		\newtheorem{theorem}{Theorem}[section]
		\newtheorem{lemma}[theorem]{Lemma}
		\newtheorem{proposition}[theorem]{Proposition}
		\newtheorem{corollary}[theorem]{Corollary}
		\newtheorem{definition}[theorem]{Definition}
		\newtheorem{postulate}[theorem]{Postulate}
		\newtheorem{remark}[theorem]{Remark}
	\newenvironment{proof}{
		\goodbreak\par
		\textit{Proof.}%
	}{%
		\nopagebreak
		\hfill{\vrule width 1ex height 1ex depth 0ex}
		\medskip
		\goodbreak
	}
	\newcommand{\sizedescriptor}[2]
	{
		\ifthenelse{\equal{#1}{0}}{}{
		\ifthenelse{\equal{#1}{1}}{\big}{
		\ifthenelse{\equal{#1}{2}}{\Big}{
		\ifthenelse{\equal{#1}{3}}{\bigg}{
		\ifthenelse{\equal{#1}{4}}{\Bigg}{
		#2}}}}}
	}
	\newcommand{\nc}[1][\land]{~{#1}~}
	\newcommand{\nl}[1][\land]{\nc[#1]\\&\nc[#1]}
	\newcommand{\proven}[1]{\underline{#1}\vspace{0.2em}\\}
	\newcommand{\ie}[1][~]{i.e.{#1}}
	\newcommand{\eg}[1][~]{e.g.{#1}}
	\newcommand{\etc}[1][~]{etc.{#1}}
	\newcommand{\intermission}{\bigskip\medskip}
	\newcommand{\df}[1]{\emph{#1}}
	\newcommand{\ism}{\cong}
	\newcommand{\equ}{\sim}
	\newcommand{\dfeq}{:=}
	\newcommand{\sepdfeq}{\quad\dfeq\quad}
	\newcommand{\apart}{\mathrel{\#}}
	\newcommand{\id}[1][]{\textrm{Id}_{#1}}
	\newcommand{\impl}{\Rightarrow}
	\newcommand{\rstr}[1]{\left.{#1}\right|}
	\newcommand{\cnct}{{{:}{:}}}
	\newcommand{\im}{\text{im}}
	\newcommand{\ball}[3][]{B_{#1}\left(#2, #3\right)}
	\newcommand{\parto}{\mathrel{\rightharpoonup}}
	\newcommand{\insarg}{\text{\textrm{---}}}
	\newcommand{\all}[3]{\forall\, #1 \,{\in}\, #2\,.\left(#3\right)}
	\newcommand{\xall}[3]{\forall\, #1 \,{\in}\, #2\,.\,#3}
	\newcommand{\xsome}[3]{\exists\, #1 \,{\in}\, #2\,.\,#3}
	\newcommand{\xexactlyone}[3]{\exists!\, #1 \,{\in}\, #2\,.\,#3}
	\newcommand{\st}[3][auto]{\sizedescriptor{#1}{\left}\{#2\;\sizedescriptor{#1}{\middle}|\;#3\sizedescriptor{#1}{\right}\}}
	\newcommand{\mlst}[5][\Big]{\begin{align*} #2 #1\{ #3 \ #1|& \ #4 #1\}#5 \end{align*}}
	\newcommand{\pst}{\mathscr{P}}
	\newcommand{\finseq}[1]{{#1}^*}
	\newcommand{\C}{\mathscr{C}}
	\newcommand{\NN}{\mathbb{N}}
	\newcommand{\ZZ}{\mathbb{Z}}
	\newcommand{\QQ}{\mathbb{Q}}
	\newcommand{\RR}{\mathbb{R}}
	\newcommand{\intoo}[3][\RR]{{#1}_{(#2, #3)}}
	\newcommand{\intcc}[3][\RR]{{#1}_{[#2, #3]}}
	\newcommand{\intco}[3][\RR]{{#1}_{[#2, #3)}}
	\newcommand{\one}{\mathbf{1}}
	\newcommand{\unit}{*}
	\newcommand{\od}{\mathbb{D}}
	\newcommand{\nnr}{{\RR_{\geq 0}}}
	\newcommand{\strat}[3][]{{}_{#1}{#2}_{#3}}
	\newcommand{\prt}{\mathbf{W}}
	\newcommand{\psd}{\mathbf{V}}
	\newcommand{\U}{\mathbf{U}}
	\newcommand{\mtr}[1]{\mathbf{#1}}
	\newcommand{\cmpl}[1]{\widehat{#1}}
	\newcommand{\cmtr}[1]{\cmpl{\mtr{#1}}}
	\newcommand{\locd}{\mathscr{L}}
	\newcommand{\Cauchy}{\mathrm{Cauchy}}
	\newcommand{\cs}{\mathrm{cs}}
	\newcommand{\tuple}[6][auto]{\strat[#2]{\sizedescriptor{#1}({#4}_{#6}, {#5}_{#6}\sizedescriptor{#1})}{{#6} \in \NN_{< #3}}}
	\newcommand{\norm}[1][\insarg]{\|{#1}\|}
	\newcommand{\fdis}{\rightarrow}
	\newcommand{\dis}{\mathop{\leftrightarrow}}
	\newcommand{\proto}[1][]{\strat[#1]{\prt}{\od}}
	\newcommand{\protoaux}[1][]{\strat[#1]{\mathbf{A}}{\od}}
	\newcommand{\pseudo}[1][]{\strat[#1]{\psd}{\od}}
	\newcommand{\pseudoaux}[1][]{\strat[#1]{\mathbf{B}}{\od}}
	\newcommand{\Ury}[1][]{\strat[#1]{\U}{\od}}
	\newcommand{\gentuple}[1][n]{\tuple{#1}{l}{a}{\alpha}{i}}
	\newcommand{\age}[1]{\text{age}(#1)}
	\newcommand{\lnth}[1]{\text{lnth}(#1)}
	\newcommand{\atuple}[3]{\tuple{\age{#1}}{\lnth{#1}}{#1}{#2}{#3}}
	\newcommand{\et}[1][0]{\strat[#1]{()}{}}
	\newcommand{\fpi}[1][]{\mathrm{FPIsom}_{#1}(\mtr{X}, \U)}
	\newcommand{\isom}{\mathrm{Isom}(\mtr{X}, \U)}
	\newcommand{\aut}{\mathrm{Aut}(\U)}
	\newcommand{\ext}[1][]{\mathscr{E}_{#1}}
	\newcommand{\prms}[1][]{\mathscr{P}_{#1}}
	\newcommand{\extm}[1][]{\mathrm{ext}_{#1}}
\title{Continuity and Algebraic Structure of the Urysohn space}
\author{
Davorin Le\v{s}nik\vspace{1ex}\\
Department of Mathematics\\
Darmstadt University of Technology, Germany\\
\texttt{lesnik@mathematik.tu-darmstadt.de}\vspace{1ex}\\
Department of Mathematics and Physics\\
University of Ljubljana, Slovenia\\
\texttt{davorin.lesnik@fmf.uni-lj.si}
}
\begin{document}

	\maketitle
	
	\begin{abstract}
		The Urysohn space is a complete separable metric space, universal among separable metric spaces for extending finite partial isometries into it. We present an alternative construction of the Urysohn space which enables us to show that extending isometries can be done in a canonical and continuous way, and allows us to equip the Urysohn space with algebraic structure. This is achieved in a constructive setting without assuming any choice principles.
	\end{abstract}
	


	\section{Introduction}\label{Section: introduction}
		In~\cite{Urysohn_PS_1927:_sur_un_espace_metrique_universel} the Urysohn space was defined as a complete separable metric space with the property that any partial isometry with a finite domain (which we call a \df{finite partial isometry}) from a separable metric space into the Urysohn space has an extension to the whole space. A model of a Urysohn space was constructed, and shown that it is unique up to isometric isomorphism.
		
		The purpose of this paper is threefold.
		\begin{itemize}
			\item\proven{Show that extending isometries can be done in a canonical and continuous way.}
				To this end we choose to model the Urysohn space in an alternative way. The additional structure we get enables us to construct an explicit mapping from finite partial isometries to total isometries of a separable metric space into the Urysohn space. Furthermore, we suitably topologize the spaces of isometries, and show that the given extension of isometries is continuous.
			\item\proven{Equip the Urysohn space with algebraic structure.}
				We identify a structure (called a \df{disring}) which helps us with the construction of the Urysohn space, and then we verify that the Urysohn space is a ``complete normed module'' over it.
			\item\proven{Prove these theorems in a weak constructive setting.}
				Constructivism in mathematics is, roughly speaking, proving existence theorems by explicitly constructing the object in question, as opposed to, say, assuming its non-existence, and deriving a contradiction~\cite{Troelstra_AS_Dalen_D_1988:_constructivism_in_mathematics_volume_1}. Reasons for doing mathematics in a constructive (instead of ``classical'') way can be philosophical (as was the case with Brouwer who essentially started the field), or practical (a physicist might not be impressed by a theorem stating that a solution to his equations exists in principle, without actually providing the solution). For us (for the purposes of this paper) it is merely proving statements in a more general setting than the standard ZFC axioms. Constructive mathematics can be formalized; basically it is classical mathematics without assuming the law of excluded middle (``every statement is either true or false'') and the axiom of choice, with perhaps some further axioms omitted, or some alternative ones added. The exact details vary with the version of constructivism; there are several~\cite{Bridges_DS_Richman_F_1987:_varieties_of_constructive_mathematics}. We prove our theorems in predicative IZF (intuitionistic Zermelo-Fraenkel) theory with natural numbers which is a common core of mostly considered varieties of constructivism (as well as classical mathematics).
		\end{itemize}
		
		This paper can be seen as a continuation of my paper~\cite{Lesnik_D_2009:_constructive_urysohn_universal_metric_space}, filling in some details and expanding the constructive development of the Urysohn space by showing its uniqueness, continuity of extensions, and developing the algebraic structure. However, it is written as a standalone paper, and aimed at more general than just constructive audience.
		
		\subsection{Outline of the Paper}
		
			\begin{itemize}
				\item \textbf{Section~\ref{Section: introduction}:} Introduction\\
					We explain the theme, contents and notation of the paper, as well as provide a soft introduction to constructive mathematics for a classical mathematician.
				\item \textbf{Section~\ref{Section: dis_structures}:} Disgroups and Disrings\\
					We introduce \df{disgroups} and related structures, for which it is meaningful to say, what an internal distance is (the prime example are non-negative real numbers with the absolute value of a difference for the distance), as well as metric spaces (and their generalizations) with distances in them. This structure is later used for the construction and algebra of the Urysohn space.
				\item \textbf{Section~\ref{Section: countable_Urysohn}:} Countable Urysohn Space\\
					As usual, we first construct a ``countable version'' of the Urysohn space (the completion of which is then the actual Urysohn space). It is given as an inductive structure, where the inductive step is essentially the extension of an isometry by one point. For technical reasons it is split into three stages: first, elements representing all combinations of distances to the new point are added, then these are cut down to the ones which make sense (read: satisfy triangle inequality), then points at zero distance are identified, to obtain a metric space. A model of such a space is explicitly given, and proven that it is unique up to isometric isomorphism.
				\item \textbf{Section~\ref{Section: reals_and_completions}:} Real Numbers and Metric Completion\\
					An intermezzo to discuss complete metric spaces in constructive settings. In particular, we recall the completion by \df{locations} which is much better suited for the construction of the Urysohn space (than the classical one with Cauchy sequences).
				\item \textbf{Section~\ref{Section: complete_Urysohn}:} Complete Urysohn Space\\
					We verify that the completion of the space from Section~\ref{Section: countable_Urysohn} satisfies the Urysohn properties, and that it is unique such up to isometric isomorphism.
				\item \textbf{Section~\ref{Section: continuity}:} Continuity of Extensions\\
					We topologize domains and codomains of extension mappings and prove their continuity (in fact, Lipschitz continuity with constant $1$ in a suitable sense).
				\item \textbf{Section~\ref{Section: algebra_of_Urysohn}:} Algebraic Structure of the Urysohn Space\\
					We show that the operation $\dis$ on the base disgroup induces an associative disgroup structure on the Urysohn space, and if we start with a disring, we obtain what is essentially a ``Banach space over the disring'' (in particular, the operations are continuous).
				\item \textbf{Section~\ref{Section: applications}:} Applications\\
					A few basic applications of continuity and algebra of the Urysohn space.
				\item \textbf{Section~\ref{Section: concluding_remarks}:} Concluding Remarks\\
					The concluding section contains remarks and some additional results, to put the paper in a wider perspective. Several further questions are posed.
			\end{itemize}
		
		\subsection{Notation and Style}
			
			\begin{itemize}
				\item
					Most of the paper (when the writer is the subject) is written in first person plural (as is usual in mathematical texts), but occasionally I use first person singular. The former is used more formally (and when I expect the reader to go along with and except what is written), whereas the latter is used when I want to express my personal style, opinion, preference or suggestion.
				\item
					Number sets are denoted by $\NN$ (natural numbers), $\ZZ$ (integers), $\QQ$ (rationals), and $\RR$ (reals). Zero is considered a natural number (so $\NN = \{0, 1, 2, 3,\ldots\}$).
				\item
					Subsets of number sets, obtained by comparison with a certain number, are denoted by the suitable order sign and that number in the index. For example, $\NN_{< 42}$ denotes the set $\st{n \in \NN}{n < 42} = \{0, 1, \ldots, 41\}$ of all natural numbers smaller than $42$, and $\RR_{\geq 0}$ denotes the set $\st{x \in \RR}{x \geq 0}$ of non-negative real numbers.
				\item
					Intervals between two numbers are denoted by these two numbers in brackets and in the index. Round, or open, brackets $(\ )$ denote the absence of the boundary in the set, and square, or closed, brackets $[\ ]$ its presence; for example $\intco[\NN]{5}{10} = \st{n \in \NN}{5 \leq n < 10} = \{5, 6, 7, 8, 9\}$ and $\intcc{0}{1} = \st{x \in \RR}{0 \leq x \leq 1}$.
				\item
					Given a map $a\colon N \to A$ where $N$ is a subset of natural numbers, we often write simply $a_k$ instead of $a(k)$ for the value of $a$ at $k \in N$.
				\item
					The set of maps from $A$ to $B$ is written as the exponential $B^A$.
				\item
					The set of finite sequences of elements in $A$ is denoted by $\finseq{A}$.
				\item
					Concatenation of sequences $a$ and $b$ is $a \cnct b$.
				\item
					Given sets $A \subseteq X$, $B \subseteq Y$ and a map $f\colon X \to Y$ with the image $\im(f) \subseteq B$, the restriction of $f$ to $A$ and $B$ is denoted by $\rstr{f}_A^B$. When we restrict only the domain or only the codomain, we write $\rstr{f}_A$ and $\rstr{f}^B$, respectively.
				\item
					A one-element set (a singleton) is denoted by $\one$ (and its sole element by $*$).
				\item
					The onto maps are called surjective, and the one-to-one maps injective.
				\item
					The quotient of a set $X$ by an equivalence relation $\equ$ is denoted by $X/_\equ$. Its elements --- the equivalence classes --- are denoted by $[x]$ where $x \in X$ (\ie if $q\colon X \to X/_\equ$ is the quotient map, then $[x] \dfeq q(x)$).
				\item
					The coproduct (disjoint union) is denoted by $+$ in the binary case, and by $\coprod$ in the general case.
			\end{itemize}
		
		\subsection{Constructive vs.~Classical}
			
			We mentioned that results in this paper will be proved in constructive setting. The main reason\footnote{A different subtler reason is given in Remark~\ref{Remark: synthetic_topology} at the end of the paper.} for this is simply that these proofs work in more general than classical setting, so I feel it is reasonable to present them in such way. Greater generality has, of course, a wider scope of applications; for example, constructive results can be implemented on a computer~\cite{Bauer_A_2000:_the_realizability_approach_to_computable_analysis_and_topology, Lietz_P_2004:_from_constructive_mathematics_to_computable_analysis_via_the_realizability_interpretation}.
			
			In this subsection we recall a few constructive definitions that we require in this paper.
			
			Since we actually need to construct an element to prove its existence, it is not the same to say for a set, that it is non-empty or that it possesses an element; thus we say that a set is \df{inhabited} when there exists an element in it.
			
			The main formal difference between classical and constructive mathematics is that in the latter, the \df{law of excluded middle}, stating that every proposition is either true or false, is not assumed. To put it differently, call a proposition $p$ \df{decidable} when $p \lor \lnot{p}$ holds (that is, we may decide whether $p$ is true or not). The law of excluded middle says that every proposition is decidable while constructively only some might be, for which this needs to be specifically proven. For example, it turns out that relations $=$, $\leq$, $<$ on $\NN$, $\ZZ$, $\QQ$ are decidable (that is, for every pair of elements it is decidable, whether they are in relation), but they are in general not decidable on $\RR$ (a fact which programmers who implement exact real arithmetic are well familiar with --- there exists no algorithm which returns a correct answer for every two real numbers whether they match).
			
			The negation is, as usual, given as $\lnot{p} = (p \Rightarrow \bot)$ where $\bot$ denotes falsehood. Thus $\lnot{p}$ is proven by assuming $p$ and deriving a contradiction. That said, a \df{proof by contradiction} of $p$, where we assume $\lnot{p}$ and derive a contradiction, is constructively not generally valid; indeed with it we merely prove $\lnot\lnot{p}$. The propositions for which it is valid, that is, those for which $\lnot\lnot{p} \implies p$ holds, are called \df{$\lnot\lnot$-stable} (or simply \df{stable}). Every decidable proposition is stable (but not vice versa in general).
		
		We call a set $X$ \df{finite} when there exists a surjective map $\NN_{< n} \to X$ for some $n \in \NN$, \ie we can enumerate the elements of $X$ with the first few natural numbers. Note that the empty set $\emptyset$ is finite by this definition since we can take $n = 0$. In fact, any finite set is either empty or inhabited; consider any surjection $\NN_{< n} \to X$, and decide whether $n$ equals or is greater than $0$.
         
      If we fix a surjection $a\colon \NN_{< n} \to X$, we can write a finite set as $X = \{a_0, a_1, \ldots, a_{n-1}\}$. However, in this list some elements can potentially repeat since we only require $a$ to be a surjection, not a bijection. Therefore, contrary to the classical intuition, for a finite set $X$ there need not exist $n \in \NN$ such that $X$ would have exactly $n$ elements (in the sense that there is a bijection between $X$ and $\NN_{< n}$). In fact, this happens precisely when $X$ has decidable equality (since in that case we can remove the repetitions of elements in the list).\footnote{Some authors reserve the word `finite' only for sets in bijection with $\NN_{< n}$ while what we call finite they term \df{finitely enumerated}. Our definition of finiteness is equivalent to Kuratowski finiteness.}
      
      Next, we call a set $X$ \df{infinite} when there exists an injective map $\NN \to X$. Clearly if a set is infinite, it is also inhabited, and if it contains an infinite subset, it is itself infinite.
      
      The dual notion, the existence of a surjective map $\NN \to X$, is something like countability, but we want the empty set to be considered countable as well, so we allow enumerations containing ``placeholder elements'' which don't actually represent elements of $X$. Thus we define that $X$ is \df{countable} when there exists a map $s\colon \NN \to X + \one$ such that $X$ is contained in the image of $s$. Of course, by this definition every finite set $\{a_0, a_1, \ldots, a_{n-1}\}$ is countable, witnessed by the map
      $$i \mapsto \begin{cases} a_i & \text{if } i < n,\\ \unit & \text{otherwise}. \end{cases}$$
      However, if $X$ is inhabited, then the placeholder element can be replaced by a particular element from $X$, so we see that there is a surjective map $\NN \to X$ if and only if $X$ is both countable and inhabited. That does \emph{not} mean that we can treat $\emptyset$ as a special case; inhabitedness is not a decidable property for general countable sets.
      
      The \df{axiom of choice} (more precisely, the axiom schema) states that every total relation contains a graph of some map:
      $$\xall{x}{X}\xsome{y}{Y}{R(x, y)} \implies \xsome{f}{Y^X}\xall{x}{X}{R(x, f(x))}.$$
      Classically the axiom of choice is usually assumed, while constructively only some of its weaker versions might be. When $X$ is in bijection with $\NN_{< n}$ for some $n \in \NN$, this is called \df{finite choice}, and can be proven by induction, so it is always accepted. Many constructivists also accept the case when $X \ism \NN$, called \df{countable choice}, but we will not do so in this paper. (See however Remark~\ref{Remark: unique_choice} at the end.)

	\section{Disgroups and Disrings}\label{Section: dis_structures}
		
		In this section we define some structures involving an operation $\dis$ which will play a role in the construction and algebra of the Urysohn space.
		
		\begin{definition}\label{Definition: disgroup}
			Let $(X, +, 0)$ be a commutative monoid (= semigroup with the neutral element $0$) and $\dis\colon X \times X \to X$ a binary operation. Declare the relation $\leq$ on $X$ by
			$$a \leq b \sepdfeq a + (a \dis b) = b$$
			for $a, b \in X$. Suppose that the following holds for all $a, b, x \in X$.
			\begin{itemize}
				\item
					$a \dis b = b \dis a$ \quad (commutativity, or symmetry)
				\item
					$a \dis 0 = a$ \quad (unit)
				\item
					$a \dis b = 0$ if and only if $a = b$
				\item
					$(a + x) \dis (b + x) = a \dis b$ \quad (additivity)
				\item
					$(a + a) \dis (b + b) = (a \dis b) + (a \dis b)$
				\item
					if $a + a \leq b + b$, then $a \leq b$
				\item
					$a \dis b \leq a \dis x + b \dis x$ \quad (triangle inequality)
			\end{itemize}
			Then we call $(X, +, 0, \dis)$ a \df{disgroup}. Regarding the order of operations, we declare that $\dis$ is evaluated before $+$.\footnote{I still occasionaly use unnecessary brackets if I feel that they make the calculation clearer.}
		\end{definition}
		
		Intuitively, the operation $\dis$ can be viewed as an ``internal distance'' on $X$; for this reason I suggest the name `disgroup', and $\dis$ to be read as `dis' (the way $+$ is read `plus', and $\cdot$ `times').
		
		Our leading example of a disgroup is the set of non-negative real numbers (the possible distances in metric spaces) $\nnr$ where the addition is the usual one, and $\dis$ is defined for $x, y \in \nnr$ as $x \dis y \dfeq |x - y|$. This example and the fact that we view a ``distance'' to be something ``non-negative'', as well as the axiom $a \dis 0 = a$, suggest that disgroups lack ``negative'' elements. Indeed:
		\begin{proposition}\label{Proposition: elements_nonnegative_in_disgroup}
			In a disgroup $(X, +, 0, \dis)$ all elements are ``non-negative'', that is, the statement $\xall{x}{X}{0 \leq x}$ holds.
		\end{proposition}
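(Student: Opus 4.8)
The plan is to unfold the definition of the relation $\leq$ and reduce the claim to an identity that is immediate from the disgroup axioms. By the definition $a \leq b \dfeq a + (a \dis b) = b$, the statement $0 \leq x$ means precisely that $0 + (0 \dis x) = x$. Since $0$ is the neutral element of the commutative monoid $(X, +, 0)$, the left-hand side equals $0 \dis x$, so it suffices to show $0 \dis x = x$. For this I would apply commutativity of $\dis$ to rewrite $0 \dis x = x \dis 0$, and then invoke the unit axiom $a \dis 0 = a$ with $a = x$ to conclude $x \dis 0 = x$. Chaining these equalities gives $0 + (0 \dis x) = x$, i.e.\ $0 \leq x$, and since $x \in X$ was arbitrary, $\xall{x}{X}{0 \leq x}$.

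I do not anticipate any real obstacle here: the argument uses only three of the seven disgroup axioms --- neutrality of $0$ for $+$, commutativity (symmetry) of $\dis$, and the unit law $a \dis 0 = a$ --- and none of the heavier axioms (additivity, the doubling conditions, or the triangle inequality) are needed. The only mild subtlety is that the unit axiom is stated as $a \dis 0 = a$ rather than $0 \dis a = a$, so commutativity is genuinely invoked to bridge the two; if one wished to avoid even that, one could instead derive $0 \dis x = x$ by first proving commutativity is forced, but the direct route above is cleaner. The proof is thus a short two- or three-line calculation.
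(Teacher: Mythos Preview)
Your proof is correct and essentially identical to the paper's own argument, which reads simply ``$0 + (0 \dis x) = 0 + x = x$''. You have merely made explicit the two steps (commutativity of $\dis$ followed by the unit axiom) that the paper compresses into the middle equality.
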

		\begin{proof}
			Simple: $0 + (0 \dis x) = 0 + x = x$.
		\end{proof}
      
      Consequently we expect that a disgroup is seldom a (commutative) group. The exception is when its elements are opposite to themselves.
		
		\begin{proposition}\label{Proposition: when_disgroups_are_groups}
			The following statements are equivalent for a disgroup $(X, +, 0, \dis)$.
			\begin{enumerate}
				\item
					The operations $+$ and $\dis$ match.
				\item
					The operation $\dis$ is associative.
				\item
					The relation $\leq$ is full: $\xall{a, b}{X}{a \leq b}$.
				\item
					We have $x + x = 0$ for all $x \in X$.
				\item
					All elements of $X$ are invertible with respect to addition (making $X$ a group).
			\end{enumerate}
		\end{proposition}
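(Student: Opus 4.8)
The plan is to close the cyclic chain of implications $1 \impl 2 \impl 3 \impl 4 \impl 5 \impl 1$, and most of the links become trivial once two identities valid in \emph{every} disgroup are isolated. First, $a \dis (a + x) = x$: apply additivity with summand $a$ to get $(0 + a) \dis (x + a) = 0 \dis x$, and simplify the left side by commutativity of $+$ and the right side by symmetry and the unit axiom (which together give $0 \dis x = x$). Second --- and this one needs associativity --- the map $\sigma_a \colon x \mapsto x \dis a$ is an involution: indeed $(x \dis a) \dis a = x \dis (a \dis a) = x \dis 0 = x$, using associativity, then $a \dis a = 0$ (the instance $b = a$ of the ``$\dis$ vanishes iff equal'' axiom), then the unit axiom.

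With these in hand: $1 \impl 2$ is immediate, since $\dis = +$ inherits associativity from the monoid. For $2 \impl 3$, fix $a, b \in X$ and apply $\sigma_a$ to the putative identity $a + (a \dis b) = b$: by symmetry and the first identity, $\sigma_a\bigl(a + (a \dis b)\bigr) = a \dis \bigl(a + (a \dis b)\bigr) = a \dis b$, while also $\sigma_a(b) = b \dis a = a \dis b$; since $\sigma_a$ is an involution it is injective, so $a + (a \dis b) = b$, that is, $a \leq b$. For $3 \impl 4$, fullness of $\leq$ gives $a \leq 0$, which by the unit axiom $a \dis 0 = a$ unfolds to $a + a = 0$. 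For $4 \impl 5$, $a + a = 0$ says every element is its own additive inverse, so $X$ is a (commutative) group.

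Finally $5 \impl 1$: in the group $X$, write $-a$ for the inverse and apply additivity with summand $-a$ to obtain $a \dis b = (a + (-a)) \dis (b + (-a)) = 0 \dis (b - a) = b - a$; symmetry of $\dis$ then forces $b - a = a - b$ for all $a, b$, and taking $a = 0$ yields $b = -b$, whence $a \dis b = b - a = a + b$. This reestablishes statement $1$ and closes the cycle. The only step with any content is $2 \impl 3$: the point there is to notice that associativity makes ``$\dis a$'' an involution, hence cancellable, which is precisely what is required; the remaining implications are one-line unfoldings of the definitions, and all of it is purely equational, so no choice or excluded middle intervenes.
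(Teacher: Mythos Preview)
Your proof is correct. The paper organizes the equivalences slightly differently: instead of the clean cycle $1 \impl 2 \impl 3 \impl 4 \impl 5 \impl 1$, it first closes the loop $1 \Leftrightarrow 2$ directly (computing $(a+b) \dis (a \dis b) = ((a+b) \dis a) \dis b = b \dis b = 0$), and only then passes to $3$ using both $1$ and $2$ together. Your direct $2 \impl 3$ via the observation that $\sigma_a$ is an involution (hence injective) is a nice shortcut that avoids establishing $1$ as an intermediate step; the paper's $1 \land 2 \impl 3$ computation $a + (a \dis b) = a \dis (a \dis b) = (a \dis a) \dis b = b$ is really the same involution identity, just arrived at after already knowing $+ = \dis$. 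The remaining implications ($3 \impl 4 \impl 5 \impl 1$) are handled identically in both proofs.
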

		\begin{proof}
			\begin{itemize}
				\item\proven{$(1 \impl 2)$}
					Of course.
				\item\proven{$(2 \impl 1)$}
					For $a, b \in X$ calculate
					$$(a + b) \dis (a \dis b) = \big((a + b) \dis a\big) \dis b = (b \dis 0) \dis b = b \dis b = 0.$$
					Hence $a + b = a \dis b$.
				\item\proven{$(1 \land 2 \impl 3)$}
					We have $a + (a \dis b) = a \dis (a \dis b) = (a \dis a) \dis b = 0 \dis b = b$, so $a \leq b$.
				\item\proven{$(3 \impl 4)$}
					The assuption tells us in particular that $x \leq 0$ which means $0 = x + (x \dis 0) = x + x$.
				\item\proven{$(4 \impl 5)$}
					By the assumption every element $x \in X$ is opposite to itself, \ie $-x = x$.
				\item\proven{$(5 \impl 1)$}
					For $a, b \in X$ we have
					$$a \dis b = (a - b) \dis 0 = a - b,$$
					so $-$ and $\dis$ match. Furthermore,
					$$-a = (-a) \dis 0 = 0 \dis a = a,$$
					so every element of $X$ is opposite to itself which implies that $+$ and $-$ match. We conclude that $+$ and $\dis$ match as well.
			\end{itemize}
		\end{proof}
		
		In this proposition we assumed that $X$ already is a disgroup, but the disgroup conditions actually follow from the above properties. In particular, groups of order $2$ are also examples of disgroups, $\dis$ being simply the addition.
		
		\begin{definition}
			If in a disgroup $\dis$ is associative, we call it an \df{associative disgroup}.
		\end{definition}
		
		\begin{proposition}\label{Proposition: order_two_group_is_a_disgroup}
			Any group $(G, +, 0, -)$ with the property $\xall{x}{G}{x + x = 0}$ is an associative disgroup (where $a \dis b$ is given as $a + b$). (The converse holds by Proposition~\ref{Proposition: when_disgroups_are_groups}.)
		\end{proposition}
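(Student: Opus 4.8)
The plan is to do nothing more than check, one by one, the seven clauses of Definition~\ref{Definition: disgroup} together with associativity, for the structure $(G, +, 0, \dis)$ in which $a \dis b \dfeq a + b$. Before that, though, I would dispose of a small preliminary that the definition tacitly requires: $(G, +, 0)$ must be a \emph{commutative} monoid. This follows from the hypothesis, since $x + x = 0$ gives $x = -x$ for every $x \in G$, whence for all $a, b \in G$ we get $a + b = -(a + b) = (-b) + (-a) = b + a$. So $G$ is abelian, and in particular a commutative monoid.

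Next I would compute the order induced by $\dis$. For all $a, b \in G$ we have $a + (a \dis b) = a + (a + b) = (a + a) + b = 0 + b = b$, so the relation $\leq$, defined by $a \leq b$ iff $a + (a \dis b) = b$, is the full relation on $G$: any two elements are comparable both ways. This one observation immediately settles the two axioms whose conclusion is an inequality --- namely `if $a + a \leq b + b$ then $a \leq b$' and the triangle inequality `$a \dis b \leq a \dis x + b \dis x$' --- since their conclusions now hold unconditionally. The remaining five clauses are routine equational calculations: symmetry is commutativity of $+$; the unit law is $a + 0 = a$; for the third clause, $a + b = 0$ is equivalent to $a = -b$, which (as $-b = b$) is equivalent to $a = b$; additivity follows from $(a + x) + (b + x) = (a + b) + (x + x) = a + b$; and in the `doubling' clause both sides vanish, since $(a \dis b) + (a \dis b) = (a + b) + (a + b) = 0$ while $(a + a) \dis (b + b) = 0 + 0 = 0$. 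Finally, $\dis$ inherits associativity from $+$: $(a \dis b) \dis c = (a + b) + c = a + (b + c) = a \dis (b \dis c)$. Hence $(G, +, 0, \dis)$ is an associative disgroup.

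There is essentially no obstacle in this argument; the only point that is not a pure triviality --- and the only use of the order-$2$ hypothesis beyond direct computation --- is the remark that $G$ must be commutative, and even that is a three-line manipulation. Nothing above invokes excluded middle or any form of choice (the fullness of $\leq$ is established outright, not by case analysis), so the proof is constructively valid exactly as written.
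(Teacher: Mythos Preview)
Your proof is correct and follows essentially the same approach as the paper: establish commutativity from $x = -x$, observe that $\leq$ is the full relation (exactly as in the proof of Proposition~\ref{Proposition: when_disgroups_are_groups}), and then the disgroup axioms and associativity are immediate. You simply spell out the verifications that the paper summarizes as ``from here the disgroup conditions easily follow.''
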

		\begin{proof}
			The condition $x + x = 0$ is equivalent to $x = -x$. Recall that any such group $G$ is commutative since $a + b = (-a) + (-b) = -(b + a) = b + a$. Taking $\dis$ to equal $+$, we verify that the relation $\leq$ is full the same way as in the previous proposition. From here the disgroup conditions easily follow.
		\end{proof}
		
		We now examine the properties of general disgroups, particularly of the relation $\leq$.
		
		\begin{proposition}\label{Proposition: cancellation_property_of_disgroups}
			In a disgroup $(X, +, 0, \dis)$ the \df{cancellation property} holds for $=$ and $\leq$:
			$$a + x = b + x \iff a = b \qquad \text{and} \qquad a + x \leq b + x \iff a \leq b$$
			for all $a, b, x \in X$.
		\end{proposition}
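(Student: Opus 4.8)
The plan is to treat the two equivalences in turn, first establishing the cancellation law for $=$ and then bootstrapping it to obtain the one for $\leq$. In both cases the forward implications are immediate --- if $a = b$ then trivially $a + x = b + x$, and if $a \leq b$, say $a + (a \dis b) = b$, then adding $x$ gives $a + x + (a \dis b) = b + x$, which by additivity is exactly $(a + x) + \big((a + x) \dis (b + x)\big) = b + x$, i.e.\ $a + x \leq b + x$. So the content lies in the reverse implications.

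For $=$, suppose $a + x = b + x$. The idea is to combine this with the two axioms that describe the behaviour of $\dis$: from $a + x = b + x$ and ``$a \dis b = 0$ iff $a = b$'' we get $(a + x) \dis (b + x) = 0$, and additivity rewrites the left side as $a \dis b$; hence $a \dis b = 0$, and reading the same axiom in the other direction yields $a = b$.

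For $\leq$, suppose $a + x \leq b + x$, i.e.\ $(a + x) + \big((a + x) \dis (b + x)\big) = b + x$. Apply additivity to replace $(a + x) \dis (b + x)$ by $a \dis b$, obtaining $\big(a + (a \dis b)\big) + x = b + x$; now cancel the $x$ using the part of the proposition already proved to conclude $a + (a \dis b) = b$, which is precisely $a \leq b$.

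The only point worth noticing --- hardly an obstacle --- is the order of moves: one should rewrite via additivity \emph{before} attempting to cancel, so that in both parts the problem is reduced to the already-available fact that $\dis$ detects equality. No case distinctions or choice principles enter, so the argument is constructively unproblematic.
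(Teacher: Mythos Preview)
Your proof is correct and follows essentially the same route as the paper: both use the chain $a + x = b + x \iff (a + x) \dis (b + x) = 0 \iff a \dis b = 0 \iff a = b$ for equality, and then combine additivity with this cancellation to handle $\leq$. Your version is slightly more explicit about the forward implications, but the argument is the same.
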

		\begin{proof}
			Calculate
			$$a + x = b + x \iff (a + x) \dis (b + x) = 0 \iff a \dis b = 0 \iff a = b$$
			whence we see that $a + x + (a + x) \dis (b + x) = b + x$ and $a + a \dis b = b$ are equivalent as well.
		\end{proof}
		
		\begin{proposition}
			In a disgroup $(X, +, 0, \dis)$ we have
			$$a \leq b \land c \leq d \implies a + c \leq b + d$$
			for all $a, b, c, d \in X$.
		\end{proposition}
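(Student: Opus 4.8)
The plan is to unfold the definition of $\leq$ and reduce the whole statement to one small identity about $\dis$. First I would use the hypotheses: $a \leq b$ and $c \leq d$ mean, by definition, $a + (a \dis b) = b$ and $c + (c \dis d) = d$, and adding these (commutativity of $+$) gives
\[ b + d = (a + c) + \big((a \dis b) + (c \dis d)\big). \]
Hence it is enough to prove $(a + c) \dis (b + d) = (a \dis b) + (c \dis d)$: once that is in hand, $(a + c) + \big((a + c) \dis (b + d)\big) = b + d$, which is precisely $a + c \leq b + d$.

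The real content is the auxiliary fact that $p \dis (p + q) = q$ for all $p, q \in X$. To establish it I would first note $0 \leq q$ by Proposition~\ref{Proposition: elements_nonnegative_in_disgroup}, then apply the cancellation property for $\leq$ (Proposition~\ref{Proposition: cancellation_property_of_disgroups}) with the summand $p$ to get $p = 0 + p \leq q + p = p + q$. Unfolding the definition of this instance of $\leq$ says $p + \big(p \dis (p + q)\big) = p + q$, and cancelling $p$ (cancellation property for $=$) gives $p \dis (p + q) = q$.

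Then I would instantiate this with $p \dfeq a + c$ and $q \dfeq (a \dis b) + (c \dis d)$ and combine it with the displayed identity for $b + d$ to obtain $(a + c) \dis (b + d) = (a + c) \dis \big((a+c) + ((a \dis b) + (c \dis d))\big) = (a \dis b) + (c \dis d)$, which finishes the proof as explained above.

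The main thing to be careful about is resisting the obvious-looking shortcut $a + c \leq b + c \leq b + d$: both links follow instantly from the cancellation property, but transitivity of $\leq$ has not been proved at this stage of the development, so that argument is not (yet) available. Going through the explicit computation of $(a+c) \dis (b+d)$ via the $p \dis (p+q) = q$ lemma is the way around this, and is essentially the only step that requires any thought.
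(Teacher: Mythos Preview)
Your proof is correct and follows essentially the same strategy as the paper: substitute $b = a + (a \dis b)$, $d = c + (c \dis d)$, and then show $(a+c) \dis (b+d) = (a \dis b) + (c \dis d)$. The only difference is in how you justify the auxiliary identity $p \dis (p+q) = q$: you route it through $0 \leq q$, cancellation, and the definition of $\leq$, whereas the paper obtains it in one step directly from the additivity axiom, $(p+0) \dis (p+q) = 0 \dis q = q$. Your argument is valid (and your caution about not using transitivity is well placed), but the additivity shortcut is worth noting.
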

		\begin{proof}
			Suppose $a \leq b$ and $c \leq d$, \ie $a + a \dis b = b$, $c + c \dis d = d$. Then
			$$a + c + (a + c) \dis (b + d) = a + c + (a + c) \dis (a + a \dis b + c + c \dis d) =$$
			$$= a + c + a \dis b + c \dis d = b + d.$$
		\end{proof}
		
		\begin{lemma}\label{Lemma: dis_is_difference_between_comparable_elements}
			Let $(X, +, 0, \dis)$ be a disgroup. The following statements are equivalent for $a, b \in X$.
			\begin{enumerate}
				\item
					$a \leq b$
				\item
					$\xall{x}{X}{a \dis (b + x) = (a \dis b) + x}$
				\item
					$\all{x}{X}{x \leq a \dis b \iff x + a \leq b}$
				\item
					$\xsome{x}{X}{a + x = b}$
			\end{enumerate}
		\end{lemma}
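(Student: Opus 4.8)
The plan is to show that the four conditions are equivalent by a cycle of implications, $(1) \impl (3) \impl (4) \impl (2) \impl (1)$, arranged so that at no stage do I rely on a characterisation of $\leq$ that has not yet been established. The computational backbone of every step is the identity $u \dis (u + c) = c$, valid for all $u, c \in X$: indeed $u \dis (u + c) = (0 + u) \dis (c + u) = 0 \dis c = c \dis 0 = c$ by additivity, symmetry and the unit axiom. I will also use that $\leq$ is reflexive (immediate, since $u \dis u = 0$ forces $u + (u \dis u) = u$) and the cancellation property (Proposition~\ref{Proposition: cancellation_property_of_disgroups}). Write $d \dfeq a \dis b$ throughout.

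For $(1) \impl (3)$ — the only step needing an argument in both directions — assume $a + d = b$ and fix $x$. If $x \leq d$, say $x + (x \dis d) = d$, then adding $a$ gives $(x + a) + (x \dis d) = b$, and the identity rewrites this as $(x + a) + \big((x+a) \dis b\big) = b$, \ie $x + a \leq b$. Conversely, if $x + a \leq b$, then $(x+a) + \big((x+a)\dis b\big) = b = d + a$, so cancellation yields $x + \big((x+a)\dis b\big) = d$, and the identity again turns this into $x + (x \dis d) = d$, \ie $x \leq d$. For $(3) \impl (4)$ I simply instantiate the hypothesis at $x \dfeq d$; since $d \leq d$ by reflexivity, we get $d + a \leq b$, and unfolding and reassociating this exhibits the explicit element $d + \big((d+a)\dis b\big)$, which added to $a$ yields $b$.

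For $(4) \impl (2)$, pick $w$ with $a + w = b$; the identity gives $a \dis b = a \dis (a+w) = w$ and likewise $a \dis (b + x) = a \dis \big(a + (w+x)\big) = w + x$, so $a \dis (b+x) = (a\dis b) + x$ for all $x$. Finally, for $(2) \impl (1)$ the key move is to feed the special value $x \dfeq a$ into the hypothesis: it then reads $a \dis (b + a) = (a \dis b) + a$, and since $a \dis (b+a) = a \dis (a + b) = b$ by commutativity of $+$ and the identity, we conclude $b = a + (a \dis b)$, which is exactly $a \leq b$.

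I do not anticipate a genuine obstacle: all four implications reduce to short manipulations with the single identity above. The only points requiring care are organisational — laying out the cycle so that $(1) \impl (3)$ is proved directly from the definition of $\leq$ rather than from condition $(4)$ (which is not yet available there), and not forgetting to invoke the cancellation property in the backward half of $(1) \impl (3)$.
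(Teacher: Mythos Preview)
Your proof is correct. The organisation differs from the paper's: you run a single cycle $(1)\Rightarrow(3)\Rightarrow(4)\Rightarrow(2)\Rightarrow(1)$, whereas the paper uses a hub-and-spoke pattern, proving each of $(2)$, $(3)$, $(4)$ equivalent to $(1)$ separately. Your $(2)\Rightarrow(1)$ (specialise $x=a$) and $(4)\Rightarrow(2)$ are essentially the same computations as the paper's. The paper's $(1)\Rightarrow(3)$ is a one-liner via cancellation ($x+a\leq b \iff x+a\leq a+(a\dis b) \iff x\leq a\dis b$), while you unfold both directions explicitly; conversely, the paper's $(3)\Rightarrow(1)$ and $(4)\Rightarrow(1)$ both appeal to the non-negativity of all elements (Proposition~\ref{Proposition: elements_nonnegative_in_disgroup}), which your cycle avoids entirely --- you rely only on the identity $u\dis(u+c)=c$, reflexivity of $\leq$, and cancellation. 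So your route is marginally more self-contained, at the cost of a slightly longer $(3)\Rightarrow(4)$ step (the paper, having $(1)$ in hand, simply takes $x=a\dis b$).
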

		\begin{proof}
			\begin{itemize}
				\item\proven{$(1 \impl 2)$}
					Suppose $a \leq b$, \ie $a + a \dis b = b$, and take $x \in X$. Then
					$$a \dis (b + x) = (a + 0) \dis (a + a \dis b + x) = 0 \dis (a \dis b + x) = a \dis b + x.$$
				\item\proven{$(2 \impl 1)$}
					Take $x = a$:
					$$a + (a \dis b) = (a \dis b) + a = a \dis (b + a) = (0 + a) \dis (b + a) = 0 \dis b = b.$$
				\item\proven{$(1 \impl 3)$}
					If $a \leq b$, then
					$$x + a \leq b \iff x + a \leq a + a \dis b \iff x \leq a \dis b.$$
				\item\proven{$(3 \impl 1)$}
					Take $x = 0$, and recall that all elements, including $a \dis b$, are non-negative.
				\item\proven{$(1 \impl 4)$}
					Take $x = a \dis b$.
				\item\proven{$(4 \impl 1)$}
					Since $x \geq 0$, we have $b = a + x \geq a$.
			\end{itemize}
		\end{proof}
		
		\begin{proposition}
			The relation $\leq$ is a preorder (reflexive and transitive).
		\end{proposition}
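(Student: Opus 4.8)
The plan is to establish the two properties separately, each reducing to something already in hand. For \emph{reflexivity} I would simply unfold the definition: the assertion $a \leq a$ means $a + (a \dis a) = a$, and the disgroup axiom ``$a \dis b = 0$ if and only if $a = b$'' gives $a \dis a = 0$, so $a + (a \dis a) = a + 0 = a$ by the monoid unit law. Thus reflexivity is immediate.

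For \emph{transitivity}, suppose $a \leq b$ and $b \leq c$. Rather than manipulating the defining equations $a + (a \dis b) = b$ and $b + (b \dis c) = c$ directly --- which would tempt one to claim $a \dis c = (a \dis b) + (b \dis c)$, something we only know at this stage as the inequality $a \dis c \leq a \dis b + b \dis c$ from the triangle axiom --- I would invoke the characterization $(1 \iff 4)$ of Lemma~\ref{Lemma: dis_is_difference_between_comparable_elements}. That lemma converts $a \leq b$ into the existence of some $x \in X$ with $a + x = b$, and $b \leq c$ into the existence of some $y \in X$ with $b + y = c$. Then $a + (x + y) = (a + x) + y = b + y = c$ by commutativity and associativity of $+$, so $a \leq c$ by the same characterization applied in reverse.

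I do not expect a genuine obstacle. The one mild subtlety worth flagging is that the ``obvious'' direct computation is circular: one can indeed show $\dis$ is additive along $\leq$-chains, but only \emph{after} transitivity is available, by combining the two defining equations with the cancellation property of Proposition~\ref{Proposition: cancellation_property_of_disgroups}. So the clean route is to compose the additive witnesses handed to us by Lemma~\ref{Lemma: dis_is_difference_between_comparable_elements} rather than to reason about $\dis$ at all. Note finally that the argument is fully constructive: no case distinction, no appeal to decidability or choice; we literally exhibit the witness $x + y$ for $a \leq c$.
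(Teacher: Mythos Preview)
Your proof is correct. Reflexivity is handled identically to the paper. For transitivity you and the paper both appeal to Lemma~\ref{Lemma: dis_is_difference_between_comparable_elements}, but to different items: you compose the additive witnesses supplied by the equivalence $(1)\Leftrightarrow(4)$, whereas the paper uses item~$(2)$ to compute the witness explicitly, writing $c = b + (b \dis c)$ and then
\[
a \dis c = a \dis \bigl(b + (b \dis c)\bigr) = (a \dis b) + (b \dis c),
\]
whence $a + (a \dis c) = b + (b \dis c) = c$. Your route is slightly more conceptual (and never names $a \dis c$), while the paper's route has the bonus of actually identifying $a \dis c$ as $(a \dis b) + (b \dis c)$ along the way.

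One correction to your commentary: the ``obvious direct computation'' you flag as circular is in fact exactly what the paper does, and it is \emph{not} circular. Item~$(2)$ of Lemma~\ref{Lemma: dis_is_difference_between_comparable_elements} is proved from the additivity axiom alone, with no appeal to transitivity of~$\leq$, so invoking it here is legitimate. Your caution was well-motivated --- one does have to check that the lemma is available at this point --- but the dependency you feared is not present.
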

		\begin{proof}
			Reflexivity is easy: $a + (a \dis a) = a + 0 = a$.
			
			For transitivity suppose $a \leq b$ and $b \leq c$, \ie $a + a \dis b = b$, $b + b \dis c = c$. Then
			$$a + (a \dis c) = a + (a \dis (b + b \dis c)) = a + ((a \dis b) + (b \dis c)) = b + (b \dis c) = c$$
			(the second equality holds by the previous lemma).
		\end{proof}
		
		We already know (use Proposition~\ref{Proposition: when_disgroups_are_groups} for a group of order $2$) that $\leq$ need not be a partial order (\ie also antisymmetric) in general. However we will mostly be interested in the case when it is. Here is the characterization.
		\begin{proposition}\label{Proposition: when_disgroup_is_partially_ordered}
			The following is equivalent for a disgroup $(X, +, 0, \dis)$.
			\begin{enumerate}
				\item
					$\leq$ is antisymmetric (thus a partial order).
				\item
					$\all{x}{X}{x + x = 0 \implies x = 0}$
				\item
					The map $X \to X$, $x \mapsto x + x$, is injective.
			\end{enumerate}
		\end{proposition}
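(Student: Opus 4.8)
The plan is to prove the cyclic chain of implications $(1) \Rightarrow (2) \Rightarrow (3) \Rightarrow (1)$, using only the disgroup axioms together with Proposition~\ref{Proposition: elements_nonnegative_in_disgroup} (all elements are non-negative) and Proposition~\ref{Proposition: cancellation_property_of_disgroups} (cancellation). For $(1) \Rightarrow (2)$, assume $\leq$ is antisymmetric and take $x \in X$ with $x + x = 0$; since the unit axiom gives $x \dis 0 = x$, we have $x + (x \dis 0) = x + x = 0$, which is exactly $x \leq 0$, and as $0 \leq x$ holds by Proposition~\ref{Proposition: elements_nonnegative_in_disgroup}, antisymmetry forces $x = 0$.

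For $(2) \Rightarrow (3)$, assume $\all{x}{X}{x + x = 0 \implies x = 0}$ and suppose $a + a = b + b$ for $a, b \in X$. Then the doubling axiom together with this equality gives $(a \dis b) + (a \dis b) = (a + a) \dis (b + b) = (b + b) \dis (b + b) = 0$, the last equality since $c \dis c = 0$ for every $c$; the hypothesis then yields $a \dis b = 0$, hence $a = b$, so the map $x \mapsto x + x$ is injective. For $(3) \Rightarrow (1)$, assume this map is injective and suppose $a \leq b$ and $b \leq a$; writing $d \dfeq a \dis b = b \dis a$ (using symmetry), the two hypotheses become $a + d = b$ and $b + d = a$, so substituting the first into the second gives $a + (d + d) = a = a + 0$. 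The cancellation property then yields $d + d = 0 = 0 + 0$, and injectivity of $x \mapsto x + x$ forces $d = 0$; that is $a \dis b = 0$, and therefore $a = b$.

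The whole argument is short and largely mechanical; the only point that needs slight care is the step $(3) \Rightarrow (1)$, where one must first apply cancellation to strip off the common summand $a$ and only then invoke injectivity of the doubling map. Since no case distinctions, double-negation eliminations, or choice principles are used anywhere, the proof goes through unchanged in the constructive setting adopted in the paper.
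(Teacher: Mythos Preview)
Your proof is correct and essentially matches the paper's. The only cosmetic difference is that you arrange the implications as a cycle $(1)\Rightarrow(2)\Rightarrow(3)\Rightarrow(1)$, whereas the paper proves $(1)\Leftrightarrow(2)$ and $(2)\Leftrightarrow(3)$; your step $(3)\Rightarrow(1)$ is just the paper's $(2)\Rightarrow(1)$ with the trivial $(3)\Rightarrow(2)$ folded in at the end.
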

		\begin{proof}
			\begin{enumerate}
				\item\proven{$(1 \impl 2)$}
					Suppose $\leq$ is antisymmetric and $x + x = 0$ for $x \in X$. Then $x + (x \dis 0) = x + x = 0$, so $x \leq 0$. We also have $0 \leq x$ by Proposition~\ref{Proposition: elements_nonnegative_in_disgroup}. Thus $x = 0$ by antisymmetry.
				\item\proven{$(2 \impl 1)$}
					Take $a, b \in X$, and suppose $a \leq b$ and $b \leq a$. Summing $a + (a \dis b) = b$ and $b + (b \dis a) = a$, we obtain $a + b + (a \dis b) + (a \dis b) = a + b$. Use the cancellation property to obtain $(a \dis b) + (a \dis b) = 0$ whence $a \dis b = 0$ by assumption. Thus $a = b$.
				\item\proven{$(2 \impl 3)$}
					$$x + x = y + y \implies (x + x) \dis (y + y) = 0 \implies$$
					$$\implies (x \dis y) + (x \dis y) = 0 \implies x \dis y = 0 \implies x = y$$
				\item\proven{$(3 \impl 2)$}
					Of course since $0 + 0 = 0$.
			\end{enumerate}
		\end{proof}
		Note that this proposition and Proposition~\ref{Proposition: when_disgroups_are_groups} imply that the only partially ordered disgroup which is also a group is the trivial group.
		
		\begin{lemma}\label{Lemma: special_cases_of_triangle_inequality}
			The following special cases of the triangle inequality hold in a disgroup:
			\begin{enumerate}
				\item\label{Lemma: special_cases_of_triangle_inequality: bound}
					$a \leq a \dis b + b$,
				\item\label{Lemma: special_cases_of_triangle_inequality: comparison_between_dis_and_plus}
					$a \dis b \leq a + b$.
			\end{enumerate}
		\end{lemma}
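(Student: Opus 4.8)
The plan is to obtain both inequalities as immediate instances of the triangle inequality axiom $a \dis b \leq a \dis x + b \dis x$ from Definition~\ref{Definition: disgroup}, specialising the free variable $x$ and then simplifying the resulting expression using only the unit axiom $a \dis 0 = a$ and symmetry $a \dis b = b \dis a$.

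For part~\ref{Lemma: special_cases_of_triangle_inequality: comparison_between_dis_and_plus} I would set $x \dfeq 0$ in the triangle inequality, which yields
$$a \dis b \leq a \dis 0 + b \dis 0 = a + b,$$
the equality being a double application of the unit axiom. For part~\ref{Lemma: special_cases_of_triangle_inequality: bound} I would instead apply the triangle inequality to the triple $(a, 0, b)$, i.e.\ substitute $b \dfeq 0$ and $x \dfeq b$, obtaining
$$a \dis 0 \leq a \dis b + 0 \dis b.$$
The left-hand side equals $a$ by the unit axiom, and the second summand on the right equals $0 \dis b = b \dis 0 = b$ by symmetry followed by the unit axiom; hence $a \leq a \dis b + b$.

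There is essentially no obstacle here. The only point requiring a moment's care is that the triangle inequality in Definition~\ref{Definition: disgroup} is postulated for \emph{all} triples of elements of $X$, so the two substitutions above are legitimate instances of it; beyond that, no properties of $\leq$ established in the preceding propositions (not even transitivity) are needed, so the whole argument is a short chain of rewrites.
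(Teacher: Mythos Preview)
Your proof is correct and is exactly the approach the paper takes: the paper's proof consists of the single sentence ``These are special cases of the triangle inequality when one of the elements is $0$,'' and you have simply spelled out which element is set to $0$ in each case and how the unit and symmetry axioms finish the simplification.
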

		\begin{proof}
			These are special cases of the triangle inequality when one of the elements is $0$.
		\end{proof}
		
		\begin{lemma}\label{Lemma: dis_upper_bounds}
			In a disgroup $(X, +, 0, \dis)$ we have
			$$y + a \dis b \leq x \hspace{1em} \iff \hspace{1em} y + a \leq x + b \hspace{1ex} \land \hspace{1ex} y + b \leq x + a$$
			for all $a, b, x, y \in X$ (in particular $a \dis b \leq x \iff a \leq x + b \land b \leq x + a$).
		\end{lemma}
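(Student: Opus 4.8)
The plan is to prove the two implications separately; since the displayed ``in particular'' clause is merely the instance $y = 0$, I would keep $y$ general throughout and obtain that case for free.

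\emph{Forward implication.} Assume $y + a \dis b \leq x$. By Lemma~\ref{Lemma: special_cases_of_triangle_inequality}(\ref{Lemma: special_cases_of_triangle_inequality: bound}) we have $a \leq (a \dis b) + b$; adding $y$ on the left (using reflexivity of $\leq$ together with the monotonicity law $a \leq b \land c \leq d \implies a + c \leq b + d$) gives $y + a \leq (y + a \dis b) + b$, while adding $b$ on the right to the hypothesis gives $(y + a \dis b) + b \leq x + b$, so $y + a \leq x + b$ by transitivity of $\leq$. Since $a \dis b = b \dis a$, the symmetric argument yields $y + b \leq x + a$.

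\emph{Converse.} Assume $y + a \leq x + b$ and $y + b \leq x + a$. By Lemma~\ref{Lemma: dis_is_difference_between_comparable_elements} there are $p, q \in X$ with $(y + a) + p = x + b$ and $(y + b) + q = x + a$; adding these equalities and cancelling $a + b$ (Proposition~\ref{Proposition: cancellation_property_of_disgroups}) gives $(y + y) + (p + q) = x + x$. The key step is the identity $(a + a) \dis (b + b) = q \dis p$, which I would establish by additivity alone: adding $(q + q) + (y + y)$ to both arguments of $(a + a) \dis (b + b)$ changes nothing, the second argument becomes $(b + b) + (q + q) + (y + y) = (x + a) + (x + a) = (x + x) + (a + a)$ by the second relation, one then cancels $a + a$, substitutes $x + x = (y + y) + (p + q)$, and cancels $y + y$ and one copy of $q$. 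Then, using the disgroup axiom $(a \dis b) + (a \dis b) = (a + a) \dis (b + b)$ and Lemma~\ref{Lemma: special_cases_of_triangle_inequality}(\ref{Lemma: special_cases_of_triangle_inequality: comparison_between_dis_and_plus}),
\[
(y + a \dis b) + (y + a \dis b) = (y + y) + (a + a) \dis (b + b) = (y + y) + q \dis p \leq (y + y) + q + p = x + x ,
\]
and the axiom ``$c + c \leq d + d \implies c \leq d$'' gives $y + a \dis b \leq x$.

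The main obstacle is precisely this converse: since $\leq$ need not be antisymmetric in a disgroup, one cannot conclude by proving the inequality in both directions and must actually exhibit the comparison $y + a \dis b \leq x$. The device that unlocks it is to pass to the ``doubled'' picture, where the axioms $(a + a) \dis (b + b) = (a \dis b) + (a \dis b)$ and ``$c + c \leq d + d \implies c \leq d$'' allow one to strip off the additive summand $y + y$; the single computation that needs care is the identity $(a + a) \dis (b + b) = q \dis p$, which must be derived from additivity and cancellation only, with no recourse to the order relation.
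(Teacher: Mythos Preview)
Your proof is correct. The forward direction is essentially the paper's argument, just arranged slightly differently (the paper unfolds the definition of $y + a \dis b \leq x$ and then applies Lemma~\ref{Lemma: special_cases_of_triangle_inequality}(\ref{Lemma: special_cases_of_triangle_inequality: bound}) together with non-negativity, which amounts to the same two steps you take).

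For the converse you take a genuinely different route from the paper. The paper works entirely with $\dis$ and the triangle inequality: it rewrites $(a \dis b) + (a \dis b)$ as $(y+a+a) \dis (y+b+b)$, then inserts the intermediate point $x+a+b$ via the triangle inequality, and uses the hypotheses in the form $y+a+a \leq x+a+b$ and $y+b+b \leq x+a+b$ to collapse each summand via the very definition of $\leq$; after cancelling $a+a+b+b$ one lands on $(y + a \dis b) + (y + a \dis b) \leq x + x$. Your approach instead materialises witnesses $p,q$ for the two hypotheses via Lemma~\ref{Lemma: dis_is_difference_between_comparable_elements}, proves the exact identity $(a+a)\dis(b+b) = q \dis p$ by a chain of additivity cancellations, and only then invokes $q \dis p \leq q + p$. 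Both arguments pass through doubling and the axiom ``$c+c \leq d+d \implies c \leq d$''; the paper's version is a bit shorter and uses the triangle inequality as its single nontrivial step, while yours trades that for a purely additive computation that is more bookkeeping but avoids the triangle inequality altogether.
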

		\begin{proof}
			Suppose $y + a \leq x + b$ and $y + b \leq x + a$ which imply $y + a + a \leq x + a + b$ and $y + b + b \leq x + a + b$. Calculate
			$$y + y + a + a + b + b + a \dis b + a \dis b =$$
			$$= y + y + a + a + b + b + (a + a) \dis (b + b) =$$
			$$= y + a + a + y + b + b + (y + a + a) \dis (y + b + b) \leq$$
			$$\leq (y + a + a) + (y + b + b) + (y + a + a) \dis (x + a + b) + (y + b + b) \dis (x + a + b) =$$
			$$= x + a + b + x + a + b$$
			whence $y + a \dis b + y + a \dis b \leq x + x$ by the cancellation property. Hence $y + a \dis b \leq x$.
			
			Conversely, suppose $y + a \dis b \leq x$ which means $(y + a \dis b) + (y + a \dis b) \dis x = x$. Use Lemma~\ref{Lemma: special_cases_of_triangle_inequality}(\ref{Lemma: special_cases_of_triangle_inequality: bound}) and the non-negativity of elements to calculate
			$$x + b = b + (y + a \dis b) + (y + a \dis b) \dis x \geq y + a + (y + a \dis b) \dis x \geq y + a;$$
			similarly $x + a \geq y + b$.
		\end{proof}
		
		\begin{lemma}
			In a disgroup the following variants of triangle inequality also hold:
			\begin{enumerate}
				\item
					$(a \dis x) \dis (b \dis x) \leq a \dis b$,
				\item
					$a \dis b \leq c \implies a \dis x \leq b \dis x + c$.
			\end{enumerate}
		\end{lemma}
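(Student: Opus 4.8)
The plan is to derive both inequalities purely from the triangle inequality axiom, together with Lemma~\ref{Lemma: dis_upper_bounds}, the transitivity and reflexivity of $\leq$, and the monotonicity of $+$ with respect to $\leq$ (the proposition $a \leq b \land c \leq d \implies a + c \leq b + d$, or equivalently the cancellation property of Proposition~\ref{Proposition: cancellation_property_of_disgroups}); no genuinely new idea should be needed beyond careful bookkeeping with the symmetry of $\dis$.

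For the first inequality I would apply Lemma~\ref{Lemma: dis_upper_bounds}, in the form $p \dis q \leq r \iff p \leq r + q \;\land\; q \leq r + p$, with $p \dfeq a \dis x$, $q \dfeq b \dis x$ and $r \dfeq a \dis b$. This reduces the claim to the two statements $a \dis x \leq (a \dis b) + (b \dis x)$ and $b \dis x \leq (a \dis b) + (a \dis x)$. Both are instances of the triangle inequality axiom: the first is the axiom applied to $a, x$ with ``midpoint'' $b$, giving $a \dis x \leq a \dis b + x \dis b$, and the second is the axiom applied to $b, x$ with ``midpoint'' $a$, giving $b \dis x \leq b \dis a + x \dis a$; one then rewrites $x \dis b = b \dis x$, $b \dis a = a \dis b$, $x \dis a = a \dis x$ using the symmetry of $\dis$. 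Hence the first part follows at once.

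For the second inequality, assume $a \dis b \leq c$. The triangle inequality applied to $a, x$ with ``midpoint'' $b$ gives $a \dis x \leq a \dis b + b \dis x$. Adding $b \dis x$ to both sides of the hypothesis (using monotonicity of $+$, with reflexivity of $\leq$ for the second summand) yields $a \dis b + b \dis x \leq c + b \dis x = b \dis x + c$. Chaining these two estimates by transitivity of $\leq$ produces $a \dis x \leq b \dis x + c$, as required. There is no real obstacle here; the only point to watch is to line up the two bullet conditions fed into Lemma~\ref{Lemma: dis_upper_bounds}, and the uses of symmetry, so that each matches the triangle inequality axiom exactly.
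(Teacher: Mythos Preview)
Your proof is correct and follows essentially the same approach as the paper's own proof: for the first item the paper invokes Lemma~\ref{Lemma: dis_upper_bounds} to reduce to two instances of the triangle inequality, and for the second item it appeals to additivity (monotonicity of $+$) and transitivity of $\leq$, which is precisely your argument spelled out. The only difference is that the paper leaves the use of the triangle inequality in the second item implicit, while you make it explicit.
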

		\begin{proof}
			The first statement follows from the triangle inequality by Lemma~\ref{Lemma: dis_upper_bounds}. The second one is obvious by the additivity and transitivity of $\leq$.
		\end{proof}
		
		\begin{remark}\label{Remark: nondegeneracy_of_dis}
			In the definition of a disgroup we wrote the condition $a \dis b = 0 \iff a = b$ as an equivalence since that is what we are used to from metric spaces, but the implication $a \dis b = 0 \implies a = b$ actually follows from other axioms. Recall from Lemma~\ref{Lemma: special_cases_of_triangle_inequality}(\ref{Lemma: special_cases_of_triangle_inequality: bound}) (which uses only the triangle inequality and the fact $a \dis 0 = a$) that $a \leq a \dis b + b$ which in the case $a \dis b = 0$ means $a \leq b$. By definition this is $a + a \dis b = b$; using $a \dis b = 0$ again, we infer $a = b$.
		\end{remark}
		
		To get a sense what disgroups are, we classify them as the ``non-negative parts'' of certain groups.
		
		\begin{definition}
			Let $(G, +, 0, -)$ be a commutative group and $|\insarg|\colon G \to G$ an operation (the \df{absolute value}) on $G$. Define the relation $\leq$ by
			$$a \leq b \sepdfeq a + |b - a| = b$$
			for $a, b \in G$; in particular $0 \leq x \iff |x| = x$. Suppose that the following holds for all $x, y \in G$.
			\begin{itemize}
				\item
					$\big| |x| \big| = |x|$ \quad (idempotence)
				\item
					$|{-x}| = |x|$
				\item
					$|x| = 0 \iff x = 0$
				\item
					$x \leq |x|$
				\item
					$|x + x| = |x| + |x|$
				\item
					if $x + x \leq y + y$, then $x \leq y$
				\item
					if $0 \leq x$ and $0 \leq y$, then $0 \leq x + y$
				\item
					$|x + y| \leq |x| + |y|$ \quad (triangle inequality)
			\end{itemize}
			Then we call $(G, +, 0, -, |\insarg|)$ a \df{commutative group with absolute value}.
		\end{definition}
		
		\begin{proposition}
			The following holds for a commutative group with absolute value $(G, +, 0, -, |\insarg|)$ and all $a, b, c, d, x \in G$.
			\begin{enumerate}
				\item
					$0 \leq |x|$.
				\item
					$|a - b| + |b - c| \geq |a - c|$.
				\item
					$a + x \leq b + x \iff a \leq b$.
				\item
					$a \leq b \land c \leq d \implies a + c \leq b + d$.
				\item
					$\leq$ is a preorder on $G$.
				\item
					{\hspace{0pt}}\phantom{$\iff$} $\leq$ is a partial order on $G$\\
					$\iff$ $\all{x}{G}{x + x = 0 \implies x = 0}$\\
					$\iff$ $x \mapsto x + x$ is injective.
				\item
					$|a| \leq x \iff a \leq x \land -a \leq x$.
			\end{enumerate}
		\end{proposition}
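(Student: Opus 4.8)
The plan is to isolate one structural observation that collapses parts 1, 3, 4 and 5, to read off part 2 directly from the triangle inequality axiom, to handle part 6 by copying the proof of Proposition~\ref{Proposition: when_disgroup_is_partially_ordered}, and to prove part 7 by a short direct computation.

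First I would record the following. Set $P \dfeq \st{x \in G}{|x| = x}$, which by the axiom $0 \leq x \iff |x| = x$ is exactly the set of non-negative elements of $G$. Then $0 \in P$ (since $x = 0 \implies |x| = 0$), the set $P$ is closed under $+$ (this is precisely the axiom $0 \leq x \land 0 \leq y \implies 0 \leq x + y$), and, using cancellation in the group $(G, +, 0, -)$,
$$a \leq b \iff a + |b - a| = b \iff |b - a| = b - a \iff b - a \in P.$$
Given this, part 1 is idempotence ($\big||x|\big| = |x|$ says $|x| \in P$); parts 3 and 4 hold because $(b + x) - (a + x) = b - a$ and $(b + d) - (a + c) = (b - a) + (d - c) \in P$ whenever $b - a, d - c \in P$; and part 5 is reflexivity ($a - a = 0 \in P$) together with transitivity ($c - a = (c - b) + (b - a) \in P$). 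Part 2 is simply the triangle inequality axiom applied to $x \dfeq a - b$, $y \dfeq b - c$, so that $x + y = a - c$.

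For part 6 I would run the cycle $(1) \implies (2) \implies (3) \implies (1)$ exactly as in Proposition~\ref{Proposition: when_disgroup_is_partially_ordered}. If $\leq$ is antisymmetric and $x + x = 0$, then $x + x = 0 = 0 + 0$, so reflexivity gives $x + x \leq 0 + 0$, hence $x \leq 0$ by the axiom that $x + x \leq y + y$ implies $x \leq y$ (call it the halving axiom); likewise $(-x) + (-x) = 0$ gives $-x \leq 0$, and adding $x$ (part 3) gives $0 \leq x$, so $x = 0$. If instead $x + x = 0 \implies x = 0$ always, then $x + x = y + y$ yields $(x - y) + (x - y) = 0$, hence $x = y$, so doubling is injective. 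Finally, if doubling is injective and $a \leq b \leq a$, then adding $a + |b - a| = b$ to $b + |a - b| = a$ and cancelling leaves $|b - a| + |a - b| = 0$, i.e. $w + w = 0$ for $w \dfeq |b - a|$ (using $|a - b| = |b - a|$), so $w = 0$ and $a = b$.

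Part 7 is where the only real work lies. The forward direction is easy: from $|a| \leq x$, the axiom $a \leq |a|$ and transitivity give $a \leq x$, while $-a \leq |{-a}| = |a| \leq x$ gives $-a \leq x$. For the converse, assume $a \leq x$ and $-a \leq x$; by the displayed characterization $x - a \in P$ and $x + a \in P$, so $|x - a| = x - a$ and $|x + a| = x + a$. Since $a + a = (x + a) - (x - a)$, the triangle inequality axiom (together with $|{-u}| = |u|$) gives
$$|a| + |a| = |a + a| = \big|(x + a) - (x - a)\big| \leq |x + a| + |x - a| = (x + a) + (x - a) = x + x,$$
and the halving axiom, applied to $|a|$ and $x$, yields $|a| \leq x$. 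The main obstacle is exactly this last argument — spotting the identity $a + a = (x + a) - (x - a)$ so that the triangle inequality can be fed the two non-negative quantities $x + a$ and $x - a$, after which the halving axiom finishes the job.
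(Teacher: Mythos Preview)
Your proof is correct and follows essentially the same route as the paper: the positive-cone reformulation $a \leq b \iff b - a \in P$ is exactly how the paper handles parts 3--5, and your argument for part 7 (writing $a + a = (x+a) - (x-a)$, applying the triangle inequality, then the halving axiom) is identical to the paper's. The only minor deviation is in part 6, $(1) \Rightarrow (2)$: the paper argues $|x| + |0 - |x|| = |x| + |x| = |x + x| = 0$, giving $|x| \leq 0$ directly, whereas you invoke the halving axiom on $x + x \leq 0 + 0$ to get $x \leq 0$ and symmetrically $0 \leq x$; both are equally short and valid.
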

		\begin{proof}
			\begin{enumerate}
				\item
					This is precisely the idempotence of the absolute value.
				\item
					Standard: $|a - c| = |(a - b) + (b - c)| \leq |a - b| + |b - c|$.
				\item
					Because in a group $a + x + |(b + x) - (a + x)| = b + x$ if and only if $a + |b - a| = b$.
				\item
					Let $a \leq b$, $c \leq d$ which means $0 \leq b - a$, $0 \leq d - c$. Then also $0 \leq b - a + d - c$, so $a + c \leq b + d$.
				\item
					Reflexivity: $x + |x - x| = x + 0 = x$.
					
					Transitivity: take $a, b, c \in G$, and suppose $a \leq b$, $b \leq c$. By the previous two items we may sum the two inequalities, and cancel $b$, thus obtaining $a \leq c$.
				\item
					Suppose $\leq$ is a partial order, in particular antisymmetric, and let $x \in G$, $x + x = 0$. Then $|x| + |0 - |x|| = |x| + |x| = |x + x| = 0$, so $|x| \leq 0$. We also know $|x| \geq 0$, so $|x| = 0$, hence $x = 0$. Conversely, take $a, b \in G$ such that $a \leq b$, $b \leq a$ which means $b - a = |b - a| = |a - b| = a - b$, so $(b - a) + (b - a) = b - a + a - b = 0$, therefore $b - a = 0$ by assumption; conclude $a = b$.
					
					Clearly injectivity of $x \mapsto x + x$ implies $x + x = 0 \implies x = 0$ (since $0 + 0 = 0$). Conversely, suppose $x + x = y + y$. Then $(x - y) + (x - y) = 0$ which by assumption means $x - y = 0$, \ie $x = y$.
				\item
					Because $a \leq |a|$ and $-a \leq |-a| = |a|$, one implication holds by transitivity of $\leq$. For the other assume $a \leq x$ and $-a \leq x$, meaning $a + |x - a| = x = -a + |x + a|$. Then
					$$|a| + |a| = |a + a| \leq |a - x| + |a + x| = x - a + x + a = x + x$$
					whence $|a| \leq x$.
			\end{enumerate}
		\end{proof}
		
		\begin{proposition}\label{Proposition: classification_of_disgroups}
			Disgroups are (up to isomorphism) the ``non-negative'' parts of commutative groups with absolute value. More precisely:
			\begin{enumerate}
				\item
					If $(G, +, 0, -, |\insarg|)$ is a commutative group with absolute value, then $(G_{\geq 0}, +, 0, \dis)$ is a disgroup with $\dis$ for $a, b \in G_{\geq 0}$ given as $a \dis b = |b - a|$.
				\item
					Every disgroup embeds (in a way which preserves all operations) into its group of formal differences $G$ (with image $G_{\geq 0}$) which is a commutative group with absolute value.
				\item
					These two processes are mutually inverse up to isomorphism.
			\end{enumerate}
		\end{proposition}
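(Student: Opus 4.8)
The plan is to check the three parts of the statement in order: part (1) is a routine translation of axioms, part (2) is the substance, and part (3) then follows formally.

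For part (1): $G_{\geq 0}$ is a commutative submonoid of $(G,+,0)$ --- it contains $0$ since $|0| = 0$, and is closed under $+$ by the group-with-absolute-value axiom stating that $0 \leq x$ and $0 \leq y$ imply $0 \leq x+y$ --- and $a \dis b \dfeq |b-a|$ indeed lands in $G_{\geq 0}$ because $0 \leq |x|$ always. Each of the seven disgroup axioms is then exactly a property already recorded for the absolute value: symmetry of $\dis$ is $|{-x}| = |x|$; $a \dis 0 = a$ is $|{-a}| = |a| = a$ (as $a \geq 0$); nondegeneracy is $|x| = 0 \iff x = 0$; additivity is $|(b+x)-(a+x)| = |b-a|$; the doubling law is $|x+x| = |x|+|x|$; and the halving implication and triangle inequality follow from the matching group axioms, once one notes that the relation $\leq$ induced on $G_{\geq 0}$ by this disgroup structure is literally the restriction of $\leq$ on $G$, both saying ``$a+|b-a|=b$''.

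For part (2): the group of formal differences is the Grothendieck construction, $G \dfeq (X \times X)/{\approx}$ with $(a,b) \approx (c,d)$ meaning $a + d = b + c$. This is an equivalence relation --- transitivity uses the cancellation property (Proposition~\ref{Proposition: cancellation_property_of_disgroups}), which also makes $x \mapsto [(x,0)]$ injective --- addition is coordinatewise, $-[(a,b)] = [(b,a)]$, and I would define $|[(a,b)]| \dfeq [(a \dis b,0)]$, well defined by additivity of $\dis$ (from $a+d=b+c$ we get $a \dis b = (a+d)\dis(b+d) = (b+c)\dis(b+d) = c \dis d$). The one computation to carry out carefully, and then reuse, is the order correspondence
$$[(a,b)] \leq [(c,d)] \quad \iff \quad a+d \leq b+c \quad \text{in } X.$$
Granting it, the remaining absolute-value axioms become earlier disgroup facts: $x \leq |x|$ is $a \leq (a \dis b) + b$, i.e.\ Lemma~\ref{Lemma: special_cases_of_triangle_inequality}(\ref{Lemma: special_cases_of_triangle_inequality: bound}); the halving implication comes from the disgroup axiom applied to $a+d$ and $b+c$; closure of the non-negative part under $+$ uses monotonicity of $\leq$; and $|x+y| \leq |x|+|y|$ reduces to $(a+c)\dis(b+d) \leq (a \dis b)+(c \dis d)$, which follows from the disgroup triangle inequality together with additivity. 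Finally $x \mapsto [(x,0)]$ preserves $+$, $0$, and $\dis$ (the last since $|[(y,0)]-[(x,0)]| = [(x \dis y,0)]$), and its image is exactly $G_{\geq 0}$: if $0 \leq [(a,b)]$ then $b \leq a$, so Lemma~\ref{Lemma: dis_is_difference_between_comparable_elements} gives $z$ with $[(a,b)] = [(z,0)]$.

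For part (3): starting from a disgroup $X$, the embedding of part (2) is a bijection onto $G_{\geq 0}$ preserving all operations, hence a disgroup isomorphism $X \ism G_{\geq 0}$. Starting from a commutative group with absolute value $H$, the map $[(a,b)] \mapsto a-b$ from the group of formal differences of $H_{\geq 0}$ to $H$ is a well-defined group homomorphism, injective (if $a=b$ in $H$ then $(a,b) \approx (0,0)$) and surjective, since every $h \in H$ equals $|h| - (|h|-h)$ with both $|h|$ and $|h|-h$ in $H_{\geq 0}$ (the latter because the axiom $h \leq |h|$ says $0 \leq |h|-h$); and it respects absolute values because $|[(a,b)]| = [(|b-a|,0)] \mapsto |b-a| = |a-b|$. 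Hence both round trips are canonical isomorphisms. The main obstacle I anticipate is organizational rather than conceptual: ensuring the Grothendieck construction is sound in the constructive setting --- that $\approx$ is genuinely an equivalence relation and that $+$ and $|\insarg|$ descend to the quotient --- and pinning down the order correspondence exactly, since essentially every nonroutine step of part (2) is routed through it. No choice principle is invoked: each existential witness needed (the $z$ with $a = b+z$ from Lemma~\ref{Lemma: dis_is_difference_between_comparable_elements}, and the decomposition $h = |h|-(|h|-h)$) is produced explicitly.
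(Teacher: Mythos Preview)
Your proposal is correct and follows exactly the approach the paper takes: the Grothendieck group of formal differences with absolute value $|[a,b]| = [a \dis b, 0]$, and the same well-definedness computation. The paper's proof is a two-line sketch that leaves everything as an exercise, so your write-up simply fills in the details (order correspondence, axiom-by-axiom reductions, and the explicit inverse in part (3)) that the paper omits.
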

		\begin{proof}
			The proof is a simple exercise. We mention only that for a disgroup $X$ we construct its group of formal differences as the quotient $X \times X/_\equ$ where $(a, b) \equ (c, d) \iff a + d = b + c$, and then the operations on the equivalence classes are given by $[a, b] + [c, d] = [a + c, b + d]$, $-[a, b] = [b, a]$ (as usual) while the absolute value is $|[a, b]| = [a \dis b, 0]$ which is well defined since if $a + d = b + c$, then $a \dis b = (a + d) \dis (b + d) = (b + c) \dis (b + d) = c \dis d$.
		\end{proof}
		
		Recall that $\leq$ being a partial order in a disgroup is equivalent to the injectivity of $x \mapsto x + x$. We will be particularly interested in the case when this map is not only injective, but also split.
		
		\begin{definition}
			We say that $(X, +, 0, \dis, \frac{\insarg}{2})$ is a \df{halved disgroup} when $(X, +, 0, \dis)$ is a disgroup and the operation (the \df{halving map}) $\frac{\insarg}{2}\colon X \to X$ has the following properties for all $a, b \in X$.
			\begin{itemize}
				\item
					$\frac{a}{2} + \frac{a}{2} = a$
				\item
					$\frac{a + b}{2} = \frac{a}{2} + \frac{b}{2}$
			\end{itemize}
		\end{definition}
		
		Here is the characterization of the existence of the halving map.
		\begin{proposition}\label{Proposition: characterization_of_halving_map}
			Let $(X, +, 0, \dis)$ be a disgroup and $g\colon X \to X$ a map, given by $g(x) \dfeq x + x$. The following is equivalent.
			\begin{enumerate}
				\item
					A halving map on the given disgroup exists.
				\item
					The map $g$ is bijective.
			\end{enumerate}
			When these conditions are satisfied, the halving map is the inverse of $g$. In particular, a disgroup can have at most one halving map.
		\end{proposition}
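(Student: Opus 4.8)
The plan is to show that a halving map, whenever it exists, is precisely a two-sided inverse of $g$, and conversely that $g^{-1}$, whenever it exists, satisfies the two halving axioms. Uniqueness of the halving map then drops out from uniqueness of inverses.

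First I would prove $(1 \impl 2)$. Given a halving map $\frac{\insarg}{2}$, the first axiom reads $g\big(\frac{a}{2}\big) = \frac{a}{2} + \frac{a}{2} = a$, so $g$ is surjective. Combining the two axioms gives $\frac{x + x}{2} = \frac{x}{2} + \frac{x}{2} = x$ for every $x \in X$, so $\frac{\insarg}{2}$ is also a left inverse of $g$; in particular $g$ is injective, hence bijective. (Alternatively, injectivity of $g$ is already equivalent to antisymmetry of $\leq$ by Proposition~\ref{Proposition: when_disgroup_is_partially_ordered}, but we do not need this.) Observe that this argument has in fact shown that \emph{any} halving map is a two-sided inverse of $g$.

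Next, for $(2 \impl 1)$, I would set $\frac{\insarg}{2} \dfeq g^{-1}$ and verify the axioms. The identity $\frac{a}{2} + \frac{a}{2} = g\big(g^{-1}(a)\big) = a$ is immediate. For $\frac{a + b}{2} = \frac{a}{2} + \frac{b}{2}$ I would apply the injective map $g$ to both sides: the right-hand side is sent to $\big(g^{-1}(a) + g^{-1}(b)\big) + \big(g^{-1}(a) + g^{-1}(b)\big)$, which by commutativity and associativity of $+$ rearranges to $g\big(g^{-1}(a)\big) + g\big(g^{-1}(b)\big) = a + b$, which is exactly the image of the left-hand side; injectivity of $g$ then yields the equality.

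Finally, for the closing remarks: since every halving map is a two-sided inverse of $g$ (first step) and two-sided inverses of a map are unique, there is at most one halving map, and when it exists it coincides with $g^{-1}$. I expect no genuine obstacle --- the only mildly non-routine point is checking additivity of $g^{-1}$, where one must pass through $g$ and use commutativity of $+$ rather than arguing with $g^{-1}$ directly.
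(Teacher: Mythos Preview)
Your proof is correct and follows essentially the same approach as the paper: both show that a halving map is a two-sided inverse of $g$ via $x = \frac{x}{2} + \frac{x}{2} = \frac{x+x}{2}$, and both verify additivity of $g^{-1}$ by rewriting $a+b = g(g^{-1}(a)) + g(g^{-1}(b)) = g\big(g^{-1}(a) + g^{-1}(b)\big)$ and then applying $g^{-1}$ (the paper) or, equivalently, invoking injectivity of $g$ (you).
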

		\begin{proof}
			A halving map is the inverse of $x \mapsto x + x$ because $x = \frac{x}{2} + \frac{x}{2} = \frac{x + x}{2}$, and is therefore unique. Conversely, suppose $f\colon X \to X$ is an inverse of $g$. Then clearly $f(x) + f(x) = x$, but also
			$$f(a + b) = f(f(a) + f(a) + f(b) + f(b)) = f(g(f(a) + f(b))) = f(a) + f(b).$$
		\end{proof}
		
		\begin{proposition}
			A halving map on a disgroup $(X, +, 0, \dis)$ preserves $\dis$ and $\leq$ as well, \ie
			$$\frac{a \dis b}{2} = \frac{a}{2} \dis \frac{b}{2} \qquad \text{and} \qquad a \leq b \iff \frac{a}{2} \leq \frac{b}{2}$$
			for all $a, b \in X$. Moreover, in the presence of the halving map $\leq$ is a partial order.
		\end{proposition}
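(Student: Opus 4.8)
The plan is to dispatch the three claims in turn, using the bijectivity and uniqueness of the doubling map $g\colon x \mapsto x + x$ recorded in Proposition~\ref{Proposition: characterization_of_halving_map} (where $\frac{\insarg}{2} = g^{-1}$).

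First I would prove $\frac{a \dis b}{2} = \frac{a}{2} \dis \frac{b}{2}$ by applying the disgroup axiom $(c + c) \dis (d + d) = (c \dis d) + (c \dis d)$ with $c = \frac{a}{2}$ and $d = \frac{b}{2}$. Since $\frac{a}{2} + \frac{a}{2} = a$ and $\frac{b}{2} + \frac{b}{2} = b$, this says exactly $a \dis b = (\frac{a}{2} \dis \frac{b}{2}) + (\frac{a}{2} \dis \frac{b}{2}) = g\!\left(\frac{a}{2} \dis \frac{b}{2}\right)$, and applying $g^{-1} = \frac{\insarg}{2}$ to both sides gives the identity.

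Next, for the equivalence $a \leq b \iff \frac{a}{2} \leq \frac{b}{2}$, I would use the characterization $a \leq b \iff \xsome{x}{X}{a + x = b}$ from Lemma~\ref{Lemma: dis_is_difference_between_comparable_elements}(4). If $a \leq b$, choose $x$ with $a + x = b$; halving both sides and using additivity of the halving map yields $\frac{a}{2} + \frac{x}{2} = \frac{b}{2}$, so $\frac{a}{2} \leq \frac{b}{2}$. Conversely, if $\frac{a}{2} \leq \frac{b}{2}$, choose $y$ with $\frac{a}{2} + y = \frac{b}{2}$ and double (apply $g$) to get $a + (y + y) = b$, whence $a \leq b$; alternatively the backward direction falls out of monotonicity of addition applied to $\frac{a}{2} \leq \frac{b}{2}$ twice, giving $a = \frac{a}{2} + \frac{a}{2} \leq \frac{b}{2} + \frac{b}{2} = b$. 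Finally, $\leq$ is a preorder in every disgroup, and since $g$ is bijective it is injective, so Proposition~\ref{Proposition: when_disgroup_is_partially_ordered} delivers antisymmetry, making $\leq$ a partial order.

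I do not expect a genuine obstacle: the argument is routine once the first identity is in hand. The only point to watch is the direction of passage in the equivalence — one must \emph{double}, not halve, in the backward implication — and to remember that all the uniqueness facts about the halving map are consequences of the bijectivity of $g$ rather than of the defining equations alone.
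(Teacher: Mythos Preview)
Your proposal is correct and essentially matches the paper's proof. The only minor difference is in the equivalence $a \leq b \iff \frac{a}{2} \leq \frac{b}{2}$: the paper invokes the disgroup axiom ``if $a + a \leq b + b$ then $a \leq b$'' together with monotonicity of addition in a single line, whereas you route through the characterization $a \leq b \iff \xsome{x}{X}{a + x = b}$ from Lemma~\ref{Lemma: dis_is_difference_between_comparable_elements}; both are equally valid and amount to the same idea.
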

		\begin{proof}
			We have
			$$\frac{a}{2} \dis \frac{b}{2} = \frac{\frac{a}{2} \dis \frac{b}{2} + \frac{a}{2} \dis \frac{b}{2}}{2} = \frac{(\frac{a}{2} + \frac{a}{2}) \dis (\frac{b}{2} + \frac{b}{2})}{2} = \frac{a \dis b}{2}$$
			and
			$$\frac{a}{2} \leq \frac{b}{2} \iff \frac{a}{2} + \frac{a}{2} \leq \frac{b}{2} + \frac{b}{2} \iff a \leq b.$$
			Furthermore, since the halving map is the inverse of the map $x \mapsto x + x$, the latter must be bijective (in particular injective), so $\leq$ is a partial order by Proposition~\ref{Proposition: when_disgroup_is_partially_ordered}.
		\end{proof}
		
		Recall that given any partial order $(X, \leq)$ and its subset $A \subseteq X$, an element $u \in X$ is defined to be the \df{supremum} of $A$ (denoted by $\sup A$) when
		$$\all{x}{X}{u \leq x \iff \xall{a}{A}{a \leq x}},$$
		and analogously, $l \in X$ is the \df{infimum} of $A$ ($l = \inf A$) when
		$$\all{x}{X}{x \leq l \iff \xall{a}{A}{x \leq a}}.$$
		Suprema and infima are unique in a partial order, though they do not always exist. We show that halved disgroups have finite suprema and binary infima.
		
		\begin{proposition}
			A halved disgroup $(X, +, 0, \dis, \frac{\insarg}{2})$ is a lattice with a bottom element; that is, it has suprema of finite subsets and infima of inhabited finite subsets. Binary suprema and infima are given as
			$$\sup\{a, b\} = \frac{a + b + a \dis b}{2}, \qquad \inf\{a, b\} = \frac{(a + b) \dis (a \dis b)}{2}.$$
			Furthermore, the following holds for all $a, b, x \in X$.
			\begin{enumerate}
				\item
					$\sup\{a + x, b + x\} = \sup\{a, b\} + x, \quad \inf\{a + x, b + x\} = \inf\{a, b\} + x$ \quad (additivity)
				\item
					$a \dis b \leq \sup\{a, b\} \leq a + b$
				\item
					$a + b = \sup\{a, b\} + \inf\{a, b\}$
				\item
					$a \dis b = \sup\{a, b\} \dis \inf\{a, b\}$
			\end{enumerate}
		\end{proposition}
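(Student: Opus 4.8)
The plan is to check that the two displayed expressions have the universal properties defining the binary supremum and infimum, to note that the empty supremum is the bottom element $0$ (least by Proposition~\ref{Proposition: elements_nonnegative_in_disgroup}), to obtain suprema of all finite subsets and infima of all \emph{inhabited} finite subsets by recursion on an enumerating finite list, and finally to derive the four identities. Throughout, the halving map enters only to divide ``doubled'' identities by $2$, since (as just proved) halving preserves $\leq$ and $\dis$; equivalently one can cite the disgroup axiom $a + a \leq b + b \implies a \leq b$.

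Write $s \dfeq \frac{a + b + a \dis b}{2}$. To see $s = \sup\{a, b\}$ I would first establish $a \leq s$ and $b \leq s$: doubling via $\frac{t}{2} + \frac{t}{2} = t$ gives $s + s = a + b + a \dis b$, and after cancelling $a$ (Proposition~\ref{Proposition: cancellation_property_of_disgroups}) the inequality $a + a \leq s + s$ reduces to $a \leq a \dis b + b$, which is Lemma~\ref{Lemma: special_cases_of_triangle_inequality}(\ref{Lemma: special_cases_of_triangle_inequality: bound}); hence $a \leq s$, and symmetrically $b \leq s$ using $a \dis b = b \dis a$. For minimality, suppose $a \leq x$ and $b \leq x$, \ie $a + a \dis x = x$ and $b + b \dis x = x$; then $x + x = (a + b) + (a \dis x + b \dis x)$ while $s + s = (a + b) + (a \dis b)$, so the triangle inequality $a \dis b \leq a \dis x + b \dis x$ together with monotonicity of $+$ yields $s + s \leq x + x$, hence $s \leq x$.

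Now write $l \dfeq \frac{(a + b) \dis (a \dis b)}{2}$. For $l \leq a$ (and symmetrically $l \leq b$) I would show $l + l \leq a + a$, \ie $(a + b) \dis (a \dis b) \leq a + a$, via Lemma~\ref{Lemma: dis_upper_bounds}: it suffices that $a + b \leq a + a + a \dis b$ (which follows from $b \leq a \dis b + a$, Lemma~\ref{Lemma: special_cases_of_triangle_inequality}(\ref{Lemma: special_cases_of_triangle_inequality: bound})) and $a \dis b \leq a + a + a + b$ (which follows from Lemma~\ref{Lemma: special_cases_of_triangle_inequality}(\ref{Lemma: special_cases_of_triangle_inequality: comparison_between_dis_and_plus}) and non-negativity). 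For the lower-bound property, suppose $x \leq a$ and $x \leq b$, and put $p \dfeq x \dis a$, $q \dfeq x \dis b$, so that $a = x + p$, $b = x + q$; by additivity $a \dis b = p \dis q$ and $a + b = x + x + p + q$, hence $l + l = (x + x + p + q) \dis (p \dis q)$. Here I would invoke the auxiliary monotonicity fact that $d_1 \leq d_2 \leq c$ implies $c \dis d_1 \geq c \dis d_2$ --- which drops out by cancelling $d_1$ in $c = d_1 + (d_1 \dis c) = d_1 + (d_1 \dis d_2) + (d_2 \dis c)$ --- applied with $d_1 = p \dis q \leq p + q = d_2 \leq x + x + p + q = c$ (the first inequality being Lemma~\ref{Lemma: special_cases_of_triangle_inequality}(\ref{Lemma: special_cases_of_triangle_inequality: comparison_between_dis_and_plus})); this gives $l + l \geq (x + x + p + q) \dis (p + q) = x + x$ by additivity and the unit law, whence $x \leq l$. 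I expect this step to be the main obstacle, as it needs both the substitution trick and that not-quite-standard monotonicity of $\dis$ in its ``nearer'' argument.

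Finally, the identities. (3) $a + b = s + l$: since $a \dis b \leq a + b$ (Lemma~\ref{Lemma: special_cases_of_triangle_inequality}(\ref{Lemma: special_cases_of_triangle_inequality: comparison_between_dis_and_plus})) we have $(a \dis b) + \big((a + b) \dis (a \dis b)\big) = a + b$, so the numerator $(a + b + a \dis b) + \big((a + b) \dis (a \dis b)\big)$ collapses to $(a + b) + (a + b)$. (4) $a \dis b = s \dis l$: with $u = a + b$, $v = a \dis b$, $v \leq u$, write $u = v + w$ for $w = v \dis u$; then the doubled left side is $(u + v) \dis w = (v + v + w) \dis w = v + v$ by additivity. (2) $\sup\{a, b\} \leq a + b$ because $a + b$ is an upper bound of $\{a, b\}$ (Lemma~\ref{Lemma: dis_is_difference_between_comparable_elements}(4)), and $a \dis b = s \dis l = l \dis s \leq l + (l \dis s) = s$ using $0 \leq l \leq a \leq s$. (1) additivity of $\sup$ and $\inf$ is immediate from the formulas, the two copies of $x$ in the numerator becoming one $x$ after halving (or from the cancellation property and the universal characterisations). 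To pass from binary to finite: $\sup \emptyset = 0$ is the bottom by Proposition~\ref{Proposition: elements_nonnegative_in_disgroup}, and for a finite subset presented by a surjection $\NN_{< n} \to X$ one defines its supremum by recursion on the list ($0$ for the empty list, $\sup\{a_0, \sigma(\text{tail})\}$ for $a_0 \cnct \text{tail}$) and checks by induction on $n$ that the result satisfies the universal property of the supremum of the subset; uniqueness of suprema makes this independent of the chosen enumeration. Infima of inhabited finite subsets are handled the same way, inhabitedness guaranteeing $n \geq 1$ so that the recursion can start from an actual element, with no top element required.
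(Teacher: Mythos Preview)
Your proof is correct, but you take a somewhat different route from the paper's, and the contrast is worth noting.

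For the supremum, you verify the two halves of the universal property separately: $a,b\leq s$ via Lemma~\ref{Lemma: special_cases_of_triangle_inequality}(\ref{Lemma: special_cases_of_triangle_inequality: bound}), and $s\leq x$ via the triangle inequality. The paper instead establishes the single equivalence $\frac{a+b+a\dis b}{2}\leq x \iff a\leq x\land b\leq x$ in one chain, passing through Lemma~\ref{Lemma: dis_upper_bounds} (which unpacks $a+b+a\dis b\leq x+x$ as the conjunction $a+b+a\leq x+x+b$ and $a+b+b\leq x+x+a$). For the infimum the paper does the same: a single equivalence chain $x\leq\frac{(a+b)\dis(a\dis b)}{2}\iff x\leq a\land x\leq b$, this time combining Lemma~\ref{Lemma: dis_is_difference_between_comparable_elements} (to rewrite $x+x\leq(a+b)\dis(a\dis b)$ as $x+x+a\dis b\leq a+b$) with Lemma~\ref{Lemma: dis_upper_bounds}. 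Your infimum argument, by contrast, introduces the substitution $a=x+p$, $b=x+q$ and an auxiliary monotonicity fact ($d_1\leq d_2\leq c\Rightarrow c\dis d_2\leq c\dis d_1$), which you correctly derive but which the paper avoids entirely.

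The paper's approach is more uniform and economical --- both formulas are handled by the same mechanism, namely repeated application of Lemma~\ref{Lemma: dis_upper_bounds} --- and it dispenses with your monotonicity sublemma. Your approach is more direct in the sense of checking ``is an upper bound'' and ``is least'' as separate facts, which may feel more transparent, and your monotonicity observation is a useful standalone fact. The paper also orders items (1)--(4) so that (4) is deduced from (3) via $a\dis b+\inf\{a,b\}=\sup\{a,b\}$; your derivation of (4) via the substitution $u=v+w$ and of (2) via the already-proved (4) is a perfectly good alternative.
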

		\begin{proof}
			Throughout this proof keep in mind Lemma~\ref{Lemma: special_cases_of_triangle_inequality}(\ref{Lemma: special_cases_of_triangle_inequality: comparison_between_dis_and_plus}), namely that $a \dis b \leq a + b$.
			
			As always, the nullary supremum $\sup\emptyset$ is the smallest element in the partial order which in our case exists, namely $0$. To show that binary suprema and infima are as prescribed, take arbitrary $a, b, x \in X$. Then
			$$\frac{a + b + a \dis b}{2} \leq x \iff a + b + a \dis b \leq x + x \iff$$
			$$\iff a + b + a \leq x + x + b \land a + b + b \leq x + x + a \iff$$
			$$\iff a + a \leq x + x \land b + b \leq x + x \iff a \leq x \land b \leq x$$
			where the second equivalence holds by Lemma~\ref{Lemma: dis_upper_bounds}, and
			$$x \leq \frac{(a + b) \dis (a \dis b)}{2} \iff x + x \leq (a + b) \dis (a \dis b) \iff$$
			$$\iff x + x + a \dis b \leq a + b \iff x + x + a \leq a + b + b \land x + x + b \leq a + b + a \iff$$
			$$\iff x + x \leq b + b \land x + x \leq a + a \iff x \leq a \land x \leq b$$
			where the second equivalence holds by Lemma~\ref{Lemma: dis_is_difference_between_comparable_elements}, and the third by Lemma~\ref{Lemma: dis_upper_bounds}.
			
			\begin{enumerate}
				\item
					Calculate
					$$\sup\{a + x, b + x\} = \frac{a + x + b + x + (a + x) \dis (b + x)}{2} = \frac{a + b + a \dis b}{2} + x,$$
					$$\inf\{a + x, b + x\} = \frac{(a + x + b + x) \dis ((a + x) \dis (b + x))}{2} =$$
					$$= \big(x + \frac{a + b}{2}\big) \dis \frac{a \dis b}{2} = x + \frac{(a + b) \dis (a \dis b)}{2}$$
					where the last equality holds by Lemma~\ref{Lemma: dis_is_difference_between_comparable_elements}.
				\item
					Because
					$$a \dis b = \frac{a \dis b}{2} + \frac{a \dis b}{2} \leq \underbrace{\frac{a + b}{2} + \frac{a \dis b}{2}}_{\sup\{a, b\}} \leq \frac{a + b}{2} + \frac{a + b}{2} = a + b.$$
				\item
					$$\sup\{a, b\} + \inf\{a, b\} = \frac{a + b + a \dis b}{2} + \frac{(a + b) \dis (a \dis b)}{2} =$$
					$$= \frac{a + b + a \dis b + (a + b) \dis (a \dis b)}{2} = \frac{a + b + (a + b)}{2} = a + b$$
				\item
					Since $\inf\{a, b\} \leq \sup\{a, b\}$, the statement $a \dis b = \sup\{a, b\} \dis \inf\{a, b\}$ is equivalent to $a \dis b + \inf\{a, b\} = \sup\{a, b\}$ by Lemma~\ref{Lemma: dis_is_difference_between_comparable_elements}.
					$$a \dis b + \inf\{a, b\} = \frac{a \dis b}{2} + \frac{a \dis b}{2} + \frac{(a + b) \dis (a \dis b)}{2} =$$
					$$= \frac{a \dis b + a \dis b + (a + b) \dis (a \dis b)}{2} = \frac{a \dis b + (a + b)}{2} = \sup\{a, b\}$$
			\end{enumerate}
		\end{proof}
		
		Halved disgroups can also be described as the non-negative parts of certain groups.
		
		\begin{proposition}\label{Proposition: absolute_value_and_lattice_groups}
			Let $\frac{\insarg}{2}\colon G \times G \to G$ be a halving map (\ie the inverse of the map $x \mapsto x + x$) on a commutative group $(G, +, 0, -)$. The following is equivalent.\footnote{This statement can be made more precise: one can define the category of halved commutative groups with absolute value, and the category of halved commutative lattice groups, and show that they are isomorphic (with the isomorphism preserving the underlying sets and the partial order).}
			\begin{enumerate}
				\item
					$G$ has an absolute value map.
				\item
					$G$ is a \df{lattice group} in the sense that there is a partial order $\leq$ which makes $(G, \leq)$ a lattice, and the following properties additionally hold for $a, b, c, d \in G$.
					\begin{itemize}
						\item
							$a \leq b \land c \leq d \implies a + c \leq b + d$
						\item
							$\sup\{a + a, b + b\} = \sup\{a, b\} + \sup\{a, b\}$
					\end{itemize}
			\end{enumerate}
		\end{proposition}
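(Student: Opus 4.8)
The plan is to exhibit explicit constructions in both directions and check that they are mutually inverse; throughout we use that the hypothesis already supplies the halving map $\frac{\insarg}{2}$, the inverse of $x \mapsto x + x$ (Proposition~\ref{Proposition: characterization_of_halving_map}), so in particular $x \mapsto x + x$ is injective. For $(1 \Rightarrow 2)$, start from an absolute value $|\insarg|$ on $G$. The basic properties of commutative groups with absolute value established above already give that the induced relation $\leq$ (with $a \leq b \iff a + |b - a| = b$) is a translation-invariant preorder satisfying $|c| \leq z \iff c \leq z \land -c \leq z$, and it is antisymmetric --- hence a partial order --- because $x \mapsto x + x$ is injective. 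It remains to produce a lattice structure, for which by induction the binary case suffices; I would set
$$\sup\{a, b\} \dfeq \frac{a + b + |b - a|}{2}, \qquad \inf\{a, b\} \dfeq \frac{a + b - |b - a|}{2},$$
well defined and symmetric because $|b - a| = |a - b|$. To check the universal property of $\sup\{a,b\}$ I would unwind $\frac{a+b+|b-a|}{2} \leq x \iff a + b + |b - a| \leq x + x$ (passing between $x$ and $x + x$ via the axiom ``$u + u \leq v + v \Rightarrow u \leq v$'' and additivity), then cancel $a + b$ and apply $|c| \leq z \iff c \leq z \land -c \leq z$ with $c = b - a$ and $z = (x - a) + (x - b)$, which collapses to $a \leq x \land b \leq x$; the case of $\inf$ is dual, or one may just use $\inf\{a, b\} = a + b - \sup\{a, b\}$. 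The additivity axiom for lattice groups is then immediate from the displayed formulas and translation invariance, and $\sup\{a + a, b + b\} = \sup\{a, b\} + \sup\{a, b\}$ reduces via the formula to $|(b + b) - (a + a)| = |b - a| + |b - a|$, that is, to the axiom $|x + x| = |x| + |x|$.

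For $(2 \Rightarrow 1)$, start from a lattice group $(G, \leq)$ and set $|x| \dfeq \sup\{x, -x\}$. Several absolute-value axioms are routine: $x \leq |x|$ and $|-x| = |x|$ are immediate, $\big||x|\big| = |x|$ follows from $-|x| \leq x \leq |x|$, $|x| = 0 \iff x = 0$ from antisymmetry, the triangle inequality from adding $x \leq |x|$ to $y \leq |y|$ (and $-x \leq |x|$ to $-y \leq |y|$) and taking the sup, and $0 \leq x \land 0 \leq y \Rightarrow 0 \leq x + y$ is the given additivity. The remaining two axioms --- $|x + x| = |x| + |x|$ and ``$x + x \leq y + y \Rightarrow x \leq y$'' --- together with the claim that the order induced by this $|\insarg|$ is exactly the original $\leq$, all reduce to the identity $0 \leq c + c \iff 0 \leq c$: its nontrivial direction follows by applying the doubling axiom to obtain $\sup\{c + c, 0\} = \sup\{c, 0\} + \sup\{c, 0\}$, which under $0 \leq c + c$ becomes $c + c = \sup\{c, 0\} + \sup\{c, 0\}$, whence $\sup\{c, 0\} = c$ after cancelling the doubling with the halving map. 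From this, $|x + x| = \sup\{x + x, (-x) + (-x)\} = \sup\{x, -x\} + \sup\{x, -x\}$ by the doubling axiom, ``$x + x \leq y + y \Rightarrow x \leq y$'' is the identity applied to $c = y - x$ together with translation invariance, and $a + |b - a| = b \iff |b - a| = b - a \iff -(b - a) \leq b - a \iff 0 \leq (b - a) + (b - a) \iff a \leq b$.

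Finally the two processes are mutually inverse: the $\sup$-formula applied to the $|\insarg|$ extracted from a lattice group returns $\frac{x + (-x) + |-x - x|}{2} = \frac{|x + x|}{2} = \frac{|x| + |x|}{2} = |x|$, and conversely the order induced by an absolute value determines its suprema and infima uniquely, so re-extracting the absolute value from the associated lattice returns the original map. I expect the only real obstacle to be the identity $0 \leq c + c \iff 0 \leq c$ and the ensuing identification of the two orders in the $(2 \Rightarrow 1)$ direction; this is precisely where both the halving map (so that doubling can be cancelled) and the extra axiom $\sup\{a + a, b + b\} = \sup\{a, b\} + \sup\{a, b\}$ are genuinely used, which also explains why the halving map appears in the hypothesis of the proposition rather than among the equivalent conditions.
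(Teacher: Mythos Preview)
Your proposal is correct and follows essentially the same approach as the paper: the paper's proof is left as an exercise with precisely the hints you carry out, namely $\sup\{a,b\} = \frac{a+b+|b-a|}{2}$, $\inf\{a,b\} = \frac{a+b-|b-a|}{2}$ in one direction and $|a| = \sup\{a,-a\}$ in the other. Your treatment is in fact considerably more detailed than the paper's --- in particular your isolation of the key lemma $0 \leq c+c \iff 0 \leq c$ (via the doubling axiom and the halving map) and the verification that the two orders coincide are exactly the points the paper leaves to the reader.
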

		\begin{proof}
			Exercise. Show that given the absolute value, binary suprema and infima are calculated as
			$$\sup\{a, b\} = \frac{a + b + |b - a|}{2}, \qquad \inf\{a, b\} = \frac{a + b - |b - a|}{2}.$$
			Conversely, the absolute value is expressed in terms of a supremum as
			$$|a| = \sup\{a, -a\}.$$
			To get the proof in this direction going, start by showing $\sup\{a + x, b + x\} = \sup\{a, b\} + x$, and consequently, $\sup\{a, b\} + \sup\{c, d\} = \sup\{a + c, a + d, b + c, b + d\}$.
			
			Note also that suprema can be expressed in terms of infima, and vice versa, in at least two ways:
			$$\sup\{a, b\} = a + b - \inf\{a, b\} = -\inf\{-a, -b\},$$
			$$\inf\{a, b\} = a + b - \sup\{a, b\} = -\sup\{-a, -b\}.$$
		\end{proof}
		
		\begin{proposition}\label{Proposition: classification_of_halved_disgroups}
			Halved disgroups are (in the sense of Proposition~\ref{Proposition: classification_of_disgroups}) precisely the ``non-negative'' parts of halved commutative groups with absolute value, or equivalently, of halved commutative lattice groups.
		\end{proposition}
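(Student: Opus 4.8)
The plan is to layer the halving structure on top of the correspondence already established in Proposition~\ref{Proposition: classification_of_disgroups} between disgroups and non-negative parts of commutative groups with absolute value, and then invoke Proposition~\ref{Proposition: absolute_value_and_lattice_groups} for the reformulation in terms of lattice groups. The only real content is checking that the halving map travels along this correspondence in both directions; because halving maps are unique wherever they exist (Proposition~\ref{Proposition: characterization_of_halving_map} and its evident group counterpart), the two passages then remain mutually inverse with no extra work.

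First, the forward direction. Given a halved disgroup $(X, +, 0, \dis, \frac{\insarg}{2})$, form its group of formal differences $G = X \times X/_\equ$ as in the proof of Proposition~\ref{Proposition: classification_of_disgroups}; this is already a commutative group with absolute value whose non-negative part $G_{\geq 0}$ is the isomorphic image of $X$ under $x \mapsto [x, 0]$. I would define a halving map on $G$ by $\frac{[a, b]}{2} \dfeq \big[\frac{a}{2}, \frac{b}{2}\big]$. Well-definedness follows from additivity of $\frac{\insarg}{2}$ on $X$: if $a + d = b + c$ then $\frac{a}{2} + \frac{d}{2} = \frac{b}{2} + \frac{c}{2}$. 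That this map is the inverse of $x \mapsto x + x$ on $G$ is immediate from the two halving axioms on $X$ (one checks $\frac{[a,b]}{2} + \frac{[a,b]}{2} = [a, b]$, and conversely $\frac{[2a, 2b]}{2} = [a, b]$), so $G$ is a halved commutative group with absolute value. Finally this halving restricts on $G_{\geq 0}$ to the original one, since $\frac{[x, 0]}{2} = \big[\frac{x}{2}, 0\big]$ and $\frac{x}{2} \in X$ is non-negative.

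For the converse, start with a halved commutative group with absolute value $G$, whose halving map is forced to be the inverse of $x \mapsto x + x$. By Proposition~\ref{Proposition: classification_of_disgroups}, $G_{\geq 0}$ is a disgroup with $a \dis b = |b - a|$. The point that needs a moment's care is that $\frac{\insarg}{2}$ maps $G_{\geq 0}$ into itself: if $0 \leq x$, then writing $x = \frac{x}{2} + \frac{x}{2}$ and applying the group-with-absolute-value axiom ``$u + u \leq v + v \implies u \leq v$'' with $u = 0$ and $v = \frac{x}{2}$ yields $0 \leq \frac{x}{2}$. The restricted map then satisfies the two halved-disgroup axioms because these are just instances of the corresponding identities valid throughout $G$; hence $G_{\geq 0}$ is a halved disgroup.

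It remains to observe that the two constructions are mutually inverse up to isomorphism. This is inherited from Proposition~\ref{Proposition: classification_of_disgroups}(3): the underlying isomorphisms of (dis)groups built there automatically carry halving maps to halving maps, since a halving map — being the inverse of $x \mapsto x + x$ — is uniquely determined by the additive structure, so there is nothing further to match. Finally, Proposition~\ref{Proposition: absolute_value_and_lattice_groups} identifies, on a halved commutative group, having an absolute value with being a lattice group (via $|a| = \sup\{a, -a\}$), which supplies the ``equivalently'' clause of the statement. I expect no genuine obstacle — the argument is bookkeeping — the one spot worth flagging being the containment $\frac{\insarg}{2}(G_{\geq 0}) \subseteq G_{\geq 0}$ in the converse, which is easy to overlook.
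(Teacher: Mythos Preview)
Your proof is correct and follows exactly the approach the paper intends: the paper's own proof is simply ``Much the same as the proof of Proposition~\ref{Proposition: classification_of_disgroups},'' and you have faithfully carried out that layering of the halving structure on top of the formal-differences correspondence, together with the invocation of Proposition~\ref{Proposition: absolute_value_and_lattice_groups}. Your flagging of the closure $\frac{\insarg}{2}(G_{\geq 0}) \subseteq G_{\geq 0}$ via the axiom $u + u \leq v + v \implies u \leq v$ is the right detail to isolate, and your argument for it is sound.
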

		\begin{proof}
			Much the same as the proof of Proposition~\ref{Proposition: classification_of_disgroups}.
		\end{proof}
		
		\intermission
		
		Adding the multiplicative structure to a commutative (semi)group turns it into a (semi)ring. We can do this for disgroups as well. Recall that
		\begin{itemize}
			\item
				$(X, +, 0, \cdot)$ is a \df{semiring} when $(X, +, 0)$ is a commutative monoid, $(X, \cdot)$ is a semigroup, the multiplication $\cdot$ is distributive over addition, and the condition $0 \cdot x = x \cdot 0 = 0$ holds,
			\item
				$(X, +, 0, \cdot, 1)$ is a \df{unital} semiring when $(X, +, 0, \cdot)$ is a semiring and $(X, \cdot, 1)$ a monoid,
			\item
				a semiring is \df{commutative} when its multiplication is commutative.
		\end{itemize}
		
		\begin{definition}
			A structure $(X, +, 0, \dis, \cdot, 1)$ is a \df{disring} when $(X, +, 0, \dis)$ is a disgroup, $(X, +, 0, \cdot, 1)$ a unital commutative semiring, and for $\dis$ the additional property
			\begin{itemize}
				\item
					$(a \dis b) \cdot x = (a \cdot x) \dis (b \cdot x)$ \quad (distributivity, or homogeneity)
			\end{itemize}
			holds for all $a, b, x \in X$. The order of operations is to first evaluate $\cdot$, then $\dis$, then $+$.
		\end{definition}
		
		\begin{remark}
			Informally, a semiring is a ``ring without subtraction''. In a semiring it is necessary to postulate the condition $0 \cdot x = x \cdot 0 = 0$, unlike in the case of a ring where it is implied by $0 \cdot x = (0 + 0) \cdot x = 0 \cdot x + 0 \cdot x$. For the same reason we would not need to assume this condition in the case of a disring, as $0 \cdot x = (0 \dis 0) \cdot x = 0 \cdot x \dis 0 \cdot x = 0$.
		\end{remark}
		
		The set $\nnr$ is a disring. Proposition~\ref{Proposition: when_disgroups_are_groups} still applies to see when a disring is a ring, meaning that rings of characteristic $2$ are examples of disrings ($\dis$ being the addition).
		
		\begin{proposition}
			Let $(X, +, 0, \dis, \cdot, 1)$ be a disring. Then
			$$a \leq b \implies a \cdot x \leq b \cdot x$$
			for all $a, b, x \in X$.
		\end{proposition}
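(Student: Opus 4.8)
The plan is to unwind the definition of $\leq$ and then multiply through. Recall that, by definition, $a \leq b$ in the underlying disgroup means exactly $a + (a \dis b) = b$, and likewise $a \cdot x \leq b \cdot x$ means $a \cdot x + \big((a \cdot x) \dis (b \cdot x)\big) = b \cdot x$. So the goal is to transform the first equation into the second by multiplying by $x$.

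First I would assume $a \leq b$, which gives $a + (a \dis b) = b$. Multiplying both sides on the right by $x$ yields $\big(a + (a \dis b)\big) \cdot x = b \cdot x$. Next I would apply distributivity of the multiplication over the addition (part of the semiring axioms) to rewrite the left-hand side as $a \cdot x + (a \dis b) \cdot x$. Then I would apply the homogeneity axiom $(a \dis b) \cdot x = (a \cdot x) \dis (b \cdot x)$ to rewrite the second summand, obtaining
$$a \cdot x + \big((a \cdot x) \dis (b \cdot x)\big) = b \cdot x,$$
which is precisely the statement $a \cdot x \leq b \cdot x$.

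There is no real obstacle here; it is a two-step computation using only right-distributivity of $\cdot$ over $+$ and the homogeneity axiom of a disring, together with the definition of $\leq$. The only thing to be mildly careful about is that commutativity of the semiring multiplication is available, so it does not matter on which side we multiply by $x$; and that no cancellation or ordering lemmas are needed at all — the proof is purely equational.
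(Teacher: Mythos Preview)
Your proof is correct and takes essentially the same approach as the paper: assume $a + (a \dis b) = b$, multiply by $x$, and use distributivity of $\cdot$ over $+$ together with the homogeneity axiom $(a \dis b) \cdot x = (a \cdot x) \dis (b \cdot x)$ to obtain $a \cdot x + (a \cdot x) \dis (b \cdot x) = b \cdot x$.
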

		\begin{proof}
			Straightforward. Suppose $a \leq b$, \ie $a + a \dis b = b$. Then
			$$a \cdot x + a \cdot x \dis b \cdot x = (a + a \dis b) \cdot x = b \cdot x.$$
		\end{proof}
		
		Naturally, we can equip a disring with a halving map as well, obtaining a \df{halved disring}. Again, our example is $\nnr$.
		
		\begin{proposition}
			Let $(X, +, 0, \dis, \cdot, 1, \frac{\insarg}{2})$ be a halved disring. Then $\frac{a}{2} = \frac{1}{2} \cdot a$ for all $a \in X$.
		\end{proposition}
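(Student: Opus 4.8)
The plan is to apply the doubling map to both sides and use its injectivity. First I would recall from Proposition~\ref{Proposition: characterization_of_halving_map} that the mere presence of a halving map forces the map $g\colon X \to X$, $g(x) \dfeq x + x$, to be bijective — in particular injective — and that $\frac{\insarg}{2}$ is exactly the inverse of $g$. So it suffices to check that $\frac{a}{2}$ and $\frac{1}{2} \cdot a$ have the same image under $g$.

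For the first, the defining property of the halving map gives $g\big(\frac{a}{2}\big) = \frac{a}{2} + \frac{a}{2} = a$ immediately. For the second, I would compute $g\big(\frac{1}{2} \cdot a\big) = \frac{1}{2} \cdot a + \frac{1}{2} \cdot a = \big(\frac{1}{2} + \frac{1}{2}\big) \cdot a = 1 \cdot a = a$, using distributivity of $\cdot$ over $+$, then the halving property applied to the multiplicative unit (namely $\frac{1}{2} + \frac{1}{2} = 1$), and finally that $1$ is the multiplicative unit of the semiring. Hence $g\big(\frac{a}{2}\big) = a = g\big(\frac{1}{2} \cdot a\big)$, and injectivity of $g$ yields $\frac{a}{2} = \frac{1}{2} \cdot a$.

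There is essentially no obstacle: the only subtlety is that injectivity of $g$ is not an added hypothesis but is built into the notion of a halving map via Proposition~\ref{Proposition: characterization_of_halving_map}. Equivalently, one can phrase the argument as: $\frac{1}{2} \cdot a$ is a preimage of $a$ under $g$, and since $g$ is invertible its unique preimage is $\frac{a}{2}$.
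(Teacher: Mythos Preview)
Your proof is correct and essentially the same as the paper's: both rest on the computation $\frac{1}{2} \cdot a + \frac{1}{2} \cdot a = \big(\frac{1}{2} + \frac{1}{2}\big) \cdot a = 1 \cdot a = a$ together with Proposition~\ref{Proposition: characterization_of_halving_map}. The paper phrases it as ``$a \mapsto \frac{1}{2} \cdot a$ is a halving map, and halving maps are unique'' (so it also checks additivity $\frac{1}{2} \cdot (a+b) = \frac{1}{2} \cdot a + \frac{1}{2} \cdot b$), whereas you go straight through injectivity of $g$, which is marginally more economical but amounts to the same argument.
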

		\begin{proof}
			Note that $a \mapsto \frac{1}{2} \cdot a$ is a halving map on $X$ since by distributivity
			$$\frac{1}{2} \cdot a + \frac{1}{2} \cdot a = \big(\frac{1}{2} + \frac{1}{2}\big) \cdot a = 1 \cdot a = a$$
			and
			$$\frac{1}{2} \cdot (a + b) = \frac{1}{2} \cdot a + \frac{1}{2} \cdot b.$$
			By Proposition~\ref{Proposition: characterization_of_halving_map} a halving map is unique, so $\frac{a}{2} = \frac{1}{2} \cdot a$.
		\end{proof}
		
		As usual, we denote a product of $n \in \NN$ factors $a \in X$ by $a^n$, \ie $a^0 = 1$ and inductively $a^{n+1} = a^n \cdot a$. We'll often deal with powers of one half in a halved disring, and we denote them by $2^{-n} = \big(\frac{1}{2}\big)^n$.
		
		Following the by now usual line, we interpret (halved) disrings as non-negative parts of certain rings.
		\begin{proposition}\label{Proposition: classification_of_halved_dis}
			(Halved) disrings are (in the sense of Proposition~\ref{Proposition: classification_of_disgroups}) the ``non-negative'' parts of (halved) commutative unital rings with absolute value.
		\end{proposition}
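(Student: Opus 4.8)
The plan is to imitate the proofs of Proposition~\ref{Proposition: classification_of_disgroups} and Proposition~\ref{Proposition: classification_of_halved_disgroups}, adding a multiplicative decoration. Fix a (halved) disring $X$. Its group of formal differences $G = X \times X/_\equ$ is, by Proposition~\ref{Proposition: classification_of_disgroups} (resp. Proposition~\ref{Proposition: classification_of_halved_disgroups}), already a commutative (halved lattice) group with absolute value, with $X$ embedded as $G_{\geq 0}$ and $|[a, b]| = [a \dis b, 0]$. On $G$ we define multiplication by the formula forced by distributivity and $[1, 0]$ being a unit, namely $[a, b] \cdot [c, d] \dfeq [ac + bd,\ ad + bc]$ (equivalently: invoke the universal property of the ring of differences of the commutative semiring $X$, the additive cancellation needed for the embedding being Proposition~\ref{Proposition: cancellation_property_of_disgroups}). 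That this is well defined and makes $G$ a commutative unital ring is the standard ``ring completion of a commutative semiring'' bookkeeping; in the halved case the halving map extends by $\frac{[a, b]}{2} \dfeq [\frac{a}{2}, \frac{b}{2}]$ and is compatible with the multiplication.

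The one genuinely new point --- and I expect it to be the main obstacle --- is that the absolute value of $G$ is \emph{multiplicative}, $|\xi \eta| = |\xi| \cdot |\eta|$ for all $\xi, \eta \in G$; this is exactly what makes $G$ a commutative unital ring with absolute value in the appropriate sense (and then $|[1,0]| = [1,0]$, so $1 \geq 0$, while the remaining axioms are inherited from $G$ being a commutative group with absolute value). First observe that the disring axiom $(a \dis b) \cdot x = (a \cdot x) \dis (b \cdot x)$, transported to $G$, says precisely that $|\xi \cdot c| = |\xi| \cdot c$ whenever $c \in G_{\geq 0}$ (in particular $\big||\xi|\,\eta\big| = |\xi||\eta|$, via commutativity and the case $c = |\xi|$). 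From this special case one bootstraps to the full statement using only the machinery of the earlier proposition on commutative groups with absolute value: the triangle inequality, its reverse form, and the characterization $|\zeta| \leq x \iff \zeta \leq x \land -\zeta \leq x$. Concretely, both $|\eta| - \eta$ and $|\eta| + \eta$ are non-negative, and writing $\xi\eta = \xi|\eta| - \xi(|\eta| - \eta) = -\xi|\eta| + \xi(|\eta| + \eta)$ and applying the special case to $\xi(|\eta| \mp \eta)$, the two reverse-triangle estimates $|\xi\eta| \geq \big|\xi|\eta|\big| - |\xi(|\eta| \mp \eta)|$ simplify to $|\xi\eta| \geq |\xi|\eta$ and $|\xi\eta| \geq -|\xi|\eta$, which by the $|\zeta| \leq x$-characterization give $|\xi\eta| \geq \big||\xi|\,\eta\big| = |\xi||\eta|$; symmetrically the two ordinary triangle estimates $|\xi\eta| \leq \big|\xi|\eta|\big| + |\xi(|\eta| \pm \eta)| = 2|\xi||\eta| \mp |\xi|\eta$ together yield $|\xi\eta| + \big||\xi|\,\eta\big| \leq 2|\xi||\eta|$, i.e. $|\xi\eta| \leq |\xi||\eta|$.

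For the reverse passage, let $(R, +, 0, -, |\insarg|, \cdot, 1)$ be a (halved) commutative unital ring with absolute value. Its non-negative part $R_{\geq 0}$ is automatically closed under multiplication --- for $a, b \geq 0$ we have $|ab| = |a||b| = ab$, hence $ab \geq 0$ --- contains $1$ (as $|1| = 1$), and with $a \dis b \dfeq |b - a|$ satisfies homogeneity, since for $x \geq 0$ one computes $(a \dis b) \cdot x = |b - a| \cdot |x| = |(b - a)x| = (ax) \dis (bx)$. Together with Proposition~\ref{Proposition: classification_of_disgroups} this exhibits $R_{\geq 0}$ as a disring (halved, if $R$ is). Finally, that the two constructions are mutually inverse up to isomorphism is the same verification as in Proposition~\ref{Proposition: classification_of_disgroups}, now carried along the multiplication as well. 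In short, everything reduces to routine transcription of the earlier proofs except for the multiplicativity of the extended absolute value, which is the only step requiring genuine work.
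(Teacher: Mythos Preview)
Your proposal is correct and follows exactly the route the paper intends: the paper's own proof is literally the two words ``As before'', so you are supplying the details the author left implicit. The construction of the ring of formal differences, the verification that $R_{\geq 0}$ is a (halved) disring, and the mutual inversion are all as in Proposition~\ref{Proposition: classification_of_disgroups} with multiplicative decoration, which is precisely what ``as before'' gestures at.

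The one place you go beyond the paper is in proving full multiplicativity $|\xi\eta| = |\xi|\,|\eta|$ on $G$; the paper never defines ``commutative unital ring with absolute value'' explicitly, so it is not clear whether full multiplicativity is part of the intended notion or only the weaker homogeneity $|\xi c| = |\xi|\,c$ for $c \geq 0$ (which is an immediate transcription of the disring axiom and is all that is needed for the reverse passage). Your bootstrapping argument from the weak to the strong form is correct: the two triangle estimates combine via the characterization $|z| \leq x \iff z \leq x \land -z \leq x$ and cancellation to give $|\xi\eta| \leq |\xi||\eta|$ and $|\xi\eta| \geq |\xi||\eta|$. Note only that concluding literal equality from these two inequalities uses antisymmetry of $\leq$, which is guaranteed in the halved case (Proposition~\ref{Proposition: when_disgroup_is_partially_ordered} via Proposition~\ref{Proposition: characterization_of_halving_map}) but not in general; for unhalved disrings the weak form already suffices for the correspondence.
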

		\begin{proof}
			As before.
		\end{proof}
		
		A halving map implies many properties, including some in the definition of a disgroup. As such, we can give halved disgroups/disrings with fewer properties.
		\begin{proposition}
			Let $(X, +, 0)$ be a commutative monoid, $\frac{\insarg}{2}\colon X \to X$ a halving map on it, $\dis\colon X \times X \to X$ a binary operation, and $\leq$ defined as usual: $a \leq b \iff a + a \dis b = b$ for all $a, b \in X$. Suppose that the following holds for all $a, b, x \in X$.
			\begin{itemize}
				\item
					$a \dis b = b \dis a$
				\item
					$a \dis 0 = a$
				\item
					$a \dis a = 0$
				\item
					$(a + x) \dis (b + x) = a \dis b$
				\item
					$a \dis b \leq a \dis x + b \dis x$
			\end{itemize}
			Then $(X, +, 0, \dis, \frac{\insarg}{2})$ is a halved disgroup. If moreover we have $1 \in X$ and a a binary operation $\cdot\colon X \times X \to X$ such that $(X, \cdot, 1)$ is a commutative monoid and $\cdot$ distributes over $+$ and $\dis$, then $(X, +, 0, \dis, \cdot, 1, \frac{\insarg}{2})$ is a halved disring.
		\end{proposition}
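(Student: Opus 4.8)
The plan is to verify the two disgroup axioms that are not among our five hypotheses --- namely $(a + a) \dis (b + b) = (a \dis b) + (a \dis b)$ and $a + a \le b + b \impl a \le b$ --- together with the direction $a \dis b = 0 \impl a = b$ (the converse is the hypothesis $a \dis a = 0$, and the remaining disgroup axioms are hypotheses). First I would note that, by exactly the arguments already given in this section for disgroups --- none of which uses the two axioms in question --- the five hypotheses alone already yield: all elements are non-negative; $a \dis b = 0 \impl a = b$ by Remark~\ref{Remark: nondegeneracy_of_dis}, which uses only $a \dis 0 = a$ and the triangle inequality; the cancellation property for $+$; the special cases $a \le a \dis b + b$ and $a \dis b \le a + b$ of the triangle inequality; $a \le a + x$ for all $x$, hence $a \le b \iff \xsome{x}{X}{a + x = b}$; reflexivity and transitivity of $\le$; the implication $a \le b \land c \le d \impl a + c \le b + d$; and the forward half $a \dis b \le x \impl a \le x + b \land b \le x + a$ of Lemma~\ref{Lemma: dis_upper_bounds}.

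Next I would use the halving map. The two halving axioms say precisely that $\frac{\insarg}{2}$ is a two-sided inverse of $g\colon x \mapsto x + x$, so $g$ is bijective, hence injective; therefore $\frac{0}{2} = 0$ and $x + x = 0 \impl x = 0$. Injectivity of $g$ gives antisymmetry of $\le$ (if $a \le b$ and $b \le a$, sum the two relations and cancel to get $(a \dis b) + (a \dis b) = 0$, so $a \dis b = 0$, so $a = b$), so $\le$ is a partial order. Applying $g$ and $\frac{\insarg}{2}$ to the characterization $a \le b \iff \xsome{x}{X}{a + x = b}$ gives $x \le y \iff \frac{x}{2} \le \frac{y}{2}$; instantiating at $x + x$ and $y + y$ then yields $x + x \le y + y \iff x \le y$, which contains the axiom $a + a \le b + b \impl a \le b$.

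The remaining and main point is $(a + a) \dis (b + b) = (a \dis b) + (a \dis b)$; put $d \dfeq a \dis b$. Inserting the point $a + b$, applying the triangle inequality, and simplifying with additivity and commutativity gives the easy inequality $(a + a) \dis (b + b) \le d + d$. For the reverse one --- the hard part --- I would first upgrade the forward half of Lemma~\ref{Lemma: dis_upper_bounds} to a genuine equivalence by means of the halving map: given $a \le x + b$ and $b \le x + a$, choose witnesses $y, z$ with $a + y = x + b$ and $b + z = x + a$; cancellation forces $y + z = x + x$, additivity gives $a \dis b = x \dis y$, and since $x = \frac{y + z}{2} = \frac{y}{2} + \frac{z}{2}$ one computes $x \dis y = \bigl(\frac{y}{2} + \frac{z}{2}\bigr) \dis \bigl(\frac{y}{2} + \frac{y}{2}\bigr) = \frac{z}{2} \dis \frac{y}{2} \le \frac{z}{2} + \frac{y}{2} = x$, so $a \dis b \le x$. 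With the equivalence $a \dis b \le x \iff a \le x + b \land b \le x + a$ available, the inequality $d \le \frac{(a + a) \dis (b + b)}{2}$ reduces (via $u \le v \iff \frac{u}{2} \le \frac{v}{2}$) to the two instances $a + a \le (a + a) \dis (b + b) + (b + b)$ and $b + b \le (a + a) \dis (b + b) + (a + a)$ of the special case $u \le u \dis v + v$ of the triangle inequality. Doubling $d \le \frac{(a + a) \dis (b + b)}{2}$ gives $d + d \le (a + a) \dis (b + b)$, and together with the easy inequality and antisymmetry this yields $(a + a) \dis (b + b) = d + d$, as wanted.

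This proves that $(X, +, 0, \dis, \frac{\insarg}{2})$ is a halved disgroup. In the ring case, $0 \cdot x = (0 \dis 0) \cdot x = (0 \cdot x) \dis (0 \cdot x) = 0$, so $(X, +, 0, \cdot, 1)$ is a unital commutative semiring, and distributivity of $\cdot$ over $\dis$ is exactly the homogeneity axiom; hence $(X, +, 0, \dis, \cdot, 1, \frac{\insarg}{2})$ is a halved disring. The one genuine obstacle in all of this is the doubling-additivity of $\dis$, handled through the upgraded upper-bound lemma as above.
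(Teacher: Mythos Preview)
Your argument is correct, and it is a careful expansion of what the paper's own proof merely asserts: the paper notes (via Remark~\ref{Remark: nondegeneracy_of_dis}) that $a \dis b = 0 \implies a = b$ follows from the listed conditions, and then says only that ``the other conditions follow easily from the properties of the halving map.'' You supply exactly those details, and your route through an upgraded version of Lemma~\ref{Lemma: dis_upper_bounds} to obtain the doubling-additivity $(a+a) \dis (b+b) = (a \dis b) + (a \dis b)$ is a clean way to do it; the key computation $a \dis b = x \dis y = \frac{z}{2} \dis \frac{y}{2} \le \frac{z}{2} + \frac{y}{2} = x$ is precisely where the halving map earns its keep.

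A small remark on bookkeeping: you are right that the preliminary facts you invoke (non-negativity, cancellation, $a \le b \iff \xsome{x}{X}{a + x = b}$, transitivity of $\le$, the forward half of Lemma~\ref{Lemma: dis_upper_bounds}, \etc[]) do not depend on the two missing axioms --- this is worth saying explicitly, as you do, since the paper's proofs of those facts are stated for disgroups and one must check that the dependencies are acyclic. Your observation that the two halving axioms together make $x \mapsto x + x$ a genuine bijection (not just split) is also the efficient way to get both $a + a \le b + b \implies a \le b$ and antisymmetry in one stroke.
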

		\begin{proof}
			Recall from Remark~\ref{Remark: nondegeneracy_of_dis} that $a \dis b = 0 \implies a = b$ is implied by other conditions, so together with $a \dis a = 0$ we obtain $a \dis b = 0 \iff a = b$. The other conditions follow easily from the properties of the halving map.
		\end{proof}
		
		\begin{remark}\label{Remark: dis_algebraic_theories}
			Notice that all the conditions in the previous proposition are given as \emph{equations}, including the ones not explicitly written (for a monoid \etc[]) as well as the triangle inequality (since $\leq$ is given by an equation). Thus halved disgroups and halved disrings form \df{finitary algebraic theories}. Several consequences immediately follow, such as that products of halved disgroups/disrings are again halved disgroups/disrings.
		\end{remark}
		
		\begin{remark}\label{Remark: examples_of_disgroups}
			So far we've only mentioned $\nnr$ (and groups/rings of order/ characteristic $2$) as an example of (halved) disgroups/disrings. There are of course plenty others, including:
			\begin{itemize}
				\item
					subdisrings of $\nnr$ (\eg natural numbers, non-negative dyadic rationals, non-negative rationals and non-negative real algebraic numbers, the last three being halved) and their arbitrary products (by Remark~\ref{Remark: dis_algebraic_theories});
				\item
					symmetric (or Hermitian in the complex case) matrices form a halved commutative group with absolute value, and therefore their non-negative part --- the positive-semidefinite matrices --- forms a halved disgroup (by Proposition~\ref{Proposition: classification_of_halved_disgroups});
				\item
					Lebesgue integrable maps (more precisely, their equivalence classes, \ie spaces $\mathscr{L}^1$) on closed bounded intervals, continuous maps \etc form halved unital commutative rings, so their non-negative parts (maps which take only non-negative values) form a halved disring (by Proposition~\ref{Proposition: classification_of_halved_dis}).
			\end{itemize}
			See also Proposition~\ref{Proposition: Boolean_lattices_disrings}.
		\end{remark}
		
		\intermission
		
		The purpose of disgroups is to serve us as possible distances of metric spaces. Hereafter let $\od$ denote an arbitrary disgroup (as well as, with a slight abuse of notation, its underlying set). As we prove new results, we will progressively impose further requirements on $\od$:
		\begin{itemize}
			\item
				that its relation $\leq$ is a partial order hereafter,
			\item
				that it has finite suprema (\eg it is halved) in Section~\ref{Section: countable_Urysohn} and onwards,
			\item
				that it is a halved subdisgroup of $\nnr$ containing $1$ in Section~\ref{Section: reals_and_completions} and onwards,
			\item
				that it is a halved disring in Section~\ref{Section: algebra_of_Urysohn}.
		\end{itemize}
		
		We will require not just metric spaces, but also their generalizations. Pseudometric is a common generalization of a metric; what we term `protometric', less so.
		\begin{definition}\label{Definition: od_metric_spaces}
			Let $d\colon X \times X \to \od$ be a map. We call $\mtr{X} = (X, d)$
			\begin{itemize}
				\item
					a \df{$\od$-protometric space} when
					$$d(a, b) = d(b, a), \qquad \text{(symmetry)}$$
					$$d(a, b) + d(b, c) \geq d(a, c) \qquad \text{(triangle inequality)}$$
					hold for all $a, b, c \in X$,
				\item
					a \df{$\od$-pseudometric space} when additionally
					$$d(a, a) = 0$$
					for all $a \in X$,
				\item
					a \df{$\od$-metric space} when furthermore
					$$d(a, b) = 0 \implies a = b$$
					holds for all $a, b \in X$.
			\end{itemize}
			The latter two conditions can be summarized as $d(a, b) = 0 \iff a = b$.
			
			The map $d$ is called the (\df{proto}-, \df{pseudo}-)\df{metric}, or more informally, the \df{distance} on $X$.
		\end{definition}
		
		Of course, the usual definition of (pseudo)metric spaces matches $\nnr$-(pseudo)metric spaces in the sense of Definition~\ref{Definition: od_metric_spaces}.
		
		\medskip
		
		Recall we mentioned that $\dis$ in a disgroup can be viewed as an ``internal distance''. We can now make this precise.
		\begin{proposition}
			Every disgroup $\od$ is a $\od$-metric space with $\dis$ as the distance.
		\end{proposition}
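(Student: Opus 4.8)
The plan is to verify directly that $(X, d)$ with $d(a, b) \dfeq a \dis b$ satisfies the three clauses of Definition~\ref{Definition: od_metric_spaces}; each clause corresponds almost verbatim to one of the disgroup axioms, so there is essentially nothing to do beyond matching them up. First I would dispatch \emph{symmetry}: $d(a, b) = a \dis b = b \dis a = d(b, a)$ is precisely the commutativity (symmetry) axiom of a disgroup.

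Next, the \emph{triangle inequality}. The disgroup axiom reads $a \dis b \leq a \dis x + b \dis x$ for all $a, b, x \in \od$. Instantiating it with the triple $(a, c, b)$ in the roles of $(a, b, x)$ gives $a \dis c \leq a \dis b + c \dis b$, and one application of symmetry rewrites $c \dis b$ as $b \dis c$, so that $d(a, c) \leq d(a, b) + d(b, c)$, i.e. $d(a, b) + d(b, c) \geq d(a, c)$. The only point to watch is lining up the indices in the axiom correctly; there is nothing deeper involved.

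Finally, for the two remaining metric conditions, $d(a, a) = 0$ follows by taking $a = b$ in the axiom $a \dis b = 0 \iff a = b$, and $d(a, b) = 0 \implies a = b$ is the forward direction of that same equivalence (which, as recorded in Remark~\ref{Remark: nondegeneracy_of_dis}, is in fact already forced by the other axioms). Assembling these observations shows that $(X, \dis)$ is a $\od$-metric space. I do not expect any genuine obstacle: the statement is really a sanity check confirming that the disgroup axioms were set up precisely so that $\dis$ behaves as an internal distance.
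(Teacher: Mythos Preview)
Your proof is correct and is exactly the verification the paper alludes to when it writes ``Follows immediately from definitions''; you have simply spelled out the matching of each metric-space clause with the corresponding disgroup axiom. There is nothing to add or correct.
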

		\begin{proof}
			Follows immediately from definitions.
		\end{proof}
		
		The morphisms of metric spaces we will mostly work with are the following.
		\begin{definition}\label{Definition: maps}
			The map $f\colon X \to Y$ map between $\od$-(proto-, pseudo-)metric spaces $\mtr{X} = (X, d_\mtr{X})$ and $\mtr{Y} = (Y, d_\mtr{Y})$ is
			\begin{itemize}
				\item
					\df{non-expansive} when $d_\mtr{Y}\big(f(x), f(y)\big) \leq d_\mtr{X}(x, y)$ for all $x, y \in X$,
				\item
					an \df{isometry} when $d_\mtr{Y}\big(f(x), f(y)\big) = d_\mtr{X}(x, y)$ for all $x, y \in X$,
				\item
					an \df{isometric embedding} when it is an injective isometry,
				\item
					an \df{isometric isomorphism} when it is a bijective isometry (and hence its inverse is an isometry as well).
			\end{itemize}
		\end{definition}
		
		\begin{remark}
			Non-expansive maps are often taken as morphisms of a category of metric spaces, and are called \df{metric maps} in that context. The isomorphisms of this category are isometric isomorphisms.
		\end{remark}
		
		The following definition is also useful when talking about the Urysohn space.
		\begin{definition}
			A \df{finite partial isometry} $X \parto Y$ is an isometry which maps from a finite subset of $X$ to $Y$.
		\end{definition}
		
		Next, we recall what (binary) products of (proto-, pseudo-)metric spaces are. For spaces $\mtr{X} = (X, d_\mtr{X})$, $\mtr{Y} = (Y, d_\mtr{Y})$ there are many reasonable choices for the metric on the product $X \times Y$, all yielding the same topology. The two product metrics we will use in this paper are the $\infty$-metric (also called the $\sup$-metric), given by
		$$d_\infty\big((x, y), (x', y')\big) \dfeq \sup\{d_\mtr{X}(x, x'), d_\mtr{Y}(y, y')\},$$
		and the $1$-metric (also know as the ``taxicab'' metric)
		$$d_1\big((x, y), (x', y')\big) \dfeq d_\mtr{X}(x, x') + d_\mtr{Y}(y, y').$$
		These definitions extend to general finite products. Note that projections are non-expansive maps in either case.
		
		\medskip
		
		We defined metric spaces in stages, going through protometric and pseudometric spaces first. These will also be the stages of the construction of the Urysohn space. We observe here that there are natural passages between the notions.
		
		Given a $\od$-protometric space $\mtr{X} = (X, d)$, define its \df{kernel} by
		$$\ker(\mtr{X}) \dfeq \st{x \in X}{d(x, x) = 0}.$$
		It is immediate that the kernel of a $\od$-protometric space is a $\od$-pseudometric space. The inclusion $\ker(\mtr{X}) \hookrightarrow X$ is of course an isometric embedding.\footnote{In categorical terms, the subcategory of $\od$-pseudometric spaces is coreflective in the category of $\od$-protometric spaces, the kernel is the coreflector, and its inclusion is the counit of the adjunction.}
		
		In any $\od$-pseudometric space $\mtr{X} = (X, d)$ we can define the relation $\equ$ for $a, b \in X$ by
		$$a \equ b \dfeq \big(d(a, b) = 0\big).$$
		It is easy to see that this is an equivalence relation. The quotient it induces is called the \df{Kolmogorov quotient}\footnote{In classical general topology the Kolmogorov quotient of a topological space is constructed by identifying points which have the same neighbourhoods, thus obtaining a $T_0$ space. In pseudometric spaces the points with the same neighbourhoods are precisely those at zero distance.}, and it is a $\od$-metric space for the metric (which we'll by a slight abuse of notation denote by the same symbol) $d([a], [b]) \dfeq d(a, b)$ (properties of a pseudometric imply that this is well-defined). From this it is clear that the Kolmogorov quotient map is a surjective isometry.\footnote{The categorical interpretation is that the subcategory of $\od$-metric spaces is reflective in the category of $\od$-pseudometric spaces, the Kolmogorov quotient is the reflector, and the quotient map is the unit of the adjunction.}
		
		\begin{remark}
			Actually, the converse also holds: any surjective isometry from a pseudometric to a metric space is, up to isometric isomorphism, the Kolmogorov quotient map. As for the injectivity of isometries, while they need not be injective in general, recall that they perforce are if their domain is a metric space.
		\end{remark}

	\section{Countable Urysohn Space}\label{Section: countable_Urysohn}
	
		A typical construction of the Urysohn space involves constructing a rational version, and then the actual Urysohn space is its completion~\cite{Urysohn_PS_1927:_sur_un_espace_metrique_universel, Holmes_MR_1992:_the_universal_separable_metric_space_of_urysohn_and_isometric_embeddings_thereof_in_banach_spaces}. In this section we generalize this approach, refining the construction we presented in~\cite{Lesnik_D_2009:_constructive_urysohn_universal_metric_space}. The idea is to construct a metric space which has the extension property much like the Urysohn space, but has distances limited to a disgroup $\od$, and is not yet complete. We do so in three steps: we construct a $\od$-protometric candidate, refine it to a $\od$-pseudometric space of which we take the Kolmogorov quotient to obtain the desired $\od$-metric space.
		
		Define two sequences of sets inductively as follows: let $\proto[-1] \dfeq \emptyset$, and then
		$$\protoaux[n] \dfeq \finseq{(\proto[n-1] \times \od)} \qquad \text{and} \qquad \proto[n] \dfeq \proto[n-1] + \protoaux[n] \qquad \text{for $n \in \NN$}.$$
		In words, we start with the empty set, and then repeatedly make finite tuples of elements that we already have, together with some elements from $\od$. Note that $\proto[0]$ is a singleton, as we can make only the empty tuple from zero elements. Consequently $\protoaux[1] \ism \finseq{\od}$ and $\proto[1] \ism \finseq{\od} + \one$. The later sets get more complicated quickly.
		
		We adopt the following notation. Taking the elements $a_0, \ldots, a_{l-1} \in \proto[n-1]$ and $\alpha_0, \ldots, \alpha_{l-1} \in \od$, we denote the tuple, constructed from these elements, by $\gentuple \in \protoaux[n] \subseteq \proto[n]$. We call $n$ the \df{age}, and $l$ the \df{length} of the tuple while $a_i$s are its \df{predecessors}. Note that ``the same'' tuple appears at all the later ages as well, but (in view of using disjoint unions) we consider these tuples to be different elements: $\gentuple \neq \gentuple[m]$ if $n \neq m$. In particular, the empty tuple appears at all the ages (from $0$ onwards).
		
		The idea behind this construction is that a tuple $\gentuple$ should represent a point which is at distances $\alpha_i$ from $a_i$s, and so by inductively adding these tuples, the space we obtain in the end should satisfy the extension property (roughly speaking, as not all choices of distances are valid; we deal with this below). We define $\proto$ to be the set of all such tuples, \ie $\proto \dfeq \coprod_{n \in \NN} \protoaux[n]$; equivalently, $\proto$ is the colimit (or the direct limit, or the union if you will) of all $\proto[n]$s.
		
		For now, these definitions should be considered on the formal level; we cannot just make a sequence of sets like this in our setting. We prove that these definitions are valid by constructing an explicit model of $\proto$, $\protoaux[n]$s and $\proto[n]$s.
		
		The idea is that a tuple should be encoded by its age, the encodings of its predecessors, and prescribed distances (we'll be able to infer the length) which can be done with a combination of natural numbers and the elements of $\od$. Let $(\overline{x})$ denote the encoding of a tuple $x$; then inductively, for $a = \gentuple$,
		$$(\overline{a}) \dfeq \big(n, \alpha_0, \overline{a_0}, n, \alpha_1, \overline{a_1}, n, \ldots, n, \alpha_{l-1}, \overline{a_{l-1}}, n\big).$$
		Thus the age of $a$ is the first term of the sequence, and the length is the number of times the age $n$ appears, minus one. The encodings of predecessors are unambigously separated by $n$s, as predecessors necessarily have lesser ages. In conclusion, $\proto$ is the subset of $\finseq{(\NN + \od)}$ containing those sequences which start with a natural number (say $n$), end by $n$ as well, all natural numbers appearing in the sequence are $\leq n$, the first term after every $n$ (except the last one) is in $\od$, and recursively, for every two consecutive $n$s the sequence between them, minus the first term, is a valid encoding of a tuple of an age $< n$. Finally, define $\protoaux[n]$ to be the set of those elements from thusly represented $\proto$ which have the first term $n$, and $\proto[n]$ to contain the elements with the first term $\leq n$.
		
		This representation notwithstanding, we still prefer to write the elements of $\proto$ as tuples of the form $\gentuple$, as these are easier to deal with than the encodings.
		
		Our next task is to equip $\proto$ with the distance. As mentioned, the intuition is that the distances of $\gentuple$ from $a_i$s should be $\alpha_i$s, but defining the distance between different tuples is a non-trivial task, as we need to satisfy all triangle inequalities (or in our case, the condition for a $\od$-protometric) at once. Generally there are many solutions, as a triangle inequality bounds a distance to an interval, not a single point. It turns out, however, that we want the ``minimal'' solution which is to say that the distance between tuples with similar terms is small (see Proposition~\ref{Proposition: similar_distances_and_points_yield_similar_extensions_in_Urysohn_space} below) which we rely upon in the next section to show that the completion is the actual Urysohn space.
		
		Let $\age{a}$ and $\lnth{a}$ denote the age and the length of a tuple $a$, respectively. We define the map $d\colon \proto \times \proto \to \od$ for $a = \atuple{a}{\alpha}{i}, b = \atuple{b}{\beta}{j} \in \proto$ inductively on $\age{a} + \age{b}$ as
		$$d(a, b) \dfeq \sup\Big(\st{d(a_i, b) \dis \alpha_i}{i \in \NN_{< \lnth{a}}} \cup \st{d(a, b_j) \dis \beta_j}{j \in \NN_{< \lnth{b}}}\Big).$$
		This inductive definition is convenient, as we do not need the base case since eventually we end up calculating (the distance of the empty tuple to itself which is) the supremum of the empty set (namely $0$).
		
		\begin{proposition}
			$(\proto, d)$ is a $\od$-protometric space.
		\end{proposition}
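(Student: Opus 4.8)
\textit{Proof proposal.} The statement has three ingredients: that $d$ is well-defined, that it is symmetric, and that it satisfies the triangle inequality (we are only after a $\od$-protometric space, so there is no condition on $d(a,a)$ to verify). For well-definedness, observe that the recursion defining $d(a,b)$ invokes only values $d(a_i,b)$ and $d(a,b_j)$, and predecessors have strictly smaller age; hence each invocation lowers $\age{a}+\age{b}$, so the recursion is well-founded and bottoms out --- at pairs of empty tuples --- at $\sup\emptyset=0$. Since each index set $\NN_{<\lnth{a}}$, $\NN_{<\lnth{b}}$ is finite and $\od$ is assumed from this section onwards to have finite suprema (with $\leq$ a partial order, so that suprema are unique), the supremum exists and $d$ is a genuine map $\proto\times\proto\to\od$.

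Symmetry is then a one-line induction on $\age{a}+\age{b}$: the defining expression for $d(b,a)$ is obtained from that for $d(a,b)$ by interchanging the two sets in the union, and by the induction hypothesis $d(a_i,b)=d(b,a_i)$ and $d(a,b_j)=d(b_j,a)$ (the relevant age sums being strictly smaller), so the two unions, hence their suprema, coincide.

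For the triangle inequality $d(a,c)\leq d(a,b)+d(b,c)$ I would induct on $\age{a}+\age{b}+\age{c}$, using symmetry freely. Since $d(a,c)$ is the supremum of finitely many elements, it suffices to bound each of them by $d(a,b)+d(b,c)$, and by the symmetry of the hypothesis under swapping $a\leftrightarrow c$ it is enough to handle a term $d(a_i,c)\dis\alpha_i$. By Lemma~\ref{Lemma: dis_upper_bounds} this reduces to the two inequalities
$$d(a_i,c)\leq d(a,b)+d(b,c)+\alpha_i \qquad\text{and}\qquad \alpha_i\leq d(a,b)+d(b,c)+d(a_i,c).$$
The crucial input is that $d(a_i,b)\dis\alpha_i\leq d(a,b)$ holds \emph{directly}, since the right-hand side is a supremum over a set containing the left-hand side; applying Lemma~\ref{Lemma: dis_upper_bounds} to this yields both $d(a_i,b)\leq d(a,b)+\alpha_i$ and $\alpha_i\leq d(a,b)+d(a_i,b)$. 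Feeding in the induction hypothesis --- the triangle inequality for the triple $(a_i,b,c)$ for the first displayed inequality (so $d(a_i,c)\leq d(a_i,b)+d(b,c)\leq d(a,b)+\alpha_i+d(b,c)$), and for $(a_i,c,b)$ for the second (so $\alpha_i\leq d(a,b)+d(a_i,b)\leq d(a,b)+d(a_i,c)+d(b,c)$), both of strictly smaller total age --- closes both. Taking the supremum over all terms gives the claim.

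The only genuinely non-routine point, and the one I expect to be the main obstacle, is precisely this last step: one cannot simply apply $\dis\alpha_i$ to the induction-hypothesis inequality $d(a_i,c)\leq d(a_i,b)+d(b,c)$, because $\dis$ is not monotone in either argument (this already fails in $\nnr$). Routing through Lemma~\ref{Lemma: dis_upper_bounds}, which trades a $\dis$-bound for a pair of $+$-bounds, is exactly what sidesteps the failure of monotonicity, and it is what makes the ``minimal'' choice of distances in the definition of $d$ work.
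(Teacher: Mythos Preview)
Your proof is correct and follows the same overall strategy as the paper: induction on $\age{a}+\age{b}$ for symmetry, and on $\age{a}+\age{b}+\age{c}$ for the triangle inequality, reducing the latter to bounding each term of the supremum defining $d(a,c)$.

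The only difference is in the bookkeeping for that bound. Where you unpack $d(a_i,c)\dis\alpha_i\leq d(a,b)+d(b,c)$ via Lemma~\ref{Lemma: dis_upper_bounds} into two additive inequalities and close each separately, the paper does it in a single line using (the second item of) Lemma~2.15, namely the implication $p\dis q\leq r\implies p\dis x\leq q\dis x + r$: from the induction hypothesis one has $d(a_i,c)\dis d(a_i,b)\leq d(b,c)$, hence
\[
d(a_i,c)\dis\alpha_i \;\leq\; d(a_i,b)\dis\alpha_i + d(b,c) \;\leq\; d(a,b)+d(b,c),
\]
the last step being your observation that $d(a_i,b)\dis\alpha_i$ is one of the terms in the supremum $d(a,b)$. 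This is exactly the same content packaged more compactly; your remark about the failure of monotonicity of $\dis$ and the need to trade it for additive bounds is precisely what both routes are doing.
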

		\begin{proof}
			Let $a = \atuple{a}{\alpha}{i}$, $b = \atuple{b}{\beta}{j}$, $c = \atuple{c}{\gamma}{k} \in \proto$ denote three general elements of $\proto$.
			\begin{itemize}
				\item\proven{symmetry}
					By induction on $\age{a} + \age{b}$:
					$$d(b, a) =$$
					$$= \sup\Big(\st{d(b_j, a) \dis \beta_j}{j \in \NN_{< \lnth{b}}} \cup \st{d(b, a_i) \dis \alpha_i}{i \in \NN_{< \lnth{a}}}\Big) =$$
					$$= \sup\Big(\st{d(a, b_j) \dis \beta_j}{j \in \NN_{< \lnth{b}}} \cup \st{d(a_i, b) \dis \alpha_i}{i \in \NN_{< \lnth{a}}}\Big) =$$
					$$= d(a, b).$$
				\item\proven{triangle inequality}
					By induction on $\age{a} + \age{b} + \age{c}$; it is sufficient to verify that every value of the set, of which supremum is $d(a, c)$, is at most $d(a, b) + d(b, c)$.
					$$d(a_i, c) \dis \alpha_i \leq d(a_i, b) \dis \alpha_i + d(b, c) \leq d(a, b) + d(b, c)$$
					$$d(a, c_k) \dis \gamma_k \leq d(a, b) + d(b, c_k) \dis \gamma_k \leq d(a, b) + d(b, c)$$
			\end{itemize}
		\end{proof}
		
		Let $\lambda, \mu \in \od$ and $a \dfeq \strat[1]{\big(\et, \lambda, \et, \mu\big)}{}$. Note that $d(a, a) = \lambda \dis \mu$ which is in general not zero, so $(\proto, d)$ is not a $\od$-pseudometric space. This shouldn't be too surprising, as $a$ is supposed to be a point which is at distances both $\lambda$ and $\mu$ from $\et$ which makes sense only when $\lambda = \mu$ (which is when $\lambda \dis \mu = 0$). More generally, distances should respect triangle inequalities, and we see that we need to trim $\proto$ to tuples that do.
		
		Define inductively $\pseudo[-1] \dfeq \emptyset$,
		\mlst{\pseudoaux[n] \dfeq}{\gentuple \in \protoaux[n]}
			{\xall{i}{\NN_{< n}}{a_i \in \pseudo[n-1]} \nl
			\all{i, j}{\NN_{< n}}{d(a_i, a_j) \dis \alpha_i \leq \alpha_j}}
		{,}
		$$\pseudo[n] \dfeq \pseudo[n-1] + \pseudoaux[n] \qquad \text{for $n \in \NN$}.$$
		Finally, let $\pseudo \dfeq \coprod_{n \in \NN} \pseudoaux[n]$, or equivalently, $\pseudo$ is the colimit od $\pseudo[n]$s. To see that we have models of these sets, just consider them as subsets $\pseudo[n] \subseteq \proto[n]$, $\pseudo \subseteq \proto$. We say that a tuple $a \in \proto$ is \df{permissible} when $a \in \pseudo$.
		
		\begin{theorem}\label{Theorem: distances_as_described_in_Urysohn_tuples}
			For every permissible tuple $a = \atuple{a}{\alpha}{i} \in \pseudo$ we have $d(a, a) = 0$, or equivalently, $d(a, a_i) = \alpha_i$ for every $i \in \NN_{< \lnth{a}}$.
		\end{theorem}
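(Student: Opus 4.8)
The plan is to reduce the statement to showing $d(a, a_i) = \alpha_i$ for every $i < \lnth{a}$, and to prove this by (strong) induction on $\age{a}$. First I would record the stated equivalence: by symmetry of $d$ the defining supremum collapses to $d(a, a) = \sup\st{d(a, a_i) \dis \alpha_i}{i \in \NN_{< \lnth{a}}}$, so, using that $\leq$ is a partial order, that all elements of $\od$ are non-negative (Proposition~\ref{Proposition: elements_nonnegative_in_disgroup}), and that $x \dis y = 0 \iff x = y$, we get $d(a, a) = 0$ exactly when $d(a, a_i) = \alpha_i$ for all $i$. Now fix a permissible $a = \atuple{a}{\alpha}{i}$ and assume the theorem for all permissible tuples of age $< \age{a}$; in particular every predecessor $a_i$ (permissible and of smaller age) satisfies $d(a_i, a_i) = 0$.

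The lower bound $d(a, a_i) \geq \alpha_i$ is immediate, since $d(a, a_i) = d(a_i, a)$ and $d(a_i, a_i) \dis \alpha_i = 0 \dis \alpha_i = \alpha_i$ occurs among the terms of the defining supremum of $d(a_i, a)$. For the upper bound $d(a, a_i) \leq \alpha_i$ I would prove the auxiliary inequality
\[
d(a, b) \;\leq\; \alpha_i + d(a_i, b) \qquad\text{for every permissible $b$ with $\age{b} < \age{a}$ and every $i < \lnth{a}$,}
\]
and then specialize it to $b \dfeq a_i$, which gives $d(a, a_i) \leq \alpha_i + d(a_i, a_i) = \alpha_i$. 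The auxiliary inequality I would prove by a nested induction on $\age{b}$: writing $b = \atuple{b}{\beta}{j}$ and expanding $d(a, b)$ by its definition, it is enough to bound each generating term. Terms of the form $d(a_k, b) \dis \alpha_k$ are dealt with using Lemma~\ref{Lemma: dis_upper_bounds}, the triangle inequality, and the permissibility condition $d(a_k, a_i) \dis \alpha_k \leq \alpha_i$ (which, again via Lemma~\ref{Lemma: dis_upper_bounds}, already yields both $d(a_k, a_i) \leq \alpha_k + \alpha_i$ and $\alpha_k \leq \alpha_i + d(a_k, a_i)$); no induction is needed for these. Terms of the form $d(a, b_j) \dis \beta_j$ are dealt with by invoking the main induction hypothesis on $b$ (so $d(b, b_j) = \beta_j$), the nested hypothesis on $b_j$ (so $d(a, b_j) \leq \alpha_i + d(a_i, b_j)$), and once more Lemma~\ref{Lemma: dis_upper_bounds} together with the triangle inequality.

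The delicate point — which I expect to be the real obstacle — is one half of the last estimate, namely $\beta_j \leq \alpha_i + d(a_i, b) + d(a, b_j)$: the naive ways of deriving it all come back to the inequality $d(a, a_i) \leq \alpha_i$ that is still being proved, so the argument threatens to be circular. The escape is to observe that $d(a_i, b_j) \dis \alpha_i$ is itself one of the terms in the defining supremum of $d(a, b_j)$, hence $d(a_i, b_j) \dis \alpha_i \leq d(a, b_j)$, and therefore, by Lemma~\ref{Lemma: dis_upper_bounds}, $d(a_i, b_j) \leq \alpha_i + d(a, b_j)$ with no hypotheses whatsoever; together with the triangle inequality $\beta_j = d(b, b_j) \leq d(a_i, b) + d(a_i, b_j)$ this settles the estimate. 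Once the auxiliary inequality is established, $d(a, a_i) \leq \alpha_i$ follows, so $d(a, a_i) = \alpha_i$ for every $i$, and hence $d(a, a) = 0$, closing the induction.
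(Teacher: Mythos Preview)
Your proof is correct and takes a genuinely different route from the paper's. Both argue by induction on $\age{a}$, settle the equivalence and the lower bound $d(a,a_i)\geq\alpha_i$ the same way, and then work for the upper bound; but for $d(a,a_i)\leq\alpha_i$ the paper introduces a recursive notation $a_{j_0,\ldots,j_r}$ for iterated predecessors and proves a descending claim of the form ``if $d(a,a_{j_0,\ldots,j_{l+1}})\leq\alpha_{j_0}+\cdots+\alpha_{j_0,\ldots,j_{l+1}}$ for all $j_{l+1}$, then $d(a,a_{j_0,\ldots,j_l})\leq\alpha_{j_0}+\cdots+\alpha_{j_0,\ldots,j_l}$'', bottoming out at empty tuples. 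Your auxiliary inequality $d(a,b)\leq\alpha_i+d(a_i,b)$ for permissible $b$ of smaller age, proved by a nested induction on $\age{b}$, packages the same information more cleanly: it avoids the tree bookkeeping entirely and isolates a natural one-sided triangle bound. The key non-circular step you flag --- reading $d(a_i,b_j)\dis\alpha_i\leq d(a,b_j)$ directly off the defining supremum to get $d(a_i,b_j)\leq\alpha_i+d(a,b_j)$ --- is exactly what makes your argument go through, and it has no counterpart that needs to be spotted in the paper's version because there the corresponding estimate is absorbed into the path-sum formulation. What the paper's approach buys is explicit bounds of $d(a,a_{j_0,\ldots,j_l})$ by sums of $\alpha$'s along paths, which are not needed here but could conceivably be useful elsewhere; what your approach buys is a shorter, more transparent proof of the theorem as stated.
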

		\begin{proof}
			It is easy to see that the statements in the theorem are equivalent; we focus on actually proving them. We do so by induction on $\age{a}$.
			
			In the base case $\age{a} = 0$, \ie $a = \et$, there is nothing to prove. For a general $a$ describe all its predecessors inductively as follows:
			$$a_{j_0, j_1, \ldots, j_r} = (a_{j_0, j_1, \ldots, j_r, j_{r+1}}, \alpha_{j_0, j_1, \ldots, j_r, j_{r+1}})_{j_{r+1} \in \NN_{< \lnth{a_{j_0, j_1, \ldots, j_r}}}}.$$
			
			The heart of the proof is in the following \textbf{Claim}:
			\begin{itemize}
				\item
					{\it Let $l \in \NN$ and $j_i \in \NN_{< \lnth{a_{j_0, j_1, \ldots, j_{i-1}}}}$ for all $i \in \NN_{\leq l}$. Assume that for all $j_{l+1} \in \NN_{< \lnth{a_{j_0, j_1, \ldots, j_l}}}$ we have
					$$d(a, a_{j_0, j_1, \ldots, j_{l+1}}) \leq \alpha_{j_0} +  \alpha_{j_0, j_1} + \alpha_{j_0, j_1, j_2} + \ldots + \alpha_{j_0, j_1, \ldots, j_l} + \alpha_{j_0, j_1, \ldots, j_{l+1}}.$$
					Then $d(a, a_{j_0, j_1, \ldots, j_l}) \leq \alpha_{j_0} + \alpha_{j_0, j_1} + \alpha_{j_0, j_1, j_2} + \ldots + \alpha_{j_0, j_1, \ldots, j_l}$.}
					\bigskip
					\begin{proof}
						We have
						$$d(a, a_{j_0, j_1, \ldots, j_l}) = \sup\Big\{\st[1]{d(a_i, a_{j_0, j_1, \ldots, j_l}) \dis \alpha_i}{i \in \NN_{< \lnth{a}}} \cup$$
						$$\cup \st[1]{d(a, a_{j_0, j_1, \ldots, j_{l+1}}) \dis \alpha_{j_0, j_1, \ldots, j_{l+1}}}{j_{l+1} \in \NN_{< \lnth{a_{j_0, j_1, \ldots, j_l}}}}\Big\}.$$
						For $i \in \NN_{< \lnth{a}}$ recall permissibility and the original induction hypothesis.
						$$d(a_i, a_{j_0, j_1, \ldots, j_l}) \dis \alpha_i \leq d(a_i, a_{j_0}) \dis \alpha_i + d(a_{j_0}, a_{j_0, j_1, \ldots, j_l}) \leq$$
						$$\leq d(a_i, a_{j_0}) \dis \alpha_i + d(a_{j_0}, a_{j_0, j_1}) + d(a_{j_0, j_1}, a_{j_0, j_1, j_2}) + \ldots +$$
						$$+ d(a_{j_0, j_1, \ldots, j_{l-1}}, a_{j_0, j_1, \ldots, j_l}) \leq \alpha_{j_0} + \alpha_{j_0, j_1} + \alpha_{j_0, j_1, j_2} + \ldots + \alpha_{j_0, j_1, \ldots, j_l}$$
						
						As for the second part, take any $j_{l+1} \in \NN_{< \lnth{a_{j_0, j_1, \ldots, j_l}}}$.
						$$\alpha_{j_0, j_1, \ldots, j_{l+1}} = d(a_{j_0, j_1, \ldots, j_l}, a_{j_0, j_1, \ldots, j_{l+1}}) \leq$$
						$$\leq d(a, a_{j_0, j_1, \ldots, j_{l+1}}) + d(a, a_{j_0, j_1, \ldots, j_l}) \leq$$
						$$\leq d(a, a_{j_0, j_1, \ldots, j_{l+1}}) + \alpha_{j_0} + d(a_{j_0}, a_{j_0, j_1, \ldots, j_l}) \leq d(a, a_{j_0, j_1, \ldots, j_{l+1}}) + \alpha_{j_0} +$$
						$$+ d(a_{j_0}, a_{j_0, j_1}) + d(a_{j_0, j_1}, a_{j_0, j_1, j_2}) + \ldots + d(a_{j_0, j_1, \ldots, j_{l-1}}, a_{j_0, j_1, \ldots, j_l}) =$$
						$$= d(a, a_{j_0, j_1, \ldots, j_{l+1}}) + \alpha_{j_0} + \alpha_{j_0, j_1} + \alpha_{j_0, j_1, j_2} + \ldots + \alpha_{j_0, j_1, \ldots, j_l}$$
						The last inequality to prove,
						$$d(a, a_{j_0, j_1, \ldots, j_{l+1}}) \leq \alpha_{j_0, j_1, \ldots, j_{l+1}} + \alpha_{j_0} + \alpha_{j_0, j_1} + \alpha_{j_0, j_1, j_2} + \ldots + \alpha_{j_0, j_1, \ldots, j_l},$$
						holds by assumption.
					\end{proof}
			\end{itemize}
			
			Notice that this {Claim} serves not only as the inductive step, but also as the base of induction since when we reach the empty tuple (which is after at most $\age{a}$ steps), the condition is vacuous. In the end we obtain $d(a, a_{j_0}) \leq \alpha_{j_0}$.
			
			The reverse inequality is easier:
			$$d(a, a_{j_0}) \geq d(a_{j_0}, a_{j_0}) \dis \alpha_{j_0} = \alpha_{j_0}$$
			since $d(a_{j_0}, a_{j_0}) = 0$ by the induction hypothesis.
		\end{proof}
		
		This proves that $\pseudo$ is a $\od$-pseudometric space. In fact, we claim it contains precisely the tuples from $\proto$ such that they, and their predecessors, and the predecessors' predecessors \etc are at distance $0$ to themselves.
		
		\begin{theorem}\label{Theorem: classification_of_permissible_tuples}
			\
			\begin{enumerate}
				\item
					Define the map $r\colon \proto \to \pseudo$ for $a = \atuple{a}{\alpha}{i} \in \proto$ inductively on $\age{a}$ by
					$$r(a) \dfeq {}_{\age{a}}\Big(r(a_i), d\big(a, r(a_i)\big)\Big)_{i \in \NN_{< \lnth{a}}}.$$
					The map $r$ is well defined (we need to verify $r(\protoaux[n]) \subseteq \pseudoaux[n]$) and a retraction of $\proto$ onto $\pseudo$ (\ie $\rstr{r}_{\pseudo} = \id[\pseudo]$).
				\item
					Define inductively $\strat[-1]{\psd'}{\od} \dfeq \emptyset$,
					$$\strat[n]{\psd'}{\od} \dfeq \st{a = \atuple{a}{\alpha}{i} \in \proto[n]}{d(a, a) = 0 \land \xall{i}{\NN_{< \lnth{a}}}{a_i \in \strat[n-1]{\psd'}{\od}}}.$$
					Then $\pseudo[n] = \strat[n]{\psd'}{\od}$ for all $n \in \NN$.
			\end{enumerate}
		\end{theorem}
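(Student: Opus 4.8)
The plan is to treat the two parts in turn, inducting on the age throughout and leaning on the triangle inequality of the $\od$-protometric space $(\proto, d)$ together with Theorem~\ref{Theorem: distances_as_described_in_Urysohn_tuples}.

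For part~(1) I would first verify well-definedness, \ie that $r(\protoaux[n]) \subseteq \pseudoaux[n]$, by induction on $n$. Writing $a = \gentuple \in \protoaux[n]$, the predecessors of $r(a)$ are the $r(a_i)$, which lie in $\pseudo[n-1]$ by the inductive hypothesis, so the first defining clause of $\pseudoaux[n]$ holds; for the permissibility clause one needs $d\big(r(a_i), r(a_j)\big) \dis d\big(a, r(a_i)\big) \leq d\big(a, r(a_j)\big)$ for the relevant $i, j$, and this I would get from Lemma~\ref{Lemma: dis_upper_bounds}, which reduces that $\dis$-inequality to the two ordinary inequalities $d\big(r(a_i), r(a_j)\big) \leq d\big(a, r(a_j)\big) + d\big(a, r(a_i)\big)$ and $d\big(a, r(a_i)\big) \leq d\big(a, r(a_j)\big) + d\big(r(a_i), r(a_j)\big)$, both instances of the triangle inequality in $(\proto, d)$. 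That $r$ is a retraction, $\rstr{r}_{\pseudo} = \id[\pseudo]$, again goes by induction on the age: if $a = \gentuple \in \pseudoaux[n]$ then each $a_i$ lies in $\pseudo[n-1]$, so $r(a_i) = a_i$ by the inductive hypothesis, while $d\big(a, r(a_i)\big) = d(a, a_i) = \alpha_i$ by Theorem~\ref{Theorem: distances_as_described_in_Urysohn_tuples} (as $a$ is permissible); hence $r(a) = a$.

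For part~(2) the key observation is that, by symmetry of $d$, the two index sets appearing in the definition of $d(a, a)$ coincide, so $d(a, a) = \sup\st{d(a, a_i) \dis \alpha_i}{i \in \NN_{< \lnth{a}}}$ for every $a = \gentuple \in \proto$; since elements of $\od$ are non-negative and $x \dis y = 0$ iff $x = y$, this gives $d(a, a) = 0 \iff \xall{i}{\NN_{< \lnth{a}}}{d(a, a_i) = \alpha_i}$. Unwinding the definition of $r$, this in turn yields $r(a) = a \iff d(a, a) = 0 \land \xall{i}{\NN_{< \lnth{a}}}{r(a_i) = a_i}$. Since $r$ is a retraction onto $\pseudo$ by part~(1), we have $a \in \pseudo \iff r(a) = a$, and an element of age $< n$ lies in $\pseudo$ iff it lies in $\pseudo[n-1]$; combining these, $a \in \pseudo[n] \iff d(a,a) = 0 \land \xall{i}{\NN_{<\lnth{a}}}{a_i \in \pseudo[n-1]}$ for all $a \in \proto[n]$. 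Then I induct on $n$: the base case $\pseudo[-1] = \emptyset = \strat[-1]{\psd'}{\od}$ is immediate, and the inductive hypothesis $\pseudo[n-1] = \strat[n-1]{\psd'}{\od}$ turns the displayed characterization of $\pseudo[n]$ into exactly $\strat[n]{\psd'}{\od}$.

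The one step requiring genuine thought is the permissibility verification in part~(1): recognising that the triangle inequality of $(\proto, d)$, passed through Lemma~\ref{Lemma: dis_upper_bounds}, is precisely what is needed to land $r(a)$ in $\pseudoaux[n]$. Everything else is bookkeeping around the disjoint unions --- the ``same'' tuple occurring at several ages, and treating $\pseudo[n]$ and $\strat[n]{\psd'}{\od}$ as subsets of $\proto[n]$ --- plus a direct unwinding of definitions and an appeal to Theorem~\ref{Theorem: distances_as_described_in_Urysohn_tuples}.
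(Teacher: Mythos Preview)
Your proposal is correct and follows essentially the same approach as the paper's own proof. The only differences are cosmetic: where the paper says the permissibility condition ``holds by triangle inequality'' you spell out the reduction via Lemma~\ref{Lemma: dis_upper_bounds}, and in part~(2) you package the argument as a single biconditional $r(a)=a \iff d(a,a)=0 \land \xall{i}{}{r(a_i)=a_i}$ (then invoke $a\in\pseudo \iff r(a)=a$) whereas the paper treats the two inclusions $\pseudo[n]\subseteq\strat[n]{\psd'}{\od}$ and $\strat[n]{\psd'}{\od}\subseteq\pseudo[n]$ separately --- but the underlying computations are identical.
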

		\begin{proof}
			\begin{enumerate}
				\item
					\begin{itemize}
						\item\proven{$r(\protoaux[n]) \subseteq \pseudoaux[n]$ for all $n \in \NN$}
							By induction on $n$. Clearly the age of the tuple is preserved by $r$ if the ages of predecessors are. To see that $r$ maps tuples to permissible ones, note that $r$-images of predecessors are permissible by the induction hypothesis, and the condition
							$$d(r(a_i), r(a_j)) \dis d(a, r(a_i)) \leq d(a, r(a_j))$$
							holds by triangle inequality.
						\item\proven{$\rstr{r}_{\pseudo[n]} = {\id[]}_{\pseudo[n]}$}
							Take $a = \atuple{a}{\alpha}{i} \in \pseudo[n]$; we verify $r(a) = a$. By induction on $n$ we have $r(a_i) = a_i$ for all $i \in \NN_{< \lnth{a}}$, and the condition $d(a, a_i) = \alpha_i$ holds by Theorem~\ref{Theorem: distances_as_described_in_Urysohn_tuples}.
					\end{itemize}
				\item
					Induction on $n$, together with Theorem~\ref{Theorem: distances_as_described_in_Urysohn_tuples}, tells us that $\pseudo[n] \subseteq \strat[n]{\psd'}{\od}$ for all $n$. For the converse it is sufficient to verify $r(a) = a$ for all $a = \atuple{a}{\alpha}{i} \in \strat[n]{\psd'}{\od}$. By the induction hypothesis $r(a_i) = a_i$ for all $i$, so we only still need to see $d(a, a_i) = \alpha_i$, but this follows from the assumption $d(a, a) = 0$.
			\end{enumerate}
		\end{proof}
		
		However, $\pseudo$ is not a metric space --- to obtain different tuples at distance $0$, try for example changing the order of terms in the tuple, repeat the terms, or simply consider any $a \in \pseudo[n]$ and ${}_{n+1}(a, 0)$.
		
		We define $\Ury$ to be the Kolmogorov quotient of $\pseudo$. As such, it is a $\od$-metric space.
		
		\begin{proposition}\label{Proposition: similar_distances_and_points_yield_similar_extensions_in_Urysohn_space}
			Let $a = {}_{\age{a}}(a_i, \alpha_i)_{i \in \NN_{< l}}, b = {}_{\age{b}}(b_j, \beta_j)_{j \in \NN_{< l}} \in \pseudo$ be tuples of the same length $l \in \NN$. For any $\epsilon, \epsilon' \in \od$ if $\alpha_i \dis \beta_i \leq \epsilon$ and $d(a_i, b_i) \leq \epsilon'$ for all $i \in \NN_{< l}$, then $d(a, b) \leq \epsilon + \epsilon'$.
		\end{proposition}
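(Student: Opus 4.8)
The plan is to reduce everything to bounding the individual generators of the supremum that defines $d(a, b)$. Recalling
$$d(a, b) = \sup\Big(\st{d(a_i, b) \dis \alpha_i}{i \in \NN_{< l}} \cup \st{d(a, b_j) \dis \beta_j}{j \in \NN_{< l}}\Big),$$
the defining property of suprema in $\od$ reduces the claim to showing $d(a_i, b) \dis \alpha_i \le \epsilon + \epsilon'$ and $d(a, b_j) \dis \beta_j \le \epsilon + \epsilon'$ for all indices; since the hypotheses are symmetric in $(a, \alpha)$ and $(b, \beta)$ and $d$ is symmetric, I would prove only the first family, the second following verbatim by swapping roles.

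So fix $i \in \NN_{< l}$. By Lemma~\ref{Lemma: dis_upper_bounds}, the target $d(a_i, b) \dis \alpha_i \le \epsilon + \epsilon'$ is equivalent to the conjunction
$$d(a_i, b) \le \epsilon + \epsilon' + \alpha_i \qquad \text{and} \qquad \alpha_i \le \epsilon + \epsilon' + d(a_i, b),$$
and I would establish each half by running the triangle inequality in $\proto$ through the matched predecessors $a_i, b_i$ --- crucially \emph{not} through $a$ and $b$ themselves, so that $d(a, b)$ never reappears on a right-hand side. For the first half: $d(a_i, b) \le d(a_i, b_i) + d(b_i, b)$, where $d(b_i, b) = \beta_i$ by permissibility of $b$ and Theorem~\ref{Theorem: distances_as_described_in_Urysohn_tuples}, and $\beta_i \le (\alpha_i \dis \beta_i) + \alpha_i \le \epsilon + \alpha_i$ by Lemma~\ref{Lemma: special_cases_of_triangle_inequality}(\ref{Lemma: special_cases_of_triangle_inequality: bound}) together with the hypothesis; combining with $d(a_i, b_i) \le \epsilon'$ and monotonicity of $+$ gives the bound. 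For the second half: $\alpha_i \le (\alpha_i \dis \beta_i) + \beta_i \le \epsilon + \beta_i$ and $\beta_i = d(b_i, b) \le d(b_i, a_i) + d(a_i, b) \le \epsilon' + d(a_i, b)$, which chain to the desired estimate.

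Taking suprema then yields $d(a, b) \le \epsilon + \epsilon'$. I do not expect any genuine difficulty: the argument is a direct computation, and the only thing to be careful with is the bookkeeping of the chained inequalities --- specifically, keeping the triangle inequality oriented so that it decomposes $d(a_i, b)$ (and $d(b_i, b)$) via the coordinatewise-matched points rather than via $a$ and $b$, which would be circular. All the $\dis$-arithmetic used is already isolated in Lemma~\ref{Lemma: dis_upper_bounds} and Lemma~\ref{Lemma: special_cases_of_triangle_inequality}, so no new lemmas are needed.
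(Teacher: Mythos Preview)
Your argument is correct and follows essentially the same strategy as the paper: bound each generator of the supremum defining $d(a,b)$ by routing through the matched predecessor $b_i$ and using permissibility to get $d(b_i,b)=\beta_i$. The only difference is cosmetic: you unpack $d(a_i,b)\dis\alpha_i\le\epsilon+\epsilon'$ into two additive inequalities via Lemma~\ref{Lemma: dis_upper_bounds}, whereas the paper stays at the $\dis$ level and writes in one line
\[
d(a_i,b)\dis\alpha_i \;\le\; d(b_i,b)\dis\alpha_i + d(a_i,b_i) \;=\; \beta_i\dis\alpha_i + d(a_i,b_i) \;\le\; \epsilon+\epsilon',
\]
using the triangle inequality for $\dis$ in $\od$ together with $d(a_i,b)\dis d(b_i,b)\le d(a_i,b_i)$.
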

		\begin{proof}
		   $$d(a_i, b) \dis \alpha_i \leq d(b_i, b) \dis \alpha_i + d(a_i, b_i) = \beta_i \dis \alpha_i + d(a_i, b_i) \leq \epsilon + \epsilon'$$
		   $$d(a, b_i) \dis \beta_i \leq d(a, a_i) \dis \beta_i + d(a_i, b_i) = \alpha_i \dis \beta_i + d(a_i, b_i) \leq \epsilon + \epsilon'$$
		\end{proof}
		
		\begin{corollary}\label{Corollary: Urysohn_tuples_canonical_on_quotient}
			Let $l \in \NN$, $\omega_0, \ldots, \omega_{l-1} \in \od$ and $a = {}_{\age{a}}(a_i, \omega_i)_{i \in \NN_{< l}}, b = {}_{\age{b}}(b_j, \omega_j)_{j \in \NN_{< l}} \in \pseudo$ such that $d(a_i, b_i) = 0$ for all $i \in \NN_{< n}$. Then $d(a, b) = 0$.
		\end{corollary}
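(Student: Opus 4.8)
The plan is to derive this immediately from Proposition~\ref{Proposition: similar_distances_and_points_yield_similar_extensions_in_Urysohn_space} by taking both error bounds to be $0$. First I would observe that since the prescribed distances to the $i$-th predecessors of $a$ and $b$ coincide (both equal $\omega_i$), the disgroup axiom $x \dis y = 0 \iff x = y$ gives $\omega_i \dis \omega_i = 0$ for every $i \in \NN_{< l}$; and by hypothesis $d(a_i, b_i) = 0$. So the premises of Proposition~\ref{Proposition: similar_distances_and_points_yield_similar_extensions_in_Urysohn_space} are satisfied with $\epsilon = 0$ and $\epsilon' = 0$, yielding
$$d(a, b) \leq 0 + 0 = 0.$$

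Then I would invoke Proposition~\ref{Proposition: elements_nonnegative_in_disgroup} (all elements of $\od$ are non-negative) to get $0 \leq d(a, b)$, and conclude $d(a, b) = 0$ by antisymmetry of $\leq$, which is in force since we have assumed $\od$ to be partially ordered from Section~\ref{Section: dis_structures} onwards.

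There is essentially no obstacle here — the whole content lies in Proposition~\ref{Proposition: similar_distances_and_points_yield_similar_extensions_in_Urysohn_space}, and this corollary is just the specialization to the case of exactly matching prescribed distances and predecessors identified in the Kolmogorov quotient. The only thing to be slightly careful about is the typo in the index range of the hypothesis (``$i \in \NN_{< n}$'' should read ``$i \in \NN_{< l}$''), but that does not affect the argument.
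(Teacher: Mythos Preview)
Your proof is correct and is exactly the paper's approach: the paper's proof is the single line ``Take $\epsilon = \epsilon' = 0$ in Proposition~\ref{Proposition: similar_distances_and_points_yield_similar_extensions_in_Urysohn_space}.'' Your added justifications (that $\omega_i \dis \omega_i = 0$ and that $d(a,b) \leq 0$ forces equality by antisymmetry) are the implicit details behind that one line, and your observation about the index typo is correct.
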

		\begin{proof}
			Take $\epsilon = \epsilon' = 0$ in Proposition~\ref{Proposition: similar_distances_and_points_yield_similar_extensions_in_Urysohn_space}.
		\end{proof}
		
		Denote
		$$\prms[\od] \dfeq \st{(x_i, \chi_i)_{i \in \NN_{< l}} \in \finseq{(\Ury \times \od)}}{\all{i, j}{\NN_{< l}}{d(x_i, x_j) \dis \chi_i \leq \chi_j}},$$
		and let $\ext[\od]\colon \prms[\od] \to \Ury$ be given as
		$$\ext[\od]\Big(\big([a_i], \chi_i\big)_{i \in \NN_{< l}}\Big) \dfeq \big[{}_{\sup\st{\age{a_i}}{i \in \NN_{< l}}+1}(a_i, \chi_i)_{i \in \NN_{< l}}\big].$$
		Note that the map $\ext[\od]$ is well defined by Corollary~\ref{Corollary: Urysohn_tuples_canonical_on_quotient}.
		
		\begin{lemma}\label{Lemma: extend_isometry_by_one_point_into_uncompleted_Urysohn}
			We have $d\big(\ext[\od]((x_i, \chi_i)_{i \in \NN_{< l}}), x_k\big) = \chi_k$ for all $k \in \NN_{< l}$.
		\end{lemma}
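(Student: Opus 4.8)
The plan is simply to unwind the definitions and then invoke Theorem~\ref{Theorem: distances_as_described_in_Urysohn_tuples}. Fix a representation, \ie write $x_i = [a_i]$ with $a_i \in \pseudo$, put $n \dfeq \sup\st{\age{a_i}}{i \in \NN_{< l}} + 1$, and set $a \dfeq {}_n(a_i, \chi_i)_{i \in \NN_{< l}}$, so that by the very definition of $\ext[\od]$ we have $\ext[\od]\big((x_i, \chi_i)_{i \in \NN_{< l}}\big) = [a]$.

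First I would check that $a$ is in fact permissible, \ie $a \in \pseudoaux[n]$. Each predecessor $a_i$ lies in $\pseudo[\age{a_i}] \subseteq \pseudo[n-1]$ since $\age{a_i} \leq n - 1$, and the remaining requirement $d(a_i, a_j) \dis \chi_i \leq \chi_j$ for $i, j \in \NN_{< l}$ is exactly the defining condition of $\prms[\od]$ applied to $\big(([a_i], \chi_i)\big)_{i \in \NN_{< l}}$, once one recalls that the Kolmogorov quotient metric on $\Ury$ satisfies $d([a_i], [a_j]) = d(a_i, a_j)$. The particular choice of representatives $a_i$ plays no role here: that is precisely the content of Corollary~\ref{Corollary: Urysohn_tuples_canonical_on_quotient}, which is what makes $\ext[\od]$ well defined in the first place.

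Now $a$ is a permissible tuple of length $l$ whose $i$-th prescribed distance is $\chi_i$ and whose $i$-th predecessor is $a_i$, so Theorem~\ref{Theorem: distances_as_described_in_Urysohn_tuples} yields $d(a, a_i) = \chi_i$ for every $i \in \NN_{< l}$. Using once more that distances in the Kolmogorov quotient are computed on representatives, we get
$$d\big(\ext[\od]((x_i, \chi_i)_{i \in \NN_{< l}}), x_k\big) = d([a], [a_k]) = d(a, a_k) = \chi_k$$
for each $k \in \NN_{< l}$, which is the claim.

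I do not expect a genuine obstacle here: all the real work resides in Theorem~\ref{Theorem: distances_as_described_in_Urysohn_tuples}, which is already proved. The only points deserving a word of care are the bookkeeping that the tuple $a$ produced by $\ext[\od]$ indeed lands in $\pseudoaux[n]$ (so that the theorem applies) and that passing to the Kolmogorov quotient neither alters the relevant distances nor depends on the chosen representatives --- and both are immediate from the construction together with Corollary~\ref{Corollary: Urysohn_tuples_canonical_on_quotient}.
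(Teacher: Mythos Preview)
Your proposal is correct and follows exactly the paper's approach: the paper's proof consists of the single line ``By Theorem~\ref{Theorem: distances_as_described_in_Urysohn_tuples}'', and you have merely (and correctly) spelled out the bookkeeping that makes this invocation legitimate.
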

		\begin{proof}
			By Theorem~\ref{Theorem: distances_as_described_in_Urysohn_tuples}. 
		\end{proof}
		
		We claim that the existence of such a map ensures an extension property, similar to the one that the Urysohn space has. For this reason we introduce the following definition.
		\begin{definition}\label{Definition: od-Urysohn}
			Let
			\begin{itemize}
				\item
					$(U', d')$ be a $\od$-metric space,
				\item
					$\prms[\od]' \dfeq \st{(x_i, \chi_i)_{i \in \NN_{< l}} \in \finseq{(U' \times \od)}}{\all{i, j}{\NN_{< l}}{d'(x_i, x_j) \dis \chi_i \leq \chi_j}}$, and
				\item
					$\ext[\od]'\colon \prms[\od]' \to U'$ a map with the property
					$$\xall{(x_i, \chi_i)_{i \in \NN_{< l}}}{\prms[\od]'}\xall{k}{\NN_{< l}}{d'\big(\ext[\od]'((x_i, \chi_i)_{i \in \NN_{< l}}), x_k\big) = \chi_k}.$$
			\end{itemize}
			Then $(U', d', \ext[\od]')$ is called a \df{$\od$-Urysohn space}.
		\end{definition}
		
		\begin{theorem}\label{Theorem: countable_Urysohn_space}
			Let $(U', d', \ext[\od]')$ be a $\od$-Urysohn space and let
			\begin{itemize}
				\item
					$(X, d_\mtr{X}, s\colon \NN \to \one + X)$ be a countable $\od$-metric space (with $s$ the enumeration of its elements),
				\item
					$F \subseteq X$ a finite subset with enumeration $F = \{y_0, \ldots, y_{k-1}\}$,
				\item
					$e\colon F \to U'$ an isometry.
			\end{itemize}
			Then there exists a canonical choice of an isometry $f\colon X \to U'$ such that $\rstr{f}_F = e$.
		\end{theorem}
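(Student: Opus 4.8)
The plan is a one-sided back-and-forth argument: enumerate the elements of $X$ according to $s$ and extend $e$ one point at a time, using the map $\ext[\od]'$ each time to supply the image of the newly enumerated point (this is exactly what makes the construction canonical, with no appeal to choice). First I would recursively define, for each $n \in \NN$, a finite subset $F_n \subseteq X$ together with an isometry $f_n\colon F_n \to U'$ extending $e$, such that $\rstr{f_{n+1}}_{F_n} = f_n$ and such that the $F_n$ exhaust $X$ along $s$; then I would set $f \dfeq \bigcup_{n \in \NN} f_n$.

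For the recursion, put $F_0 \dfeq F$ (with the given enumeration $y_0, \ldots, y_{k-1}$) and $f_0 \dfeq e$. Given $F_n$ and $f_n$, inspect $s(n) \in \one + X$, deciding whether it is the placeholder $\unit$ or an element of $X$. If $s(n) = \unit$, set $F_{n+1} \dfeq F_n$ and $f_{n+1} \dfeq f_n$. If $s(n) = x \in X$, set $F_{n+1} \dfeq F_n \cup \{x\}$, let $f_{n+1}$ agree with $f_n$ on $F_n$, and send
\[
x \ \longmapsto\ \ext[\od]'\big(\,(f_n(z),\, d_\mtr{X}(x, z))_{z \in F_n}\,\big),
\]
where the finite enumeration of $F_n$ needed to form this sequence is threaded through the recursion. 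Two things need checking. That the argument of $\ext[\od]'$ lies in $\prms[\od]'$ is immediate: since $f_n$ is an isometry, $d'(f_n(z), f_n(z')) = d_\mtr{X}(z, z')$, and $d_\mtr{X}(z, z') \dis d_\mtr{X}(x, z) \leq d_\mtr{X}(x, z')$ holds by Lemma~\ref{Lemma: dis_upper_bounds}, both required inequalities being instances of the triangle inequality in $\mtr{X}$. That $f_{n+1}$ is a well-defined function on $F_{n+1}$: the only way it could fail is if $x$ already occurs in $F_n$; but then the pair $(f_n(x), 0)$ appears among the arguments of $\ext[\od]'$, so its defining property (Definition~\ref{Definition: od-Urysohn}) gives $d'\big(f_{n+1}(x), f_n(x)\big) = 0$, hence $f_{n+1}(x) = f_n(x)$ because $U'$ is a $\od$-metric space. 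The same defining property yields $d'\big(f_{n+1}(x), f_n(z)\big) = d_\mtr{X}(x, z)$ for every $z \in F_n$, so by induction $f_{n+1}$ is again an isometry extending $e$.

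Finally, $f \dfeq \bigcup_{n} f_n$ is a function because the $f_n$ are pairwise compatible by construction; its domain is $\bigcup_n F_n = X$, since every $x \in X$ equals $s(n)$ for some $n$ and hence lies in $F_{n+1}$; any two points of $X$ lie in a common $F_n$, so $f$ is an isometry; and $\rstr{f}_F = f_0 = e$. Canonicity holds because everything is produced by an explicit recursion over $\NN$ — the only images ever chosen are the values of the given map $\ext[\od]'$ — so no choice principle is used. I expect the one genuine difficulty to be exactly the well-definedness step above: without decidable equality on $X$ we cannot test whether the freshly enumerated point is new, and the observation that $\ext[\od]'$ automatically reproduces an already-assigned value (forced by the metric axiom of $U'$) is precisely what legitimises extending ``one point at a time''. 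A secondary, purely bookkeeping, point is to carry the finite enumerations of the $F_n$ through the recursion so that the argument of $\ext[\od]'$ is literally an element of $\finseq{(U' \times \od)}$.
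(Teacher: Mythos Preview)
Your proposal is correct and follows essentially the same approach as the paper: a one-sided inductive extension using $\ext[\od]'$ at each step, with well-definedness handled by the observation that if the newly enumerated point coincides with an earlier one, the defining property of $\ext[\od]'$ forces the new value to agree with the old. The paper's version is organizationally slightly leaner in that it writes the tuple directly as the concatenation $\big(e(y_i), d_\mtr{X}(s_n, y_i)\big)_{i \in \NN_{< k}} \cnct \big(f(s_j), d_\mtr{X}(s_n, s_j)\big)_{j \in \NN_{< n} \cap s^{-1}(X)}$, thereby avoiding your bookkeeping of threading enumerations of the intermediate sets $F_n$ through the recursion.
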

		\begin{proof}
			Notice that $\ext[\od]'$ allows us to extend the isometry for one point, so the idea is to use it inductively, first for $F$, then adding more and more terms of the sequence $s$. Explicitly, if $s_n \in X$, define $f(s_n)$ inductively on $n \in \NN$ as
			$$f(s_n) \dfeq \ext[\od]'\Big(\big(e(y_i), d_\mtr{X}(s_n, y_i)\big)_{i \in \NN_{< k}} \cnct \big(f(s_j), d_\mtr{X}(s_n, s_j)\big)_{j \in \NN_{< n} \cap s^{-1}(X)}\Big).$$
			The map $f$ is well defined --- if $s_n$ equals some $x \in F \cup \big(s(\NN_{< n}) \cap X\big)$, then $d'\big(f(s_n), f(x)\big) = 0$, so $f(s_n) = f(x)$. For the same reason $f$ is an extension of $e$.
		\end{proof}
		
		Thus $\Ury$, and more generally any $\od$-Urysohn space, satisfies the extension property for finite partial isometries from countable $\od$-metric spaces. The converse of course also holds: if we have a canonical choice of extending finite partial isometries, then $\ext[\od]'$ can be defined as its special case. To see this, take $\big(x_i, \chi_i\big)_{i \in \NN_{< l}} \in \prms[\od]'$ and let $F \dfeq \st{x_i}{i \in \NN_{< l}}$ be the metric subspace of $U'$. Declare $X'$ to be $F$, together with another point $\unit$ which is at distance $\chi_i$ to $x_i$ for all $i \in \NN_{< l}$. Let $X$ be the Kolmogorov quotient of $X'$, to ensure that it is metric ($X'$ might not have been, as some $\chi_i$s could potentially be zero). Extend the isometric embedding $F \hookrightarrow U'$ and declare that $\ext[\od]'((x_i, \chi_i)_{i \in \NN_{< l}})$ is the image of $\unit$.
		
		\begin{corollary}\label{Corollary: countable_Urysohn_embedding_property}
			Any countable $\od$-metric space can be isometrically embedded into a $\od$-Urysohn space.
		\end{corollary}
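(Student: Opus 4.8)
The plan is to reduce the statement to Theorem~\ref{Theorem: countable_Urysohn_space} by taking the finite partial isometry to be the empty one. First I would record that the triple $(\Ury, d, \ext[\od])$ is a $\od$-Urysohn space in the sense of Definition~\ref{Definition: od-Urysohn}: by construction $\Ury$ is a $\od$-metric space, the set $\prms[\od]$ is precisely the set $\prms[\od]'$ of Definition~\ref{Definition: od-Urysohn} instantiated at $U' = \Ury$, and the required property of the extension operator is exactly Lemma~\ref{Lemma: extend_isometry_by_one_point_into_uncompleted_Urysohn}. So a $\od$-Urysohn space exists; any other one would serve equally well for the argument below.

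Next, given a countable $\od$-metric space $(X, d_\mtr{X}, s)$, I would apply Theorem~\ref{Theorem: countable_Urysohn_space} with $U' \dfeq \Ury$, with $F \dfeq \emptyset$ (which is finite, witnessed by $n = 0$, carrying the empty enumeration), and with $e\colon \emptyset \to \Ury$ the empty map, which is vacuously an isometry. The theorem then produces a canonical isometry $f\colon X \to \Ury$; the condition $\rstr{f}_F = e$ is no constraint at all.

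It remains only to note that $f$ is in fact an isometric embedding, that is, injective: if $f(x) = f(y)$ then $d\big(f(x), f(y)\big) = 0$, hence $d_\mtr{X}(x, y) = 0$ since $f$ is an isometry, and therefore $x = y$ because $X$ is a $\od$-metric space. This is the general fact, recorded in the remark after Definition~\ref{Definition: od_metric_spaces}, that an isometry with metric domain is automatically injective. Thus there is no genuine obstacle here: all the work is already contained in Theorem~\ref{Theorem: countable_Urysohn_space}, and the corollary is just its special case $F = \emptyset$, combined with the automatic injectivity of isometries out of metric spaces.
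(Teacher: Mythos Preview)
Your proof is correct and follows exactly the paper's approach, whose entire proof is the one line ``Extend the finite partial isometry with the empty domain.'' You have simply spelled out more detail (that $\Ury$ is a $\od$-Urysohn space, that an isometry out of a metric space is injective), but the core idea --- specialize Theorem~\ref{Theorem: countable_Urysohn_space} to $F = \emptyset$ --- is identical.
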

		\begin{proof}
			Extend the finite partial isometry with the empty domain.
		\end{proof}
		
		How many different $\od$-Urysohn spaces are there, though? In the remainder of this section we verify, that when $\od$ is countable, $\Ury$ is the only countable $\od$-Urysohn space up to isometric isomorphism.
		
		\begin{lemma}\label{Lemma: finite_sequences_over_countable_alphabet_are_countable}
			For any set $A$ there exists a mapping which takes a surjection $f\colon\NN \to A$ to a surjection $\NN \to \finseq{A}$.
		\end{lemma}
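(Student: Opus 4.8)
The plan is to reduce the statement to a single choice-free enumeration of $\finseq{\NN}$ and then transport it along $f$. First I would fix, once and for all and independently of $A$ and $f$, an explicit surjection $e\colon \NN \to \finseq{\NN}$. A convenient choice is prime coding: writing $p_0 = 2, p_1 = 3, p_2 = 5, \ldots$ for the increasing enumeration of primes, the assignment $(a_0, \ldots, a_{n-1}) \mapsto \prod_{i \in \NN_{< n}} p_i^{a_i + 1}$ injects $\finseq{\NN}$ into $\NN$, its image being the set of $m \geq 1$ whose prime divisors form an initial segment of $p_0, p_1, \ldots$. Since this is all ordinary arithmetic on $\NN$, membership in the image is decidable and the inverse is computable on it; so I set $e(m)$ to be the decoded sequence when $m$ lies in the image and the empty sequence $()$ otherwise. (An equally explicit alternative is to define $e$ by well-founded recursion on $m$ using the Cantor pairing $\langle-,-\rangle$: put $e(0) \dfeq ()$ and $e(m) \dfeq a \cnct e(b)$ where $m - 1 = \langle a, b\rangle$, which terminates because $b < m$.) Either way $e$ is total and surjective.

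Next, given a surjection $f\colon \NN \to A$, let $f^*\colon \finseq{\NN} \to \finseq{A}$ be the componentwise application $(m_0, \ldots, m_{n-1}) \mapsto (f(m_0), \ldots, f(m_{n-1}))$, and put $g \dfeq f^* \circ e\colon \NN \to \finseq{A}$. The assignment $f \mapsto g$ is then the required mapping: $e$ does not depend on $f$, and $f^*$ is uniformly and canonically built from $f$.

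It remains to check that $g$ is surjective, and since $g$ is a composite of $e$ (already surjective) with $f^*$, it suffices to see $f^*$ is surjective. Given $(a_0, \ldots, a_{n-1}) \in \finseq{A}$, surjectivity of $f$ provides, for each $i \in \NN_{< n}$, some $m_i \in \NN$ with $f(m_i) = a_i$; assembling the tuple $(m_0, \ldots, m_{n-1})$ from these preimages is an instance of \emph{finite} choice (a routine induction on $n$), which is always valid constructively. Then $f^*(m_0, \ldots, m_{n-1}) = (a_0, \ldots, a_{n-1})$.

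The argument is short, and the only thing demanding attention is constructive hygiene: the enumeration $e$ must be a genuinely primitive construction with no choice hidden inside ``pick a code'', and the step from surjectivity of $f$ to surjectivity of $f^*$ must use \emph{finite} choice only. The latter is legitimate precisely because the length $n$ is determined by the given sequence before any choosing occurs, so countable choice is never invoked; past this point the proof is entirely routine.
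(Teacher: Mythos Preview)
Your proof is correct and follows essentially the same approach as the paper: fix once and for all a surjection (the paper actually uses a bijection) $\NN \to \finseq{\NN}$, then postcompose with the componentwise map $f^* = \coprod_{n \in \NN} f^n$. The paper's proof is a single sentence and leaves the surjectivity of $f^*$ implicit, whereas you spell out the finite-choice step explicitly; this extra care is appropriate in the constructive setting but does not change the substance of the argument.
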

		\begin{proof}
			Fix a bijection $\NN \ism \finseq{\NN}$ and compose it with $\coprod_{n \in \NN} f^n$.
		\end{proof}
		
		\begin{lemma}\label{Lemma: countable_od_implies_countable_protoUrysohn}
			Suppose $\od$ is countable. Then there exists a sequence of sequences $s\colon \NN \times \NN \to \proto$ such that for every $n \in \NN$ the image of $s_n$ is $\protoaux[n]$.
		\end{lemma}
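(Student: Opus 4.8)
The plan is to build, by primitive recursion on $n$ and with no appeal to countable choice, a simultaneous pair of surjections $p_n\colon\NN\to\proto[n]$ and $q_n\colon\NN\to\protoaux[n]$, and then set $s(n,m)\dfeq q_n(m)$, regarding $\protoaux[n]$ as a subset of $\proto=\coprod_{k\in\NN}\protoaux[k]$. Since $q_n$ is surjective onto $\protoaux[n]$ by construction, the image of $s_n$ is exactly $\protoaux[n]$, which is what is required.

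Before the recursion I would assemble the standard \emph{uniform} closure facts for the property ``admits a surjection from $\NN$''. Fixing once and for all bijections $\NN\ism\NN+\NN$ and $\NN\ism\NN\times\NN$, a surjection onto $A$ together with a surjection onto $B$ compose (with the coproduct, resp.\ the product, of the two maps) to an explicit surjection onto $A+B$, resp.\ onto $A\times B$, with no choice involved. Lemma~\ref{Lemma: finite_sequences_over_countable_alphabet_are_countable} already supplies a map \emph{sending} a surjection onto $A$ to a surjection onto $\finseq{A}$. Finally, a countable inhabited set admits a surjection from $\NN$ (recalled in the introduction), and $\od$ is inhabited because $0\in\od$; so the hypothesis that $\od$ is countable yields a surjection $g\colon\NN\to\od$.

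Now the recursion. For $n=0$ we have $\proto[0]=\one$ and $\protoaux[0]=\finseq{(\emptyset\times\od)}=\finseq{\emptyset}=\one$, so $p_0$ and $q_0$ are taken to be the (unique, constant) maps. For $n\geq1$, given a surjection $p_{n-1}\colon\NN\to\proto[n-1]$ (note $\proto[n-1]$ is inhabited), combine it with $g$ to obtain a surjection $\NN\to\proto[n-1]\times\od$; apply Lemma~\ref{Lemma: finite_sequences_over_countable_alphabet_are_countable} to get a surjection $q_n\colon\NN\to\finseq{(\proto[n-1]\times\od)}=\protoaux[n]$; and combine $p_{n-1}$ with $q_n$ to get a surjection $p_n\colon\NN\to\proto[n-1]+\protoaux[n]=\proto[n]$. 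Each clause is a \emph{definite} construction from previously built data, so this is a bona fide definition by recursion, and in particular no choice is invoked.

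There is no real obstacle beyond bookkeeping; the two points deserving care are that the recursion must be started at $n=0$ with an explicit base case rather than at $n=-1$ (since $\proto[-1]=\emptyset$ admits no surjection from $\NN$), and that every auxiliary fact used — closure of enumerability under $+$, $\times$ and $\finseq{(\insarg)}$, and ``countable plus inhabited implies enumerable'' — must be invoked in its uniform, choice-free form, which, as noted above, it has. (Alternatively one could work with the explicit encoding $\proto\subseteq\finseq{(\NN+\od)}$: for each $n$ the set of codes of age $n$ is a decidable, inhabited subset of $\finseq{(\NN+\od)}$ — decidability needs only decidable equality on $\NN$ and the decidable case split of the coproduct $\NN+\od$, never equality on $\od$ — and a surjection onto a decidable inhabited subset is obtained from one onto the ambient set by redirecting every value outside the subset to a fixed element of it, here the one-term code $(n)$ of the empty tuple of age $n$; but the inductive construction above is cleaner.)
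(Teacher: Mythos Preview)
Your proof is correct and follows essentially the same approach as the paper's: an inductive construction of surjections $\NN\to\protoaux[n]$, using at each step a surjection onto $\proto[n-1]$ together with one onto $\od$, and then Lemma~\ref{Lemma: finite_sequences_over_countable_alphabet_are_countable}. The only cosmetic difference is that the paper rebuilds the surjection onto $\proto[n-1]$ at each stage from $s_0,\ldots,s_{n-1}$ via a fixed family of bijections $b_n\colon\NN\to\NN^n$, whereas you carry $p_{n-1}$ along as part of the recursion state; your bookkeeping is arguably tidier, and your parenthetical alternative via decidable subsets of $\finseq{(\NN+\od)}$ is a nice observation the paper does not make.
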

		\begin{proof}
			Recall that there exists a sequence of bijections $b_n\colon \NN \to \NN^n$, $n \in \NN_{\geq 1}$. Also, let $c\colon \NN \to \od$ be a surjection (it exists because $\od$ is inhabited (\eg $0 \in \od$) and countable).
			
			We define the sequences $s_n\colon \NN \to \protoaux[n]$ inductively on $n \in \NN$. Let $s_0$ be the only possible map $\NN \to \protoaux[0]$, \ie the constant sequence with terms ${}_{0}()$. Suppose now that $n \in \NN_{\geq 1}$, and that we already defined $s_0, \ldots, s_{n-1}$. Note that the map $t\colon \NN \to \proto[n-1]$, defined by $t(k) \dfeq (s_0, \ldots, s_{n-1}) \circ b_n(k)$, is surjective. Thus the map $(t \times c) \circ b_2\colon \NN \to \proto[n-1] \times \od$ is surjective as well. Use Lemma~\ref{Lemma: finite_sequences_over_countable_alphabet_are_countable} to obtain $s_n$.
		\end{proof}
		
		\begin{lemma}\label{Lemma: countability_of_od_and_Urysohn}
			The following statements are equivalent.\footnote{Classically the implications $2 \impl 3 \impl 4$ are trivial since $\pseudo \subseteq \proto \subseteq \finseq{(\NN + \od)}$. Constructively some work is required, because a subset of a countable set need not be countable.}
			\begin{enumerate}
				\item
					$\od$ is countable.
				\item
					$\finseq{(\NN + \od)}$ is countable.
				\item
					$\proto$ is countable.
				\item
					$\pseudo$ is countable.
				\item
					$\Ury$ is countable.
			\end{enumerate}
		\end{lemma}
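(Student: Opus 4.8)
The plan is to prove all five conditions equivalent via the cycle $(1) \Rightarrow (2) \Rightarrow (3) \Rightarrow (4) \Rightarrow (5) \Rightarrow (1)$. Before starting it pays to isolate two elementary closure properties of countability that get used several times. First, the surjective image of a countable set is countable: if $s\colon \NN \to X + \one$ witnesses countability of $X$ and $g\colon X \to Z$ is surjective, then $(g + \id[\one]) \circ s$ witnesses countability of $Z$, since every $z \in Z$ is $g$ of some $x = s(n)$. Second, a decidable subset $A \subseteq Y$ of a countable set $Y$ is countable: if $t\colon \NN \to Y + \one$ witnesses countability of $Y$, send $n$ to $t(n)$ whenever $t(n)$ lies in $A$ (a decidable condition) and to $\unit$ otherwise.

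For $(1) \Rightarrow (2)$: since $\od$ is inhabited (say $0 \in \od$) and countable, there is a surjection $\NN \to \od$, hence --- composing with a bijection $\NN \ism \NN + \NN$ and the two coproduct injections --- a surjection $\NN \to \NN + \od$; Lemma~\ref{Lemma: finite_sequences_over_countable_alphabet_are_countable} then produces a surjection $\NN \to \finseq{(\NN + \od)}$. For $(2) \Rightarrow (3)$, recall that $\proto$ was modelled concretely as the subset of $\finseq{(\NN + \od)}$ consisting of the valid encodings $(\overline{a})$. The crux of the whole lemma is that \emph{this subset is decidable}: parsing a candidate sequence only requires reading off natural numbers (with their decidable order and equality) and testing, for a given term, whether it lies in the $\NN$-summand or in the $\od$-summand of the coproduct --- and one never needs to decide equality of two elements of $\od$. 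Making this precise is a short recursion on the length of the candidate sequence. Given decidability, the second closure property above yields that $\proto$ is countable. This $(2) \Rightarrow (3)$ step is where I expect the only real work to lie; the remaining implications are essentially formal given the structure already built.

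For $(3) \Rightarrow (4)$: the map $r\colon \proto \to \pseudo$ of Theorem~\ref{Theorem: classification_of_permissible_tuples} is a retraction of $\proto$ onto $\pseudo$, hence surjective, so $\pseudo$ is a surjective image of a countable set. For $(4) \Rightarrow (5)$: the Kolmogorov quotient map $\pseudo \to \Ury$ is surjective, so the same closure property applies. Finally, for $(5) \Rightarrow (1)$, exhibit $\od$ as a (set-theoretic) retract of $\Ury$: for $\alpha \in \od$ the tuple ${}_{1}(\et, \alpha)$ is permissible, because $\et \in \pseudo[0]$ and its single permissibility condition is $d(\et,\et) \dis \alpha \leq \alpha$, i.e. $\alpha \leq \alpha$; hence by Theorem~\ref{Theorem: distances_as_described_in_Urysohn_tuples} we get $d\big({}_{1}(\et, \alpha), \et\big) = \alpha$. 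Thus $\alpha \mapsto [{}_{1}(\et, \alpha)]$ is a section of the map $\Ury \to \od$, $x \mapsto d(x, [\et])$, making the latter a surjection; one last application of the first closure property gives that $\od$ is countable, closing the cycle.
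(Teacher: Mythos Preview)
Your proof is correct and shares the steps $(1)\Rightarrow(2)$, $(3)\Rightarrow(4)$, $(4)\Rightarrow(5)$, $(5)\Rightarrow(1)$ with the paper. The one genuine difference is how you link $(2)$ and $(3)$. The paper does \emph{not} prove $(2)\Rightarrow(3)$ directly; instead it proves $(2)\Rightarrow(1)$ (via the surjection $\finseq{(\NN+\od)}\to\od$ sending a list starting with $\lambda\in\od$ to $\lambda$ and everything else to $0$) and then $(1)\Rightarrow(3)$ via Lemma~\ref{Lemma: countable_od_implies_countable_protoUrysohn}, which builds surjections $\NN\to\protoaux[n]$ level by level. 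Your route --- observing that membership in $\proto\subseteq\finseq{(\NN+\od)}$ is decidable because parsing an encoding only uses decidable equality on $\NN$ and the decidable case split on the coproduct $\NN+\od$ --- is a clean alternative that makes the ``obvious'' implication $(2)\Rightarrow(3)$ go through constructively after all, and it renders the separate Lemma~\ref{Lemma: countable_od_implies_countable_protoUrysohn} unnecessary for this result. The paper's route, on the other hand, has the virtue of never needing to revisit the details of the encoding.
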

		\begin{proof}
			\begin{itemize}
				\item\proven{$(1 \impl 2)$}
					If $\od$ is countable, so is $\NN + \od$. Now use Lemma~\ref{Lemma: finite_sequences_over_countable_alphabet_are_countable}.
				\item\proven{$(2 \impl 1)$}
					The image of a countable set is countable, and $\od$ is the image of $\finseq{(\NN + \od)}$ via the map which takes the empty list and lists which start with a natural number to $0$, and a list which starts with $\lambda \in \od$ to $\lambda$. Obviously this map is surjective.
				\item\proven{$(1 \impl 3)$}
					Lemma~\ref{Lemma: countable_od_implies_countable_protoUrysohn} gives a surjection $\NN \times \NN \to \proto$. Precompose it with a bijection $\NN \ism \NN \times \NN$.
				\item\proven{$(3 \impl 4)$}
					Because $\pseudo$ is an image (even a retract) of $\proto$ by Theorem~\ref{Theorem: classification_of_permissible_tuples}.
				\item\proven{$(4 \impl 5)$}
					Because $\Ury$ is an image of $\pseudo$ via the Kolmogorov quotient map.
				\item\proven{$(5 \impl 1)$}
					Because $\od$ is an image of $\Ury$ via the map $a \mapsto d(a, [{}_{0}()])$. This is indeed a surjective mapping, as for every $\lambda \in \od$ we have $d\big([{}_{1}({}_{0}(), \lambda)], [{}_{0}()]\big) = \lambda$.
			\end{itemize}
		\end{proof}
		
		\begin{theorem}\label{Theorem: uniqueness_of_countable_Urysohn}
			Suppose $(U', d', \ext[\od]'\colon \prms[\od]' \to U')$ and $(U'', d'', \ext[\od]''\colon \prms[\od]'' \to U'')$ are $\od$-Urysohn spaces. Then there exists a mapping which takes any surjections $s'\colon \NN \to U'$ and $s''\colon \NN \to U''$ to an isometric isomorphism $U' \to U''$.\footnote{A classical mathematician writing this theorem would likely also add the assumption that some surjections $\NN \to U'$, $\NN \to U''$ actually exist. But the theorem is still true even if $U'$ and/or $U''$ aren't countable; we just get a mapping with an empty domain.}
		\end{theorem}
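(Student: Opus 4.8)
The plan is to run a canonical \emph{back-and-forth} argument. I will build, by recursion on $k \in \NN$, a chain of finite partial isometries $h_k \colon U' \parto U''$, each carried not as an abstract function but concretely as a pair of finite lists $(x_0, \ldots, x_{m-1})$ in $U'$ and $(y_0, \ldots, y_{m-1})$ in $U''$ of some common length $m$ with $d''(y_i, y_j) = d'(x_i, x_j)$ for all $i, j$ (so $h_k$ is automatically an isometry, hence injective since $U'$ is metric). Start with $h_0$ the pair of empty lists. At an even step $k = 2n$ I append $s'_n$ to the $U'$-list, computing the matching $U''$-entry via $\ext[\od]''$; at an odd step $k = 2n+1$ I append $s''_n$ to the $U''$-list, computing the matching $U'$-entry via $\ext[\od]'$. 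Keeping the lists (rather than the underlying subsets) as the carried data is precisely what makes the recursion a genuine function of the inputs $s', s''$ and avoids any appeal to decidable equality on $U'$ or $U''$, or to choice.

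For the even step, write the current lists as above and set $\chi_i \dfeq d'(s'_n, x_i)$. The tuple $(y_i, \chi_i)_{i \in \NN_{<m}}$ lies in $\prms[\od]''$: since $d''(y_i, y_j) = d'(x_i, x_j)$, the required inequality $d''(y_i, y_j) \dis \chi_i \leq \chi_j$ is, by Lemma~\ref{Lemma: dis_upper_bounds}, equivalent to the two triangle inequalities $d'(x_i, x_j) \leq d'(s'_n, x_j) + d'(s'_n, x_i)$ and $d'(s'_n, x_i) \leq d'(s'_n, x_j) + d'(x_i, x_j)$ in $U'$. Hence I may put $y_m \dfeq \ext[\od]''\big((y_i, \chi_i)_{i \in \NN_{<m}}\big)$; by the defining property of a $\od$-Urysohn space $d''(y_m, y_i) = \chi_i = d'(s'_n, x_i)$ and $d''(y_m, y_m) = 0 = d'(s'_n, s'_n)$, so the extended pair of lists still has matching internal distances. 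In particular, if $s'_n$ coincides in $U'$ with some old $x_i$, then $d''(y_m, y_i) = d'(s'_n, x_i) = 0$, so $y_m = y_i$ in the metric space $U''$: the recursion never contradicts itself. The odd step is the mirror image using $\ext[\od]'$.

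Now let $h \colon U' \parto U''$ be the union of the chain: its domain consists of the points occurring in some $U'$-list, and $h$ sends such a point to the correspondingly indexed $U''$-entry, which is consistent across $k$ because the lists only grow. For any $x \in U'$, surjectivity of $s'$ gives $n$ with $s'_n = x$, and $x$ is appended at step $2n$, so $h$ is total; symmetrically, surjectivity of $s''$ makes $h$ surjective onto $U''$. Any two points of the domain already lie in a single $h_k$, so $d''(h(x), h(x')) = d'(x, x')$: thus $h$ is an isometry, hence injective, hence an isometric isomorphism, whose inverse is again an isometry. Every choice in the construction is forced once $s'$ and $s''$ are fixed, so $(s', s'') \mapsto h$ is the desired mapping; it assumes neither that $U'$, $U''$ are inhabited nor that such surjections exist — if they do not, the domain of the mapping is simply empty.

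The parts deserving care are all constructive bookkeeping rather than deep mathematics: (i) maintaining $h_k$ as explicit lists so the recursion is a bona fide function and no decidable equality or choice principle creeps in; (ii) checking at each step that the tuple fed to $\ext[\od]'$ or $\ext[\od]''$ really lies in the relevant $\prms$, which is exactly Lemma~\ref{Lemma: dis_upper_bounds} applied to triangle inequalities already available; and (iii) observing that appending a point already present up to distance $0$ is harmless, so $h$ is well defined on the metric quotient. The remaining facts — that a growing chain of isometries unions to an isometry, and that a surjective isometry with metric domain is an isometric isomorphism — are routine.
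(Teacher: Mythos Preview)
Your proof is correct and follows essentially the same back-and-forth strategy as the paper: both maintain the partial isometry as a pair of parallel finite lists (so no decidable equality or choice is needed), extend one side at a time via $\ext[\od]'$ or $\ext[\od]''$, and rely on the metric property to absorb repeated entries. The only cosmetic difference is that the paper performs the forth and back extensions together at each stage $n$ (growing the lists by two), whereas you alternate them at even and odd steps; the verifications are otherwise identical.
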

		\begin{proof}
			The standard proof using the so-called \df{back-and-forth method} goes as follows: inductively construct mutually inverse isometries between $U'$ and $U''$ by extending one isometry (using $\ext[\od]''$) over the first element in $s'$ on which it is not yet defined (extending also the other one to be inverse to it), then extending the other isometry (using $\ext[\od]'$) similarly. Continue this ad infinitum. Since $s'$ and $s''$ are surjective, we exhaust all elements in $U'$ and $U''$, thus obtaining surjective isometries between metric spaces, hence isometric isomorphisms.
			
			Hidden in this proof is the implicit assumption that $U'$ and $U''$ have decidable equality. We adopt the proof to work constructively as well.
			
			We inductively on $n \in \NN$ define $t'_n\colon \NN_{2 n} \to U'$, $t''_n\colon \NN_{2 n} \to U''$ and isometries $f_n\colon \im(t'_n) \to U''$ and $g_n\colon \im(t''_n) \to U'$ as follows. Let $t'_0$, $t''_0$, $f_0$ and $g_0$ be the empty maps (the only possibility, as they have the empty domain). Now suppose $t'_k$, $t''_k$, $f_k$, $g_k$ have been defined for all $k \in \NN_{\leq n}$, and denote
			$$a \dfeq \ext[\od]''\Big(\Big(f_n\big(t'_n(i)\big), d'\big(s'(n), t'_n(i)\big)\Big)_{i \in \NN_{< 2n}}\Big),$$
			$$b \dfeq \ext[\od]'\Big(\Big(g_n\big(t''_n(i)\big), d''\big(s''(n), t''_n(i)\big)\Big)_{i \in \NN_{< 2n}} \cnct \big(a, d''(s''(n), a)\big)\Big).$$
			We used $\ext[\od]'$ and $\ext[\od]''$ on elements of $\prms[\od]'$ and $\prms[\od]''$ because we took distances from metric spaces. Define:
			$$\rstr{t'_{n+1}}_{\NN_{< 2n}} \dfeq t'_n, \quad t'_{n+1}(2n) \dfeq s'(n), \quad t'_{n+1}(2n+1) \dfeq b,$$
			$$\rstr{t''_{n+1}}_{\NN_{< 2n}} \dfeq t''_n, \quad t''_{n+1}(2n) \dfeq a, \quad t''_{n+1}(2n+1) \dfeq s''(n),$$
			$$\rstr{f_{n+1}}_{\im(t'_n)} \dfeq f_n, \quad f(t'_{n+1}(2n)) \dfeq a, \quad f(t'_{n+1}(2n+1)) \dfeq s''(n),$$
			$$\rstr{g_{n+1}}_{\im(t''_n)} \dfeq g_n, \quad g(t''_{n+1}(2n)) \dfeq s'(n), \quad g(t''_{n+1}(2n+1)) \dfeq s''(n).$$
			The defining property of $\ext[\od]'$ and $\ext[\od]''$ implies that $f_{n+1}$ and $g_{n+1}$ are well defined (for example, $t'_{n+1}(2n)$ might equal some previous term, but then their distance is zero, as is the distance of their $f$-images which then match), and that they are isometries.
			
			Let $f$ and $g$ be colimits of $f_n$s and $g_n$s, respectively. We see that they are total on $U'$, $U''$ since $s'$, $s''$ are surjective and $s'(\NN_{< n}) \subseteq \im(t'_n)$ and $s''(\NN_{< n}) \subseteq \im(t''_n)$. By construction they are mutually inverse isometries between $U'$ and $U''$.
		\end{proof}
		
		\begin{corollary}\label{Corollary: uniqueness_of_countable_Urysohn}
			Up to isometric isomorphism there exists at most one countable $\od$-Urysohn space. Thus if $\od$ is countable, then $\Ury$ is (up to isometric isomorphism) the sole countable $\od$-Urysohn space.
		\end{corollary}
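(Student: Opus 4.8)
The plan is to reduce both assertions to Theorem~\ref{Theorem: uniqueness_of_countable_Urysohn}. For the first sentence --- that any two countable $\od$-Urysohn spaces are isometrically isomorphic --- the only gap to fill is that Theorem~\ref{Theorem: uniqueness_of_countable_Urysohn} asks for \emph{surjections} $\NN \to U'$ and $\NN \to U''$, whereas ``countable'' merely supplies maps into $U' + \one$ and $U'' + \one$ whose images contain $U'$ and $U''$. To bridge this I would first note that every $\od$-Urysohn space $(U', d', \ext[\od]')$ is inhabited: the empty sequence vacuously belongs to $\prms[\od]'$, so its image under $\ext[\od]'$ is an element of $U'$. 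As recalled in Section~\ref{Section: introduction}, an inhabited countable set admits a surjection from $\NN$ (one replaces the placeholder element by the inhabiting witness). Hence, given two countable $\od$-Urysohn spaces, I would choose such surjections and feed them to Theorem~\ref{Theorem: uniqueness_of_countable_Urysohn}, obtaining the desired isometric isomorphism.

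For the second sentence I would assume $\od$ countable and verify that $(\Ury, d, \ext[\od])$ is itself a \emph{countable} $\od$-Urysohn space. That it is a $\od$-Urysohn space at all is already established: $\Ury$ is a $\od$-metric space, being the Kolmogorov quotient of the $\od$-pseudometric space $\pseudo$, and $\ext[\od]$ has the required extension property by Lemma~\ref{Lemma: extend_isometry_by_one_point_into_uncompleted_Urysohn}. Its countability is precisely the implication from $\od$ countable to $\Ury$ countable contained in Lemma~\ref{Lemma: countability_of_od_and_Urysohn}. Applying the first sentence then gives that $\Ury$ is, up to isometric isomorphism, the sole countable $\od$-Urysohn space.

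I do not anticipate a genuine obstacle; this is essentially bookkeeping on top of Theorem~\ref{Theorem: uniqueness_of_countable_Urysohn}. The one place demanding constructive care is the passage from ``countable'' to ``surjection from $\NN$'', which is why the small observation that $\od$-Urysohn spaces are inhabited --- witnessed by applying the extension map to the empty tuple --- is the load-bearing step: without it Theorem~\ref{Theorem: uniqueness_of_countable_Urysohn} could not be invoked. Everything else (that $\Ury$ is metric, that $\ext[\od]$ behaves as required, that countability of $\od$ transfers to $\Ury$) has already been proved earlier in this section.
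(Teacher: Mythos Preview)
Your proof is correct and follows essentially the same route as the paper: establish that any $\od$-Urysohn space is inhabited, upgrade countability to a surjection from $\NN$, invoke Theorem~\ref{Theorem: uniqueness_of_countable_Urysohn}, and then use Lemma~\ref{Lemma: countability_of_od_and_Urysohn} for the second sentence. The only cosmetic difference is the witness for inhabitedness: you apply $\ext[\od]'$ to the empty tuple directly, whereas the paper cites Corollary~\ref{Corollary: countable_Urysohn_embedding_property} (embedding $\one$); both are valid and amount to the same thing.
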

		\begin{proof}
			Any $\od$-Urysohn space has to be inhabited (as every countable $\od$-metric space, including $\one$, can be embedded into it by Corollary~\ref{Corollary: countable_Urysohn_embedding_property}), thus for a countable one there exists a surjection from $\NN$ onto it. The first part of the corollary now follows from the preceding theorem. For the second one use the fact that $\Ury$ is indeed a $\od$-Urysohn space, and moreover countable by Lemma~\ref{Lemma: countability_of_od_and_Urysohn} if $\od$ is.
		\end{proof}

	\section{Real Numbers and Metric Completion}\label{Section: reals_and_completions}
	
		We want to construct the Urysohn space as the completion of $\Ury$ for a suitable $\od$ which leads us to the question what is a completion of a metric space. Classically one constructs a completion as the set of equivalence classes of Cauchy sequences; call this the \df{Cauchy completion}, and call a metric space in which every Cauchy sequence converges \df{Cauchy complete}. This construction is problematic in our case for two reasons. First of all, constructively (when not assuming countable choice) this theory does not work well since the Cauchy completion need not be Cauchy complete~\cite{MALQ:MALQ200710007}. Second, even if we are not concerned about constructivism, there is a method of completing a space which lends itself far better to our construction of the Urysohn space (and is arguably simpler, in particular no quotients are involved). Before we can present it however, we need to say something about real numbers.
		
		Normally one does not bother with how the reals are explicitly constructed; one merely uses the fact that they are a field with all the rest of the structure. We will not have this luxury; we will in some cases need to explicitly prove that something is/determines a real number. However, in the spirit of proving our theorems in as general setting as we can, we prefer not to choose a specific model of reals, as different varieties of constructivism use different ones. Therefore, instead of choosing a construction of reals, we make some postulates about them.
		
		\begin{postulate}
			The set of real numbers $\RR$ is a halved lattice ring. Moreover, it is equipped with a relation $<$ (the strict order) which satisfies the following conditions for all $a, b, x \in \RR$.
			\begin{itemize}
				\item
					$\lnot(a < b) \iff b \leq a$
				\item
					$\lnot(a < b \land b < a)$ \quad (asymmetry)
				\item
					$a < b \implies a < x \lor x < b$ \quad (cotransitivity)
				\item
					$a < b \iff a + x < b + x$ \quad (additivity)
				\item
					$0 < x \implies (a < b \iff a \cdot x < b \cdot x)$
				\item
					$x < 0 \lor 0 < x \iff \xsome{y}{\RR}{x \cdot y = 1}$
				\item
					$0 < x \iff \xsome{n}{\NN}{2^{-n} \leq x}$
			\end{itemize}
		\end{postulate}
		The last condition is (in the presence of others) actually the Archimedean axiom in disguise.
		
		\begin{corollary}
			\
			\begin{enumerate}
				\item
					$\RR$ is a commutative group with absolute value, and therefore also a metric space with the Euclidean metric.
				\item
					$\nnr$ is a halved disring.
			\end{enumerate}
		\end{corollary}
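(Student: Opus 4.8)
The plan is to derive both items by unwinding the definition of a halved lattice ring and invoking the classification results of Section~\ref{Section: dis_structures}; no new ideas are needed.

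For item~1: the additive structure of $\RR$, together with the halving map and the lattice order, is by hypothesis a halved commutative lattice group. Hence the ($\Leftarrow$) direction of Proposition~\ref{Proposition: absolute_value_and_lattice_groups} supplies an absolute value on it, explicitly $|a| = \sup\{a, -a\}$, so $\RR$ is a commutative group with absolute value. It then remains to see that the Euclidean metric $d(a, b) \dfeq |b - a|$ is an $\nnr$-metric: its values lie in $\nnr$ since $0 \leq |x|$ always; $d(b, a) = |{-(b - a)}| = |b - a| = d(a, b)$; $d(a, a) = |0| = 0$; the triangle inequality is exactly the inequality $|a - c| \leq |a - b| + |b - c|$ already recorded for commutative groups with absolute value; and $d(a, b) = 0$ gives $b - a = 0$, \ie $a = b$. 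Thus $(\RR, d)$ is a metric space in the ordinary sense.

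For item~2: $\nnr$ is by definition the non-negative part $\RR_{\geq 0}$ of $\RR$, so by Proposition~\ref{Proposition: classification_of_halved_dis} it suffices to exhibit $\RR$ as a halved commutative unital ring with absolute value. Commutativity, the unit and the ring axioms are part of being a halved lattice ring; the halving map is part of the data; the absolute value was constructed in item~1. The one compatibility that still needs checking is homogeneity of the absolute value under multiplication by non-negative elements, \ie $|x \cdot y| = |x| \cdot y$ for $y \geq 0$, which is what becomes $(a \dis b) \cdot x = (a \cdot x) \dis (b \cdot x)$ on the non-negative part. This I would extract from the postulate: if $0 \leq y$, then either $y = 0$ (trivial) or $0 < y$; in the latter case $a \leq b$ forces $a \cdot y \leq b \cdot y$, since $b \cdot y < a \cdot y$ would, via the clause $0 < y \implies (a < b \iff a \cdot y < b \cdot y)$, give $b < a$, contradicting $\lnot(b < a)$. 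Using that $y$ (hence also $y^{-1}$) is strictly positive, this monotonicity upgrades to $\sup\{a, -a\} \cdot y = \sup\{a \cdot y, -(a \cdot y)\}$, \ie $|a| \cdot y = |a \cdot y|$. With homogeneity in place, Proposition~\ref{Proposition: classification_of_halved_dis} yields that $\nnr$ is a halved disring.

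The argument is essentially bookkeeping; the only part that calls for care is keeping it constructive. Concretely, one must derive the monotonicity of multiplication by a non-negative element — and the interaction of suprema with such multiplication — purely from the strict order via the clause $\lnot(a < b) \iff b \leq a$, without any appeal to decidability, and confirm that the two order-compatibility conditions of a lattice group (monotonicity of $+$ and $\sup\{a + a, b + b\} = \sup\{a, b\} + \sup\{a, b\}$) are indeed part of the notion of a halved lattice ring, or else establish them in the same fashion.
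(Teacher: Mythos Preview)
Your approach matches the paper's: both items are reduced to Propositions~\ref{Proposition: absolute_value_and_lattice_groups} and~\ref{Proposition: classification_of_halved_dis}, and the paper's proof is literally just that citation. You have simply unpacked more of the bookkeeping than the author does.

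One genuine issue, however: in deriving homogeneity you write ``if $0 \leq y$, then either $y = 0$ (trivial) or $0 < y$'', and this dichotomy is precisely what the paper warns is \emph{not} available constructively for real numbers. You flag the need to avoid decidability in your last paragraph, but the argument you actually give still rests on it. The repair is to prove directly that $u \geq 0$ and $v \geq 0$ imply $u v \geq 0$: assume $u v < 0$; then $u v$ is apart from $0$, hence invertible, hence so are $u$ and $v$, whence $u < 0 \lor 0 < u$; from $u \geq 0$ we get $0 < u$, and likewise $0 < v$; the postulate $0 < u \implies (a < b \iff a u < b u)$ with $a = 0$, $b = v$ then gives $0 < u v$, contradicting asymmetry. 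This yields monotonicity of multiplication by non-negatives without any case split, and from there $\sup\{a,-a\}\cdot y = \sup\{a y, -a y\}$ follows as you indicate.
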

		\begin{proof}
			By Propositions~\ref{Proposition: absolute_value_and_lattice_groups} and~\ref{Proposition: classification_of_halved_dis}.
		\end{proof}
		
		The second postulate describes the property that a real number can be given in terms of its (arbitrarily good) lower and upper approximations. Essentially we are saying that $\RR$ is Dedekind complete.
		\begin{postulate}\label{Postulate: reals_Dedekind_complete}
			Let $L, U \subseteq \RR$ have the properties
			\begin{itemize}
				\item
					$\xall{q}{L}\xall{r}{U}{q \leq r}$,
				\item
					$\xall{\epsilon}{\RR_{> 0}}\xsome{q}{L}\xsome{r}{U}{r \leq q + 2 \epsilon}$.
			\end{itemize}
			Then there exists a unique $x \in \RR$ such that $\sup{L} = x = \inf{U}$.\footnote{Obviously multiplying $\epsilon$ by $2$ in the second condition doesn't change the content of the statement, but this form is more useful since in practice we usually determine both $r$ and $q$ up to $\epsilon$ away from $x$, and then they differ by as much as twice this amount.}
		\end{postulate}
		
		Let us now return to metric spaces. The presence of the relation $<$ on the reals (something which we didn't have in a general disring, but see Subsection~\ref{Subsection: completion_of_disgroups}) lets us no longer defer the standard metric definitions which use it.
		\begin{definition}
			Let $\mtr{X} = (X, d_\mtr{X})$ be a protometric space.
			\begin{itemize}
				\item
					The subset
					$$\ball[\mtr{X}]{x}{r} \dfeq \st{y \in X}{d_\mtr{X}(x, y) < r}$$
					is called the (\df{open}) \df{ball with the center $x \in X$ and the radius $r \in \RR$}.
				\item
					A subset $A \subseteq X$ is \df{dense} in the space $\mtr{X}$ when every ball with a positive radius intersects it, \ie when
					$$\xall{x}{X}\xall{r}{\RR_{> 0}}\xsome{a}{A}{d_\mtr{X}(x, a) < r}$$
					holds.
			\end{itemize}
		\end{definition}
		
		\begin{lemma}\label{Lemma: density_of_od}
			Let $\od$ be a halved subdisgroup of $\nnr$ with $1 \in \od$ (\eg $\od$ is a halved subdisring).
			\begin{enumerate}
				\item
					$\od$ contains all non-negative diadic rational numbers, \ie $\xall{m, n}{\NN}{\frac{m}{2^n} \in \od}$.
				\item
					$\od$ is dense in $\nnr$.
			\end{enumerate}
		\end{lemma}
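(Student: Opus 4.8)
The plan is to prove the two parts separately; part~1 is essentially bookkeeping, and part~2 is where the constructive care is needed.

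For part~1: $(\od, +, 0)$ is a commutative monoid containing $1$, so an easy induction gives $m = 1 + \cdots + 1 \in \od$ for every $m \in \NN$ (with $m = 0$ yielding $0 \in \od$), i.e.\ $\NN \subseteq \od$. The halving map on $\nnr$ is division by $2$ (division by $2$ is a halving map, and by Proposition~\ref{Proposition: characterization_of_halving_map} there is at most one), and $\od$, being halved, is closed under it. Iterating the halving map $n$ times on $m \in \od$ yields $\frac{m}{2^{n}} \in \od$, which is exactly part~1.

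For part~2, recall that the metric on $\nnr$ is $x \dis y = |x - y|$, so density means: for all $x \in \nnr$ and $r \in \RR_{>0}$ there is a non-negative dyadic $a$ with $|x - a| < r$ — and such an $a$ lies in $\od$ by part~1. The strategy is the familiar ``grid'' argument carried out constructively. Using the Archimedean axiom (the last real-number postulate) I would first choose $n \in \NN$ with $2^{-n} \leq r$, and then, applying it to $\frac{1}{2^{n} x + 1}$ (which is positive since $x \geq 0$), choose $M \in \NN$ with $x < M \cdot 2^{-n}$ (forcing $M \geq 1$). The grid points $j \cdot 2^{-n}$ for $0 \leq j \leq M$ are all non-negative dyadics, and it remains to locate $x$ between two consecutive ones.

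The last step is the real obstacle: one cannot simply ``take the largest $j$ with $j \cdot 2^{-n} \leq x$'', since $\leq$ is not decidable on $\RR$. Instead, for each $j \in \{1, \ldots, M\}$ cotransitivity of $<$ applied to $(j-1)2^{-n} < j \cdot 2^{-n}$ gives $(j-1)2^{-n} < x \lor x < j \cdot 2^{-n}$, and finite choice produces $c\colon \{1, \ldots, M\} \to \NN_{<2}$ with $c(j) = 0 \implies (j-1)2^{-n} < x$ and $c(j) = 1 \implies x < j \cdot 2^{-n}$. Since $c$ lives in a set with decidable equality over a finite domain, it is decidable whether some $c(j) = 1$, and if so there is a least such $j_{0}$. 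If every $c(j) = 0$, then $c(M) = 0$ together with $x < M \cdot 2^{-n}$ gives $(M-1)2^{-n} < x < M \cdot 2^{-n}$, so $a \dfeq (M-1)2^{-n}$ works; if $j_{0} = 1$, then $0 \leq x < 2^{-n}$, so $a \dfeq 0$ works; and if $j_{0} \geq 2$, minimality gives $c(j_{0}-1) = 0$, so $(j_{0}-2)2^{-n} < x < j_{0} \cdot 2^{-n}$ and $a \dfeq (j_{0}-1)2^{-n}$ works. In every case $a \in \od$ and $|x - a| < 2^{-n} \leq r$, finishing part~2. Everything apart from this finite search — the monoid/halving bookkeeping and the two appeals to the Archimedean axiom — is routine.
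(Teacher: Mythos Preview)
Your proof is correct. Part~1 matches the paper's argument exactly: closure under addition gives $\NN \subseteq \od$, then iterated halving gives all non-negative dyadics. For part~2 the paper simply writes ``Because non-negative dyadic rationals are dense in $\nnr$'' and stops, treating this as a background fact about the reals; you instead supply a full constructive derivation from the postulates, using the Archimedean axiom to pick a fine enough grid and a bound $M$, and then cotransitivity plus finite choice to locate $x$ between consecutive grid points without deciding $\leq$ on $\RR$. So the route is the same (dyadics are in $\od$, dyadics are dense), but you unpack the second step where the paper does not. What you gain is an explicit, self-contained argument that works in the paper's choice-free setting; what the paper's one-liner gains is brevity, relying on the reader to know or accept that density of the dyadics is constructively unproblematic.
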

		\begin{proof}
			\begin{enumerate}
				\item
					Since $\od$ contains $0$ and $1$ and is closed for addition, it contains all natural numbers. Due to the halving map it then contains all non-negative diadic rationals.
				\item
					Because non-negative diadic rationals are dense in $\nnr$.
			\end{enumerate}
		\end{proof}
		
		In addition to the maps in Definition~\ref{Definition: maps}, the following classes of maps will also be relevant to us.
		\begin{definition}\label{Definition: more_maps}
			The map $f\colon X \to Y$ between protometric spaces $\mtr{X} = (X, d_\mtr{X})$, $\mtr{Y} = (Y, d_\mtr{Y})$ is:
			\begin{itemize}
				\item
					a \df{dense isometry} when it is an isometry with a dense image in $\mtr{Y}$,
				\item
					a \df{Lipschitz map} when there exists a \df{Lipschitz coefficient} $C \in \RR_{> 0}$, such that for every $x, y \in X$ we have $d_\mtr{Y}\big(f(x), f(y)\big) \leq C \cdot d_\mtr{X}(x, y)$,
				\item
					an \df{area Lipschitz map} when there exists $R \in \RR_{> 0}$ such that $f$ is Lipschitz on all balls of radius $R$ in $X$, that is,
			 $$\xsome{R}{\RR_{> 0}}\xall{x}{X}\xsome{C}{\RR_{> 0}}\all{y}{\ball[\mtr{X}]{x}{r}}{d_\mtr{Y}(f(x), f(y)) \leq C \cdot d_\mtr{X}(x, y)},$$
				\item
					\df{continuous} when it satisfies the usual $\epsilon$-$\delta$ condition
					$$\xall{x}{X}\xall{\epsilon}{\RR_{> 0}}\xsome{\delta}{\RR_{> 0}}\xall{y}{\ball[\mtr{X}]{x}{\delta}}{d_\mtr{Y}(f(x), f(y)) < \epsilon}.$$
			\end{itemize}
		\end{definition}
		It should be clear that these properties imply the later ones.
		
		\begin{remark}
			In the definition of a Lipschitz map we purposefully restrict the coefficient to be a positive number, because we often divide by it. It changes nothing, as the Lipschitz coefficient can always be increased. The definition of an area Lipschitz map is new. Since we are saying that a map is Lipschitz on some balls, one might also consider the name \df{locally Lipschitz}, but this would be misleading, I think. The point of a local property is that it holds on arbitrarily small balls, but here the purpose is quite different: we want the Lipschitz property on sufficiently \emph{large} balls.
		\end{remark}
		
		As is well known, a continuous map between metric spaces is determined already by its values on a dense subset. We recall the proof just so that we notice that the domain of the map can more general.
		\begin{lemma}\label{Lemma: uniqueness_of_continuous_extensions}
			Let $f\colon X \to Y$ be a continuous map from a protometric space $\mtr{X} = (X, d_\mtr{X})$ to a metric space $\mtr{Y} = (Y, d_\mtr{Y})$. Let $i\colon X \to X'$ be a dense isometry between protometric spaces $\mtr{X}$, $\mtr{X'} = (X', d_\mtr{X'})$. Then there exists at most one continuous map $X' \to Y$ which extends $f$, \ie for all continuous maps $g, h\colon X' \to Y$ the statement $g \circ i = f = h \circ i$ implies $g = h$.
		\end{lemma}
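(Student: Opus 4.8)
The plan is to run the classical $\epsilon/3$ argument, taking care that it goes through without the law of excluded middle. Suppose $g, h\colon X' \to Y$ are continuous with $g \circ i = f = h \circ i$. Since functions are equal once they agree pointwise, and since $\mtr{Y}$ is a \emph{metric} (not merely protometric) space, so that $d_\mtr{Y}(u, v) = 0$ forces $u = v$, it suffices to prove $d_\mtr{Y}\big(g(x'), h(x')\big) = 0$ for an arbitrary $x' \in X'$. As this distance is a non-negative real and $\leq$ is a partial order on $\RR$, it is enough to show $\lnot\big(0 < d_\mtr{Y}(g(x'), h(x'))\big)$, using the postulated equivalence $\lnot(0 < t) \iff t \leq 0$ and antisymmetry.

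So fix $x' \in X'$ and assume, toward a contradiction, that $\delta_0 \dfeq d_\mtr{Y}\big(g(x'), h(x')\big)$ satisfies $0 < \delta_0$; put $\epsilon \dfeq \frac{\delta_0}{3}$, which is positive. Continuity of $g$ and of $h$ at $x'$ produces $\delta_g, \delta_h \in \RR_{>0}$ with $d_\mtr{Y}\big(g(x'), g(y')\big) < \epsilon$ for all $y' \in \ball[\mtr{X'}]{x'}{\delta_g}$, and likewise for $h$ with $\delta_h$. Let $\delta \dfeq \inf\{\delta_g, \delta_h\}$; it is again strictly positive (apply the Archimedean postulate $0 < x \iff \xsome{n}{\NN}{2^{-n} \leq x}$ to $\delta_g$ and $\delta_h$ and take the larger exponent). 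Density of the image of $i$ now yields $a \in X$ with $d_\mtr{X'}\big(x', i(a)\big) < \delta$, so $i(a)$ lies in both balls $\ball[\mtr{X'}]{x'}{\delta_g}$ and $\ball[\mtr{X'}]{x'}{\delta_h}$.

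Since $g\big(i(a)\big) = f(a) = h\big(i(a)\big)$, the triangle inequality in $\mtr{Y}$ gives
$$\delta_0 = d_\mtr{Y}\big(g(x'), h(x')\big) \leq d_\mtr{Y}\big(g(x'), g(i(a))\big) + d_\mtr{Y}\big(g(i(a)), h(i(a))\big) + d_\mtr{Y}\big(h(i(a)), h(x')\big) < \epsilon + 0 + \epsilon = \tfrac{2}{3}\delta_0,$$
hence $\frac{1}{3}\delta_0 < 0$ by additivity, contradicting $0 < \delta_0$ via asymmetry of $<$. This refutes $0 < \delta_0$, so $\delta_0 \leq 0$; combined with $\delta_0 \geq 0$ we obtain $\delta_0 = 0$, and therefore $g(x') = h(x')$. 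Since $x'$ was arbitrary, $g = h$.

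I do not expect a genuine obstacle: the statement is the standard ``a continuous map is determined on a dense subset'', and the only points needing attention are constructive bookkeeping — phrasing the conclusion as the negation $\lnot(0 < \delta_0)$ rather than ``$\delta_0$ is smaller than every positive $\epsilon$'', and checking positivity of $\inf\{\delta_g, \delta_h\}$ — both settled directly from the postulates on $\RR$. Note also that the hypothesis that $i$ is an isometry is never used; only its density matters, together with the fact that the \emph{codomain} $\mtr{Y}$ is metric.
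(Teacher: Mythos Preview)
Your proof is correct and follows essentially the same route as the paper's: assume the distance between $g(x')$ and $h(x')$ is positive, use continuity of $g$ and $h$ together with density of $i$ to derive a strict inequality contradicting this, and conclude via tightness that the distance is zero. The only cosmetic differences are that the paper divides by $2$ rather than $3$ and simply asserts the existence of a common $\delta$ without explicitly forming $\inf\{\delta_g,\delta_h\}$.
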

		\begin{proof}
			Take any $a \in X'$ and suppose $d_\mtr{Y}(g(a), h(a)) > 0$. Let $\epsilon \dfeq \frac{d_\mtr{Y}(g(a), h(a))}{2}$. By continuity of $g$ and $h$ there exists $\delta \in \RR_{> 0}$, so that $d_\mtr{Y}(g(a), g(b)) < \epsilon$ and $d_\mtr{Y}(h(a), h(b)) < \epsilon$ for all $b \in X'$ less that $\delta$ away from $a$. Let $x \in X$ be such, that $d_\mtr{X'}(i(x), a) < \delta$. Then
			$$d_\mtr{Y}(g(a), h(a)) \leq d_\mtr{Y}(g(a), g(i(x))) + d_\mtr{Y}(g(i(x)), h(i(x))) + d_\mtr{Y}(h(i(x)), h(a)) <$$
			$$< \epsilon + 0 + \epsilon = d_\mtr{Y}(g(a), h(a)),$$
			a contradiction, so $d_\mtr{Y}(g(a), h(a)) = 0$. Since $\mtr{Y}$ is metric, $g(a) = h(a)$.
		\end{proof}
		
		In a similar vein we can test the relation $\leq$ between continuous maps just on dense subsets of their domain.
		\begin{corollary}\label{Corollary: comparison_on_dense_subsets}
			Let $\mtr{X} = (X, d_\mtr{X})$ be a protometric space and $A \subseteq X$ its dense subset.
			\begin{enumerate}
				\item\label{Corollary: comparison_on_dense_subsets: leq}
					Let $f, g\colon X \to \RR$ be continuous maps, such that $f(a) \leq g(a)$ for all $a \in A$. Then $f(x) \leq g(x)$ for all $x \in X$.
				\item\label{Corollary: comparison_on_dense_subsets: suprema}
					Let $f\colon X \to \RR$ be a continuous map. If the supremum $s_A \dfeq \sup\st{f(a)}{a \in A}$ exists (as a real number), then so does $s_X \dfeq \sup\st{f(x)}{x \in X}$, and they are equal.
			 \end{enumerate}
		\end{corollary}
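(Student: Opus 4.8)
The plan is to derive both parts from a single pointwise squeezing argument, carried out \emph{negatively} so that neither excluded middle nor countable choice is needed. For part~\ref{Corollary: comparison_on_dense_subsets: leq} I would fix $x \in X$ and prove $f(x) \leq g(x)$ by establishing $\lnot\bigl(g(x) < f(x)\bigr)$ and then invoking the postulate $\lnot(a < b) \iff b \leq a$. So assume $g(x) < f(x)$; additivity of $<$ turns this into $0 < f(x) - g(x)$, whence $\epsilon \dfeq \frac{f(x) - g(x)}{2} > 0$. Continuity of $f$ and of $g$ at $x$ each supply a positive radius; their infimum $\delta$ is again positive by the Archimedean postulate, and for every $y$ with $d_\mtr{X}(x, y) < \delta$ one has $d_\RR\bigl(f(x), f(y)\bigr) < \epsilon$ and $d_\RR\bigl(g(x), g(y)\bigr) < \epsilon$. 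Density of $A$ then yields $a \in A$ with $d_\mtr{X}(x, a) < \delta$. Using $u \leq |u|$ in $\RR$ I obtain $f(x) - \epsilon < f(a)$ and $g(a) < g(x) + \epsilon$, and since $f(x) - \epsilon = g(x) + \epsilon$ (both equal $\tfrac{f(x) + g(x)}{2}$), transitivity of $<$ gives $g(a) < f(a)$. This contradicts $f(a) \leq g(a)$, which by the same postulate is precisely $\lnot\bigl(g(a) < f(a)\bigr)$; hence $f(x) \leq g(x)$.

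For part~\ref{Corollary: comparison_on_dense_subsets: suprema} I would show that $s_A$ itself satisfies the defining universal property of $\sup \st{f(x)}{x \in X}$ in the partially ordered set $(\RR, \leq)$, namely that $s_A \leq t \iff \bigl(\forall x \in X.\ f(x) \leq t\bigr)$ for every $t \in \RR$; since suprema are unique in a partial order this forces $s_X = s_A$. The right-to-left implication is immediate, because $\forall x \in X.\ f(x) \leq t$ specializes to $\forall a \in A.\ f(a) \leq t$, and $s_A = \sup \st{f(a)}{a \in A}$ then delivers $s_A \leq t$. For left-to-right, from $s_A \leq t$ together with $f(a) \leq s_A$ for all $a \in A$ (the latter obtained by putting $t = s_A$ into the definition of $s_A$ and using reflexivity) I get $f(a) \leq t$ for all $a \in A$, and applying part~\ref{Corollary: comparison_on_dense_subsets: leq} with $g$ the (evidently continuous) constant map of value $t$ gives $f(x) \leq t$ for all $x \in X$.

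I do not expect a genuine obstacle; the only thing that needs care is constructive hygiene. One must resist proving $f(x) \leq g(x)$ by a case split on how $f(x)$ and $g(x)$ compare --- that order is not decidable on $\RR$ --- and instead refute the strict inequality, which is legitimate by the postulated equivalence $\lnot(a < b) \iff b \leq a$. Likewise the common $\delta$ should be produced through the Archimedean postulate rather than an unjustified ``minimum of two reals'', and only finitely many (in fact, single) choices of $\delta$ and of $a$ are made, so countable choice is never used. The $\epsilon$-arithmetic with the absolute value is then entirely routine.
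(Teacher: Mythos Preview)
Your proof is correct, but for part~\ref{Corollary: comparison_on_dense_subsets: leq} it takes a different route from the paper. You run a direct $\epsilon$--$\delta$ argument, refuting $g(x) < f(x)$ via continuity and density and then invoking the stability of $\leq$. The paper instead exploits that in any disgroup $a \leq b$ is the \emph{equation} $a + (a \dis b) = b$: the two continuous maps $x \mapsto f(x) + f(x) \dis g(x)$ and $x \mapsto g(x)$ agree on $A$, hence agree on all of $X$ by Lemma~\ref{Lemma: uniqueness_of_continuous_extensions}, which is precisely $f(x) \leq g(x)$. This is slicker---no $\epsilon$, $\delta$, or contradiction---and it is the whole point of phrasing the corollary as a \emph{corollary} of the preceding lemma; in effect your argument re-derives a special case of Lemma~\ref{Lemma: uniqueness_of_continuous_extensions} from scratch. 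Your approach, on the other hand, is more self-contained and does not rely on recognizing that $\leq$ on $\RR$ has an equational description via $\dis$. For part~\ref{Corollary: comparison_on_dense_subsets: suprema} the two arguments are essentially the same: apply part~\ref{Corollary: comparison_on_dense_subsets: leq} with the constant function $t$ (the paper uses $t = s_A$) to see that $s_A$ bounds $f$ on all of $X$, and the rest follows from the universal property of suprema.
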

		\begin{proof}
			\begin{enumerate}
				\item
					The maps $x \mapsto f(x) + f(x) \dis g(x)$ and $x \mapsto g(x)$ match on $A$ (and are continuous since $f$ and $g$ as well as $\dis$ on $\RR$ are), and therefore on the whole $X$ by the previous lemma.
				\item
					Suppose $s_A$ exists; by the previous item $f(x) \leq s_A$ for all $x \in X$ which is sufficient for the existence of $s_X$ and the equality $s_X = s_A$.
			\end{enumerate}
		\end{proof}
		
		\begin{lemma}\label{Lemma: properties_of_continuous_maps_inferred_from_their_dense_restriction}
			Let $\mtr{X} = (X, d_\mtr{X})$, $\mtr{Y} = (Y, d_\mtr{Y})$, $\mtr{Z} = (Z, d_\mtr{Z})$ be protometric spaces, $i\colon X \to Y$ a dense isometry and $f\colon Y \to Z$ a continuous map. Then $f$ is a dense isometry/an isometry/non-expansive/Lipschitz/area Lipschitz if and only if its restriction $f \circ i$ is (with the same parameters, such as the Lipschitz coefficient).
		\end{lemma}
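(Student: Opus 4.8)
The plan is to prove the non-trivial implication --- that a property of $f \circ i$ is inherited by $f$ --- uniformly, by observing that each listed condition asserts that one continuous real-valued map built from $f$ and the distances is pointwise $\leq$ (or $=$) another such map, and then letting Corollary~\ref{Corollary: comparison_on_dense_subsets} propagate the comparison from a dense subset to the whole space. The reverse implication is routine and I would clear it first: a composite of isometries is an isometry; precomposing a non-expansive/Lipschitz/area Lipschitz map with the isometry $i$ preserves the type and the coefficient (resp.\ radius), since $i$ carries $\ball[\mtr{X}]{x}{R}$ isometrically into $\ball[\mtr{Y}]{i(x)}{R}$; and if $f$ has dense image then so does $f \circ i$, because $i(X)$ is dense in $\mtr{Y}$, $f$ is continuous, and denseness composes through an $\epsilon/2$-argument.

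For the forward direction I would first set up the engine. Equip $Y \times Y$ with the $\infty$-metric; then $i \times i \colon X \times X \to Y \times Y$ is again a dense isometry. The maps $P(y, y') \dfeq d_\mtr{Z}(f(y), f(y'))$, $Q(y, y') \dfeq d_\mtr{Y}(y, y')$ and $Q_C(y, y') \dfeq C \cdot d_\mtr{Y}(y, y')$ (for a constant $C$) are continuous on $Y \times Y$, as composites of $f$, the projections, and the distance functions (a distance function is non-expansive for the taxicab product metric, hence continuous). Now saying that $f \circ i$ is non-expansive/an isometry/Lipschitz with coefficient $C$ is exactly saying that $P \leq Q$ / $P = Q$ / $P \leq Q_C$ holds on the dense subset $(i \times i)(X \times X)$; Corollary~\ref{Corollary: comparison_on_dense_subsets}(\ref{Corollary: comparison_on_dense_subsets: leq}) lifts the inequalities to all of $Y \times Y$, and for the equality one applies it twice (or invokes Lemma~\ref{Lemma: uniqueness_of_continuous_extensions}: two continuous maps into the metric space $\RR$ agreeing on a dense subset agree everywhere). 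Hence $f$ is non-expansive/an isometry/Lipschitz with coefficient $C$. For the dense isometry case, once $f$ is known to be an isometry its image $f(Y)$ contains $f(i(X))$, which is dense in $\mtr{Z}$ by hypothesis, so $f(Y)$ is dense and $f$ is a dense isometry.

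The area Lipschitz case runs the same engine on balls, and is the part I expect to need the most care. Suppose $f \circ i$ is area Lipschitz with radius $R$, i.e.\ for every $x \in X$ the restriction $\rstr{(f \circ i)}_{\ball[\mtr{X}]{x}{R}}$ is Lipschitz with some coefficient $C_x$. Fix $R'$ with $0 < R' < R$ and let $y \in Y$. By density of $i(X)$, pick $x \in X$ with $d_\mtr{Y}(i(x), y) < R - R'$, so $\ball[\mtr{Y}]{y}{R'} \subseteq \ball[\mtr{Y}]{i(x)}{R}$. The key point is that $i$ restricts to a dense isometry $\ball[\mtr{X}]{x}{R} \to \ball[\mtr{Y}]{i(x)}{R}$: it is an isometric embedding, and its image is dense precisely because the balls are \emph{open} --- a point $z$ with $d_\mtr{Y}(i(x), z) < R$ leaves room, so density of $i(X)$ yields $x'$ with $i(x')$ as close to $z$ as desired and $d_\mtr{X}(x, x') < R$ still. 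Applying the engine of the previous paragraph with $\mtr{Y}$, $\mtr{X}$, $f$ replaced by $\ball[\mtr{Y}]{i(x)}{R}$, $\ball[\mtr{X}]{x}{R}$, $\rstr{f}_{\ball[\mtr{Y}]{i(x)}{R}}$, the hypothesis that $\rstr{(f \circ i)}_{\ball[\mtr{X}]{x}{R}}$ is Lipschitz with coefficient $C_x$ gives that $\rstr{f}_{\ball[\mtr{Y}]{i(x)}{R}}$, and so $\rstr{f}_{\ball[\mtr{Y}]{y}{R'}}$, is Lipschitz with coefficient $C_x$; thus $f$ is area Lipschitz with radius $R'$, which (as $R' < R$ was arbitrary) recovers the radius up to an arbitrarily small loss. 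The two bookkeeping subtleties --- that $i$ restricts to a dense isometry on open balls, and that the radius must shrink slightly --- are the only real obstacles; everything else is a single invocation of Corollary~\ref{Corollary: comparison_on_dense_subsets}.
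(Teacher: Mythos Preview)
Your proof is correct and follows essentially the same line as the paper: the forward direction is handled by observing that each property is an inequality between continuous real-valued functions on $Y\times Y$ (or on a ball therein), so Corollary~\ref{Corollary: comparison_on_dense_subsets}(\ref{Corollary: comparison_on_dense_subsets: leq}) propagates it from the dense image of $i\times i$ to all of $Y\times Y$; denseness of the image follows from $\im(f\circ i)\subseteq\im(f)$. You are in fact more careful than the paper on the area Lipschitz case: the paper's one-line proof does not spell out how to obtain a Lipschitz coefficient at an arbitrary $y\in Y$, and your observation that one must recenter at a nearby $i(x)$ (and hence, at least by this argument, accept an arbitrarily small loss in the radius $R$) is a genuine point that the paper's phrase ``with the same parameters'' glosses over.
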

		\begin{proof}
			It is straightforward that restrictions of such maps also have these same properties. For the converse note that by Corollary~\ref{Corollary: comparison_on_dense_subsets}(\ref{Corollary: comparison_on_dense_subsets: leq}), if $d_\mtr{Z}(f(i(x)), f(i(y))) \leq C \cdot d_\mtr{X}(x, y)$ (resp.~$d_\mtr{Z}(f(i(x)), f(i(y))) \geq C \cdot d_\mtr{X}(x, y)$) holds on some $A \subseteq X$, then $d_\mtr{Z}(g(x)), g(y))) \leq C \cdot d_\mtr{Y}(x, y)$ (resp.~$d_\mtr{Z}(g(x)), g(y))) \geq C \cdot d_\mtr{Y}(x, y)$) holds on any subset of $Y$ into which $A$ densely embeds via $i$. Also, since the image of $f \circ i$ is contained in the image of $f$, if $f \circ i$ is dense, so is $f$.
		\end{proof}
		
		In the remainder of the section we discuss the completness of (pseudo)metric spaces. It is useful to have a definition of completness which is independent of the model. The definition below is the formalization of the fact that the completion is the largest metric space into which a metric space can be densely isometrically embedded. Also, we generalize the notion to include protometric spaces.
		\begin{definition}\label{Definition: completion}
			The \df{completion} of a protometric space $\mtr{X}$ is a space $\cmtr{X}$, together with a dense isometry $i\colon \mtr{X} \to \cmtr{X}$, such that for every dense isometry $f\colon \mtr{X} \to \mtr{Y}$ there exists a unique dense isometry $g\colon \mtr{Y} \to \cmtr{X}$, for which $i = g \circ f$.
		\end{definition}
		This can be succinctly put in categorical terms. Let $\mathcal{M}$ be the category of (proto)metric spaces and dense isometries. Then the completion of $\mtr{X}$ is the terminal object in the coslice category $\mtr{X}/\mathcal{M}$. Since is it given by a universal property, it is determined up to (in this case isometric) isomorphism.
		
		We say that a space $\mtr{X}$ is \df{complete} when its identity (equivalently, any isometric isomorphism with domain $\mtr{X}$) is its completion.
		
		When the dense isometry $i\colon \mtr{X} \to \cmtr{X}$ is understood, we often simply say that the completion of $\mtr{X}$ is just the space $\cmtr{X}$. Clearly, if $f\colon \mtr{X} \to \mtr{Y}$ is a dense isometry, then $\mtr{X}$ and $\mtr{Y}$ have the ``same'' completion, in the sense that if $j\colon \mtr{Y} \to \mtr{Z}$ is the completion of $\mtr{Y}$, then $j \circ f\colon \mtr{X} \to \mtr{Z}$ is the completion of $\mtr{X}$, and if $i\colon \mtr{X} \to \mtr{Z}$ is the completion of $\mtr{X}$ and $j\colon \mtr{Y} \to \mtr{Z}$ the unique dense isometry for which $i = j \circ f$, then $j$ is the completion of $\mtr{Y}$. In particular, a completion of a space is complete.
		\begin{remark}
			Note also, that a completion is always a metric space since the Kolmogorov quotient map is a surjective, hence dense, isometry.
		\end{remark}
		
		We recall two models of completion: the one with locations, and (assuming countable choice) the one with Cauchy sequences. The idea for the first is that points $a \in X$ in a metric space are in bijective correspondence with maps $d(a, \insarg)\colon X \to \nnr$ --- the inverse correspondence is taking the unique zero. It turns out that maps of the form $d(a, \insarg)$ are precisely characterized as maps $f\colon X \to \nnr$ which satisfy the triangle inequality $|f(x) - d(x, y)| \leq f(y)$ and have a zero (its uniqueness follows from the previous condition). However, note that these maps are non-expansive, and as such are determined by its values on a dense subset. Restricting to a dense subset, ``having a zero'' becomes ``attaining arbitrarily small positive values''.
		\begin{definition}
			Let $\mtr{X} = (X, d_\mtr{X})$ be a (pseudo)metric space. A map $f\colon X \to \nnr$ is called a \df{location}~\cite{Richman_F_2000:_the_fundamental_theorem_of_algebra_a_constructive_development_without_choice} on $\mtr{X}$ when
			\begin{itemize}
				\item
					$d_\mtr{X}(x, y) \dis f(x) \leq f(y)$ for all $x, y \in X$, and
				\item
					$\xall{\epsilon}{\RR_{> 0}}\xsome{x}{X}{f(x) \leq \epsilon}$.
			\end{itemize}
			We denote the set of locations on $\mtr{X}$ by $\locd(\mtr{X})$.
		\end{definition}
		From the above discussion we see that locations on $\mtr{X}$ ought to represent distance maps from points in the completion of $\mtr{X}$, and hence represent points of completion themselves. To obtain the completion of $\mtr{X}$ we thus need to equip $\locd(\mtr{X})$ with a metric and provide a dense isometry from $\mtr{X}$ into it.
		
		For locations $f, g\colon X \to \nnr$ define
		$$d_{\locd(\mtr{X})}(f, g) \dfeq \sup\st{f(x) \dis g(x)}{x \in X} = \inf\st{f(x) + g(x)}{x \in X}.$$
		To see that this supremum and infimum indeed exist and are equal, use Postulate~\ref{Postulate: reals_Dedekind_complete}. Take any $x, y \in X$. Then
		$$f(x) \dis g(x) \leq f(x) \dis d(x, y) + d(x, y) \dis g(y) \leq f(y) + g(y),$$
		so the first condition from the postulate is satisfied. For the second, take any $\epsilon \in \RR_{> 0}$. Then there exists $x \in X$ such that $f(x) \leq \epsilon$. Hence
		$$f(x) + g(x) = g(x) - f(x) + 2 f(x) \leq f(x) \dis g(x) + 2 \epsilon.$$
		
		\begin{proposition}\label{Proposition: locations_are_completion}
			\
			\begin{enumerate}
				\item
					The map $d_{\locd(\mtr{X})}\colon \locd(\mtr{X}) \times \locd(\mtr{X}) \to \nnr$ is a metric on $\locd(\mtr{X})$.
				\item
					The map $c_{\mtr{X}}\colon X \to \locd(\mtr{X})$, given by $c_{\mtr{X}}(x) \dfeq d_\mtr{X}(x, \insarg)$, is a dense isometry.
				\item
					Let $g\colon X \to Y$ be a dense isometry between pseudometric spaces $\mtr{X}$ and $\mtr{Y} = (Y, d_\mtr{Y})$. Then there exists a unique continuous map (necessarily a dense isometry) $h\colon Y \to \locd(\mtr{X})$ such that $h \circ g = c_{\mtr{X}}$. That is, $\big(\locd(\mtr{X}), d_{\locd(\mtr{X})}\big)$, together with $c_{\mtr{X}}$, is a model of completion of $\mtr{X}$ in the sense of Definition~\ref{Definition: completion}.
			\end{enumerate}
		\end{proposition}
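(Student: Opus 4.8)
The plan is to treat the three parts in order, with the real content concentrated in part (3); parts (1) and (2) are routine verifications once one invokes Lemma~\ref{Lemma: dis_upper_bounds} to rephrase $\dis$-inequalities as pairs of triangle inequalities.

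For part (1), I would observe that the existence and equality of the defining supremum and infimum of $d_{\locd(\mtr{X})}$ has just been established via Postulate~\ref{Postulate: reals_Dedekind_complete}, so only the metric axioms remain. Symmetry is immediate from symmetry of $\dis$ on $\nnr$. For the triangle inequality I would, for each fixed $x \in X$, estimate $f(x) \dis h(x) \leq f(x) \dis g(x) + g(x) \dis h(x) \leq d_{\locd(\mtr{X})}(f, g) + d_{\locd(\mtr{X})}(g, h)$ using the triangle inequality in $\nnr$, and take the supremum over $x$. Finally $d_{\locd(\mtr{X})}(f, f) = \sup_x (f(x) \dis f(x)) = 0$, and if $d_{\locd(\mtr{X})}(f, g) = 0$ then each non-negative term $f(x) \dis g(x)$, being bounded above by the supremum $0$, is itself $0$, whence $f(x) = g(x)$ for all $x$ and $f = g$; so we in fact obtain a metric.

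For part (2), I would first check that $c_\mtr{X}(x) = d_\mtr{X}(x, \insarg)$ is a location: the condition $d_\mtr{X}(y, z) \dis d_\mtr{X}(x, y) \leq d_\mtr{X}(x, z)$ is exactly what Lemma~\ref{Lemma: dis_upper_bounds} yields from the two ordinary triangle inequalities of $\mtr{X}$, and the small-values condition is trivial since $d_\mtr{X}(x, x) = 0$. Then $d_{\locd(\mtr{X})}(c_\mtr{X}(x), c_\mtr{X}(y)) = \sup_z (d_\mtr{X}(x, z) \dis d_\mtr{X}(y, z))$, whose generic term is $\leq d_\mtr{X}(x, y)$ (again Lemma~\ref{Lemma: dis_upper_bounds}), while the term at $z = y$ equals $d_\mtr{X}(x, y) \dis 0 = d_\mtr{X}(x, y)$; hence the supremum is $d_\mtr{X}(x, y)$ and $c_\mtr{X}$ is an isometry. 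For density, given a location $f$ and $r \in \RR_{> 0}$, pick $x \in X$ with $f(x) \leq \tfrac{r}{2}$; from the location condition $d_\mtr{X}(z, x) \dis f(z) \leq f(x)$ one gets (by Lemma~\ref{Lemma: dis_upper_bounds} and symmetry) $f(z) \dis d_\mtr{X}(x, z) \leq f(x)$ for every $z$, so $d_{\locd(\mtr{X})}(f, c_\mtr{X}(x)) \leq f(x) \leq \tfrac{r}{2} < r$.

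For part (3), uniqueness is immediate from Lemma~\ref{Lemma: uniqueness_of_continuous_extensions}: $\locd(\mtr{X})$ is metric by part (1), $g$ is a dense isometry, and any continuous map $Y \to \locd(\mtr{X})$ extending $c_\mtr{X}$ along $g$ is determined by its values on the dense image of $g$. For existence I would give the map explicitly — no choice is needed — by $h(y) \dfeq d_\mtr{Y}(y, g(\insarg))\colon X \to \nnr$. The crux, and the step I expect to need the most care, is checking that $h(y)$ is indeed a location on $\mtr{X}$: the location triangle condition becomes, after using that $g$ is an \emph{isometry}, a triangle inequality in $\mtr{Y}$ (rephrased via Lemma~\ref{Lemma: dis_upper_bounds}), and the small-values condition uses exactly that $g$ has \emph{dense} image. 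One then computes $h(g(x'))(x) = d_\mtr{Y}(g(x'), g(x)) = d_\mtr{X}(x', x) = c_\mtr{X}(x')(x)$, so $h \circ g = c_\mtr{X}$, and $h$ is non-expansive — hence continuous — since each $d_\mtr{Y}(y, g(x)) \dis d_\mtr{Y}(y', g(x))$ is bounded by $d_\mtr{Y}(y, y')$. That $h$ is automatically a dense isometry then follows from Lemma~\ref{Lemma: properties_of_continuous_maps_inferred_from_their_dense_restriction}, applied to the dense isometry $g$ and the continuous $h$, because the composite $h \circ g = c_\mtr{X}$ is a dense isometry by part (2).
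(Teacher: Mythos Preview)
Your proposal is correct and follows essentially the same route as the paper: the same explicit formula $h(y)(x) = d_\mtr{Y}(g(x), y)$ in part~(3), with uniqueness via Lemma~\ref{Lemma: uniqueness_of_continuous_extensions} and the dense-isometry upgrade via Lemma~\ref{Lemma: properties_of_continuous_maps_inferred_from_their_dense_restriction}. You have simply filled in the details where the paper writes ``Standard'' and ``It is easy to see that this works''; the only cosmetic difference is that for density in part~(2) you bound the $\sup$-form of the distance while the paper bounds the $\inf$-form.
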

		\begin{proof}
			\begin{enumerate}
				\item
					Standard.
				\item
					Take $x, y \in X$; then
					$$d_{\locd(\mtr{X})}\big(c_{\mtr{X}}(x), c_{\mtr{X}}(y)\big) = \inf\st{d(x, z) + d(y, z)}{z \in X} \leq d(x, y) + d(y, y) = d(x, y),$$
					$$d_{\locd(\mtr{X})}\big(c_{\mtr{X}}(x), c_{\mtr{X}}(y)\big) = \sup\st{d(x, z) \dis d(y, z)}{z \in X} \geq d(x, y) \dis d(y, y) = d(x, y).$$
					As for density, take any $f \in \locd(\mtr{X})$ and $\epsilon \in \RR_{> 0}$. There is $x \in X$ such that $f(x) \leq \epsilon$, and then
			$$d_{\locd(\mtr{X})}\big(c_{\mtr{X}}(x), f\big) = \inf\st{d(x, y) + f(y)}{y \in X} \leq d(x, x) + f(x) \leq \epsilon.$$
				\item
					Define $h(y)(x) \dfeq d_\mtr{Y}(g(x), y)$ for all $y \in Y$, $x \in X$. It is easy to see that this works. It is unique by Lemma~\ref{Lemma: uniqueness_of_continuous_extensions} and a dense isometry by Lemma~\ref{Lemma: properties_of_continuous_maps_inferred_from_their_dense_restriction}.
			\end{enumerate}
		\end{proof}
		
		We'll use the construction of completion by locations in this paper, but we want to say something about Cauchy sequences as well. Let $\Cauchy(\mtr{X})$ be the set of Cauchy sequences of a (pseudo)metric space $\mtr{X} = (X, d_\mtr{X})$, and equip it with its standard pseudometric, that is,
		$$d_{\Cauchy(\mtr{X})}\big((a_n)_{n \in \NN}, (b_n)_{n \in \NN}\big) \dfeq \lim_{n \to \infty} d_\mtr{X}(a_n, b_n).$$
		Further, let $\cs\colon X \to \Cauchy(\mtr{X})$ map a point $x \in X$ to a constant sequence with terms $x$. Clearly, $\cs$ is a dense isometry. Finally, let $q\colon \Cauchy(\mtr{X}) \to \Cauchy(\mtr{X})/_\equ$ be the Kolmogorov quotient map of the pseudometric space $\big(\Cauchy(\mtr{X}), d_{\Cauchy(\mtr{X})}\big)$. Then by definition the quotient space, together with the dense isometry $q \circ \cs$, is the \df{Cauchy completion} of $\mtr{X}$. We say that a space is \df{Cauchy complete} when it is isometrically isomorphic to its Cauchy completion.
		
		The universal property of the completion ensures that the Cauchy completion isometrically embeds into it (that is, the Cauchy completion can be regarded as a subspace of a completion), but this embedding need not be surjective in general. The proposition below recalls a sufficient condition for when it is.
		
		\begin{proposition}
			Assuming countable choice, the Cauchy completion is the completion in the sense of Definition~\ref{Definition: completion}.
		\end{proposition}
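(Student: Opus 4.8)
The plan is to reduce everything to the already-established model of the completion by locations. By Proposition~\ref{Proposition: locations_are_completion}, $\bigl(\locd(\mtr{X}), c_{\mtr{X}}\bigr)$ is a completion of $\mtr{X}$, and a completion is unique up to isometric isomorphism (it is terminal in $\mtr{X}/\mathcal{M}$); consequently, if $\phi$ is an isometric isomorphism with $\phi \circ (q \circ \cs) = c_{\mtr{X}}$, then the Cauchy completion $\bigl(\Cauchy(\mtr{X})/_\equ, q \circ \cs\bigr)$ is itself a completion in the sense of Definition~\ref{Definition: completion}. So first I would apply Proposition~\ref{Proposition: locations_are_completion}(3) to the dense isometry $q \circ \cs\colon \mtr{X} \to \Cauchy(\mtr{X})/_\equ$, obtaining the unique dense isometry $h\colon \Cauchy(\mtr{X})/_\equ \to \locd(\mtr{X})$ with $h \circ q \circ \cs = c_{\mtr{X}}$. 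Since $\Cauchy(\mtr{X})/_\equ$ is a metric space, the isometry $h$ is automatically injective, so it remains only to prove that $h$ is surjective; then $h$ is the desired isometric isomorphism.

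To prove surjectivity, fix a location $f \in \locd(\mtr{X})$. Density of $c_{\mtr{X}}$ supplies, for each $n \in \NN$, an inhabited set $\st{x \in X}{d_{\locd(\mtr{X})}(c_{\mtr{X}}(x), f) < 2^{-n}}$; this is exactly the situation in which countable choice applies, and it yields a sequence $(x_n)_{n \in \NN}$ in $X$ with $d_{\locd(\mtr{X})}(c_{\mtr{X}}(x_n), f) < 2^{-n}$ for all $n$. The triangle inequality gives $d_{\mtr{X}}(x_m, x_n) = d_{\locd(\mtr{X})}(c_{\mtr{X}}(x_m), c_{\mtr{X}}(x_n)) \leq 2^{-m} + 2^{-n}$, so $(x_n)_{n \in \NN} \in \Cauchy(\mtr{X})$ and it determines a point $\xi \dfeq [(x_n)_{n \in \NN}] \in \Cauchy(\mtr{X})/_\equ$. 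Using that $h$ is an isometry together with $h(q(\cs(x_k))) = c_{\mtr{X}}(x_k)$, one obtains $d_{\locd(\mtr{X})}\bigl(h(\xi), c_{\mtr{X}}(x_k)\bigr) = \lim_{n \to \infty} d_{\mtr{X}}(x_n, x_k) \leq 2^{-k}$, whence $d_{\locd(\mtr{X})}\bigl(h(\xi), f\bigr) \leq 2^{1-k}$ for every $k \in \NN$; since $\locd(\mtr{X})$ is metric, this forces $h(\xi) = f$, so $f$ lies in the image of $h$, as required.

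The only genuine use of countable choice is the extraction of the approximating sequence $(x_n)$ from the density of $c_{\mtr{X}}$; everything else — the universal property of $\locd(\mtr{X})$, the injectivity of $h$, and the limit computation — is choice-free. I expect this to be the sole real obstacle, and it is no accident: this is precisely the step that fails without choice, which is exactly why in general the Cauchy completion is only a (dense) subspace of the completion, in line with the discussion preceding the proposition. An alternative would be to verify the universal property of the Cauchy completion directly, using Lemma~\ref{Lemma: uniqueness_of_continuous_extensions} for uniqueness and an approximating-sequence argument for existence, but the existence step there would require choice indexed by the possibly uncountable codomain, so factoring through $\locd(\mtr{X})$ as above is the cleaner route.
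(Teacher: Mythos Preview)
Your proof is correct, but it takes a different route from the paper's. The paper verifies the universal property of Definition~\ref{Definition: completion} directly: given an arbitrary dense isometry $f\colon \mtr{X} \to \mtr{Y}$, it constructs the required $g\colon Y \to \Cauchy(\mtr{X})/_\equ$ by picking, for each $y \in Y$, a sequence $(x_n)$ with $d_\mtr{Y}(f(x_n), y) \leq 2^{-n-1}$ and setting $g(y) = [(x_n)]$. Your approach instead establishes an isometric isomorphism between $\Cauchy(\mtr{X})/_\equ$ and the location completion $\locd(\mtr{X})$, and then transports terminality along it.

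Your closing remark deserves a correction, though. You dismiss the direct approach on the grounds that ``the existence step there would require choice indexed by the possibly uncountable codomain $Y$''. This is exactly the approach the paper takes, and it does \emph{not} need $Y$-indexed choice: for each fixed $y$, countable choice produces a sequence, and the crucial observation is that any two such sequences are equivalent, so the resulting equivalence class is \emph{unique}. The map $g$ is then defined by unique choice (which the paper explicitly accepts, \cf Remark~\ref{Remark: unique_choice}), not by full choice over $Y$. So both routes use countable choice in the same essential way --- to extract an approximating sequence from density --- and neither needs more. What your route buys is that you only need to run the approximating-sequence argument once (for the single space $\locd(\mtr{X})$) rather than for an arbitrary $\mtr{Y}$, at the cost of relying on the location model; the paper's route is self-contained but hinges on noticing the uniqueness that lets unique choice replace $Y$-indexed choice.
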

		\begin{proof}
			Let $f\colon X \to Y$ be a dense isometry between (pseudo)metric spaces $\mtr{X} = (X, d_\mtr{X})$ and $\mtr{Y} = (Y, d_\mtr{Y})$. Take an arbitrary $y \in Y$. By countable choice there exists a sequence $x_n \in X$, such that $d_{\mtr{Y}}(f(x_n), y) \leq 2^{-n-1}$. Note that any two such sequences are equivalent, so $g(y) = [(x_n)_{n \in \NN}]$ determines a well-defined map $g\colon Y \to \Cauchy(\mtr{X})/_\equ$. Observe that it is a dense isometry satisfying $g \circ f = q \circ \cs$, and is the only one such by Lemma~\ref{Lemma: uniqueness_of_continuous_extensions}.
		\end{proof}
		
		\begin{remark}
			In constructive and computational practice often not all Cauchy sequences are taken for the completion, but only those with some prescribed rate of convergence; for example, $(a_n)_{n \in \NN}$ is called a \df{rapid Cauchy sequence} when it satisfies the condition, that the distance between $a_n$ and $a_{n+1}$ is $\leq 2^{-n}$ for all $n \in \NN$. The theory still works under this restriction (in fact, the sequence we produced in the proof of the previous proposition is rapid Cauchy).
		\end{remark}
		
		Even if in general a Cauchy completion need not be complete, the converse does hold.
		\begin{proposition}\label{Proposition: complete_implies_Cauchy_complete}
			A complete space is also Cauchy complete.
		\end{proposition}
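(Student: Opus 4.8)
The plan is to derive this purely from the universal property of the completion, so that (unlike the previous proposition) no choice principle is needed. Suppose $\mtr{X}$ is complete, meaning $\id[\mtr{X}]$ is a completion of $\mtr{X}$; in particular $\mtr{X}$ is a metric space by the Remark following Definition~\ref{Definition: completion}. Recall that the Cauchy completion comes equipped with the dense isometry $q \circ \cs\colon \mtr{X} \to \Cauchy(\mtr{X})/_\equ$, where $\cs$ sends a point to the corresponding constant sequence and $q$ is the Kolmogorov quotient map; note also that $\Cauchy(\mtr{X})/_\equ$ is a metric space, being a Kolmogorov quotient.

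First I would apply the defining property of the completion $\id[\mtr{X}]$ to the dense isometry $q \circ \cs$: this yields a (unique) dense isometry $g\colon \Cauchy(\mtr{X})/_\equ \to \mtr{X}$ with $g \circ (q \circ \cs) = \id[\mtr{X}]$. From this identity I read off two facts at once: $g$ is surjective (it admits a section), and $q \circ \cs$ is injective.

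Next I would note that $g$ is also injective, since its domain $\Cauchy(\mtr{X})/_\equ$ is metric and an isometry out of a metric space is injective (as recalled at the end of Section~\ref{Section: dis_structures}). Hence $g$ is a bijective isometry, \ie an isometric isomorphism; composing $g \circ (q \circ \cs) = \id[\mtr{X}]$ with $g^{-1}$ on the left gives $q \circ \cs = g^{-1}$. Thus $q \circ \cs$ is itself an isometric isomorphism between $\mtr{X}$ and its Cauchy completion, which is exactly the assertion that $\mtr{X}$ is Cauchy complete.

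There is no real obstacle here: the argument amounts to noting that $g$ is surjective because it has a section and injective because it emanates from a metric space, hence invertible, with inverse $q \circ \cs$. The only point worth emphasizing is the contrast with the preceding proposition — there the comparison map had to be built from chosen approximating Cauchy sequences, whereas here it is supplied by the universal property, so this direction needs no choice principle.
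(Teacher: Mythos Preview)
Your proof is correct and follows essentially the same approach as the paper's: produce the dense isometry $g\colon \Cauchy(\mtr{X})/_\equ \to \mtr{X}$ from the universal property of the completion applied to $q \circ \cs$, and observe that $g$ and $q \circ \cs$ are mutually inverse isometries. The paper states this last step in one sentence (``observe that \ldots\ are mutually inverse isometries''), whereas you spell out why --- $g$ is surjective from $g \circ (q \circ \cs) = \id[\mtr{X}]$ and injective because it is an isometry out of a metric space --- which is a welcome elaboration but not a different idea.
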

		\begin{proof}
			Let $\mtr{X} = (X, d_\mtr{X})$ be a complete metric space. Observe that the Cauchy completion $X \to \Cauchy(\mtr{X})/_\equ$ and the map $\Cauchy(\mtr{X})/_\equ \to X$, which exists by the definition of completion, are mutually inverse isometries.
		\end{proof}
		
		\begin{remark}
			We mentioned that the completion of a pseudometric space matches the completion of its Kolmogorov quotient (the same is true for the Cauchy completion). What about the protometric spaces? One can see that performing the completion by locations yields the same result as if we did it just for the kernel of the protometric space. Thus it would seem, that whatever a reasonable definition of a completion of protometric spaces is, it ought to match the completion of their kernels (but we won't need this in this paper).
		\end{remark}
		
		The universal property of completion tells us that dense isometries, defined on a dense subspace and mapping into a complete space, can be extended to the whole space. As is well known, this holds for more general maps.\footnote{See~\cite{Richman_F_2008:_real_numbers_and_other_completions} for the (constructive) proof for maps, uniformly continuous on bounded subsets.} For us, the relevant classes of maps will be non-expansive and area Lipschitz maps.
		
		\begin{proposition}\label{Proposition: extension_of_maps_into_complete_space}
			Let $\mtr{X} = (X, d_\mtr{X})$, $\mtr{Y} = (Y, d_\mtr{Y})$ be pseudometric spaces, $i\colon X \to Y$ a dense isometry between them, $\mtr{A} = (A, d_\mtr{A})$ a complete metric space, and $f\colon X \to A$ an area Lipschitz map. Then there exists a unique continuous map $g\colon Y \to A$ which extends $f$, \ie $g \circ i = f$. Moreover:
			\begin{itemize}
				\item
					$g$ is also area Lipschitz, for the same $R$,
				\item
					if $f$ is Lipschitz, so is $g$, with the same Lipschitz coefficient (in particular, if $f$ is non-expansive, so is $g$),
				\item
					if $f$ is an isometry, so is $g$.
			\end{itemize}
		\end{proposition}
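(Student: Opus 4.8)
The plan is to split the claim into uniqueness, existence of a continuous extension, and the three ``moreover'' clauses, and then to reduce the clauses entirely to the existence statement. Uniqueness is immediate from Lemma~\ref{Lemma: uniqueness_of_continuous_extensions}, since $i$ is a dense isometry and $\mtr{A}$ is metric. For the clauses, observe that once \emph{any} continuous $g\colon Y \to A$ with $g \circ i = f$ has been produced, Lemma~\ref{Lemma: properties_of_continuous_maps_inferred_from_their_dense_restriction} applied to the dense isometry $i$ tells us that $g$ is area Lipschitz / Lipschitz / non-expansive / an isometry exactly when its restriction $g \circ i = f$ is, and with the same parameters (the area radius $R$, the Lipschitz coefficient); since $f$ has the relevant property by hypothesis, so does $g$. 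Thus the whole substance is the construction of a continuous extension.

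I would carry this out in two stages. \emph{Stage 1: $f$ is globally Lipschitz, say with coefficient $C$.} Because $\mtr{A}$ is complete, the canonical dense isometry $c_\mtr{A}\colon A \to \locd(\mtr{A})$ is an isometric isomorphism (the identity on $\mtr{A}$ is also a completion of $\mtr{A}$, and completions are unique), so to define $g(y) \in A$ it suffices to specify a location $\theta_y \in \locd(\mtr{A})$ for each $y \in Y$ and put $g(y) \dfeq c_\mtr{A}^{-1}(\theta_y)$. Here the absence of choice matters: instead of choosing a sequence $x_n$ with $i(x_n) \to y$ and taking a limit of $f(x_n)$ (which would need countable choice), I would describe the would-be distance map $d_\mtr{A}(g(y), \insarg)$ directly by the inf-convolution
$$\theta_y(a) \dfeq \inf\st{d_\mtr{A}(f(x), a) + C \cdot d_\mtr{Y}(i(x), y)}{x \in X}.$$
One then checks: (i) this infimum exists as a real number; (ii) $\theta_y$ satisfies the two location axioms; (iii) $g(y) \dfeq c_\mtr{A}^{-1}(\theta_y)$ extends $f$, by computing $\theta_{i(x_0)}(a) = d_\mtr{A}(f(x_0), a)$ (take $x = x_0$ for ``$\le$'' and use the Lipschitz bound for ``$\ge$''); and (iv) $g$ is $C$-Lipschitz, hence continuous, because for fixed $a$ the families defining $\theta_y(a)$ and $\theta_{y'}(a)$ differ pointwise by at most $C\,d_\mtr{Y}(y, y')$, so their infima do too. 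Items (ii)--(iv) are routine estimates with the triangle inequality (the Lipschitz hypothesis is used in (ii) to bound $d_\mtr{A}(f(x_1), f(x_2))$ between near-minimisers), and the real work is (i): I would show the set in question is \emph{located}, namely that given $\epsilon > 0$, density of $i(X)$ supplies $x$ with $C\,d_\mtr{Y}(i(x), y) < \epsilon$, and a short computation using that $i$ is an isometry shows that $d_\mtr{A}(f(x), a) - \epsilon$ is a lower bound of the whole set while the value attained at this $x$ lies within $2\epsilon$ of it; Postulate~\ref{Postulate: reals_Dedekind_complete} then yields $\theta_y(a)$.

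\emph{Stage 2: the general area Lipschitz case, reduced to Stage~1.} Let $R > 0$ be the area radius, so that for each $x_0 \in X$ the restriction of $f$ to the ball $\ball[\mtr{X}]{x_0}{R}$ is a Lipschitz map with some coefficient $C_{x_0}$. The balls $\ball[\mtr{Y}]{i(x_0)}{R}$, $x_0 \in X$, cover $Y$ by density of $i(X)$, and for each $x_0$ the map $i$ restricts to a dense isometry $\ball[\mtr{X}]{x_0}{R} \cap X \to \ball[\mtr{Y}]{i(x_0)}{R}$ (another triangle-inequality check), onto which $f$ restricts to a Lipschitz map; Stage~1 therefore produces a continuous extension $g_{x_0}$ on each such ball. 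Whenever two of these balls overlap, $g_{x_0}$ and $g_{x_1}$ both extend the restriction of $f$ to a subset dense in the overlap, so they agree there by Lemma~\ref{Lemma: uniqueness_of_continuous_extensions}; hence the relation ``$a = g_{x_0}(y)$ for some $x_0$ with $y \in \ball[\mtr{Y}]{i(x_0)}{R}$'' is total and single-valued and so defines a map $g\colon Y \to A$ with no use of choice. It extends $f$ (for $x \in X$ we have $i(x) \in \ball[\mtr{Y}]{i(x)}{R}$ and $g_x(i(x)) = f(x)$) and is continuous, since near each point of $Y$ it agrees with one of the continuous $g_{x_0}$. The only genuinely delicate point in the whole argument is item (i) of Stage~1 — the constructive existence of the inf-convolution as a real number; everything else is organisation and elementary triangle-inequality estimates.
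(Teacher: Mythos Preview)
Your proof is correct and rests on the same core device as the paper's: define the extension at $y$ as the location $a \mapsto \inf_x\big(d_\mtr{A}(f(x),a) + C\cdot d_\mtr{Y}(i(x),y)\big)$, establish that this infimum is a real number via Postulate~\ref{Postulate: reals_Dedekind_complete}, and read off the ``moreover'' clauses from Lemma~\ref{Lemma: properties_of_continuous_maps_inferred_from_their_dense_restriction}. The only difference is organizational: the paper treats the area Lipschitz case in one pass (for each $y$ it picks a nearby $z\in X$ and a coefficient $C$ on $\ball[\mtr{X}]{z}{R}$, then forms the analogous $\sup L = \inf U$ over that ball, deferring to ``technical verification'' the independence from these choices), whereas your two-stage version isolates the global Lipschitz case first and then patches via uniqueness on overlaps---which makes the independence from the auxiliary choices explicit rather than implicit.
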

		\begin{proof}
			The uniqueness of $g$ follows from Lemma~\ref{Lemma: uniqueness_of_continuous_extensions}. For its existence it is sufficient to construct an area Lipschitz extension $g'\colon Y \to \locd(\mtr{A})$; then $g$ is $g'$, composed with the isometric isomorphism $\locd(\mtr{A}) \ism A$.
			
			Let $R \in \RR_{> 0}$ witness that $f$ is area Lipschitz. Fix an arbitrary $y \in Y$ and $a \in A$, then let $z \in X$ be such, that $d_\mtr{Y}(i(z), y) < R$. Let $C \in \RR_{> 0}$ be a Lipschitz coefficient of $f$ on the ball $\ball[\mtr{X}]{z}{R}$. Declare:
			$$L \dfeq \st{d_\mtr{A}(a, f(x)) - C \cdot d_\mtr{Y}(i(x), y)}{x \in \ball[\mtr{X}]{z}{R}},$$
			$$U \dfeq \st{d_\mtr{A}(a, f(x)) + C \cdot d_\mtr{Y}(i(x), y)}{x \in \ball[\mtr{X}]{z}{R}}.$$
			Observe:
			\begin{itemize}
				\item
					for all $x, x' \in \ball[\mtr{X}]{z}{R}$
					$$d_\mtr{A}(a, f(x)) - C \cdot d_\mtr{Y}(i(x), y) \leq$$
					$$\leq d_\mtr{A}(a, f(x')) + d_\mtr{A}(f(x'), f(x)) - C \cdot d_\mtr{Y}(i(x), i(x')) + C \cdot d_\mtr{Y}(i(x'), y) \leq$$
					$$\leq d_\mtr{A}(a, f(x')) + C \cdot d_\mtr{X}(x', x) - C \cdot d_\mtr{X}(x, x') + C \cdot d_\mtr{Y}(i(x'), y) =$$
					$$= d_\mtr{A}(a, f(x')) + C \cdot d_\mtr{Y}(i(x'), y),$$
				\item
					for $\epsilon \in \RR_{> 0}$ we may find $x \in X$ such that $d_\mtr{Y}(i(x), y) \leq \inf\left\{\frac{\epsilon}{C}, R - d_\mtr{Y}(i(z), y)\right\}$, and then
					$$d_\mtr{A}(a, f(x)) + C \cdot d_\mtr{Y}(i(x), y) \leq d_\mtr{A}(a, f(x)) + \epsilon \leq d_\mtr{A}(a, f(x)) - C \cdot d_\mtr{Y}(i(x), y) + 2 \epsilon.$$
			\end{itemize}
			Let $s \dfeq \sup L = \inf U$ be the real number, determined by $L$, $U$ by Postulate~\ref{Postulate: reals_Dedekind_complete}.
			
			Define $g'(y)(a) \dfeq s$. We skip the technical verification that this works; do recall Lemma~\ref{Lemma: properties_of_continuous_maps_inferred_from_their_dense_restriction} however for the last part of the proposition.
		\end{proof}

	\section{Complete Urysohn Space}\label{Section: complete_Urysohn}
	
		Following the classical development, we now identify the Urysohn space as the completion of its ``countable version'', constructed in Section~\ref{Section: countable_Urysohn}. As such, we assume that $\od$ is a halved subdisgroup of $\nnr$ containing $1$ (and therefore all dyadic rationals by Lemma~\ref{Lemma: density_of_od}).
		
		In analogy with Definition~\ref{Definition: od-Urysohn} and discussion below it we provide the following definition.
		\begin{definition}\label{Definition: Urysohn}
			A \df{Urysohn space} is a tuple $(U', d', \ext'\colon \prms' \to U')$ where
			\begin{itemize}
				\item
					$(U', d')$ is a complete separable metric space,
				\item
					$\prms' \dfeq \st{(x_i, \chi_i)_{i \in \NN_{< l}} \in \finseq{(U' \times \nnr)}}{\all{i, j}{\NN_{< l}}{d'(x_i, x_j) \dis \chi_i \leq \chi_j}}$, and
				\item
					the map $\ext'\colon \prms[\od]' \to U'$ satisfies the property
					$$\xall{(x_i, \chi_i)_{i \in \NN_{< l}}}{\prms'}\xall{k}{\NN_{< l}}{d'\big(\ext'((x_i, \chi_i)_{i \in \NN_{< l}}), x_k\big) = \chi_k}.$$
			\end{itemize}
		\end{definition}
		Below (in Theorem~\ref{Theorem: Urysohn_space}) we show, that these properties imply the standard Urysohn extension property.
		
		Theorem~\ref{Theorem: countable_Urysohn_space} suggests that $\U_\nnr$ is a good candidate for the Urysohn space --- indeed, if it were complete, we could extend isometries into it from a dense countable subset to the whole of seperable metric space (as per Proposition~\ref{Proposition: extension_of_maps_into_complete_space}). However, it is not complete, in spite of the fact that $\nnr$ and the Kolmogorov quotients of $\pseudo[n]$s are.\footnote{Recall a similar situation: individual $\RR^n$s are complete, but their ``union'' (more precisely, the colimit of embeddings $\RR^n \ism \RR^n \times \{0\} \hookrightarrow \RR^{n+1}$) isn't.}
		
		\begin{lemma}\label{Lemma: sequence_without_limit_in_Urysohn_over_R}
			Let $s\colon \NN \to \pseudo$ be a sequence, inductively defined as
			$$s_n \dfeq \strat[n]{\big(s_k, 2^{-k}\big)}{k \in \NN_{< n}}.$$
			\begin{enumerate}
				\item
					The sequence $s$ is well defined, that is, the age of $s_n$ is indeed $n$ for all $n \in \NN$, and the terms are permissible tuples.
				\item
					The sequence $s$ is a rapid Cauchy sequence.
				\item\label{Lemma: sequence_without_limit_in_Urysohn_over_R: separated}
					For all $n \in \NN$ and all $a = \atuple{a}{\alpha}{i} \in \pseudo$ with $\age{a} < n$ the statements
					$$d(a, s_n) \geq 2^{-n} \qquad \text{and} \qquad d(a, s_n) = d(a, s_{n+1})$$
					hold.
			\end{enumerate}
		\end{lemma}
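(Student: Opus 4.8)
The first two parts are short inductions leaning on Theorem~\ref{Theorem: distances_as_described_in_Urysohn_tuples}. For part~1 I would induct on $n$: $s_0 = \et$ has age $0$ and is vacuously permissible, and in the step the predecessors $s_k$ ($k<n$) of $s_n$ lie in the previous pseudometric stage by the induction hypothesis, so only the permissibility inequalities $d(s_i,s_j)\dis 2^{-i}\leq 2^{-j}$ remain; since each $s_j$ is already known permissible, Theorem~\ref{Theorem: distances_as_described_in_Urysohn_tuples} applied to $s_j$ gives $d(s_i,s_j)=2^{-i}$ for $i<j$ (and $d(s_i,s_i)=0$), after which the inequality is an easy computation in $\nnr$ (using Lemma~\ref{Lemma: dis_upper_bounds} in the case $i>j$). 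Part~2 is immediate: $s_n$ is a predecessor of $s_{n+1}$ with prescribed distance $2^{-n}$, so Theorem~\ref{Theorem: distances_as_described_in_Urysohn_tuples} yields $d(s_n,s_{n+1})=2^{-n}\leq 2^{-n}$.

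For part~3 the engine is a one-step recursion identity, which I would prove by induction on $n$: for every permissible $a=\atuple{a}{\alpha}{i}$ with $\age a<n$,
\[ d(a,s_n)\ =\ \sup\big\{\,d(a,s_{n-1}),\ \ d(a,s_{n-1})\dis 2^{-(n-1)}\,\big\}. \]
To obtain it, unfold the definition of $d$ at the pairs $(a,s_n)$ and $(a,s_{n-1})$. Since $s_n$ and $s_{n-1}$ have predecessors $s_0,\dots,s_{n-1}$ resp.\ $s_0,\dots,s_{n-2}$ with prescribed distances $2^{-k}$, the set whose supremum is $d(a,s_n)$ agrees with the one for $d(a,s_{n-1})$ except that (a) each $d(a_i,s_{n-1})$ is replaced by $d(a_i,s_n)$, and (b) the single extra element $d(a,s_{n-1})\dis 2^{-(n-1)}$ is added. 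But $\age a_i<\age a\leq n-1$, so $d(a_i,s_{n-1})=d(a_i,s_n)$ is exactly the equality in part~3 at level $n-1$ applied to $a_i$, available by the (outer) induction hypothesis; hence change (a) is vacuous and the identity follows. The base $n=1$ (where the only candidate is $a=\et$, with no predecessors) is a direct computation giving both sides equal to $2^{-0}$.

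Granting the identity, both claims of part~3 fall out. For the lower bound, Lemma~\ref{Lemma: special_cases_of_triangle_inequality}(\ref{Lemma: special_cases_of_triangle_inequality: bound}) gives $2^{-(n-1)}\leq d(a,s_{n-1})+\big(d(a,s_{n-1})\dis 2^{-(n-1)}\big)$, and each summand is $\leq d(a,s_n)$ by the identity, so $d(a,s_n)\geq 2^{-n}$. For the equality $d(a,s_n)=d(a,s_{n+1})$ I would re-run the derivation of the identity one level higher, getting $d(a,s_{n+1})=\sup\{d(a,s_n),\,d(a,s_n)\dis 2^{-n}\}$; since $d(a,s_n)\geq 2^{-n}\geq 2^{-(n+1)}$ by the lower bound just established, Lemma~\ref{Lemma: dis_upper_bounds} gives $d(a,s_n)\dis 2^{-n}\leq d(a,s_n)$, so this supremum equals $d(a,s_n)$. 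The only care needed here is that re-deriving the identity at level $n+1$ for $a$ uses $d(a_i,s_n)=d(a_i,s_{n+1})$ for the predecessors $a_i$, i.e.\ the equality in part~3 at level $n$ for tuples of strictly smaller age; so this step is organized as a nested induction on $\age a$, with $a=\et$ as base case.

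The genuine obstacle is spotting the right auxiliary statement — the clean recursion above, and in particular the observation that moving from $s_{n-1}$ to $s_n$ affects distances to a fixed permissible tuple $a$ only through the one new constraint $2^{-(n-1)}$. Once that is in hand the rest is routine; what has to be tracked is the layered induction (outer on $n$, inner on $\age a$) and the fact that every instance of part~3 invoked inside the proof lives at a strictly earlier stage, so there is no circularity.
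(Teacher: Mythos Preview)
Your proof is correct and follows essentially the same route as the paper: expand $d(a,s_n)$ by definition, use the induction hypothesis on the predecessors $a_i$ to collapse the supremum to $\sup\{d(a,s_{n-1}),\,d(a,s_{n-1})\dis 2^{-(n-1)}\}$, and read off both claims. The only notable difference is organizational: the paper packages the argument as a single induction on $n+\age a$ and proves the lower bound by contradiction (assuming $d(a,s_n)<2^{-n}$ and deriving $d(a,s_{n-1})>2^{-n}$ against the same bound one level down), whereas your nested induction and your direct estimate $2^{-(n-1)}\leq d(a,s_{n-1})+\big(d(a,s_{n-1})\dis 2^{-(n-1)}\big)\leq 2\,d(a,s_n)$ avoid the detour through negation --- a small but pleasant improvement in a constructive setting.
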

		\begin{proof}
			\begin{enumerate}
				\item
					By induction on $n \in \NN$. If $\age{s_k} = k$ for $k \in \NN_{< n}$, then we can take the age of $s_n$ to be $n$. To obtain permissibility first note that $s_k$s are permissible by the induction hypothesis for $k \in \NN_{< n}$, and that this in particular implies
					$$d(s_k, s_l) = \begin{cases} 2^{-k} & \text{ if } k < l,\\ 0 & \text{ if } k = l,\\ 2^{-l} & \text{ if } k > l \end{cases}$$
					for $k, l \in \NN_{< n}$. From here the inequalities, required for permissibility of $s_n$, easily follow.
				\item
					We have $d(s_n, s_{n+1}) = 2^{-n}$ (Theorem~\ref{Theorem: distances_as_described_in_Urysohn_tuples}) because $s_n$s are permissible.
				\item
					We prove the two statements simultaneously, using induction on $n + \age{a}$. First, write
					$$d(a, s_n) = \sup\big(\st{d(a_i, s_n) \dis \alpha_i}{i \in \NN_{< \lnth{a}}} \cup \st{d(a, s_k) \dis 2^{-k}}{k \in \NN_{< n}}\big).$$
					Suppose that $d(a, s_n) < 2^{-n}$, and thus in turn $d(a_i, s_n) \dis \alpha_i < 2^{-n}$ and $d(a, s_k) \dis 2^{-k} < 2^{-n}$ for all $i \in \NN_{< \lnth{a}}$ and $k \in \NN_{< n}$. If we actually have an $a$ to consider, then $n > \age{a} \geq 0$, so we can take $k = n-1$, obtaining $d(a, s_{n-1}) \dis 2^{-(n-1)} < 2^{-n}$ which implies $d(a, s_{n-1}) > 2^{-n}$. On the other hand
					$$d(a, s_{n-1}) = \sup\big(\st{d(a_i, s_{n-1}) \dis \alpha_i}{i \in \NN_{< \lnth{a}}} \cup \st{d(a, s_k) \dis 2^{-k}}{k \in \NN_{< n-1}}\big).$$
					We have $d(a_i, s_{n-1}) = d(a_i, s_n)$ by the induction hypothesis, and so all the terms in this supremum are $< 2^{-n}$, a contradiction to $d(a, s_{n-1}) > 2^{-n}$. Hence $d(a, s_n) \geq 2^{-n}$.
					
					For the second part calculate
					$$d(a, s_{n+1}) = \sup\big(\st{d(a_i, s_{n+1}) \dis \alpha_i}{i \in \NN_{< \lnth{a}}} \cup \st{d(a, s_k) \dis 2^{-k}}{k \in \NN_{< n+1}}\big) =$$
					$$= \sup\big(\st{d(a_i, s_n) \dis \alpha_i}{i \in \NN_{< \lnth{a}}} \cup \st{d(a, s_k) \dis 2^{-k}}{k \in \NN_{< n}} \cup \{d(a, s_n) \dis 2^{-n}\}\big) =$$
					$$= \sup\{d(a, s_n), d(a, s_n) \dis 2^{-n}\}.$$
					The second equality holds by the induction hypothesis. Furthermore, since $d(a, s_n) \geq 2^{-n}$, we have
					$$d(a, s_n) \dis 2^{-n} = d(a, s_n) - 2^{-n} < d(a, s_n),$$
					so we conclude $d(a, s_{n+1}) = d(a, s_n)$.
			\end{enumerate}
		\end{proof}
		
		\begin{proposition}\label{Proposition: uncompleted_Urysohn_not_complete}
			$\U_\nnr$ is not complete, in fact not even Cauchy complete.
		\end{proposition}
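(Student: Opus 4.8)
The plan is to take the rapid Cauchy sequence $s\colon \NN \to \pseudo$ produced in Lemma~\ref{Lemma: sequence_without_limit_in_Urysohn_over_R} and to show that its image under the Kolmogorov quotient map $\pseudo \to \U_\nnr$, which we write $n \mapsto [s_n]$, is a Cauchy sequence in $\U_\nnr$ without a limit. Since in a Cauchy complete space every Cauchy sequence converges, this already shows $\U_\nnr$ is not Cauchy complete, and then it fails to be complete as well by Proposition~\ref{Proposition: complete_implies_Cauchy_complete}. That $([s_n])_{n \in \NN}$ is (rapid) Cauchy is immediate: the quotient map is a surjective isometry, so $d([s_n], [s_{n+1}]) = d(s_n, s_{n+1}) \leq 2^{-n}$ by Lemma~\ref{Lemma: sequence_without_limit_in_Urysohn_over_R}(2).

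Next I would suppose, towards a contradiction, that $([s_n])_{n \in \NN}$ converges in $\U_\nnr$. Every element of $\U_\nnr$ is $[a]$ for some $a \in \pseudo = \coprod_{n \in \NN} \pseudoaux[n]$, so the putative limit can be written $[a]$ with $m \dfeq \age{a} \in \NN$. Applying Lemma~\ref{Lemma: sequence_without_limit_in_Urysohn_over_R}(\ref{Lemma: sequence_without_limit_in_Urysohn_over_R: separated}) with $n = m+1 > \age{a}$ gives $d(a, s_{m+1}) \geq 2^{-(m+1)}$, and applying it with each $n > m$ gives $d(a, s_n) = d(a, s_{n+1})$; hence $d(a, s_n) = d(a, s_{m+1}) \geq 2^{-(m+1)}$ for every $n \geq m+1$. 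Since the quotient metric is computed by $d([a], [s_n]) = d(a, s_n)$, the distances $d([a], [s_n])$ are bounded below by the positive constant $2^{-(m+1)}$ and so cannot tend to $0$, contradicting $[s_n] \to [a]$. Thus $([s_n])_{n \in \NN}$ has no limit in $\U_\nnr$, which completes the argument.

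I do not expect a serious obstacle here; the substance is entirely in Lemma~\ref{Lemma: sequence_without_limit_in_Urysohn_over_R}, already proved. The only point requiring a moment's care is that every point of $\U_\nnr$ genuinely has a representative of some finite age $m$ --- which holds because $\pseudo$ is the coproduct of the $\pseudoaux[n]$ --- so that part~(\ref{Lemma: sequence_without_limit_in_Urysohn_over_R: separated}) of the lemma can legitimately be invoked with that $m$; everything else is direct substitution into the stated properties of $s$, together with the fact that the Kolmogorov quotient does not change distances.
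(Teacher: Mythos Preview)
Your proposal is correct and follows essentially the same approach as the paper: both use the rapid Cauchy sequence from Lemma~\ref{Lemma: sequence_without_limit_in_Urysohn_over_R} and combine the two statements of item~(\ref{Lemma: sequence_without_limit_in_Urysohn_over_R: separated}) to bound $d([a],[s_n])$ below by $2^{-\age{a}-1}$ for all $n > \age{a}$, so no $[a]$ can be the limit. The only difference is cosmetic---you phrase it as a contradiction argument and spell out the appeal to Proposition~\ref{Proposition: complete_implies_Cauchy_complete}, whereas the paper shows directly that every $[a]$ fails to be the limit.
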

		\begin{proof}
			We claim that the Cauchy sequence $s$ from Lemma~\ref{Lemma: sequence_without_limit_in_Urysohn_over_R} (composed with the Kolmogorov quotient map) is not convergent. To see this, take an arbitrary $[a] \in \U_\nnr$. Putting together both statements of item~\ref{Lemma: sequence_without_limit_in_Urysohn_over_R: separated} of the aforementioned lemma, we infer $d([a], [s_n]) \geq 2^{-\age{a}-1}$ for all $n \in \NN_{> \age{a}}$, so $[a]$ cannot be the limit of the sequence.
		\end{proof}
		
		We (preliminarily) define $\U$ (its metric we again denote by $d$) to be the completion of $\U_{\nnr}$. However, we wish to show that $\U$ can be obtained by completing other $\Ury$s as well.
		
		Clearly if $\od'$, $\od''$ are disrings and $\od' \subseteq \od''$, then $\strat{\prt}{\od'} \subseteq \strat{\prt}{\od''}$, $\strat{\psd}{\od'} \subseteq \strat{\psd}{\od''}$ and $\strat{\U}{\od'} \subseteq \strat{\U}{\od''}$. In particular $\Ury \subseteq \strat{\U}{\nnr}$.
		
		The following lemma is essentially the inductive step for proving that $\pseudo$ is dense in $\U$ (but for the later course of proof it is more convenient to state it for a general subset $X \subseteq \U$). It is an exercise in choosing approximations in such a way that we obtain a permissible tuple.
		
		\begin{lemma}\label{Lemma: approximation_of_Urysohn_tuples}
			Let
			\begin{itemize}
				\item
					$X \subseteq \U$ such that $\xall{x}{X}\xall{r}{\RR_{> 0}}\xsome{a}{\pseudo}{d(x, [a]) \leq r}$,
				\item
					$l \in \NN$,
				\item
					$x_0, \ldots, x_{l-1} \in X$ and $\omega_0, \ldots, \omega_{l-1} \in \nnr$ such that $d(x_i, x_j) \dis \omega_i \leq \omega_j$ for all $i, j \in \NN_{< l}$,
				\item
					$\epsilon \in \RR_{> 0}$.
			\end{itemize}
			Then there exists $a = {}_{\age{a}}(a_i, \alpha_i)_{i \in \NN_{< l}} \in \pseudo$ such that $d(x_i, [a_i]) \leq \epsilon$ and $\omega_i \dis \alpha_i \leq \epsilon$ for all $i \in \NN_{< l}$.
		\end{lemma}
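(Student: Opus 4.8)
The plan is to pick the predecessors $a_i$ first, as sufficiently good $\pseudo$-approximations of the $x_i$, and then to \emph{define} the distances $\alpha_i$ by a supremum formula which simultaneously lands in $\od$ and automatically forces the resulting tuple to be permissible. It is tempting to simply take $\alpha_i \in \od$ close to $\omega_i$, but this does not work: permissibility demands $\alpha_i \dis \alpha_j \leq d(a_i, a_j)$, and since $d(a_i, a_j)$ may be strictly smaller than $d(x_i, x_j)$, this inequality can be violated by an amount comparable to the approximation error, no matter how the $\omega_i$ are shifted or rescaled. The supremum formula circumvents exactly this difficulty.

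Concretely, set $\delta \dfeq \epsilon/3$. Using the density hypothesis on $X$, choose $a_i \in \pseudo$ with $d(x_i, [a_i]) \leq \delta$ for each $i \in \NN_{< l}$, and using density of $\od$ in $\nnr$ (Lemma~\ref{Lemma: density_of_od}) choose $\tilde{\omega}_k \in \od$ with $\omega_k \dis \tilde{\omega}_k \leq \delta$ for each $k \in \NN_{< l}$; both are finite choices. Since the Kolmogorov quotient map and the completion embedding are isometries, $d(a_i, a_j) = d([a_i], [a_j]) \in \od$ for all $i, j$. Now put
$$\alpha_i \dfeq \sup\st{d(a_i, a_k) \dis \tilde{\omega}_k}{k \in \NN_{< l}},$$
which exists and lies in $\od$ because $\od$ is halved, hence a lattice, and is closed under $\dis$. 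Finally let $a \dfeq {}_n(a_i, \alpha_i)_{i \in \NN_{< l}}$ with $n \dfeq \sup\st{\age{a_i}}{i \in \NN_{< l}} + 1$, so that every predecessor has age $< n$ and $a \in \protoaux[n]$.

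To see $a \in \pseudo$ we must verify $d(a_i, a_j) \dis \alpha_i \leq \alpha_j$ for all $i, j$; by Lemma~\ref{Lemma: dis_upper_bounds} this splits into $d(a_i, a_j) \leq \alpha_i + \alpha_j$ and $\alpha_i \leq \alpha_j + d(a_i, a_j)$. The first holds since the $k = j$ term of the supremum gives $\alpha_i \geq d(a_i, a_j) \dis \tilde{\omega}_j$ and $\alpha_j \geq \tilde{\omega}_j$, whence Lemma~\ref{Lemma: special_cases_of_triangle_inequality}(\ref{Lemma: special_cases_of_triangle_inequality: bound}) applies. For the second, for each $k$ one has $(d(a_i, a_k) \dis \tilde{\omega}_k) \dis (d(a_j, a_k) \dis \tilde{\omega}_k) \leq d(a_i, a_k) \dis d(a_j, a_k) \leq d(a_i, a_j)$, using the inequality $(a \dis x) \dis (b \dis x) \leq a \dis b$ and then a triangle-inequality estimate (again via Lemma~\ref{Lemma: dis_upper_bounds}); taking suprema over $k$ and using additivity of $\sup$ yields $\alpha_i \dis \alpha_j \leq d(a_i, a_j)$, hence $\alpha_i \leq \alpha_j + d(a_i, a_j)$. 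For the approximation bounds, the $k = i$ term gives $\alpha_i \geq \tilde{\omega}_i \geq \omega_i - \delta$; and combining $d(a_i, a_k) \dis d(x_i, x_k) \leq d(x_i,[a_i]) + d(x_k,[a_k]) \leq 2\delta$, the estimate $\omega_k \dis \tilde{\omega}_k \leq \delta$, and the permissibility hypothesis $d(x_i, x_k) \dis \omega_k \leq \omega_i$, every term of the supremum is $\leq \omega_i + 3\delta$, so $\alpha_i \leq \omega_i + 3\delta$. Thus $\omega_i \dis \alpha_i \leq 3\delta = \epsilon$ and $d(x_i, [a_i]) \leq \delta \leq \epsilon$, as required. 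The only genuinely delicate step is hitting upon the supremum formula for $\alpha_i$: once it is in place, every verification is a mechanical application of disgroup identities already proved, and the lattice property of $\od$ is precisely what keeps the $\alpha_i$ inside $\od$.
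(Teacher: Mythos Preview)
Your proof is correct and takes a genuinely different---and more elegant---route than the paper's.

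The paper's proof proceeds by \emph{inflating} both the distances and the approximating points: it sets $\lambda = \epsilon/(4l)$, picks $\alpha_i \in (\omega_i + 3\lambda, \omega_i + 4\lambda) \cap \od$ directly, chooses rough approximations $a'_i$ of the $x_i$, and then \emph{rebuilds} predecessors $a_0,\ldots,a_{l-1}$ by a fairly involved inductive construction so that $d(a_i,a_j) = d(a'_i,a'_j) + 3\lambda$ for $i\neq j$. The extra $3\lambda$ of slack is what makes the permissibility inequalities go through; the price is a long string of verifications and an error budget that depends on $l$.

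Your approach keeps the approximations $a_i$ untouched and instead shifts all the work into the choice of the $\alpha_i$ via the supremum formula $\alpha_i = \sup_k\, d(a_i,a_k)\dis\tilde\omega_k$. This is cleaner for several reasons: permissibility becomes almost automatic (it falls out of the disgroup identity $(a\dis x)\dis(b\dis x)\le a\dis b$ and the reverse triangle inequality), the error budget is a fixed $\epsilon/3$ independent of $l$, and there is no need to manufacture new tuples with prescribed mutual distances. The paper's construction, on the other hand, shows how to realise any permissible system of distances among $\pseudo$-points by actual tuples, which is of some independent interest but is not needed for the lemma as stated.

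One minor point of exposition: the phrase ``using additivity of $\sup$'' for the step $\alpha_i\dis\alpha_j\le d(a_i,a_j)$ is a little loose. What you actually use is that from $p_k\dis q_k\le c$ for all $k$ one gets $p_k\le q_k + c\le (\sup_m q_m)+c$, hence $\sup_k p_k \le (\sup_m q_m)+c$, and symmetrically; then Lemma~\ref{Lemma: dis_upper_bounds} gives $(\sup_k p_k)\dis(\sup_m q_m)\le c$. Spelling this out in one line would remove any ambiguity.
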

		\begin{proof}
			If $l = 0$, then ${}_{0}()$ works, and we are done. In the remainder of the proof assume $l \geq 1$. Also, we do not bother writing ages of the tuples we construct; just take them to be the supremum of ages of predecessors plus one.
			
			Let $\lambda \dfeq \frac{\epsilon}{4 l} > 0$. For each of finitely many $i \in \NN_{< l}$ choose $\alpha_i \in (\omega_i + \intoo{3 \lambda}{4 \lambda}) \cap \od$ (recall that $\od$ is dense in $\nnr$ by Lemma~\ref{Lemma: density_of_od}) and $a'_i \in \pseudo$ such that $d(x_i, [a'_i]) \leq \lambda$. Let
			$$\delta_{i,j} \dfeq \begin{cases} 1, & \text{if } i = j,\\ 0, & \text{if } i \neq j, \end{cases}$$
			denote the \df{Kronecker delta}, and $d_{i,j} \dfeq d(a'_i, a'_j) + 3 \lambda (1-\delta_{i,j})$. Define $a_0, \ldots, a_{l-1} \in \pseudo$ inductively by
			$$a_k \dfeq \big(a_i, d_{k,i}\big)_{i \in \NN_{< k}} \cnct \big(a'_k, \sup\st{d(a_j, a'_k) \dis d_{k,j}}{j \in \NN_{< k}}\big);$$
			in particular $a_0 = (a'_0, 0)$. The calculations (by induction on $k > i, j$) below confirm these are indeed permissible tuples:
			$$d(a_i, a_j) \dis d_{k,i} = d_{i,j} \dis d_{k,i} = \big(d(a'_i, a'_j) + 3 \lambda (1 - \delta_{i,j})\big) \dis \big(d(a'_i, a'_k) + 3 \lambda \delta_{i,j}\big) =$$
			$$= d(a'_i, a'_j) \dis \big(d(a'_i, a'_k) + 3 \lambda \delta_{i,j}\big) \leq d(a'_i, a'_j) \dis d(a'_i, a'_k) + 3 \lambda \delta_{i,j} \leq$$
			$$\leq d(a'_j, a'_k) + 3 \lambda = d_{k,j},$$
			
			$$d(a_i, a'_k) \dis d_{k,i} \leq \sup\st{d(a_j, a'_k) \dis d_{k,j}}{j \in \NN_{< k}},$$
			
			$$d(a_i, a'_k) \leq d(a_i, a'_k) \dis d_{k,i} + d_{k,i} \leq \sup\st{d(a_j, a'_k) \dis d_{k,j}}{j \in \NN_{< k}} + d_{k,i},$$
			(the next two lines prove $\sup\st{d(a_j, a'_k) \dis d_{k,j}}{j \in \NN_{< k}} \leq d(a_i, a'_k) + d_{k,i}$)
			$$d(a_j, a'_k) \leq d(a_i, a_j) + d(a_i, a'_k) = d_{i,j} + d(a_i, a'_k) = d(a'_i, a'_j) + 3 \lambda (1 - \delta_{i,j}) + d(a_i, a'_k) \leq$$
			$$\leq d(a'_i, a'_k) + d(a'_k, a'_j) + 6 \lambda + d(a_i, a'_k) = d_{k,i} + d_{k,j} + d(a_i, a'_k),$$
			
			$$d_{k,j} = d(a'_k, a'_j) + 3 \lambda \leq d(a'_i, a'_j) + d(a'_i, a'_k) + 3 \lambda \leq d_{i,j} + d(a'_i, a'_k) + 3 \lambda =$$
			$$= d(a_i, a_j) + d_{k,i} \leq d(a_j, a'_k) + d(a_i, a'_k) + d_{k,i}.$$
			
			\begin{itemize}
				\item\proven{$(a_i, \alpha_i)_{i \in \NN_{< l}} \in \pseudo$}
					Take any $i, j \in \NN_{< l}$. The condition $d(a_i, a_j) \dis \alpha_i \leq \alpha_j$ clearly holds for $i = j$, so assume $i \neq j$.
					
					$$\alpha_i + \alpha_j \geq (\omega_i + 3 \lambda) + (\omega_j + 3 \lambda) = \omega_i + \omega_j + 6 \lambda \geq d(x_i, x_j) + 6 \lambda \geq$$
					$$\geq d(a'_i, a'_j) - d(x_i, [a'_i]) - d(x_j, [a'_j]) + 6 \lambda \geq d(a'_i, a'_j) + 4 \lambda \geq d_{i,j} = d(a_i, a_j)$$
					
					$$\alpha_i \leq \omega_i + 4 \lambda \leq d(x_i, x_j) + \omega_j + 4 \lambda \leq$$
					$$\leq d(x_i, [a'_i]) + d(a'_i, a'_j) + d(x_j, [a'_j]) + \omega_j + 4 \lambda \leq d(a'_i, a'_j) + \omega_j + 6 \lambda =$$
					$$= d_{i,j} + \omega_j + 3 \lambda \leq d_{i,j} + \alpha_j = d(a_i, a_j) + \alpha_j$$
				
				\item\proven{$\xall{i}{\NN_{< l}}{d(x_i, [a_i]) \leq \epsilon}$}
					We claim that $d(a_k, a'_k) \leq 3 k \lambda$ for all $k \in \NN_{< l}$. This clearly holds for $k = 0$. By induction, for $k \geq 1$,
					$$d(a_k, a'_k) = \sup\st{d(a_j, a'_k) \dis d_{k,j}}{j \in \NN_{< k}} =$$
					$$= \sup\st{d(a_j, a'_k) \dis (d(a'_k, a'_j) + 3 \lambda)}{j \in \NN_{< k}} \leq$$
					$$\leq \sup\st{(d(a_j, a'_k) \dis d(a'_k, a'_j)) + 3 \lambda}{j \in \NN_{< k}} \leq$$
					$$\leq \sup\st{d(a_j, a'_j)}{j \in \NN_{< k}} + 3 \lambda \leq 3 (k-1) \lambda + 3 \lambda = 3 k \lambda.$$
					Therefore
					$$d(x_i, [a_i]) \leq d(x_i, [a'_i]) + d([a'_i], [a_i]) = d(x_i, [a'_i]) + d(a'_i, a_i) \leq \lambda + 3 i \lambda \leq 3 l \lambda \leq \epsilon.$$
				
				\item\proven{$\xall{i}{\NN_{< l}}{\omega_i \dis \alpha_i \leq \epsilon}$}
					$$\omega_i \dis \alpha_i < 4 \lambda \leq 4 l \lambda \leq \epsilon$$
			\end{itemize}
		\end{proof}
		
		\begin{proposition}\label{Proposition: density_preserved_by_Urysohn_construction}
			$\Ury$ is dense in $\U_\nnr$ (equivalently, $\pseudo$ is dense in $\psd_{\nnr}$).
		\end{proposition}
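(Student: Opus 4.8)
The plan is to prove, by induction on the age of a tuple, the statement: for every $b \in \psd_\nnr$ and every $r \in \RR_{> 0}$ there is $a \in \pseudo$ with $d([a], [b]) < r$ (distances taken in $\U_\nnr$, equivalently in $\U$). The parenthetical equivalence with density of $\Ury$ in $\U_\nnr$ is then automatic, since the Kolmogorov quotient maps $\pseudo \to \Ury$ and $\psd_\nnr \to \U_\nnr$ are surjective isometries (so positive-radius balls correspond) and $\Ury$ sits inside $\U_\nnr$ as the image of $\pseudo$.

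For the base case $\age{b} = 0$ we have $b = {}_0()$, which already belongs to $\pseudo$, so $a \dfeq b$ works. For the inductive step write $b = {}_{\age{b}}(b_j, \beta_j)_{j \in \NN_{< l}}$; its predecessors $b_j$ have strictly smaller age, so by the induction hypothesis each $[b_j]$ is approximated arbitrarily well by elements $[a']$ with $a' \in \pseudo$. Thus $X \dfeq \st{[b_j]}{j \in \NN_{< l}}$ satisfies the hypothesis of Lemma~\ref{Lemma: approximation_of_Urysohn_tuples}, and permissibility of $b$ provides the relations $d(b_i, b_j) \dis \beta_i \leq \beta_j$. Applying that lemma with $x_j \dfeq [b_j]$, $\omega_j \dfeq \beta_j$ and an as-yet-unspecified $\epsilon \in \RR_{> 0}$ yields a permissible tuple $a = {}_{\age{a}}(a_i, \alpha_i)_{i \in \NN_{< l}}$ with $d([b_i], [a_i]) \leq \epsilon$ and $\beta_i \dis \alpha_i \leq \epsilon$ for every $i \in \NN_{< l}$.

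Since $a$ and $b$ have the same length $l$, Proposition~\ref{Proposition: similar_distances_and_points_yield_similar_extensions_in_Urysohn_space} (applied with $\od = \nnr$, and with both of its parameters taken to be $\epsilon$) gives $d(a, b) \leq 2 \epsilon$, hence $d([a], [b]) \leq 2 \epsilon$. Choosing $\epsilon$ small enough that $2 \epsilon < r$ then closes the induction.

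I expect essentially no difficulty beyond Lemma~\ref{Lemma: approximation_of_Urysohn_tuples} itself --- that lemma, where the approximating distances $\alpha_i$ and the auxiliary quantities $d_{i,j}$ are chosen carefully enough to keep the constructed tuple permissible, carries all the weight. The only things to be careful about in assembling the proof are that the induction hypothesis is phrased so as to deliver precisely the approximation hypothesis on $X$ that the lemma demands, and the bookkeeping of the factor $2$ in the $\epsilon$-estimate.
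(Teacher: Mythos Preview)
Your proposal is correct and follows essentially the same approach as the paper: induction on the age of $b$, invoking Lemma~\ref{Lemma: approximation_of_Urysohn_tuples} with $x_j = [b_j]$ and $\omega_j = \beta_j$ to produce the approximating permissible tuple $a$, then applying Proposition~\ref{Proposition: similar_distances_and_points_yield_similar_extensions_in_Urysohn_space} with both parameters equal to $\epsilon$ to bound $d(a,b)$ by $2\epsilon$. The only cosmetic differences are that the paper takes $X = {}_{n-1}\psd_\nnr$ (the full set of lower-age tuples) rather than just the finite set of predecessors, and fixes $\epsilon = r/2$ at the outset rather than choosing it at the end.
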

		\begin{proof}
			We prove by induction on age $n \in \NN$ that $\pseudo[n]$ is dense in $\strat[n]{\psd}{\nnr}$. The proposition clearly holds for $n = 0$. Assume $n \geq 1$, and fix $r \in \RR_{> 0}$. Take $b = {}_{\age{b}}(b_i, \beta_i)_{i \in \NN_{< l}} \in {}_{n}V_\RR$, and suppose the proposition holds for ages less than $n$, in particular for predecessors of $b$. This means that we can use Lemma~\ref{Lemma: approximation_of_Urysohn_tuples} for $X = {}_{n-1}V_\RR$, $x_i = [b_i]$, $\omega_i = \beta_i$ and $\epsilon = \frac{r}{2}$ to obtain $a = {}_{\age{a}}(a_i, \alpha_i)_{i \in \NN_{< l}} \in \pseudo$ so that $d(a_i, b_i) \leq \frac{r}{2}$ and $\alpha_i \dis \beta_i \leq \frac{r}{2}$ for all $i \in \NN_{< l}$. By Proposition~\ref{Proposition: similar_distances_and_points_yield_similar_extensions_in_Urysohn_space} $d(a, b) \leq r$.
		\end{proof}
		
		We preliminarily defined $\U$ to be the completion of $\U_\nnr$, but we now see that we could define it as the completion of $\Ury$ for \emph{any} halved subdisgroup $\od \subseteq \nnr$ containing $1$.
		\begin{corollary}\label{Corollary: completion_of_dense_is_Urysohn}
			$\U$ is the completion of $\Ury$.
		\end{corollary}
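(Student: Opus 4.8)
The plan is to observe that $\Ury$ sits inside $\U_\nnr$ as a dense metric subspace, and then to invoke the general fact — recorded in the discussion right after Definition~\ref{Definition: completion} — that two (proto)metric spaces connected by a dense isometry have the same completion.

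First I would note that the inclusion $\Ury \hookrightarrow \U_\nnr$ is an isometry. It is induced by the inclusion $\pseudo \hookrightarrow \psd_\nnr$ of permissible tuples, which is legitimate because $\od \subseteq \nnr$ is a subdisgroup, so $\dis$, finite suprema, and hence the distance $d$ are computed identically whether we work in $\od$ or in $\nnr$; passing to Kolmogorov quotients the map stays injective, since $[a] = [b]$ holds in either space precisely when $d(a, b) = 0$. This is exactly the inclusion $\Ury \subseteq \U_\nnr$ already mentioned above. By Proposition~\ref{Proposition: density_preserved_by_Urysohn_construction} this inclusion has dense image, so it is a dense isometry $j\colon \Ury \to \U_\nnr$.

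Finally, $\U$ was (preliminarily) defined as the completion of $\U_\nnr$, \ie there is a dense isometry $i\colon \U_\nnr \to \U$ with the universal property of Definition~\ref{Definition: completion}. By the observation following that definition (composing a completion with a dense isometry on the other side again gives a completion), $i \circ j\colon \Ury \to \U$ is the completion of $\Ury$. There is no real obstacle here: all the substance is already contained in Proposition~\ref{Proposition: density_preserved_by_Urysohn_construction} and in the formal bookkeeping around the universal property of completion, so the corollary is essentially immediate.
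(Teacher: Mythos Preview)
Your proposal is correct and follows exactly the same route as the paper, which proves the corollary in one line by citing Proposition~\ref{Proposition: density_preserved_by_Urysohn_construction}. You have simply spelled out the bookkeeping (that the inclusion $\Ury \hookrightarrow \U_\nnr$ is an isometry and that a dense isometry composed with a completion is again a completion) which the paper leaves implicit.
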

		\begin{proof}
			By Proposition~\ref{Proposition: density_preserved_by_Urysohn_construction}.
		\end{proof}
		
		Having constructed $\U$, we now turn our attention to proving its Urysohn properties. Let
		$$\prms \dfeq \st{(x_i, \omega_i)_{i \in \NN_{< l}} \in (\U \times \nnr)^*}{\all{i, j}{\NN_{< l}}{d(x_i, x_j) \dis \omega_i \leq \omega_j}}.$$
		
		\begin{lemma}\label{Lemma: completed_Urysohn_extension}
			For all $x = \tuple{}{l}{x}{\chi}{h} \in \prms$ declare the map $f_x\colon \pseudo \to \RR$ to be defined for $a = \atuple{a}{\alpha}{i} \in \pseudo$ inductively on $\age{a}$ as
			$$f_x(a) \dfeq \sup\big(\st{d(x_h, [a]) \dis \chi_h}{h \in \NN_{< l}} \cup \st{f_x(a_i) \dis \alpha_i}{i \in \NN_{< \lnth{a}}}\big).$$
			\begin{enumerate}
				\item
					Let $\epsilon, \epsilon' > 0$ and $c = \tuple{\age{c}}{l}{c}{\gamma}{k} \in \pseudo$ be such that $d(x_k, [c_k]) \leq \epsilon$ and $\chi_k \dis \gamma_k \leq \epsilon'$ for all $k \in \NN_{< l}$. Then
					$$f_x(a) \dis d(c, a) \leq \epsilon + \epsilon'$$
					for all $a = \atuple{a}{\alpha}{i} \in \pseudo$.
				\item
					The map $f_x$ is a location on $\pseudo$.
				\item
					We have $d_{\locd(\pseudo)}\big(f_x, d(x_h, [\insarg])\big) = \chi_h$ for all $h \in \NN_{< l}$.
			\end{enumerate}
		\end{lemma}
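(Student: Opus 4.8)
The plan is to establish the three items in order, with item~(1) carrying the analytic content and items~(2) and~(3) reduced to it by approximating the (possibly non-$\od$-valued, possibly non-permissible) data $x$ by a genuine permissible tuple $c \in \pseudo$ and then letting the approximation error tend to $0$. Throughout one uses that the defining recursion for $f_x$ makes sense because at each node a finite supremum in $\RR$ is taken.

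For item~(1) I would induct on $\age{a}$, keeping $c$ fixed. By Lemma~\ref{Lemma: dis_upper_bounds} the desired bound $f_x(a) \dis d(c,a) \le \epsilon + \epsilon'$ is equivalent to the two one-sided inequalities $f_x(a) \le (\epsilon+\epsilon') + d(c,a)$ and $d(c,a) \le (\epsilon+\epsilon') + f_x(a)$; as each side is a supremum of finitely many summands, it suffices to dominate every summand of $f_x(a)$ by $(\epsilon+\epsilon') + d(c,a)$ and every summand of $d(c,a)$ by $(\epsilon+\epsilon') + f_x(a)$. The summands indexed by predecessors, $f_x(a_i)\dis\alpha_i$ and $d(c,a_i)\dis\alpha_i$, are handled by the induction hypothesis applied to $a_i$ (of smaller age) together with the $\dis$-triangle inequality through the midpoint $d(c,a_i)$. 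The summands indexed by $h \in \NN_{<l}$ are handled by chaining the $\dis$-triangle inequality with the reverse triangle inequality in $\U$ (namely $d(x_h,[a]) \dis d([c_h],[a]) \le d(x_h,[c_h]) \le \epsilon$, and the same with $x_h$ and $c_h$ exchanged), the hypothesis $\chi_h \dis \gamma_h \le \epsilon'$, and the fact that $d(c_h,a)\dis\gamma_h$ (resp. $d(x_h,[a])\dis\chi_h$) already appears inside the supremum $d(c,a)$ (resp. $f_x(a)$). This is routine but is the one step demanding care, since $\dis$ is not monotone and every inequality must be threaded through the correct midpoint.

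For item~(2), fix $\delta \in \RR_{>0}$ and apply Lemma~\ref{Lemma: approximation_of_Urysohn_tuples} with $X = \U$ (which has the density property required there, because $\Ury$ is dense in $\U$ by Corollary~\ref{Corollary: completion_of_dense_is_Urysohn}), with $\omega_i = \chi_i$ (the permissibility hypothesis being exactly $x \in \prms$), and with error $\delta$, obtaining $c = {}_{\age{c}}(c_i,\gamma_i)_{i\in\NN_{<l}} \in \pseudo$ with $d(x_i,[c_i]) \le \delta$ and $\chi_i\dis\gamma_i \le \delta$. Item~(1) (with $\epsilon = \epsilon' = \delta$) then gives $f_x(a)\dis d(c,a) \le 2\delta$ for every $a \in \pseudo$. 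Since $c \in \pseudo$, the map $a \mapsto d(c,a)$ is a location on $\pseudo$ by Proposition~\ref{Proposition: locations_are_completion}(2), hence satisfies $d(a,b)\dis d(c,a) \le d(c,b)$; combining this with the estimates $d(c,a)\dis f_x(a) \le 2\delta$, $d(c,b)\dis f_x(b)\le 2\delta$ and the $\dis$-triangle inequality gives $d(a,b)\dis f_x(a) \le f_x(b) + 4\delta$, and since $\delta$ was arbitrary we conclude $d(a,b)\dis f_x(a) \le f_x(b)$. For the second location axiom, evaluating $f_x(a)\dis d(c,a) \le 2\delta$ at $a = c$ yields $f_x(c) = f_x(c)\dis 0 \le 2\delta$ (as $d(c,c) = 0$), so choosing $\delta$ small gives, for any prescribed positive bound, a point of $\pseudo$ on which $f_x$ is below it. Thus $f_x \in \locd(\pseudo)$.

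For item~(3), first observe that $d(x_h,[\insarg])$ is a location on $\pseudo$ (the triangle inequality in $\U$ gives $d(a,b)\dis d(x_h,[a]) \le d(x_h,[b])$, and density of $\Ury$ supplies points where it is small). Fix $\delta > 0$ and take the same $c$ as above. Item~(1) gives $d_{\locd(\pseudo)}(f_x, d(c,\insarg)) = \sup_a (f_x(a)\dis d(c,a)) \le 2\delta$, while computing the location metric directly, $d_{\locd(\pseudo)}(d(c,\insarg), d(x_h,[\insarg])) = \sup_{a\in\pseudo}\big(d([c],[a])\dis d(x_h,[a])\big) = d(x_h,[c])$, the supremum being bounded by $d(x_h,[c])$ via the reverse triangle inequality in $\U$ and attained at $a = c$. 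Now $d(c,c_h) = \gamma_h$ by Theorem~\ref{Theorem: distances_as_described_in_Urysohn_tuples}, and together with $d(x_h,[c_h]) \le \delta$ and $\chi_h\dis\gamma_h \le \delta$ one gets $d(x_h,[c])\dis\chi_h \le 2\delta$ (Lemma~\ref{Lemma: dis_upper_bounds}). Applying the $\dis$-triangle inequality inside the metric space $\locd(\pseudo)$ yields $d_{\locd(\pseudo)}(f_x, d(x_h,[\insarg]))\dis\chi_h \le 2\delta + 2\delta = 4\delta$, and letting $\delta \to 0$ gives $d_{\locd(\pseudo)}(f_x, d(x_h,[\insarg])) = \chi_h$. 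The only real obstacle is the bookkeeping in item~(1); once that is in place, items~(2) and~(3) are soft ``$4\delta$ for every $\delta$'' arguments in $\RR$, modulo checking that Lemma~\ref{Lemma: approximation_of_Urysohn_tuples} applies with $X = \U$ and that $d(c,\insarg)$ and $d(x_h,[\insarg])$ are locations.
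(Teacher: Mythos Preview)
Your proposal is correct. For item~(1) you follow the same induction on $\age{a}$ and the same term-by-term bounding of the two suprema that the paper uses.

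For items~(2) and~(3) your route genuinely differs from the paper's. You reduce everything to item~(1): pick an approximating $c \in \pseudo$ via Lemma~\ref{Lemma: approximation_of_Urysohn_tuples}, transfer the known location/distance properties of $d(c,\insarg)$ to $f_x$ up to a $4\delta$ error, and let $\delta \to 0$. The paper instead proves~(2) by a contradiction argument (fixing $\epsilon = \tfrac{1}{5}\big(d(a,b)\dis f_x(a) - f_x(b)\big)$ and using item~(1) to reach $5\epsilon < 5\epsilon$; this is constructively licit because $\leq$ on $\RR$ is $\lnot\lnot$-stable), and proves~(3) by a separate direct induction on $\age{a}$, bounding $f_x(a) + d(x_h,[a])$ below and $f_x(a) \dis d(x_h,[a])$ above by $\chi_h$ termwise. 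Your approach is more unified --- once item~(1) is in hand, items~(2) and~(3) become soft metric-space estimates with no further induction --- and you also make explicit the ``$f_x$ attains small values'' half of the location axiom (evaluate at $a=c$), which the paper leaves implicit. The paper's approach to~(3), on the other hand, yields the exact value $\chi_h$ by explicit two-sided bounds rather than through a limiting argument, so it is slightly more self-contained.
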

		\begin{proof}
			\begin{enumerate}
				\item
					By induction on $\age{a}$. It is equivalent to prove $f_x(a) \leq d(c, a) + \epsilon + \epsilon'$ and $d(c, a) \leq f_x(a) + \epsilon + \epsilon'$. We calculate
					$$d(x_k, [a]) \dis \chi_k \leq (d(x_k, [a]) \dis d(c_k, a)) + (d(c_k, a) \dis \gamma_k) + (\gamma_k \dis \chi_k) \leq$$
					$$\leq d(x_k, [c_k]) + (d(c_k, a) \dis \gamma_k) + (\gamma_k \dis \chi_k) \leq \epsilon + d(c, a) + \epsilon'$$
					for all $k \in \NN_{< l}$ and, using induction hypothesis,
					$$f_x(a_i) \dis \alpha_i \leq (f_x(a_i) \dis d(c, a_i)) + (d(c, a_i) \dis \alpha_i \leq (\epsilon + \epsilon') + d(c, a)$$
					for all $i \in \NN_{< \lnth{a}}$, proving the first claim. As for the second,
					$$d(c_k, a) \dis \gamma_k \leq (d(c_k, a) \dis d(x_k, [a])) + (d(x_k, [a]) \dis \chi_k) + (\chi_k \dis \gamma_k) \leq$$
					$$\leq d(x_k, [c_k])) + (d(x_k, [a]) \dis \chi_k) + (\chi_k \dis \gamma_k) \leq \epsilon + f_x(a) + \epsilon'$$
					and
					$$d(c, a_i) \dis \alpha_i \leq (d(c, a_i) \dis f_x(a_i)) + (f_x(a_i) \dis \alpha_i) \leq (\epsilon + \epsilon') + f_x(a).$$
				\item
					We need to prove $d(a, b) \dis f_x(a) \leq f_x(b)$ for all $a = \atuple{a}{\alpha}{i},$ $b = \atuple{b}{\beta}{j} \in \pseudo$. We do so by induction on $\age{a} + \age{b}$.
					
					Suppose $d(a, b) \dis f_x(a) > f_x(b)$, and let $\epsilon \dfeq \frac{d(a, b) \dis f_x(a) - f_x(b)}{5}$. Then $\epsilon > 0$, so using Lemma~\ref{Lemma: approximation_of_Urysohn_tuples} for $X = \U$, we may choose $c = \tuple{\age{c}}{l}{c}{\gamma}{k} \in \pseudo$ such that $d(x_k, c_k) \leq \epsilon$ and $\chi_k \dis \gamma_k \leq \epsilon$ for all $k \in \NN_{< l}$. Note that this implies
					$$d(x_k, [a]) \dis \chi_k \leq d(c_k, a) \dis \gamma_k + 2 \epsilon,$$
					$$d(c_k, b) \dis \gamma_k \leq d(x_k, [b]) \dis \chi_k + 2 \epsilon$$
					for all $k \in \NN_{< l}$. By the previous item we also have
					$$f_x(a_i) \dis \alpha_i \leq d(c, a_i) \dis \alpha_i + 2 \epsilon$$
					for all $i \in \NN_{< \lnth{a}}$. Thus
					$$d(a, b) \dis f_x(a) \leq d(c, a) \dis d(a, b) + 2 \epsilon \leq d(c, b) + 2 \epsilon \leq$$
					$$\leq f_x(b) + 4 \epsilon < f_x(b) + 5 \epsilon = d(a, b) \dis f_x(a),$$
					a contradiction.
				\item
					Recall that
					$$d_{\locd(\pseudo)}\big(f_x, d(x_h, [\insarg])\big) = \inf\st{f_x(a) + d(x_h, [a])}{a \in \pseudo} =$$
					$$= \sup\st{f_x(a) \dis d(x_h, [a])}{a \in \pseudo}.$$
					Thus it is sufficient to prove that $f_x(a) + d(x_h, [a]) \geq \chi_h$ and $f_x(a) \dis d(x_h, [a]) \leq \chi_h$ for all $a \in \pseudo$. We easily get
					$$f_x(a) + d(x_h, [a]) \geq d(x_h, [a]) \dis \chi_h + d(x_h, [a]) \geq \chi_h,$$
					$$\chi_h + f_x(a) \geq \chi_h + (d(x_h, [a]) \dis \chi_h) \geq d(x_h, [a]),$$
					but we still need to verify $f_x(a) \leq d(x_h, [a]) + \chi_h$. We prove $d(x_k, [a]) \dis \chi_k \leq d(x_h, [a]) + \chi_h$ by
					$$d(x_k, [a]) \leq d(x_h, [a]) + d(x_k, x_h) \leq d(x_h, [a]) + \chi_k + \chi_h,$$
					$$\chi_k - \chi_h \leq d(x_k, x_h) \leq d(x_k, [a]) + d(x_h, [a]).$$
					For $f_x(a_i) \dis \alpha_i \leq d(x_h, [a]) + \chi_h$ we use the induction hypothesis to calculate
					$$f_x(a_i) \leq d(x_h, [a_i]) + \chi_h \leq d(x_h, [a]) + d(a, a_i) + \chi_h = d(x_h, [a]) + \chi_h + \alpha_i,$$
					$$f_x(a_i) + d(x_h, [a]) + \chi_h \geq d(x_h, [a_i]) \dis \chi_h + \chi_h + d(x_h, [a]) \geq$$
					$$\geq d(x_h, [a_i]) + d(x_h, [a]) \geq d(a, a_i) = \alpha_i.$$
			\end{enumerate}
		\end{proof}
		
		\begin{theorem}\label{Theorem: Urysohn_space}
			The metric space $\Ury$ satisfies the properties of the Urysohn space. Explicitly, the following holds.
			\begin{enumerate}
				\item
					$\U$ is (an inhabited) complete separable metric space.
				\item\label{Theorem: Urysohn_space: extension_map}
					There is a map $\ext\colon \prms \to \U$ with the property $d(\ext((x_i, \omega_i)_{i \in \NN_{< l}}), x_k) = \omega_k$ for all $k \in \NN_{< l}$.
				\item\label{Theorem: Urysohn_space: extension_of_isometries}
					Let
					\begin{itemize}
						\item
							$\mtr{X} = (X, d_\mtr{X}, s)$ be a separable metric space,
						\item
							$F \subseteq X$ a finite subset with enumeration $F = \{y_0, \ldots, y_{k-1}\}$,
						\item
							$\mtr{F} = (F, d_{F})$ metric subspace of ${X}$, and
						\item
							$e\colon F \to \Ury$ an isometry.
					\end{itemize}
					Then there exists a canonical choice of an isometry $f\colon X \to \U$ such that $\rstr{f}_F = e$.
			\end{enumerate}
		\end{theorem}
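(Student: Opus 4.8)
The plan is to dispatch the three parts in turn; the substantive work has already been done in Lemma~\ref{Lemma: completed_Urysohn_extension}, Theorem~\ref{Theorem: countable_Urysohn_space} and Proposition~\ref{Proposition: extension_of_maps_into_complete_space}, so what remains is mostly assembly and careful bookkeeping of identifications. For part~1: $\U$ is the completion of $\Ury$ by Corollary~\ref{Corollary: completion_of_dense_is_Urysohn}, and a completion is always a complete metric space, so completeness and the metric axioms come for free; $\U$ is inhabited because $[\et]$ is a point of $\Ury \subseteq \U$. For separability, although the standing $\od$ need not be countable, by (the discussion around) Corollary~\ref{Corollary: completion_of_dense_is_Urysohn} $\U$ is equally the completion of $\Ury$ over \emph{any} halved subdisgroup of $\nnr$ containing $1$; specialising to the non-negative dyadic rationals, which are countable, that countable version is countable by Lemma~\ref{Lemma: countability_of_od_and_Urysohn} and dense in $\U$, so it exhibits $\U$ as separable.

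For part~2 I would realise $\U$ concretely through locations. Since $\pseudo$ is a pseudometric space whose Kolmogorov quotient is $\Ury$, Proposition~\ref{Proposition: locations_are_completion} makes $\locd(\pseudo)$ a model of the completion of $\Ury$, hence canonically isometrically isomorphic to $\U$, and under this isomorphism a point $p \in \U$ corresponds to the location $a \mapsto d(p, [a])$ on $\pseudo$, \ie distance-to-$p$ restricted to the dense copy $[\pseudo] = \Ury$. Given $(x_i, \omega_i)_{i \in \NN_{< l}} \in \prms$, the second item of Lemma~\ref{Lemma: completed_Urysohn_extension} tells us $f_x$ is a location on $\pseudo$, so I would set $\ext\big((x_i, \omega_i)_{i \in \NN_{< l}}\big) \dfeq f_x$, viewed through the isomorphism $\locd(\pseudo) \ism \U$. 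The required identity $d\big(\ext((x_i,\omega_i)_{i \in \NN_{< l}}), x_k\big) = \omega_k$ is then exactly the third item of Lemma~\ref{Lemma: completed_Urysohn_extension}, once we observe that $x_k$ is represented by the location $d(x_k, [\insarg])$.

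For part~3, parts~1 and~2 together make $(\U, d, \ext)$ an $\nnr$-Urysohn space in the sense of Definition~\ref{Definition: od-Urysohn} (with $\od = \nnr$ there). Let $D$ be a countable dense subset of the separable space $\mtr{X}$; then $F \cup D$ is a countable subset of $X$ containing $F$, and with the subspace metric it is a countable $\nnr$-metric space, carrying an enumeration assembled from those of $F$ and $D$. Theorem~\ref{Theorem: countable_Urysohn_space}, applied to $F \cup D$, its finite subset $F$, the isometry $e$, and the $\nnr$-Urysohn space $(\U, d, \ext)$, yields a canonical isometry $g\colon F \cup D \to \U$ with $\rstr{g}_F = e$. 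Since $D$ is dense in $X$, so is $F \cup D$, so the inclusion $F \cup D \hookrightarrow X$ is a dense isometry; as $\U$ is complete and $g$ is an isometry, in particular area Lipschitz, Proposition~\ref{Proposition: extension_of_maps_into_complete_space} extends $g$ uniquely to a continuous, and in fact isometric, map $f\colon X \to \U$ with $\rstr{f}_F = \rstr{g}_F = e$. Every step was canonical in the given data.

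The only genuinely delicate point is in part~2: keeping the identifications straight, namely that the abstractly-defined completion $\U$ coincides with the location model $\locd(\pseudo)$ and that the point $x_k \in \U$ is represented there by the location $d(x_k, [\insarg])$, so that the third item of Lemma~\ref{Lemma: completed_Urysohn_extension} really delivers the extension identity in the stated form. After that, part~3 is a routine composition of the countable extension theorem with the extension-to-completion proposition, and part~1 offers no resistance.
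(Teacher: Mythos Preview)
Your proof is correct and follows essentially the same approach as the paper: part~1 via Corollary~\ref{Corollary: completion_of_dense_is_Urysohn} and Lemma~\ref{Lemma: countability_of_od_and_Urysohn} with a countable choice of $\od$, part~2 by setting $\ext(x) \dfeq f_x$ from Lemma~\ref{Lemma: completed_Urysohn_extension}, and part~3 by extending $e$ over the countable dense subset and then to all of $X$ via Proposition~\ref{Proposition: extension_of_maps_into_complete_space}. The one cosmetic difference is that in part~3 you invoke Theorem~\ref{Theorem: countable_Urysohn_space} directly (applied to the $\nnr$-Urysohn space $(\U, d, \ext)$), whereas the paper unfolds that construction inline; your packaging is arguably cleaner.
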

		\begin{proof}
			\begin{enumerate}
				\item
					Let $\od$ be a countable, such as $\od = \QQ_{\geq 0}$. Using Lemma~\ref{Lemma: countability_of_od_and_Urysohn} and Corollary~\ref{Corollary: completion_of_dense_is_Urysohn} we see that $\U$ is the completion of an inhabited countable metric space $\Ury$.
				\item
					Lemma~\ref{Lemma: completed_Urysohn_extension} tells us that $\ext$, given as $x \mapsto f_x$, works.
				\item
					Define $f'\colon F \cup \big(s(\NN) \cap X\big) \to \Ury$ on $F$ by $f'(y_i) \dfeq e(y_i)$, and on $s(\NN) \cap X$ inductively on $n \in \NN$ as follows: if $s_n \in X$, then
					$$f'(s_n) \dfeq \ext\Big(\big(e(y_i), d_{X}(s_n, y_i)\big)_{i \in \NN_{< k}} \cnct \big(f(s_j), d_\mtr{X}(s_n, s_j)\big)_{j \in \NN_{< n} \cap s^{-1}(X)}\Big).$$
					The map $f'$ is well defined --- if $s_n$ equals some $x \in F \cup \big(s(\NN_{< n}) \cap X\big)$, then $d\big(f(s_n), f(x)\big) = 0$, so $f(s_n) = f(x)$ because $\U$ is a metric space. For the same reason $f'$ is an extension of $e$. It follows from the definition of $\ext$ that $f'$ is an isometry, and since $F \cup \big(s(\NN) \cap X\big)$ is metrically dense in $\mtr{X}$, it extends to the isometry $f\colon X \to \U$ by Proposition~\ref{Proposition: extension_of_maps_into_complete_space}.
			\end{enumerate}
		\end{proof}
		
		The corollary is that any metrically separable metric space isometrically embeds into $\U$ --- just take $F = \emptyset$ in the preceding theorem.
		
		To conclude the construction of the Urysohn space, we prove that it is unique up to isometric isomorphism.
		\begin{theorem}
			Let $(U, d_U, \ext[U])$ be a Urysohn space. Then there is an isometric isomorphism $U \ism \U$.\footnote{In fact, with more technical involvement one can show that the choice of the isomorphism is canonical (depending on the enumeration of a dense subset of $U$); compare with Theorem~\ref{Theorem: uniqueness_of_countable_Urysohn}.}
		\end{theorem}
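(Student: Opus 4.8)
The plan is to re-run the back-and-forth argument from the proof of Theorem~\ref{Theorem: uniqueness_of_countable_Urysohn}, but now at the level of the completed extension maps $\ext$ and $\ext[U]$ (which accept distances in $\nnr$ rather than in a countable $\od$), and then to transport the resulting isometry between countable dense subsets onto the completions by means of Proposition~\ref{Proposition: extension_of_maps_into_complete_space}. One cannot simply invoke Theorem~\ref{Theorem: uniqueness_of_countable_Urysohn} as a black box, since a countable dense subset of $U$ need not be a $\od$-Urysohn space for any countable $\od$ --- its distances are real numbers. Nor does exhibiting mutually isometric embeddings suffice, since there is no Cantor--Bernstein theorem for metric spaces.

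First I would note that both $U$ and $\U$ are inhabited: applying $\ext[U]$, respectively $\ext$, to the empty list (which lies vacuously in the relevant permissibility set) produces a point. As $U$ and $\U$ are separable and inhabited, I may fix maps $s\colon \NN \to U$ and $t\colon \NN \to \U$ with dense image; for $\U$ one may, if desired, take $t$ to enumerate the countable dense subspace $\strat{\U}{\QQ_{\geq 0}}$ (using Corollary~\ref{Corollary: completion_of_dense_is_Urysohn} and Lemma~\ref{Lemma: countability_of_od_and_Urysohn}).

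Next I would carry out the inductive construction in the proof of Theorem~\ref{Theorem: uniqueness_of_countable_Urysohn} essentially verbatim, with $\od$ replaced by $\nnr$ and with $\ext$, $\ext[U]$ in the roles of $\ext[\od]''$, $\ext[\od]'$: at stage $n$ one uses $\ext$ to extend the forward isometry over $s(n)$ while extending a backward isometry so as to stay inverse to it, then uses $\ext[U]$ to extend the backward isometry over $t(n)$. All distances fed to $\ext$ and $\ext[U]$ are read off the genuine metrics $d$ and $d_U$, so they automatically satisfy the triangle-inequality condition defining $\prms$ and $\prms[U]$, and the defining property of the extension maps guarantees --- with the same constructive care as in Theorems~\ref{Theorem: countable_Urysohn_space} and~\ref{Theorem: uniqueness_of_countable_Urysohn}, namely that points at distance $0$ in a metric space coincide --- that the pieces are well-defined isometries. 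Passing to colimits yields mutually inverse isometries $f\colon D \to \U$ and $g\colon E \to U$ with $s(\NN) \subseteq D \subseteq U$ and $t(\NN) \subseteq E \subseteq \U$ (so $D$ and $E$ are dense), and with $t(\NN) \subseteq \im(f)$, $s(\NN) \subseteq \im(g)$; hence $f$ and $g$ are dense isometries.

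Finally, since $U$ and $\U$ are complete and isometries are Lipschitz (hence area Lipschitz), Proposition~\ref{Proposition: extension_of_maps_into_complete_space} extends $f$ and $g$ uniquely to continuous isometries $\cmpl{f}\colon U \to \U$ and $\cmpl{g}\colon \U \to U$. The composites $\cmpl{g} \circ \cmpl{f}$ and $\id[U]$ are continuous maps $U \to U$ that agree on the dense subset $D$, hence are equal by Lemma~\ref{Lemma: uniqueness_of_continuous_extensions}; symmetrically $\cmpl{f} \circ \cmpl{g} = \id[\U]$. Therefore $\cmpl{f}$ is a bijective isometry, that is, an isometric isomorphism $U \ism \U$; the canonical dependence of $\cmpl{f}$ on the enumerations $s$ and $t$ is then apparent from the construction, as the footnote indicates. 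I expect the only real work to be the bookkeeping of the constructive back-and-forth --- well-definedness of the inductively built maps in the absence of decidable equality, and the verification that every tuple handed to an extension map is permissible --- but this is a line-by-line transcription of the proof of Theorem~\ref{Theorem: uniqueness_of_countable_Urysohn}, so it carries over unchanged.
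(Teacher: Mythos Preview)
Your argument is correct, and it takes a genuinely different route from the paper's. The paper avoids re-running the back-and-forth with real distances: instead it manufactures a \emph{countable} halved subdisgroup $\od \subseteq \nnr$ (the one generated by $1$ and the pairwise distances of a fixed dense sequence in $U$), then closes the dense sequence under $\ext[U]$ restricted to $\od$-valued distances to obtain a countable $\od$-Urysohn space $U_\od \subseteq U$. At that point Corollary~\ref{Corollary: uniqueness_of_countable_Urysohn} applies as a black box, giving $U_\od \ism \Ury$, and Proposition~\ref{Proposition: extension_of_maps_into_complete_space} transports the isomorphism to the completions $U \ism \U$.

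The trade-off: the paper's proof is shorter on the page because the back-and-forth bookkeeping is packaged inside the already-proved countable uniqueness result, but it requires the auxiliary construction of a countable $\od$ and of $U_\od$, together with the observation that both remain countable. Your approach is more direct --- no auxiliary disgroup, no appeal to countability of generated structures --- at the cost of transcribing the constructive back-and-forth of Theorem~\ref{Theorem: uniqueness_of_countable_Urysohn} once more. Both hinge on the same extension-to-completion step at the end, and both yield a canonical isomorphism depending on the chosen enumerations.
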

		\begin{proof}
			Let $s_U\colon \NN \to U$ be an enumeration of a dense subset of $U$. Declare $\od$ to be the smallest halved subdisgroup of $\nnr$ which contains $1$ and the distances between terms of $s_U$. This $\od$ is countable since there are countably many pairs of natural numbers, hence countably many distances, hence countably many finite expressions involving these distances, $1$ and valid uses of disgroup operations and brackets, hence countably many their values. Let now $U_\od \subseteq U$ be the closure of the image of $s_U$ by the operation $\ext[U]$, restricted to the elements of the image of $s_U$ and the distances from $\od$. Again we are making and evaluating finite expressions over a countable alphabet, so $U_\od$ is countable. Moreover it is clearly a countable $\od$-Urysohn space (for restricted $d_U$ and $\ext[U]$), so there is an isometric isomorphism $U_\od \ism \Ury$ by Corollary~\ref{Corollary: uniqueness_of_countable_Urysohn}. The image of $s_U$, and therefore $U_\od$, is dense in $U$ while $\Ury$ is dense in $\U$ by Proposition~\ref{Proposition: density_preserved_by_Urysohn_construction} (and the definition of $\U$). Hence the isometric isomorphism $U_\od \ism \Ury$ extends to the one between $U$ and $\U$ (by Proposition~\ref{Proposition: extension_of_maps_into_complete_space}).
		\end{proof}

	\section{Continuity of Extensions}\label{Section: continuity}
	
		Theorem~\ref{Theorem: Urysohn_space} gives us a canonical way to extend finite partial isometries from a separable metric space $\mtr{X} = (X, d_\mtr{X})$ into the Urysohn space, \ie a mapping from the set of finite partial isometries $X \parto \U$ to the set of total ones $X \to \U$. In this section we show that this mapping is continuous.
		
		To do that, we need to topologize the domain and the codomain. We start with a simpler case, restricting to finite partial isometries of a given length. More precisely, for a metric space $\mtr{X} = (X, d_\mtr{X})$ let $\fpi[l]$ denote the set of isometries into $\U$, defined on lists of elements from $X$ of length $l \in \NN$.\footnote{We do not require that elements on a list all differ, so the domain of an isometry can have less than $l$ elements.} There is an obvious way how to represent such a set:
		$$\fpi[l] \dfeq$$
		$$= \st{\big((x_i)_{i \in \NN_{< l}}, (u_i)_{i \in \NN_{< l}}\big) \in X^l \times \U^l}{\xall{i, j}{\NN_{< l}}{d_\mtr{X}(x_i, x_j) = d(u_i, u_j)}}.$$
		Since $X$ and $\U$ are metric spaces, so is $X^l \times \U^l$. Thus $\fpi[l]$ is naturally topologized as its subspace. We obtain the same topology regardless of the product metric we choose on $X^l \times \U^l$, so we choose the one that is most convenient; for us this means the $\infty$-metric on $X^l$ and $\U^l$, and then the $1$-metric on their product. Explicitly, denoting this metric by $d_P$, this means
		$$d_P\big((x, u), (y, v)\big) \dfeq \sup\st{d_\mtr{X}(x_i, y_i)}{i \in \NN_{< l}} + \sup\st{d(u_i, v_i)}{i \in \NN_{< l}}$$
		for $(x, u) = \big((x_i)_{i \in \NN_{< l}}, (u_i)_{i \in \NN_{< l}}\big), (y, v) = \big((y_i)_{i \in \NN_{< l}}, (v_i)_{i \in \NN_{< l}}\big) \in \fpi[l]$.
		
		A hint why we opt for this product metric is the fact, that this combination naturally lends itself to proving that $\ext$ is non-expansive on tuples of a given length\footnote{Compare also with Proposition~\ref{Proposition: similar_distances_and_points_yield_similar_extensions_in_Urysohn_space}.} (the real reason, though, is Lemma~\ref{Lemma: estimation_of_supremum_metric_on_isometries}(\ref{Lemma: estimation_of_supremum_metric_on_isometries: dP_bound}) and its application in Theorem~\ref{Theorem: extension_nonexpansive} below).
		\begin{proposition}\label{Proposition: continuity_of_one_point_extension}
			Let ${\prms}_l$ denote the subset of $\prms$, containing the tuples of length $l \in \NN$. Then the restriction $\rstr{\ext}_{{\prms}_l}\colon {\prms}_l \to \U$ is continuous, in fact non-expansive if the metric on ${\prms}_l$ is given for as
			$$d_{{\prms}_l}\Big(\tuple{}{l}{x}{\chi}{i}, \tuple{}{l}{y}{\upsilon}{i}\Big) \dfeq \sup\st{d(x_i, y_i)}{i \in \NN_{< l}} + \sup\st{\chi_i \dis \upsilon_i}{i \in \NN_{< l}}.$$
		\end{proposition}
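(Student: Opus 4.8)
The plan is to unwind the definition of $\ext$ and estimate directly in the location model of the completion. By Theorem~\ref{Theorem: Urysohn_space}(\ref{Theorem: Urysohn_space: extension_map}) and Lemma~\ref{Lemma: completed_Urysohn_extension}, $\ext$ sends $x = \tuple{}{l}{x}{\chi}{i} \in \prms$ to the location $f_x$ on $\pseudo$, and under the identification $\U \ism \locd(\pseudo)$ (legitimate since the Kolmogorov quotient $\pseudo \to \Ury$ is a surjective, hence dense, isometry and $\U$ is the completion of $\Ury$ by Corollary~\ref{Corollary: completion_of_dense_is_Urysohn}, modeled by locations) we have
\[
d\big(\ext(x), \ext(y)\big) = d_{\locd(\pseudo)}(f_x, f_y) = \sup\st{f_x(a) \dis f_y(a)}{a \in \pseudo}
\]
for $x = \tuple{}{l}{x}{\chi}{i}$ and $y = \tuple{}{l}{y}{\upsilon}{i}$ in ${\prms}_l$. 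Writing $M \dfeq \sup\st{d(x_i, y_i)}{i \in \NN_{< l}}$ and $N \dfeq \sup\st{\chi_i \dis \upsilon_i}{i \in \NN_{< l}}$, so that $d_{{\prms}_l}(x, y) = M + N$, it therefore suffices to show $f_x(a) \dis f_y(a) \leq M + N$ for every $a \in \pseudo$.

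This I would prove by induction on $\age{a}$, and by the symmetry between $x$ and $y$ it is enough to bound every element of the set whose supremum defines $f_x(a) = \sup\big(\st{d(x_h, [a]) \dis \chi_h}{h \in \NN_{< l}} \cup \st{f_x(a_i) \dis \alpha_i}{i \in \NN_{< \lnth{a}}}\big)$ (with $a = \atuple{a}{\alpha}{i}$) by $f_y(a) + M + N$. For a term of the first kind we apply the triangle inequality in $\nnr$ twice:
\[
d(x_h, [a]) \dis \chi_h \leq \big(d(x_h, [a]) \dis d(y_h, [a])\big) + \big(d(y_h, [a]) \dis \upsilon_h\big) + \big(\upsilon_h \dis \chi_h\big),
\]
noting that $d(x_h, [a]) \dis d(y_h, [a]) \leq d(x_h, y_h) \leq M$ (by the triangle inequality in $\U$ together with Lemma~\ref{Lemma: dis_upper_bounds}), that $d(y_h, [a]) \dis \upsilon_h \leq f_y(a)$ since it is one of the terms defining $f_y(a)$, and that $\upsilon_h \dis \chi_h \leq N$. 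For a term of the second kind the induction hypothesis gives $f_x(a_i) \dis f_y(a_i) \leq M + N$, whence
\[
f_x(a_i) \dis \alpha_i \leq \big(f_x(a_i) \dis f_y(a_i)\big) + \big(f_y(a_i) \dis \alpha_i\big) \leq (M + N) + f_y(a),
\]
using again that $f_y(a_i) \dis \alpha_i$ is among the terms defining $f_y(a)$. Hence $f_x(a) \leq f_y(a) + M + N$, and by symmetry $f_y(a) \leq f_x(a) + M + N$, so $f_x(a) \dis f_y(a) \leq M + N$ by Lemma~\ref{Lemma: dis_upper_bounds}. Taking the supremum over $a \in \pseudo$ yields $d(\ext(x), \ext(y)) \leq M + N = d_{{\prms}_l}(x, y)$, i.e. $\rstr{\ext}_{{\prms}_l}$ is non-expansive, and in particular continuous. (That $d_{{\prms}_l}$ is a metric is immediate: it is the restriction to ${\prms}_l$ of the $1$-metric on $\U^l \times \nnr^l$ formed from the $\infty$-metrics on the two factors.)

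I do not expect a genuine obstacle here: the substantive work is already done in Lemma~\ref{Lemma: completed_Urysohn_extension}, and the induction above is of the same shape as the proofs of that lemma and of Proposition~\ref{Proposition: similar_distances_and_points_yield_similar_extensions_in_Urysohn_space}. The only points demanding mild care are the bookkeeping of the mixed $\U$- and $\od$-distances inside the disgroup $\nnr$ (so that the triangle inequality and the two-sided form of Lemma~\ref{Lemma: dis_upper_bounds} are applied to the right pairs), and being explicit about the identification $\U \ism \locd(\pseudo)$ under which $\ext(x) = f_x$. A slightly different route would derive the bound from Lemma~\ref{Lemma: completed_Urysohn_extension}(1) by approximating $x$ and $y$ by tuples in $\pseudo$ and then invoking Proposition~\ref{Proposition: similar_distances_and_points_yield_similar_extensions_in_Urysohn_space}, but the direct induction avoids that extra approximation step.
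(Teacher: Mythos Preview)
Your proof is correct and follows essentially the same approach as the paper: identify $\ext(x)$ and $\ext(y)$ with the locations $f_x$, $f_y$ on $\pseudo$, then bound $f_x(a) \dis f_y(a)$ by induction on $\age{a}$, handling the two kinds of terms in the defining supremum exactly as you do. The paper's write-up is terser (it does not name $M$ and $N$ separately and compresses your two triangle-inequality steps into one line), but the argument is the same.
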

		\begin{proof}
			We wish to prove
			$$d\Big(\ext\big(\tuple{}{l}{x}{\chi}{i}\big), \ext\big(\tuple{}{l}{y}{\upsilon}{i}\big)\Big) \leq d_{{\prms}_l}\Big(\tuple{}{l}{x}{\chi}{i}, \tuple{}{l}{y}{\upsilon}{i}\Big).$$
			Let $\textrm{RHS}$ be the shorthand for the right-hand side. Recall from Theorem~\ref{Theorem: Urysohn_space} that $\ext\big(\tuple{}{l}{x}{\chi}{i}\big)$ and $\ext\big(\tuple{}{l}{y}{\upsilon}{i}\big)$ are given as locations; denote them by $f, g\colon \pseudo \to \RR$ respectively. We have $d_{\locd(\pseudo)}(f, g) = \sup\st{f(a) \dis g(a)}{a \in \pseudo}$, so it is sufficient to verify that $f(a) \dis g(a) \leq \textrm{RHS}$ for all $a = \atuple{a}{\alpha}{k} \in \pseudo$. We prove this by induction on $\age{a}$. Here is the proof that $f(a) \leq g(a) + \textrm{RHS}$; the inequality with $f$ and $g$ reversed is proved the same way. Recall from Lemma~\ref{Lemma: completed_Urysohn_extension} how $f$ and $g$ are given as certain suprema.
			$$d(x_i, [a]) \dis \chi_i \leq d(y_i, [a]) \dis \upsilon_i + d(x_i, y_i) + \chi_i \dis \upsilon_i \leq g(a) + \textrm{RHS}$$
			$$f(a_k) \dis \alpha_k \leq g(a_k) \dis \alpha_k + f(a_k) \dis g(a_k) \leq g(a) + \textrm{RHS}$$
		\end{proof}
		
		\begin{proposition}\label{Proposition: extension_of_Urysohn_properties}
			The following statement holds when we restrict\footnote{Alternatively, just define the topology on $\prms$ to be the coproduct topology on $\coprod_{l \in \NN} {\prms}_l$.} to tuples of length $l \in \NN$: $\prms[\od]$ is dense in $\prms$, and $\ext$ is the unique continuous extension of $\ext[\od]$.
		\end{proposition}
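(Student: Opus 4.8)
The plan is to carry out everything with the length $l$ fixed, so that $\prms_l$ and $(\prms[\od])_l$ are metric spaces via the metric $d_{\prms_l}$ of Proposition~\ref{Proposition: continuity_of_one_point_extension}, and the inclusion $(\prms[\od])_l \hookrightarrow \prms_l$ is an isometric embedding (it preserves $d_{\prms_l}$ because $\Ury \hookrightarrow \U$ and $\od \hookrightarrow \nnr$ preserve distances). There are then three things to establish: density of $(\prms[\od])_l$ in $\prms_l$, continuity of $\rstr{\ext}_{\prms_l}$, and the fact that $\rstr{\ext}_{(\prms[\od])_l}$ actually equals $\ext[\od]$; uniqueness will then follow formally.

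For \emph{density}, fix $(x_i, \omega_i)_{i \in \NN_{<l}} \in \prms_l$ and $r \in \RR_{>0}$. The space $\U$ satisfies the hypothesis $\xall{x}{\U}\xall{\rho}{\RR_{>0}}\xsome{a}{\pseudo}{d(x,[a]) \le \rho}$ of Lemma~\ref{Lemma: approximation_of_Urysohn_tuples}, since $\Ury$ is dense in $\U$ by Corollary~\ref{Corollary: completion_of_dense_is_Urysohn}. Applying that lemma with $X = \U$, with the given $x_i$, $\omega_i$ (whose permissibility is exactly the defining condition of $\prms_l$), and with $\epsilon \dfeq r/3$, produces $a = \atuple{a}{\alpha}{i} \in \pseudo$ with $\alpha_i \in \od$, $d(x_i, [a_i]) \le \epsilon$ and $\omega_i \dis \alpha_i \le \epsilon$ for all $i$. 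Permissibility of $a$ gives $d([a_i],[a_j]) \dis \alpha_i = d(a_i,a_j)\dis\alpha_i \le \alpha_j$, so $([a_i],\alpha_i)_{i \in \NN_{<l}} \in (\prms[\od])_l$, and $d_{\prms_l}\big((x_i,\omega_i)_i,([a_i],\alpha_i)_i\big) = \sup_i d(x_i,[a_i]) + \sup_i \omega_i\dis\alpha_i \le 2\epsilon < r$.

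Continuity of $\rstr{\ext}_{\prms_l}$ is exactly Proposition~\ref{Proposition: continuity_of_one_point_extension} (indeed it is non-expansive). The substantive point, and the step I expect to be the main obstacle, is that $\ext$ agrees with $\ext[\od]$ on $(\prms[\od])_l$ once $\Ury$ is identified with its image in $\U$. I would work in the location model $\U = \locd(\pseudo)$ (recall the completion of $\Ury$ equals the completion of $\pseudo$), in which a class $[c] \in \Ury$ corresponds to the location $a \mapsto d(c,a)$. Given $x = ([b_h],\chi_h)_{h \in \NN_{<l}} \in (\prms[\od])_l$, set $c \dfeq {}_{N}(b_h,\chi_h)_{h \in \NN_{<l}}$ with $N$ one more than the largest of the ages $\age{b_h}$; then $\ext[\od](x) = [c]$ is represented by $d(c,\insarg)$, while $\ext(x)$ is the location $f_x$ of Lemma~\ref{Lemma: completed_Urysohn_extension}. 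Unwinding the definition of $d$, for $a = \atuple{a}{\alpha}{i} \in \pseudo$ one has $d(c,a) = \sup\big(\st{d(b_h,a)\dis\chi_h}{h \in \NN_{<l}} \cup \st{d(c,a_i)\dis\alpha_i}{i \in \NN_{<\lnth a}}\big)$, and since $d([b_h],[a]) = d(b_h,a)$ this is precisely the recursion defining $f_x$; as both recur on $\age{a}$ and bottom out at the empty tuple, a one-line induction yields $f_x = d(c,\insarg)$, i.e.\ $\ext(x) = \ext[\od](x)$.

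Finally, for \emph{uniqueness}: $\prms_l$ is metric, $(\prms[\od])_l$ is dense in it, and $\U$ is metric, so Lemma~\ref{Lemma: uniqueness_of_continuous_extensions} applied to the dense isometry $(\prms[\od])_l \hookrightarrow \prms_l$ shows that a continuous extension of $\ext[\od]$ to $\prms_l$ is unique. The routine bookkeeping is confined to the $\epsilon$-estimates in the density step and to Proposition~\ref{Proposition: continuity_of_one_point_extension}; the only delicate point is making explicit that, under the location model of the completion, the class $[c] \in \Ury$ is literally the location $d(c,\insarg)$ and that this matches the recursion defining $f_x$.
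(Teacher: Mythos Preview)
Your proof is correct and follows essentially the same route as the paper: density via Lemma~\ref{Lemma: approximation_of_Urysohn_tuples} (the paper simply points to Proposition~\ref{Proposition: density_preserved_by_Urysohn_construction}, whose proof is the same application of that lemma), continuity from Proposition~\ref{Proposition: continuity_of_one_point_extension}, and uniqueness from Lemma~\ref{Lemma: uniqueness_of_continuous_extensions}.

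The one place where you go beyond the paper is in explicitly verifying that $\ext$ restricted to $(\prms[\od])_l$ coincides with $\ext[\od]$. The paper's proof simply asserts this in passing (``$\ext$ (and therefore also its restriction $\ext[\od]$) is continuous''), treating the identification of $\ext[\od](x)$ with the location $f_x$ as self-evident. Your inductive argument that $f_x(a) = d(c,a)$, comparing the recursion in Lemma~\ref{Lemma: completed_Urysohn_extension} term-by-term with the defining recursion for $d$ on $\proto$, is exactly the right way to make this precise, and it is good that you flagged it as the one genuinely delicate step. So your proof is a more careful version of the paper's; nothing is different in spirit, but you have filled a gap the paper leaves implicit.
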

		\begin{proof}
			The proof of density is essentially the same as the proof of Proposition~\ref{Proposition: density_preserved_by_Urysohn_construction} (both $\pseudo$ and $\prms$ are supposed to contain those tuples which allow the extension). We know from the previous proposition that $\ext$ (and therefore also its restriction $\ext[\od]$) is continuous, so its uniqueness is given by Lemma~\ref{Lemma: uniqueness_of_continuous_extensions}.
		\end{proof}
		
		\begin{remark}
			Of course, since $\ext[\od]$ is non-expansive, we could have also defined $\ext$ as its unique continuous (and non-expansive) extension by Proposition~\ref{Proposition: extension_of_maps_into_complete_space}, but we prefered to provide an explicit formula for $\ext$.
		\end{remark}
		
		\begin{lemma}\label{Lemma: distances_in_completed_Urysohn}
			Let $(x_i, \chi_i)_{i \in \NN_{< l}}, (y_j, \upsilon_j)_{j \in \NN_{< k}} \in \prms$, and let
			$$x \dfeq \ext\big((x_i, \chi_i)_{i \in \NN_{< l}}\big), \qquad y \dfeq \ext\big((y_j, \upsilon_j)_{j \in \NN_{< k}}\big).$$
			Then
			$$d(x, y) = \sup\Big(\st{d(x_i, y) \dis \chi_i}{i \in \NN_{< l}} \cup \st{d(x, y_j) \dis \upsilon_j}{j \in \NN_{< k}}\Big).$$
		\end{lemma}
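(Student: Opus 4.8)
The plan is to recognise this identity as the ``completed'' analogue of the recursive definition of the distance $d$ on $\proto$: prove it first on a dense set of ``rational'' tuples, where it is essentially that definition, and then extend it by continuity.

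The inequality $d(x, y) \geq d(x_i, y) \dis \chi_i$ (and, symmetrically, $d(x, y) \geq d(x, y_j) \dis \upsilon_j$) is immediate: by the defining property of $\ext$ we have $d(x, x_i) = \chi_i$, so the triangle inequality gives $d(x_i, y) \leq d(x, y) + \chi_i$ and $\chi_i = d(x, x_i) \leq d(x, y) + d(x_i, y)$, and Lemma~\ref{Lemma: dis_upper_bounds} converts this into $d(x_i, y) \dis \chi_i \leq d(x, y)$. Hence ``$\geq$'' in the claimed identity always holds; the substance of the lemma is ``$\leq$'', which I would obtain together with the opposite inequality in one stroke, by density.

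Fix $\od$ to be, say, the non-negative dyadic rationals (a halved subdisgroup of $\nnr$ containing $1$, by Lemma~\ref{Lemma: density_of_od}), and restrict attention to tuples of the fixed lengths $l$ and $k$ given in the hypothesis, so the relevant spaces are ${\prms}_l$ and ${\prms}_k$ with the metrics of Proposition~\ref{Proposition: continuity_of_one_point_extension}. Inside ${\prms}_l \times {\prms}_k$ the subset of pairs of tuples over $\Ury$ with distances in $\od$ is dense, by the density statement of Proposition~\ref{Proposition: extension_of_Urysohn_properties} applied in each factor. On this dense set the identity holds on the nose: if $x_i = [a_i]$ and $y_j = [b_j]$ with $a_i, b_j \in \pseudo$ and $\chi_i, \upsilon_j \in \od$, then $\ext$ restricts to $\ext[\od]$ (Proposition~\ref{Proposition: extension_of_Urysohn_properties}), so $x$ is the class of the tuple ${}_n(a_i, \chi_i)_{i \in \NN_{< l}}$ and $y$ the class of ${}_m(b_j, \upsilon_j)_{j \in \NN_{< k}}$; applying the recursive definition of $d$ on $\proto$ to these two tuples, and using that the Kolmogorov quotient $\pseudo \to \Ury$ and the completion $\Ury \hookrightarrow \U$ are isometries (so that $d(a_i, {}_m(b_j, \upsilon_j)_j) = d(x_i, y)$ and $d({}_n(a_i, \chi_i)_i, b_j) = d(x, y_j)$), yields exactly $d(x, y) = \sup\big(\st{d(x_i, y) \dis \chi_i}{i \in \NN_{< l}} \cup \st{d(x, y_j) \dis \upsilon_j}{j \in \NN_{< k}}\big)$.

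Finally I would check that both sides of the identity, viewed as functions of the pair of tuples on ${\prms}_l \times {\prms}_k$, are continuous; then agreement on the dense set forces agreement everywhere (Lemma~\ref{Lemma: uniqueness_of_continuous_extensions}, or Corollary~\ref{Corollary: comparison_on_dense_subsets}(\ref{Corollary: comparison_on_dense_subsets: leq}) applied in both directions). The left-hand side is $d\big(\rstr{\ext}_{{\prms}_l}(\insarg), \rstr{\ext}_{{\prms}_k}(\insarg)\big)$, continuous because $\rstr{\ext}_{{\prms}_l}$ and $\rstr{\ext}_{{\prms}_k}$ are non-expansive by Proposition~\ref{Proposition: continuity_of_one_point_extension} and $d$ is non-expansive for the $1$-metric on $\U \times \U$; on the right-hand side each summand $d(x_i, y) \dis \chi_i$ and $d(x, y_j) \dis \upsilon_j$ is a composite of coordinate projections, the maps $\rstr{\ext}_{{\prms}_l}, \rstr{\ext}_{{\prms}_k}$, the distance $d$, and the operation $\dis$ on $\nnr$, hence continuous, and a finite supremum of continuous real-valued maps is continuous. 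The one place I would expect to have to be careful is the ``rational'' case: correctly matching the terms $d(a_i, {}_m(b_j, \upsilon_j)_j)$ and $d({}_n(a_i, \chi_i)_i, b_j)$ coming out of the recursion with $d(x_i, y)$ and $d(x, y_j)$, and keeping straight which of the several distances (on $\proto$, on $\pseudo$, on $\Ury$, on $\U$) is in play at each step. A purely inductive proof on $\age{a}$ in the spirit of Lemma~\ref{Lemma: completed_Urysohn_extension} is also possible but messier, so I would avoid it.
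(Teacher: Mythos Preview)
Your proposal is correct and follows essentially the same route as the paper: verify the identity on the dense subset $\prms[\od] \times \prms[\od]$ where it is the recursive definition of $d$ on $\proto$, check that both sides are continuous in the pair of tuples (using Proposition~\ref{Proposition: continuity_of_one_point_extension} and Proposition~\ref{Proposition: extension_of_Urysohn_properties}), and conclude by Lemma~\ref{Lemma: uniqueness_of_continuous_extensions}. The paper's proof is two sentences long and leaves all of your unpacking (the separate $\geq$ argument, the explicit continuity check for each side) implicit.
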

		\begin{proof}
			Observe that both sides of the equality restrict to the same map on $\prms[\od] \times \prms[\od]$ by the definition of the distance on the Urysohn space. Thus they match by Proposition~\ref{Proposition: extension_of_Urysohn_properties} and Lemma~\ref{Lemma: uniqueness_of_continuous_extensions}.
		\end{proof}
		
		Define $\isom$ to be the set of isometries from $X$ to $\U$. We wish to topologize it. Being a subset of the set of continuous maps between metric spaces, there are three standard candidates: topology of pointwise convergence, of uniform convergence on compact subsets (\ie compact-open topology), and of uniform convergence. We claim continuity of extensions for all of them; thus we choose the last option because it contains the other two, and a continuous map remains continuous if the topology on its codomain is replaced by a weaker one.
		
		Recall that the topology of uniform convergence is given by the basis
		$$\st[1]{U(f, r)}{f \in \isom \land r \in \RR_{> 0}}$$
		where
		$$U(f, r) \dfeq \st{g \in \isom}{\xsome{r'}{\intoo{0}{r}}\xall{x}{X}{d(f(x), g(x)) \leq r'}}.$$
		Here we require $r'$ to ensure that basic sets are open in the topology they generate (we would not need it we considered the topology of uniform convergence on compact subsets, or if $X$ were necessarily compact, as then writing $\xall{x}{X}{d(f(x), g(x)) < r}$ would imply the existence of a smaller bound $r'$).
		
		The sets $U(f, r)$ are reminiscent of metric balls, and indeed suitable subsets of $\isom$ are metrizable.
		\begin{lemma}\label{Lemma: sup_metric_induces_uniform_convergence}
			Let $A \subseteq \isom$ be a subset such that for every $f, g \in A$ the supremum
			$$d_{\sup}(f, g) \dfeq \sup\st{d(f(x), g(x))}{x \in X}$$
			exists (as a real number). Then $d_{\sup}$ is a metric on $A$, and the inclusion of $A$, topologized by $d_{\sup}$, into $\isom$ is a topological embedding. Moreover, for $f \in A$, the sets $U(f, r)$, restricted to $A$, are precisely the balls in $A$.
		\end{lemma}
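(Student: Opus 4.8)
The plan is in three parts: first check that $d_{\sup}$ is a metric on $A$; then identify, for $f \in A$, the trace $U(f,r) \cap A$ with the $d_{\sup}$-ball $\ball[A]{f}{r}$; and finally read off from that identification that the $d_{\sup}$-topology on $A$ coincides with the subspace topology inherited from $\isom$ with the topology of uniform convergence.

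For the metric axioms: symmetry of $d_{\sup}$ and $d_{\sup}(f,f) = 0$ are immediate (the latter because $d(f(x),f(x)) = 0$ for every $x$). Definiteness is the one place metricity of $\U$ enters: $d_{\sup}(f,g) = 0$ forces $d(f(x),g(x)) = 0$, hence $f(x) = g(x)$, for all $x \in X$, so $f = g$. The triangle inequality is pointwise, $d(f(x),h(x)) \le d(f(x),g(x)) + d(g(x),h(x)) \le d_{\sup}(f,g) + d_{\sup}(g,h)$ for every $x$; taking the supremum on the left — which exists by the standing hypothesis on $A$ — gives $d_{\sup}(f,h) \le d_{\sup}(f,g) + d_{\sup}(g,h)$.

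Next I would prove the ``moreover'' clause: for $f \in A$ and $r \in \RR_{>0}$ one has $U(f,r) \cap A = \ball[A]{f}{r} = \st{g \in A}{d_{\sup}(f,g) < r}$. If $g \in A$ with $d_{\sup}(f,g) < r$, then (noting $d_{\sup}(f,g) \ge 0$) the number $r' := \frac{d_{\sup}(f,g) + r}{2}$ lies in $\intoo{0}{r}$ and bounds every $d(f(x),g(x))$, so $g \in U(f,r)$; conversely, a witnessing $r' \in \intoo{0}{r}$ in the definition of $U(f,r)$ immediately gives $d_{\sup}(f,g) \le r' < r$. This identity then yields the embedding statement. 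Since the sets $U(h,r)$ form a basis of $\isom$, the traces $U(h,r) \cap A$ form a basis of the subspace topology on $A$; each $d_{\sup}$-ball $\ball[A]{f}{r} = U(f,r) \cap A$ (with $f \in A$) is thus subspace-open, so the metric topology is contained in the subspace topology. For the reverse inclusion, let $g \in U(h,r) \cap A$ with $h \in \isom$ arbitrary; a witness $r' \in \intoo{0}{r}$ bounds all $d(h(x),g(x))$, so $d_{\sup}(h,g)$ exists and satisfies $d_{\sup}(h,g) \le r' < r$. Setting $\epsilon := \frac{r - r'}{2} > 0$ and using the triangle inequality shows $\ball[A]{g}{\epsilon} \subseteq U(h,r) \cap A$ (for $h' \in A$ with $d_{\sup}(g,h') < \epsilon$ one gets $d(h(x),h'(x)) < \frac{r + r'}{2} \in \intoo{0}{r}$ uniformly in $x$). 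Hence every subspace-basic open is metric-open, the two topologies agree, and the inclusion $A \hookrightarrow \isom$ is a topological embedding.

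The only mildly delicate point is the bookkeeping with the auxiliary radius $r'$: the bounds one obtains pointwise are non-strict ($\le r'$), whereas the metric balls and the sets $U(f,r)$ are built from strict inequalities, so one repeatedly has to slide a value strictly in between the actual supremum and $r$. Constructively this costs nothing beyond the postulated order and lattice structure on $\RR$, and no case distinction on decidability of equality in $X$ or $\U$ is needed.
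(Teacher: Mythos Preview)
Your argument is correct and follows essentially the same route as the paper: verify the metric axioms, identify $U(f,r)\cap A$ with the $d_{\sup}$-ball when $f\in A$, and show every trace $U(h,r)\cap A$ (for arbitrary $h\in\isom$) is a union of $d_{\sup}$-balls centred at its own points. The paper packages the latter as a single union formula, while you spell out the $\epsilon=\tfrac{r-r'}{2}$ argument; these are the same computation.

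One constructive quibble: in the reverse-inclusion step you write ``$d_{\sup}(h,g)$ exists and satisfies $d_{\sup}(h,g)\le r'$'' for an arbitrary $h\in\isom$. In this paper's setting a pointwise upper bound does \emph{not} by itself guarantee that the supremum exists as a real number (Postulate~\ref{Postulate: reals_Dedekind_complete} needs approximating lower bounds too), and the hypothesis on $A$ only covers pairs from $A$. Fortunately your subsequent calculation never uses this supremum --- it only uses the pointwise bound $d(h(x),g(x))\le r'$ together with $d_{\sup}(g,h')<\epsilon$ to get $d(h(x),h'(x))\le r'+\epsilon=\tfrac{r+r'}{2}$ uniformly in $x$. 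So simply drop the unsupported existence claim and the proof stands; this is exactly how the paper phrases it, working directly with the pointwise inequality $d(f(x),g(x))+\epsilon\le r$ rather than any supremum involving the centre $f\notin A$.
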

		\begin{proof}
			The proof that $d_{\sup}$ is a metric is standard. The inclusion $A \hookrightarrow \isom$ is a topological embedding because every basic subset in $\isom$, restricted to $A$, is a union of balls in $A$:
			$$U(f, r) \cap A =$$
			$$= \bigcup\st[1]{\ball[\sup]{g}{\epsilon}}{g \in U(f, r) \cap A \land \epsilon \in \RR \land \all{x}{X}{d(f(x), g(x)) + \epsilon \leq r}}.$$
			Furthermore, if $f \in A$, then for every $g \in A$ and $r \in \RR$ the statements
			$$\xsome{r'}{\intoo{0}{r}}\xall{x}{X}{d(f(x), g(x)) \leq r'} \quad \text{and} \quad \sup\st{d(f(x), g(x))}{x \in X} < r$$
			are equivalent, so $U(f, r) \cap A = \ball[\sup]{f}{r}$.
		\end{proof}
		
		Define $\extm[l]\colon \fpi[l] \to \isom$ to be the extension map, given by Theorem~\ref{Theorem: Urysohn_space}(\ref{Theorem: Urysohn_space: extension_of_isometries}).
		
		The following very technical lemma establishes that the distance between two extensions of finite partial isometries can be approximated arbitrarily well\footnote{From above, to be precise, but from below it is a lot easier (see the following theorem).} by the distance between their finite parts.
		
		\begin{lemma}\label{Lemma: estimation_of_supremum_metric_on_isometries}
			Let
			\begin{itemize}
				\item
					$\mtr{X} = (X, d_\mtr{X}, s)$ be a separable metric space, with $s\colon \NN \to \one + X$ giving an enumeration of a countable dense subset of $X$,
				\item
					$l \in \NN$,
				\item
					$(x, u) = \big((x_i)_{i \in \NN_{< l}}, (u_i)_{i \in \NN_{< l}}\big), (y, v) = \big((y_i)_{i \in \NN_{< l}}, (v_i)_{i \in \NN_{< l}}\big) \in \fpi[l]$,
				\item
					sequences $w, z\colon \NN \to \one + \U$ given by
					$$w_n \dfeq
						\begin{cases}
							\unit & \text{ if } s_n \in \one,\\
							\ext\Big(\big(u_i, d_X(x_i, s_n)\big)_{i \in \NN_{< l}} \cnct \big(w_k, d_\mtr{X}(s_k, s_n)\big)_{k \in \NN_{< n} \cap s^{-1}(X)}\Big) & \text{ if } s_n \in X,
						\end{cases}
					$$
					$$z_n \dfeq
						\begin{cases}
							\unit & \text{ if } s_n \in \one,\\
							\ext\Big(\big(v_i, d_X(y_i, s_n)\big)_{i \in \NN_{< l}} \cnct \big(z_k, d_\mtr{X}(s_k, s_n)\big)_{k \in \NN_{< n} \cap s^{-1}(X)}\Big) & \text{ if } s_n \in X,
						\end{cases}
					$$
					\ie for $s_n \in X$ we have $w_n = \extm[l]\big((x_i)_{i \in \NN_{< l}}, (u_i)_{i \in \NN_{< l}}\big)(s_n)$ and $z_n = \extm[l]\big((y_i)_{i \in \NN_{< l}}, (v_i)_{i \in \NN_{< l}}\big)(s_n)$,
				\item
					$\epsilon \in \RR_{> 0}$,
				\item
					$M \in \NN$ large enough such that
					$$\xall{i}{\NN_{< l}}\xsome{k}{\NN_{\leq M} \cap s^{-1}(X)}{d_\mtr{X}(x_i, s_k) \leq \epsilon}$$
					and
					$$\xall{i}{\NN_{< l}}\xsome{k}{\NN_{\leq M} \cap s^{-1}(X)}{d_\mtr{X}(y_i, s_k) \leq \epsilon},$$
				\item
					$B \dfeq \sup\st{d(w_k, z_k)}{k \in \NN_{\leq M} \cap s^{-1}(X)}$.
			\end{itemize}
			Then
			\begin{enumerate}
				\item\label{Lemma: estimation_of_supremum_metric_on_isometries: finite_approximation}
					$d(w_m, z_n) \dis d_\mtr{X}(s_m, s_n) \leq B + 2\epsilon$,
				\item\label{Lemma: estimation_of_supremum_metric_on_isometries: dP_bound}
					$d(w_m, z_n) \dis d_\mtr{X}(s_m, s_n) \leq d_P\big((x, u), (y, v)\big)$
			\end{enumerate}
			for all $m, n \in s^{-1}(X)$.
		\end{lemma}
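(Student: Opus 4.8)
The plan is to reduce both inequalities to the ``diagonal'' case $m = n$, and to prove that case by strong induction on $m$, using the recursion of Lemma~\ref{Lemma: distances_in_completed_Urysohn} for distances in $\U$ between values of $\ext$. First note that $\extm[l]\big((x_i)_{i < l}, (u_i)_{i < l}\big)$ is an isometry $X \to \U$ sending each $x_i$ to $u_i$ (Theorem~\ref{Theorem: Urysohn_space}(\ref{Theorem: Urysohn_space: extension_of_isometries})), and likewise $\extm[l]\big((y_i)_{i < l}, (v_i)_{i < l}\big)$ sends $y_i$ to $v_i$; hence $d(w_p, w_q) = d_\mtr{X}(s_p, s_q) = d(z_p, z_q)$, $d(u_i, w_p) = d_\mtr{X}(x_i, s_p)$ and $d(v_i, z_p) = d_\mtr{X}(y_i, s_p)$ for all relevant indices. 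Feeding these identities into the triangle inequality and Lemma~\ref{Lemma: dis_upper_bounds} yields, for all $p, q \in s^{-1}(X)$,
$$d(w_p, z_q) \dis d_\mtr{X}(s_p, s_q) \leq d(w_q, z_q) \qquad \text{and} \qquad d(w_p, z_q) \dis d_\mtr{X}(s_p, s_q) \leq d(w_p, z_p),$$
the first by writing $d_\mtr{X}(s_p, s_q) = d(w_p, w_q)$, the second by writing $d_\mtr{X}(s_p, s_q) = d(z_p, z_q)$. Consequently it suffices to prove $d(w_m, z_m) \leq B + 2\epsilon$ for item~\ref{Lemma: estimation_of_supremum_metric_on_isometries: finite_approximation}, and $d(w_m, z_m) \leq d_P\big((x, u), (y, v)\big)$ for item~\ref{Lemma: estimation_of_supremum_metric_on_isometries: dP_bound}, for every $m \in s^{-1}(X)$.

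These diagonal estimates are proven by strong induction on $m$. For item~\ref{Lemma: estimation_of_supremum_metric_on_isometries: finite_approximation} the case $m \leq M$ is immediate, since then $d(w_m, z_m) \leq B$ by the definition of $B$; for item~\ref{Lemma: estimation_of_supremum_metric_on_isometries: dP_bound} there is no separate base case. In the inductive step, one first checks that the defining tuples of $w_m$ and $z_m$ — namely $\big(u_i, d_\mtr{X}(x_i, s_m)\big)_{i < l} \cnct \big(w_k, d_\mtr{X}(s_k, s_m)\big)_{k \in \NN_{< m} \cap s^{-1}(X)}$ and the analogous tuple built from the $v_i$ and $z_k$ — indeed lie in $\prms$ (immediate from the identities above and the triangle inequality), so Lemma~\ref{Lemma: distances_in_completed_Urysohn} writes $d(w_m, z_m)$ as the supremum of four families of terms: $d(u_i, z_m) \dis d_\mtr{X}(x_i, s_m)$ and $d(w_k, z_m) \dis d_\mtr{X}(s_k, s_m)$ (from the entries of the $w_m$-tuple) and $d(w_m, v_j) \dis d_\mtr{X}(y_j, s_m)$ and $d(w_m, z_k) \dis d_\mtr{X}(s_k, s_m)$ (from those of the $z_m$-tuple), with $i, j < l$ and $k \in \NN_{< m} \cap s^{-1}(X)$. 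The two families indexed by $k < m$ are each $\leq d(w_k, z_k)$ by the two inequalities of the first paragraph, hence are bounded by the induction hypothesis.

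It remains to bound the two ``fixed-point'' families. For item~\ref{Lemma: estimation_of_supremum_metric_on_isometries: finite_approximation}, pick (by the hypothesis on $M$) an index $k_i \in \NN_{\leq M} \cap s^{-1}(X)$ with $d_\mtr{X}(x_i, s_{k_i}) \leq \epsilon$, and estimate $d(u_i, z_m) \dis d_\mtr{X}(x_i, s_m)$ using the triangle inequality applied twice, $a \dis b \leq (a \dis c) + (c \dis d) + (d \dis b)$, with $c = d(w_{k_i}, z_m)$ and $d = d_\mtr{X}(s_{k_i}, s_m)$: the first summand is $\leq d(u_i, w_{k_i}) = d_\mtr{X}(x_i, s_{k_i}) \leq \epsilon$, the middle summand is $\leq d(w_{k_i}, z_{k_i}) \leq B$ (the second inequality of the first paragraph with $(p,q) = (k_i, m)$, together with $k_i \leq M$), and the last summand is $\leq d_\mtr{X}(s_{k_i}, x_i) \leq \epsilon$; a symmetric chain through a suitable $z_{k_j'}$ gives $d(w_m, v_j) \dis d_\mtr{X}(y_j, s_m) \leq B + 2\epsilon$. (The indices $k_i$, $k_j'$ need not be $< m$, which is exactly why these terms must be bounded directly through $B$ rather than by the induction hypothesis — this is the only place the hypothesis on $M$ enters.) For item~\ref{Lemma: estimation_of_supremum_metric_on_isometries: dP_bound}, chain instead through $c = d(v_i, z_m)$ and $d = d_\mtr{X}(y_i, s_m)$: since $d(v_i, z_m) = d_\mtr{X}(y_i, s_m)$ the middle summand vanishes, and the outer summands are $\leq d(u_i, v_i)$ and $\leq d_\mtr{X}(x_i, y_i)$, so $d(u_i, z_m) \dis d_\mtr{X}(x_i, s_m) \leq d(u_i, v_i) + d_\mtr{X}(x_i, y_i)$; dually $d(w_m, v_j) \dis d_\mtr{X}(y_j, s_m) \leq d(u_j, v_j) + d_\mtr{X}(x_j, y_j)$. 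Taking suprema over $i, j$, every term in all four families is $\leq B + 2\epsilon$ (resp.\ $\leq \sup_i d_\mtr{X}(x_i, y_i) + \sup_i d(u_i, v_i) = d_P\big((x, u), (y, v)\big)$), which closes the induction.

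I expect the real difficulty to lie in organizing the argument so that no circularity appears, rather than in any individual estimate: the naive bound $d(u_i, z_m) \dis d_\mtr{X}(x_i, s_m) = d(u_i, z_m) \dis d(u_i, w_m) \leq d(w_m, z_m)$, and its analogues for the other fixed-point and cross terms, merely re-introduces the quantity being estimated. The three-term chains through an already-controlled intermediate point — $w_{k_i}$ with $k_i \leq M$ for item~\ref{Lemma: estimation_of_supremum_metric_on_isometries: finite_approximation}, or $v_i$ for item~\ref{Lemma: estimation_of_supremum_metric_on_isometries: dP_bound} — are precisely what breaks the loop, replacing $d(w_m, z_m)$ by $d(w_{k_i}, z_{k_i}) \leq B$ or by $d(u_i, v_i) + d_\mtr{X}(x_i, y_i)$; combined with the diagonal reduction and the induction on $m$ for the predecessor terms, the argument becomes well-founded.
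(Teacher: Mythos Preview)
Your proof is correct and takes a genuinely different route from the paper's. The paper proves both items directly for general $m, n$ by induction on $m+n$, which forces it to handle separately the two directions of the $\dis$-bound (\ie\ $d(w_m, z_n) \leq \text{RHS} + d_\mtr{X}(s_m, s_n)$ and $d_\mtr{X}(s_m, s_n) \leq \text{RHS} + d(w_m, z_n)$), the second direction requiring a case split on whether $m = n$ or not. Your diagonal reduction $d(w_p, z_q) \dis d_\mtr{X}(s_p, s_q) \leq \min\{d(w_p, z_p), d(w_q, z_q)\}$ --- which follows immediately from the isometry identities $d(w_p, w_q) = d_\mtr{X}(s_p, s_q) = d(z_p, z_q)$ and the ordinary triangle inequality in $\U$ --- eliminates that case split entirely and replaces the two-variable induction by a single-variable strong induction on $m$. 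The treatment of the ``fixed-point'' terms (chaining through $w_{k_i}$ with $k_i \leq M$ for item~\ref{Lemma: estimation_of_supremum_metric_on_isometries: finite_approximation}, through $v_i$ for item~\ref{Lemma: estimation_of_supremum_metric_on_isometries: dP_bound}) is essentially the same in both proofs. Your organisation is cleaner and makes more transparent why the lemma holds: the off-diagonal statement is a trivial consequence of the diagonal one, and the diagonal one is a straightforward recursion through Lemma~\ref{Lemma: distances_in_completed_Urysohn}. The paper's approach, by contrast, establishes the general $(m,n)$ statement in one pass, at the cost of slightly more bookkeeping.
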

		\begin{proof}
			We prove both items by induction on $m+n$ (where $m, n \in s^{-1}(X)$), taking in account both times that
			\begin{align*}
				d(w_m, z_n) = \sup\Big(&\st{d(u_i, z_n) \dis d_\mtr{X}(x_i, s_m)}{i \in \NN_{< l}} \cup\\
				&\st{d(w_k, z_n) \dis d_\mtr{X}(s_k, s_m)}{k \in \NN_{< m} \cap s^{-1}(X)} \cup\\
				&\st{d(w_m, v_i) \dis d_\mtr{X}(y_i, s_n)}{i \in \NN_{< l}} \cup\\
				&\st{d(w_m, z_k) \dis d_\mtr{X}(s_k, s_n)}{k \in \NN_{< n} \cap s^{-1}(X)}
			\Big)
			\end{align*}
			by Lemma~\ref{Lemma: distances_in_completed_Urysohn}.
			\begin{enumerate}
				\item
					First we wish to prove $d_\mtr{X}(s_m, s_n) \leq B + 2\epsilon + d(w_m, z_n)$. This is obviously true if $m = n$. Suppose $m \neq n$; let $m < n$ (the case $m > n$ is proved analogously). Then $d(w_m, z_m) \dis d_\mtr{X}(s_m, s_n)$ appears in the last line of the above supremum, so
					$$d_\mtr{X}(s_m, s_n) \leq d(w_m, z_m) + d(w_m, z_m) \dis d_\mtr{X}(s_m, s_n) \leq$$
					$$\leq d(w_m, z_m) + d(w_m, z_n) = d(w_m, z_m) \dis d_\mtr{X}(s_m, s_m) + d(w_m, z_n) \leq B + 2\epsilon + d(w_m, z_n),$$
					the last inequality holding by the induction hypothesis since $m + m < m + n$.
					
					Second, we prove $d(w_m, z_n) \leq B + 2\epsilon + d_\mtr{X}(s_m, s_n)$ by showing that each individual line in the above supremum is at most the right-hand side. The second line is easy:
					$$d(w_k, z_n) \dis d_\mtr{X}(s_k, s_m) \leq d(w_k, z_n) \dis d_\mtr{X}(s_k, s_n) + d_\mtr{X}(s_m, s_n) \leq B + 2\epsilon + d_\mtr{X}(s_m, s_n),$$
					the last inequality holding by the induction hypothesis since $k + n < m + n$. As for the first line, for $i \in \NN_{< l}$ let $a \in \NN_{< B} \cap s^{-1}(X)$ be such that $d_\mtr{X}(x_i, s_a) \leq \epsilon$. Then
					$$d(u_i, z_n) \dis d_\mtr{X}(x_i, s_m) \leq d(w_a, z_n) \dis d_\mtr{X}(s_a, s_m) + d(u_i, w_a) + d_\mtr{X}(x_i, s_a) \leq$$
					$$\leq d(w_a, z_n) \dis d_\mtr{X}(s_a, s_m) + 2\epsilon \leq$$
					$$\leq d(z_a, z_n) \dis d_\mtr{X}(s_a, s_n) + d(w_a, z_a) + d_\mtr{X}(s_m, s_n) + 2\epsilon \leq B + 2\epsilon + d_\mtr{X}(s_m, s_n)$$
					since $d(u_i, w_a) = d_\mtr{X}(x_i, s_a) \leq \epsilon$ and $d(z_a, z_n) = d_\mtr{X}(s_a, s_n)$ because $\extm[l]$ maps into isometries. The statements $d(w_m, v_i) \dis d_\mtr{X}(y_i, s_n) \leq B + 2\epsilon + d_\mtr{X}(s_m, s_n)$ and $d(w_m, z_k) \dis d_\mtr{X}(s_k, s_n) \leq B + 2\epsilon + d_\mtr{X}(s_m, s_n)$ are proved analogously.
				\item
					Recall that
					$$d_P\big((x, u), (y, v)\big) = \sup\st{d_\mtr{X}(x_i, y_i)}{i \in \NN_{< l}} + \sup\st{d(u_i, v_i)}{i \in \NN_{< l}}.$$
					First we prove $d(w_m, z_n) \leq d_X(s_m, s_n) + d_P(x, y)$.
					$$d(u_i, z_n) \dis d_\mtr{X}(x_i, s_m) \leq d(z_n, v_i) \dis d_\mtr{X}(y_i, s_m) + d(u_i, v_i) + d_\mtr{X}(x_i, y_i) =$$
					$$= d_\mtr{X}(y_i, s_n) \dis d_\mtr{X}(y_i, s_m) + d(u_i, v_i) + d_\mtr{X}(x_i, y_i) \leq d_\mtr{X}(s_n, s_m) + d(u_i, v_i) + d_\mtr{X}(x_i, y_i)$$
					
					$$d(w_m, v_i) \dis d_\mtr{X}(y_i, s_n) \leq d(w_m, u_i) \dis d_\mtr{X}(x_i, s_n) + d(u_i, v_i) + d_\mtr{X}(x_i, y_i) =$$
					$$= d_\mtr{X}(x_i, s_m) \dis d_\mtr{X}(x_i, s_n) + d(u_i, v_i) + d_\mtr{X}(x_i, y_i) \leq d_\mtr{X}(s_m, s_n) + d(u_i, v_i) + d_\mtr{X}(x_i, y_i)$$
					For $d(w_k, z_n) \dis d_\mtr{X}(s_k, s_m)$ and $d(w_m, z_k) \dis d_\mtr{X}(s_k, s_n)$ use the induction hypothesis.
					
					Second, we wish to prove $d_\mtr{X}(s_m, s_n) \leq d(w_m, z_n) + d_P(x, y)$. Obviously this holds for $m = n$. Assume $m > n$ (the case $m < n$ is proved analogously); then
					$$d_\mtr{X}(s_m, s_n) = d(w_m, w_n) \leq d(w_m, z_n) + d(w_n, z_n) \leq d(w_m, z_n) + d_P(x, y)$$
					where the last inequality holds by the induction hypothesis since $n + n < m + n$.
			\end{enumerate}
		\end{proof}
		
		\begin{theorem}\label{Theorem: extension_nonexpansive}
			Let $\mtr{X} = (X, d_\mtr{X}, s)$ be a separable metric space.
			\begin{enumerate}
				\item
					For all $\big((x_i)_{i \in \NN_{< l}}, (u_i)_{i \in \NN_{< l}}\big), \big((y_i)_{i \in \NN_{< l}}, (v_i)_{i \in \NN_{< l}}\big) \in \fpi[l]$ the supremum
					$$\sup\st[2]{d\Big(\extm\big((x_i)_{i \in \NN_{< l}}, (u_i)_{i \in \NN_{< l}}\big)(x), \extm\big((y_i)_{i \in \NN_{< l}}, (v_i)_{i \in \NN_{< l}}\big)(x)\Big)}{x \in X}$$
					is a real number.
				\item
					The map $\extm[n]\colon \fpi[n] \to \isom$ is continuous, in fact non-expansive in the sense of Lemma~\ref{Lemma: sup_metric_induces_uniform_convergence}.
			\end{enumerate}
		\end{theorem}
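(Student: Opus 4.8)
The plan is to deduce both items from the technical Lemma~\ref{Lemma: estimation_of_supremum_metric_on_isometries}, using Corollary~\ref{Corollary: comparison_on_dense_subsets}(\ref{Corollary: comparison_on_dense_subsets: suprema}) to pass from the dense subset $s(\NN) \cap X$ to all of $X$, and Lemma~\ref{Lemma: sup_metric_induces_uniform_convergence} to convert the resulting metric estimate into a statement about the topology of uniform convergence. Throughout I may assume $X$ is inhabited; if $X = \emptyset$ then $\isom$ is a singleton and both claims are trivial.

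For the first item I would fix $(x, u), (y, v) \in \fpi[l]$ and bring in the sequences $w, z \colon \NN \to \one + \U$ attached to them in Lemma~\ref{Lemma: estimation_of_supremum_metric_on_isometries}, so that $w_n = \extm[l](x, u)(s_n)$ and $z_n = \extm[l](y, v)(s_n)$ whenever $s_n \in X$. The map $\varphi \colon X \to \RR$, $\varphi(p) \dfeq d\big(\extm[l](x, u)(p), \extm[l](y, v)(p)\big)$, is continuous (it is $d \colon \U \times \U \to \RR$ precomposed with a pair of isometries) and has $\varphi(s_n) = d(w_n, z_n)$ for $n \in s^{-1}(X)$. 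By Corollary~\ref{Corollary: comparison_on_dense_subsets}(\ref{Corollary: comparison_on_dense_subsets: suprema}) it is then enough to exhibit $\sup\st{d(w_n, z_n)}{n \in s^{-1}(X)}$ as a real number, and here constructive care is needed, since a bounded inhabited subset of $\RR$ need not have a supremum. I would put $B_M \dfeq \sup\big(\{0\} \cup \st{d(w_k, z_k)}{k \in \NN_{\leq M} \cap s^{-1}(X)}\big)$ (a finite supremum, hence an actual real number), $L \dfeq \st{B_M}{M \in \NN}$, and $U \dfeq$ the set of upper bounds of $L$. Lemma~\ref{Lemma: estimation_of_supremum_metric_on_isometries}(\ref{Lemma: estimation_of_supremum_metric_on_isometries: dP_bound}) with $m = n$ gives $d_P\big((x, u), (y, v)\big) \in U$, so $U$ is inhabited; Lemma~\ref{Lemma: estimation_of_supremum_metric_on_isometries}(\ref{Lemma: estimation_of_supremum_metric_on_isometries: finite_approximation}) with $m = n$ gives, for each $\epsilon \in \RR_{> 0}$, an $M$ with $d(w_n, z_n) \leq B_M + 2\epsilon$ for all $n \in s^{-1}(X)$, hence $B_M + 2\epsilon \in U$. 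These are exactly the two hypotheses of Postulate~\ref{Postulate: reals_Dedekind_complete}, which then yields $\sup L = \inf U$; a short check (each $d(w_n, z_n) \leq B_n$, and any upper bound of the $d(w_n, z_n)$ bounds $L$) identifies this number with $\sup\st{d(w_n, z_n)}{n \in s^{-1}(X)}$, proving the first claim.

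For the second item, the first item tells us precisely that $d_{\sup}(f, g)$ exists for all $f, g \in A \dfeq \im(\extm[n])$, so Lemma~\ref{Lemma: sup_metric_induces_uniform_convergence} equips $A$ with the metric $d_{\sup}$ and makes the inclusion $A \hookrightarrow \isom$ a topological embedding. It then remains to show that $\extm[n] \colon \fpi[n] \to A$ is non-expansive, that is, $d_{\sup}\big(\extm[n](x, u), \extm[n](y, v)\big) \leq d_P\big((x, u), (y, v)\big)$ for all $(x, u), (y, v) \in \fpi[n]$. Running the argument of the first item again (continuity of $\varphi$, density of $s(\NN) \cap X$, Corollary~\ref{Corollary: comparison_on_dense_subsets}(\ref{Corollary: comparison_on_dense_subsets: suprema})) shows $d_{\sup}\big(\extm[n](x, u), \extm[n](y, v)\big) = \sup\st{d(w_n, z_n)}{n \in s^{-1}(X)}$, and Lemma~\ref{Lemma: estimation_of_supremum_metric_on_isometries}(\ref{Lemma: estimation_of_supremum_metric_on_isometries: dP_bound}) with $m = n$ bounds each $d(w_n, z_n)$, and hence the supremum, by $d_P\big((x, u), (y, v)\big)$. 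A non-expansive map between metric spaces is continuous, so composing with the embedding $A \hookrightarrow \isom$ gives that $\extm[n]$ is continuous into $\isom$.

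I expect the main obstacle to be already surmounted, since it lives in Lemma~\ref{Lemma: estimation_of_supremum_metric_on_isometries}, which I may assume; relative to that lemma the only genuinely delicate point is the constructive existence of the supremum in the first item, where one cannot simply invoke boundedness. The lemma is tailored for this: item~(\ref{Lemma: estimation_of_supremum_metric_on_isometries: dP_bound}) supplies a uniform upper bound and item~(\ref{Lemma: estimation_of_supremum_metric_on_isometries: finite_approximation}) supplies arbitrarily tight finite under-approximations, which is precisely the data consumed by Postulate~\ref{Postulate: reals_Dedekind_complete}. Everything else is routine bookkeeping with results already in place.
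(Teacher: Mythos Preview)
Your proposal is correct and follows essentially the same route as the paper's own proof: apply Postulate~\ref{Postulate: reals_Dedekind_complete} to the sets $L = \{B_M\}$ and $U$ of upper bounds, using Lemma~\ref{Lemma: estimation_of_supremum_metric_on_isometries}(\ref{Lemma: estimation_of_supremum_metric_on_isometries: finite_approximation}) with $m=n$ to produce $B_M + 2\epsilon \in U$, then pass to all of $X$ via Corollary~\ref{Corollary: comparison_on_dense_subsets}(\ref{Corollary: comparison_on_dense_subsets: suprema}), and finish item~2 by Lemma~\ref{Lemma: sup_metric_induces_uniform_convergence} together with Lemma~\ref{Lemma: estimation_of_supremum_metric_on_isometries}(\ref{Lemma: estimation_of_supremum_metric_on_isometries: dP_bound}). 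The only cosmetic differences are that you handle $X=\emptyset$ and the possibly empty finite supremum explicitly, and you cite item~(\ref{Lemma: estimation_of_supremum_metric_on_isometries: dP_bound}) to witness inhabitedness of $U$ whereas the paper gets this from $B_M + 2\epsilon$; note also that the existence of a suitable $M$ is not part of Lemma~\ref{Lemma: estimation_of_supremum_metric_on_isometries} itself but comes from density of $s$, which you leave implicit.
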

		\begin{proof}
			\begin{enumerate}
				\item
					To simplify notation, shorten
					$$f(x) \dfeq d\Big(\extm\big((x_i)_{i \in \NN_{< l}}, (u_i)_{i \in \NN_{< l}}\big)(x), \extm\big((y_i)_{i \in \NN_{< l}}, (v_i)_{i \in \NN_{< l}}\big)(x)\Big)$$
					for $x \in X$. We prove that $\sup\st{f(x)}{x \in X}$ is a real number using Postulate~\ref{Postulate: reals_Dedekind_complete}. Define
					$$L \dfeq \st{\sup\st{f(s_k)}{k \in \NN_{\leq n} \cap s^{-1}(X)}}{n \in \NN},$$
					$$U \dfeq \st{a \in \RR}{\xall{n}{s^{-1}(X)}{a \geq f(s_n)}}.$$
					Take any $\epsilon \in \RR_{> 0}$. There exists a large enough $M \in \NN$ such that
					$$\xall{i}{\NN_{< l}}\xsome{k}{\NN_{\leq M} \cap s^{-1}(X)}{d_\mtr{X}(x_i, s_k) \leq \epsilon}$$
					and
					$$\xall{i}{\NN_{< l}}\xsome{k}{\NN_{\leq M} \cap s^{-1}(X)}{d_\mtr{X}(y_i, s_k) \leq \epsilon}.$$
					Let $B \dfeq \sup\st{f(s_k)}{k \in \NN_{\leq M} \cap s^{-1}(X)}$. Notice that this matches the definition of $B$ in Lemma~\ref{Lemma: estimation_of_supremum_metric_on_isometries} which tells us (by taking $m = n$ in its statement) that $f(s_n) \leq B + 2\epsilon$ for all $n \in s^{-1}(X)$. Clearly then $B \in L$ and $B + 2\epsilon \in U$, and it is obvious that $a \leq b$ for every $a \in L$ and $b \in U$. By Postulate~\ref{Postulate: reals_Dedekind_complete} $L$ and $U$ determine the real number equal to $\sup{L} = \sup\st{f(s_k)}{k \in s^{-1}(X)}$, but that equals $\sup\st{f(x)}{x \in X}$ by Corollary~\ref{Corollary: comparison_on_dense_subsets}(\ref{Corollary: comparison_on_dense_subsets: suprema}).
				\item
					Use the previous item with Lemma~\ref{Lemma: sup_metric_induces_uniform_convergence}, together with Lemma~\ref{Lemma: estimation_of_supremum_metric_on_isometries}(\ref{Lemma: estimation_of_supremum_metric_on_isometries: dP_bound}) for $m = n$.
			\end{enumerate}
		\end{proof}
		
		It is easy to extend the continuity result from finite lists of fixed length to all finite lists; let
		$$\fpi \dfeq \coprod_{l \in \NN} \fpi[n]$$
		be the topological coproduct (the disjoint union, with every summand open) of individual lists (we can realize it as a subset of $\finseq{X} \times \finseq{\U}$). The maps $\extm[n]$ induce the map $\extm\colon \fpi \to \isom$ which is continuous by the definition of a coproduct (it is defined by its continuous restrictions on members of an open cover).
		
		Finally, define two lists in $\fpi$ to be equivalent when they have the same image in $X$, on which they determine the same finite partial isometry. Here is the explicit definition. Let $\big((x_i)_{i \in \NN_{< l}}, (u_i)_{i \in \NN_{< l}}\big), \big((y_j)_{j \in \NN_{< k}}, (v_j)_{j \in \NN_{< k}}\big) \in \fpi$. Then
		$$\big((x_i)_{i \in \NN_{< l}}, (u_i)_{i \in \NN_{< l}}\big) \equ \big((y_j)_{j \in \NN_{< k}}, (v_j)_{j \in \NN_{< k}}\big) \dfeq$$
		$$\st{x_i}{i \in \NN_{< l}} = \st{y_j}{j \in \NN_{< k}} \land$$
		$$\land \xall{i}{\NN_{< l}}\all{j}{\NN_{< k}}{x_i = y_j \implies u_i = v_j}.$$
		Since we identify lists which represent the same isometries, the quotient set $\fpi/_\equ$ can genuinly be called a ``set of finite partial isometries from $\mtr{X}$ to $\U$''. Of course, it is not just a set, but a topological space, equipped with the quotient topology.
		
		Note that the result of $\extm$ does not depend on the actual lists, just on which elements appear on the list and where they get mapped (since we are calculating suprema of sets which depend only on this). Thus it induces the extension map on the quotient, $\widetilde{\extm}\colon \fpi/_\equ \to \isom$.
		
		\begin{theorem}
			The map $\widetilde{\extm}$, which maps a finite partial isometry into the Urysohn space to its total isometric extension, is continuous.
		\end{theorem}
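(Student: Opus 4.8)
The plan is to reduce everything to the universal property of the quotient topology. By construction $\widetilde{\extm}$ is the unique function satisfying $\widetilde{\extm} \circ q = \extm$, where $q\colon \fpi \to \fpi/_\equ$ is the quotient projection. Since $q$ is a quotient map, a function out of $\fpi/_\equ$ is continuous if and only if its precomposition with $q$ is continuous; hence it suffices to observe that $\extm\colon \fpi \to \isom$ is continuous. But $\extm$ is the map induced on the coproduct $\fpi = \coprod_{l \in \NN} \fpi[l]$ by the maps $\extm[l]$, each of which is non-expansive (hence continuous) by Theorem~\ref{Theorem: extension_nonexpansive}, and a map out of a coproduct is continuous precisely when its restriction to each of the (open) summands is. So $\extm$ is continuous, and therefore so is $\widetilde{\extm}$.

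The one point that should be verified with a little care is that $\extm$ genuinely factors through $q$, i.e.\ that $\equ$-equivalent lists have the same image under $\extm$, so that $\widetilde{\extm}$ is well defined and the identity $\widetilde{\extm} \circ q = \extm$ makes sense. This was already noted in the discussion preceding the statement: the value $\extm[l]\big((x_i)_{i \in \NN_{< l}}, (u_i)_{i \in \NN_{< l}}\big)$ is a location whose defining suprema depend only on the set $\st{(x_i, u_i)}{i \in \NN_{< l}}$ of point–value pairs, not on the indexing or on repetitions, and the back-and-forth construction producing the total isometric extension likewise uses only this data. Consequently $\extm$ is constant on $\equ$-classes, which is exactly what is needed.

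The honest remark is that the substance of this theorem lives entirely in Theorem~\ref{Theorem: extension_nonexpansive} and its supporting Lemma~\ref{Lemma: estimation_of_supremum_metric_on_isometries}, where one approximates $d_{\sup}$ between two total extensions from above by a finite, explicitly controllable quantity; the present statement is then a formality. Once each $\extm[l]$ is known to be continuous (indeed non-expansive in the sense of Lemma~\ref{Lemma: sup_metric_induces_uniform_convergence}), the passage to the coproduct and then to the quotient costs nothing beyond the standard universal properties of those constructions, all of which are available in the constructive setting of the paper without invoking any choice principle. So I do not anticipate a genuine obstacle here; the only thing to be slightly watchful about is keeping the factorization $\widetilde{\extm} \circ q = \extm$ explicit and recalling that $q$ is a \emph{quotient} map (not merely continuous), which is what licenses transporting continuity from $\extm$ to $\widetilde{\extm}$.
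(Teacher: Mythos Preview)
Your proposal is correct and matches the paper's own proof, which is a one-sentence appeal to the universal property of the quotient topology applied to the already-established continuity of $\extm$. You have simply spelled out the two ingredients (coproduct continuity from Theorem~\ref{Theorem: extension_nonexpansive}, then quotient continuity) more explicitly than the paper does, and added the well-definedness check that the paper handles in the paragraph preceding the theorem.
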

		\begin{proof}
			A standard theorem from topology states, that a given a continuous map which respects an equivalence relation on its domain, the map it induces on the topological quotient of its domain is continuous.
		\end{proof}

	\section{Algebraic Structure of the Urysohn Space}\label{Section: algebra_of_Urysohn}
	
		In this section we equip the Urysohn space with the algebraic structure. Specifically, we show that it is a ``disring analogue of a Banach space over $\nnr$''.
		
		First define a map $\norm\colon \proto \to \od$ (call it a \df{norm}) for $a = \atuple{a}{\alpha}{i} \in \proto$ as $\norm[a] \dfeq d(a, \et)$; equivalently, it is inductively defined by
		$$\norm[a] \dfeq \sup\st{\norm[a_i] \dis \alpha_i}{i \in \NN_{< \lnth{a}}}.$$
		
		Second, define the operation $\dis\colon \proto \times \proto \to \proto$ for $a = \atuple{a}{\alpha}{i}, b = \atuple{b}{\beta}{j} \in \proto$ inductively on $\age{a} + \age{b}$ by
		$$a \dis b \dfeq \strat[\age{a} + \age{b}]{\big((a_i \dis b, \alpha_i)_{i \in \NN_{< \lnth{a}}} \cnct (a \dis b_j, \beta_j)_{j \in \NN_{< \lnth{b}}}\big)}{}.$$
		Clearly $\lnth{a \dis b} = \lnth{a} + \lnth{b}$. We declared $\age{a \dis b} = \age{a} + \age{b}$ which obviously works if $\age{a}, \age{b} \geq 1$, but it works also when one of $a, b$ is the empty tuple at age $0$ since it is easy to prove inductively $a \dis \et = \et \dis a = a$. Thus $\et$ is the neutral element for $\dis$.
		
		\begin{proposition}
			The operation $\dis$ is associative.
		\end{proposition}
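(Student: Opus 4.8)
The plan is to prove associativity by a single induction on the natural number $\age{a} + \age{b} + \age{c}$, expanding both $(a \dis b) \dis c$ and $a \dis (b \dis c)$ by the recursive definition of $\dis$ and checking that the two resulting tuples are literally the same. Since the identity to be proved is purely structural — an equality of tuples, not a metric inequality — no triangle-inequality manipulations are needed; the entire content is bookkeeping about ages, lengths, and the order of the concatenated blocks of predecessors.

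Concretely, writing $a = \atuple{a}{\alpha}{i}$, $b = \atuple{b}{\beta}{j}$, $c = \atuple{c}{\gamma}{k}$ and expanding $a \dis b$ first, the tuple $(a \dis b) \dis c$ has age $(\age{a} + \age{b}) + \age{c}$ and predecessor list
$$\big((a_i \dis b) \dis c,\ \alpha_i\big)_{i \in \NN_{< \lnth{a}}} \cnct \big((a \dis b_j) \dis c,\ \beta_j\big)_{j \in \NN_{< \lnth{b}}} \cnct \big((a \dis b) \dis c_k,\ \gamma_k\big)_{k \in \NN_{< \lnth{c}}}.$$
Symmetrically, expanding $b \dis c$ first shows that $a \dis (b \dis c)$ has age $\age{a} + (\age{b} + \age{c})$ and predecessor list
$$\big(a_i \dis (b \dis c),\ \alpha_i\big)_{i \in \NN_{< \lnth{a}}} \cnct \big(a \dis (b_j \dis c),\ \beta_j\big)_{j \in \NN_{< \lnth{b}}} \cnct \big(a \dis (b \dis c_k),\ \gamma_k\big)_{k \in \NN_{< \lnth{c}}}.$$
The two ages coincide because $+$ is associative on $\NN$, and the two predecessor lists have the same length with the same $\od$-entries $\alpha_i, \beta_j, \gamma_k$ in matching positions, so it remains only to identify the first components block by block: $(a_i \dis b) \dis c = a_i \dis (b \dis c)$, $(a \dis b_j) \dis c = a \dis (b_j \dis c)$, and $(a \dis b) \dis c_k = a \dis (b \dis c_k)$. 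Each of these is an instance of the very statement being proved, applied to the triples $(a_i, b, c)$, $(a, b_j, c)$, $(a, b, c_k)$, whose age sums lie strictly below $\age{a} + \age{b} + \age{c}$ since $\age{a_i} < \age{a}$, $\age{b_j} < \age{b}$, $\age{c_k} < \age{c}$; hence they hold by the induction hypothesis, and the two tuples are equal.

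The edge cases in which one of $a, b, c$ equals the empty tuple $\et$ need no separate treatment: when, say, $\lnth{a} = 0$, the first block above is empty on both sides and the remaining blocks are again matched by the induction hypothesis, while at the bottom of the recursion $\age{a} + \age{b} + \age{c} = 0$ forces $a = b = c = \et$ and both sides reduce to $\et$. I do not expect a genuine obstacle here; the only point that deserves a moment's care is that the convention $\age{a \dis b} = \age{a} + \age{b}$ — already justified in the text via $\et \dis x = x \dis \et = x$ — is applied consistently when computing the ages of the intermediate tuples $a \dis b$ and $b \dis c$, so that the induction on $\age{a} + \age{b} + \age{c}$ is genuinely well-founded.
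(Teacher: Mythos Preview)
Your proof is correct and follows essentially the same argument as the paper: induction on $\age{a} + \age{b} + \age{c}$, expanding both sides via the recursive definition of $\dis$, and matching the three blocks of predecessors using the induction hypothesis. The paper's version is terser, omitting your explicit remarks on the base case and the well-foundedness of the age convention, but the mathematical content is identical.
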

		\begin{proof}
			Take $a = \atuple{a}{\alpha}{i}, b = \atuple{b}{\beta}{j}, c = \atuple{c}{\gamma}{k} \in \proto$. By induction on $\age{a} + \age{b} + \age{c}$
			$$(a \dis b) \dis c =$$
			{\small
			$$= \strat[(\age{a} + \age{b}) + \age{c}]{\big(((a_i \dis b) \dis c, \alpha_i)_{i \in \NN_{< \lnth{a}}} \cnct ((a \dis b_j) \dis c, \beta_j)_{j \in \NN_{< \lnth{b}}} \cnct ((a \dis b) \dis c_k, \gamma_k)_{k \in \NN_{< \lnth{c}}}\big)}{} =$$
			$$= \strat[\age{a} + (\age{b} + \age{c})]{\big((a_i \dis (b \dis c), \alpha_i)_{i \in \NN_{< \lnth{a}}} \cnct (a \dis (b_j \dis c), \beta_j)_{j \in \NN_{< \lnth{b}}} \cnct (a \dis (b \dis c_k), \gamma_k)_{k \in \NN_{< \lnth{c}}}\big)}{} =$$
			}
			$$= a \dis (b \dis c).$$
		\end{proof}
		
		We connect the two introduced operations.
		\begin{proposition}
			The following holds for all $a, b \in \proto$.
			\begin{enumerate}
				\item
					$\norm[a \dis b] = d(a, b)$
				\item
					$\norm[a] \dis \norm[b] \leq \norm[a \dis b]$ \quad (\df{triangle inequality})
			\end{enumerate}
		\end{proposition}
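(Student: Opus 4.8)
The plan is to dispatch part~(1) by a short induction on the total age, and then obtain part~(2) from~(1) together with the triangle inequality that $\proto$ already enjoys as a $\od$-protometric space.

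For part~(1) I would induct on $\age{a} + \age{b}$. Writing $a = \atuple{a}{\alpha}{i}$ and $b = \atuple{b}{\beta}{j}$, the definition of $\dis$ on $\proto$ says that the tuple $a \dis b$ has as its predecessors exactly the tuples $a_i \dis b$ (at prescribed distances $\alpha_i$) and $a \dis b_j$ (at prescribed distances $\beta_j$). Feeding this predecessor list into the inductive formula for the norm gives
$$\norm[a \dis b] = \sup\big(\st{\norm[a_i \dis b] \dis \alpha_i}{i \in \NN_{< \lnth{a}}} \cup \st{\norm[a \dis b_j] \dis \beta_j}{j \in \NN_{< \lnth{b}}}\big).$$
Since predecessors have strictly smaller age, both $\age{a_i} + \age{b}$ and $\age{a} + \age{b_j}$ are $< \age{a} + \age{b}$, so the induction hypothesis yields $\norm[a_i \dis b] = d(a_i, b)$ and $\norm[a \dis b_j] = d(a, b_j)$. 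Substituting, the right-hand side becomes term for term the defining supremum of $d(a, b)$. No separate base case is needed: whenever a tuple has length $0$ the corresponding index set is empty, and in particular $\norm[\et] = d(\et, \et) = 0$.

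For part~(2), by part~(1) it suffices to show $\norm[a] \dis \norm[b] \leq d(a, b)$, i.e. $d(a, \et) \dis d(b, \et) \leq d(a, b)$. Applying the ``in particular'' clause of Lemma~\ref{Lemma: dis_upper_bounds} (with $d(a, \et)$, $d(b, \et)$, $d(a, b)$ in the roles of $a$, $b$, $x$), this is equivalent to the pair of inequalities $d(a, \et) \leq d(a, b) + d(b, \et)$ and $d(b, \et) \leq d(a, b) + d(a, \et)$, and both are instances of the triangle inequality of the $\od$-protometric space $(\proto, d)$ (the second after using symmetry). So part~(2) requires no induction at all.

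The only delicate point is the bookkeeping in part~(1): correctly reading off the predecessor list of $a \dis b$ from the definition and checking that, after the induction hypothesis is applied, the resulting expression coincides with the recursive clause defining $d(a, b)$. There is no genuine obstacle here — both claims fall out essentially by unfolding definitions.
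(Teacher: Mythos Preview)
Your proof is correct and essentially identical to the paper's: part~(1) is the same induction on $\age{a}+\age{b}$ via unfolding the definitions, and part~(2) is the same one-line deduction from part~(1) and the protometric triangle inequality (the paper compresses your invocation of Lemma~\ref{Lemma: dis_upper_bounds} into the single step $d(a,b)\geq d(a,\et)\dis d(b,\et)$).
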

		\begin{proof}
			\begin{enumerate}
				\item
					By induction on $\age{a} + \age{b}$
					$$\norm[a \dis b] = \sup\big(\st{\norm[a_i \dis b] \dis \alpha_i}{i \in \NN_{< \lnth{a}}} \cup \st{\norm[a \dis b_j] \dis \beta_j}{j \in \NN_{< \lnth{b}}}\big) =$$
					$$= \sup\big(\st{d(a_i, b) \dis \alpha_i}{i \in \NN_{< \lnth{a}}} \cup \st{d(a, b_j) \dis \beta_j}{j \in \NN_{< \lnth{b}}}\big) = d(a, b).$$
				\item
					$\norm[a \dis b] = d(a, b) \geq d(a, \et) \dis d(b, \et) = \norm[a] \dis \norm[b]$
			\end{enumerate}
		\end{proof}
		
		\begin{lemma}\label{Lemma: scramble_terms_in_dis}
         Let $a, b, c, d \in \proto$. Then $d(a \dis b, c \dis d) = d(a \dis c, b \dis d)$.\footnote{The geometric meaning of this lemma is essentially that $\dis$ is a non-expansive map. See Proposition~\ref{Proposition: Urysohn_dis_nonexpansive} below.}
		\end{lemma}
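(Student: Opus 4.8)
The plan is to prove the identity by induction on the total age $\age{a} + \age{b} + \age{c} + \age{d}$, using that both $\dis$ on $\proto$ and the distance $d$ are given by recursion on ages. Recall that the predecessors of $a \dis b$ are precisely the tuples $a_i \dis b$ carrying the distance tag $\alpha_i$ (for $i \in \NN_{< \lnth{a}}$) together with the tuples $a \dis b_j$ carrying the tag $\beta_j$ (for $j \in \NN_{< \lnth{b}}$), and that $\age{a \dis b} = \age{a} + \age{b}$. In particular both $d(a \dis b, c \dis d)$ and $d(a \dis c, b \dis d)$ are evaluated at recursion level $\age{a} + \age{b} + \age{c} + \age{d}$, so this is the right quantity to induct on.

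For the inductive step, write $a = \atuple{a}{\alpha}{i}$, $b = \atuple{b}{\beta}{j}$, $c = \atuple{c}{\gamma}{k}$, $d = \atuple{d}{\delta}{l}$ and expand. Unfolding the definition of the distance on the tuples $a \dis b$ and $c \dis d$, the value $d(a \dis b, c \dis d)$ is the supremum of the family of elements
$$d(a_i \dis b, c \dis d) \dis \alpha_i, \qquad d(a \dis b_j, c \dis d) \dis \beta_j, \qquad d(a \dis b, c_k \dis d) \dis \gamma_k, \qquad d(a \dis b, c \dis d_l) \dis \delta_l$$
ranging over $i \in \NN_{< \lnth{a}}$, $j \in \NN_{< \lnth{b}}$, $k \in \NN_{< \lnth{c}}$, $l \in \NN_{< \lnth{d}}$, while $d(a \dis c, b \dis d)$ is the supremum of
$$d(a_i \dis c, b \dis d) \dis \alpha_i, \qquad d(a \dis c_k, b \dis d) \dis \gamma_k, \qquad d(a \dis c, b_j \dis d) \dis \beta_j, \qquad d(a \dis c, b \dis d_l) \dis \delta_l.$$
Applying the induction hypothesis to the quadruples $(a_i, b, c, d)$, $(a, b_j, c, d)$, $(a, b, c_k, d)$, $(a, b, c, d_l)$ — each of strictly smaller total age — yields $d(a_i \dis b, c \dis d) = d(a_i \dis c, b \dis d)$, $d(a \dis b_j, c \dis d) = d(a \dis c, b_j \dis d)$, $d(a \dis b, c_k \dis d) = d(a \dis c_k, b \dis d)$ and $d(a \dis b, c \dis d_l) = d(a \dis c, b \dis d_l)$. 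Hence the two families above coincide as sets, and therefore so do their suprema.

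The base case requires no separate treatment: whenever the recursion reaches an empty tuple the corresponding family of terms is empty, and when $\age{a} + \age{b} + \age{c} + \age{d} = 0$ all four tuples equal $\et$, so both sides are $d(\et, \et) = 0$. I do not anticipate a real obstacle; the argument is essentially bookkeeping of the four predecessor families, the one point to watch being that passing to a predecessor strictly decreases the age, so the induction hypothesis genuinely applies. The footnote's interpretation of the lemma as non-expansiveness of $\dis$ plays no role in the proof itself.
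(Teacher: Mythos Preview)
Your argument is correct and is essentially the same as the paper's: both proceed by induction on $\age{a}+\age{b}+\age{c}+\age{d}$, expand $d(a\dis b,c\dis d)$ and $d(a\dis c,b\dis d)$ into the same four families of terms indexed by the predecessors of $a,b,c,d$, and match them up term-by-term via the induction hypothesis. Your remarks on the base case and on the strict decrease of total age when passing to a predecessor are accurate and make the induction explicit.
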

		\begin{proof}
			Let $a = \atuple{a}{\alpha}{i},$ $b = \atuple{b}{\beta}{j},$ $c = \atuple{c}{\gamma}{k},$ $d = \atuple{d}{\delta}{l}$. We prove the statement by induction on $\age{a} + \age{b} + \age{c} + \age{d}$.
			$$d(a \dis b, c \dis d) =$$
			$$= \sup\Big(%
				\st{d(a_i \dis b, c \dis d) \dis \alpha_i}{i \in \NN_{< \lnth{a}}} \cup
				\st{d(a \dis b_j, c \dis d) \dis \beta_j}{j \in \NN_{< \lnth{b}}} \cup$$
				$$\cup
				\st{d(a \dis b, c_k \dis d) \dis \gamma_k}{k \in \NN_{< \lnth{c}}} \cup
				\st{d(a \dis b, c \dis d_l) \dis \delta_l}{l \in \NN_{< \lnth{d}}}%
			\Big) =$$
			$$= \sup\Big(%
				\st{d(a_i \dis c, b \dis d) \dis \alpha_i}{i \in \NN_{< \lnth{a}}} \cup
				\st{d(a \dis c, b_j \dis d) \dis \beta_j}{j \in \NN_{< \lnth{b}}} \cup$$
				$$\cup
				\st{d(a \dis c_k, b \dis d) \dis \gamma_k}{k \in \NN_{< \lnth{c}}} \cup
				\st{d(a \dis c, b \dis d_l) \dis \delta_l}{l \in \NN_{< \lnth{d}}}%
			\Big) =$$
			$$= d(a \dis c, b \dis d)$$
		\end{proof}
		
		\begin{corollary}\label{Corollary: dis_independent_of_representatives}
			Let $a, b, a', b' \in \proto$, and suppose $d(a, a') = d(b, b') = 0$. Then $d(a \dis b, a' \dis b') = 0$.
		\end{corollary}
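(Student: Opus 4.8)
The plan is to avoid any direct induction on ages and instead reduce the claim to a one-line application of the triangle inequality for the protometric $d$ on $\proto$, after first ``scrambling'' the four tuples into a convenient configuration. The key move is to apply Lemma~\ref{Lemma: scramble_terms_in_dis} with the substitution that pairs $a$ with $a'$ and $b$ with $b'$, which gives
$$d(a \dis b, a' \dis b') = d(a \dis a', b \dis b').$$
The point of this rearrangement is that it collects the two ``mismatches'' $a \dis a'$ and $b \dis b'$ into the two arguments of $d$, so that the hypotheses $d(a, a') = 0$ and $d(b, b') = 0$ become directly exploitable.

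Next I would recall the definition $\norm[x] \dfeq d(x, \et)$ together with the previously established identity $\norm[a \dis b] = d(a, b)$, applied to the pairs $(a, a')$ and $(b, b')$. These give $d(a \dis a', \et) = \norm[a \dis a'] = d(a, a') = 0$ and, likewise, $d(b \dis b', \et) = \norm[b \dis b'] = d(b, b') = 0$; in words, both $a \dis a'$ and $b \dis b'$ have norm $0$.

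Finally I would insert $\et$ as an intermediate point and use the triangle inequality for $d$, together with symmetry:
$$d(a \dis a', b \dis b') \leq d(a \dis a', \et) + d(\et, b \dis b') = \norm[a \dis a'] + \norm[b \dis b'] = 0.$$
Since all elements of $\od$ are non-negative (Proposition~\ref{Proposition: elements_nonnegative_in_disgroup}), this forces $d(a \dis a', b \dis b') = 0$, and combined with the scrambling identity above we conclude $d(a \dis b, a' \dis b') = 0$. I do not expect any genuine obstacle: the only mildly delicate point is recognizing that the right instance of Lemma~\ref{Lemma: scramble_terms_in_dis} collapses the problem to a trivial triangle-inequality estimate, rather than attempting a direct induction on $\age{a} + \age{b} + \age{a'} + \age{b'}$, which would be considerably more cumbersome.
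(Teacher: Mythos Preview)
Your proof is correct and is essentially identical to the paper's own argument: it applies Lemma~\ref{Lemma: scramble_terms_in_dis} to get $d(a \dis b, a' \dis b') = d(a \dis a', b \dis b')$, then bounds this by $\norm[a \dis a'] + \norm[b \dis b'] = d(a,a') + d(b,b') = 0$ via the triangle inequality through $\et$ and the identity $\norm[x \dis y] = d(x,y)$.
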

		\begin{proof}
			By the previous lemma
			$$d(a \dis b, a' \dis b') = d(a \dis a', b \dis b') \leq \norm[a \dis a'] + \norm[b \dis b'] = d(a, a') + d(b, b') = 0.$$
		\end{proof}
		
		Now we consider the additional properties of $\dis$ when we restrict it to $\pseudo$.
		\begin{proposition}
			The following holds for all $a, b, x \in \pseudo$.
			\begin{enumerate}
				\item
					$\norm[a \dis a] = 0$
				\item
					$d(a \dis b, b \dis a) = 0$
				\item
					$d(a \dis x, b \dis x) = d(a, b) = d(x \dis a, x \dis b)$
				\item
					$a \dis b \in \pseudo$
			\end{enumerate}
		\end{proposition}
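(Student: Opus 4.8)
The plan is to dispatch the four items in the order (1), (3), (2), (4), since only (2) and (4) will need induction (each on $\age{a}+\age{b}$), both will lean on (3), and (3) itself needs no induction and no other item. Item (1) is immediate: as shown just above, $\norm[a\dis b]=d(a,b)$ for all $a,b\in\proto$, hence $\norm[a\dis a]=d(a,a)$, which vanishes for permissible $a$ by Theorem~\ref{Theorem: distances_as_described_in_Urysohn_tuples}; the only consequence I need afterwards is that $\norm[x\dis x]=0$ whenever $x\in\pseudo$. For (3), Lemma~\ref{Lemma: scramble_terms_in_dis} gives $d(a\dis x,b\dis x)=d(a\dis b,x\dis x)$ and $d(x\dis a,x\dis b)=d(x\dis x,a\dis b)$, so by symmetry of $d$ all three expressions equal $d(a\dis b,x\dis x)$; then the triangle inequality through $\et$ gives $d(a\dis b,x\dis x)\leq\norm[a\dis b]+\norm[x\dis x]=d(a,b)$, while $d(a,b)=\norm[a\dis b]=d(a\dis b,\et)\leq d(a\dis b,x\dis x)+\norm[x\dis x]=d(a\dis b,x\dis x)$, so everything equals $d(a,b)$.

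For (2) I would induct on $\age{a}+\age{b}$ (with $a,b$ over $\pseudo$). Expanding $d(a\dis b,b\dis a)$ by the definition of $d$ on $\proto$ presents it as a supremum of four families of terms, indexed by the two kinds of predecessor of $a\dis b$ and the two kinds of predecessor of $b\dis a$. A representative term from the $a_i$-predecessor of $a\dis b$ is $d(a_i\dis b,b\dis a)\dis\alpha_i$: the induction hypothesis applied to the pair $(a_i,b)$ gives $d(a_i\dis b,b\dis a_i)=0$, while (3) gives $d(b\dis a_i,b\dis a)=d(a_i,a)=\alpha_i$ (using $a\in\pseudo$ and Theorem~\ref{Theorem: distances_as_described_in_Urysohn_tuples}); two uses of the triangle inequality then pin $d(a_i\dis b,b\dis a)$ down to exactly $\alpha_i$, so the term is $\alpha_i\dis\alpha_i=0$. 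The remaining three families are symmetric (the $b_j$-predecessors use the pair $(a,b_j)$), so the whole supremum is $0$.

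For (4) I would again induct on $\age{a}+\age{b}$, verifying that $a\dis b={}_{\age{a}+\age{b}}\big((a_i\dis b,\alpha_i)_{i\in\NN_{<\lnth{a}}}\cnct(a\dis b_j,\beta_j)_{j\in\NN_{<\lnth{b}}}\big)$ is permissible. Its predecessors $a_i\dis b$ and $a\dis b_j$ are permissible by the induction hypothesis and have age $<\age{a}+\age{b}$, so they sit in the appropriate $\pseudo[\cdot]$. It then remains to check the permissibility inequality $d(p,q)\dis(\text{label of }p)\leq(\text{label of }q)$ for every ordered pair $(p,q)$ of predecessors. For two predecessors of the same kind this follows from (3), which computes $d(a_i\dis b,a_{i'}\dis b)=d(a_i,a_{i'})$ and $d(a\dis b_j,a\dis b_{j'})=d(b_j,b_{j'})$, together with the permissibility of $a$ (respectively of $b$). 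The mixed pair $(a_i\dis b,a\dis b_j)$ is the sole delicate case: by Lemma~\ref{Lemma: scramble_terms_in_dis} its distance is $d(a_i\dis a,b\dis b_j)$, and the ordinary and reverse triangle inequalities through $\et$ squeeze it between $\alpha_i\dis\beta_j=\norm[a_i\dis a]\dis\norm[b\dis b_j]$ and $\alpha_i+\beta_j=\norm[a_i\dis a]+\norm[b\dis b_j]$; feeding these two bounds into Lemma~\ref{Lemma: dis_upper_bounds} and invoking Lemma~\ref{Lemma: special_cases_of_triangle_inequality}(\ref{Lemma: special_cases_of_triangle_inequality: bound}) then gives both $d(a_i\dis b,a\dis b_j)\dis\alpha_i\leq\beta_j$ and $d(a_i\dis b,a\dis b_j)\dis\beta_j\leq\alpha_i$, as required.

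The main obstacle will be the mixed-pair estimate in (4), and to a lesser degree the two-sided squeeze that makes each supremum term vanish in (2): neither is conceptually hard, but both demand juggling the triangle inequality and Lemma~\ref{Lemma: dis_upper_bounds} in the correct directions, keeping track of which inequality is used which way. I note finally that (2) and (4) run two independent inductions on $\age{a}+\age{b}$ that never call one another, and that both are free to use (3) because (3) was proved unconditionally.
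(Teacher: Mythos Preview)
Your proof is correct, but two of your items take a different route from the paper's.

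For item~(2) the paper avoids induction altogether: it inserts the unit $\et$ and applies Lemma~\ref{Lemma: scramble_terms_in_dis} twice to obtain
\[
d(a\dis b,\,b\dis a)=d\big((a\dis b)\dis\et,\,b\dis a\big)=d\big((a\dis b)\dis b,\,\et\dis a\big)=d\big(a\dis(b\dis b),\,a\dis\et\big)=d(a\dis a,\,b\dis b),
\]
and the last quantity is bounded by $\norm[a\dis a]+\norm[b\dis b]=0$. This algebraic trick is shorter than your induction, though your argument (expanding the supremum, pinning each term to $0$ via the induction hypothesis and item~(3)) is perfectly valid and perhaps more transparent about \emph{why} commutativity holds.

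Conversely, your reverse inequality in item~(3) is cleaner than the paper's. The paper argues that $d(a,b)=d(a\dis x\dis x,\,b\dis x\dis x)\leq d(a\dis x,\,b\dis x)$, which needs Corollary~\ref{Corollary: dis_independent_of_representatives} and a reapplication of the forward direction; your single triangle inequality through $\et$, namely $d(a,b)=d(a\dis b,\et)\leq d(a\dis b,\,x\dis x)+\norm[x\dis x]$, does the job in one line.

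Item~(4) is essentially the same in both: the paper's verification is implicitly inductive (the predecessors $a_i\dis b$ and $a\dis b_j$ must already lie in the appropriate $\pseudo[\cdot]$), and its mixed-pair estimate is exactly the two bounds $\alpha_i\dis\beta_j\leq d(a_i\dis a,\,b\dis b_j)\leq\alpha_i+\beta_j$ that you derive, just written out directly rather than via Lemma~\ref{Lemma: dis_upper_bounds}.
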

		\begin{proof}
			\begin{enumerate}
				\item
					$\norm[a \dis a] = d(a, a) = 0$
				\item
					Playing with Lemma~\ref{Lemma: scramble_terms_in_dis} we obtain
					$$d(a \dis b, b \dis a) = d(a \dis b \dis \et, b \dis a) = d(a \dis b \dis b, \et \dis a) =$$
					$$= d(a \dis b \dis b, a) = d(a \dis b \dis b, a \dis \et) = d(a \dis a, b \dis b \dis \et) =$$
					$$= d(a \dis a, b \dis b) \leq \norm[a \dis a] + \norm[b \dis b] = 0.$$
				\item
					Using Lemma~\ref{Lemma: scramble_terms_in_dis} we calculate
					$$d(a \dis x, b \dis x) = d(a \dis b, x \dis x) \leq \norm[a \dis b] + \norm[x \dis x] = d(a, b)$$
					whence also (by Corollary~\ref{Corollary: dis_independent_of_representatives} and associativity of $\dis$)
					$$d(a, b) = d(a \dis x \dis x, b \dis x \dis x) \leq d(a \dis x, b \dis x).$$
					Similarly for the other equality.
				\item
					Permissibility of $a \dis b$ is verified by the following calculations.
					$$d(a_i \dis b, a_j \dis b) \dis \alpha_i = d(a_i, a_j) \dis \alpha_i \leq \alpha_j$$
					$$d(a_i \dis b, a \dis b_j) = d(a_i \dis a, b \dis b_j) \leq \norm[a_i \dis a] + \norm[b \dis b_j] = d(a_i, a) + d(b, b_i) = \alpha_i + \beta_j$$
					$$\alpha_i \leq \alpha_i \dis \beta_j + \beta_j = d(a_i, a) \dis d(b, b_j) + \beta_j = \norm[a_i \dis a] \dis \norm[b, b_j] + \beta_j \leq d(a_i \dis a, b \dis b_j) + \beta_j$$
					Inequalities $d(a \dis b_i, a \dis b_j) \dis \beta_i \leq \beta_j$ and $d(a \dis b_j, a_i \dis b) \dis \beta_j \leq \alpha_i$ are verified analogously.
			\end{enumerate}
		\end{proof}
		
		Observe that by Corollary~\ref{Corollary: dis_independent_of_representatives} $\dis$ on $\pseudo$ induces $\dis$ on its Kolmogorov quotient $\Ury$.
		
		\begin{theorem}\label{Theorem: noncompleted_Urysohn_associative_disgroup}
			$\big(\Ury, \dis, [\et]\big)$ is an associative disgroup.
		\end{theorem}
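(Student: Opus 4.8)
The plan is to assemble the structural facts about $\dis$ proved in the preceding propositions and then to invoke Proposition~\ref{Proposition: order_two_group_is_a_disgroup}, which tells us that a commutative group in which every element is its own additive inverse is automatically an associative disgroup with internal distance equal to the group addition. So the real task is just to check that $\big(\Ury, \dis, [\et]\big)$ is a commutative monoid (in fact a group) in which $x \dis x = [\et]$ for all $x$.

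First I would observe that $\dis$ descends to a well-defined binary operation on $\Ury$: by the last item of the proposition immediately above, $\dis$ restricts to a map $\pseudo \times \pseudo \to \pseudo$, and by Corollary~\ref{Corollary: dis_independent_of_representatives} it respects the relation $d(\insarg, \insarg) = 0$, hence passes to the Kolmogorov quotient via $[a] \dis [b] \dfeq [a \dis b]$. Note also $\et = {}_{0}()$ lies in $\pseudo$, so $[\et] \in \Ury$ makes sense. Then each monoid axiom reduces to something already established: associativity of $\dis$ on $\Ury$ comes from its associativity on $\proto$ (the proposition proved just above) pushed through the quotient; $[\et]$ is a two-sided unit because $\et \dis a = a \dis \et = a$ (noted in the text preceding the associativity proposition); commutativity on $\Ury$ follows from $d(a \dis b, b \dis a) = 0$, which gives $[a] \dis [b] = [b] \dis [a]$; and every element is self-inverse since $\norm[a \dis a] = d(a \dis a, \et) = 0$ yields $[a] \dis [a] = [\et]$. (As recalled inside the proof of Proposition~\ref{Proposition: order_two_group_is_a_disgroup}, commutativity is in fact forced once every element is self-inverse, so strictly one need only verify associativity, the unit law, and self-inverseness.) Having verified these, Proposition~\ref{Proposition: order_two_group_is_a_disgroup} applies directly and gives that $\big(\Ury, \dis, [\et]\big)$ is an associative disgroup, the internal distance being $\dis$ itself (as Proposition~\ref{Proposition: when_disgroups_are_groups} forces it to be).

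I do not expect a genuine obstacle here: the substance has been front-loaded into Corollary~\ref{Corollary: dis_independent_of_representatives} (well-definedness on the quotient), Lemma~\ref{Lemma: scramble_terms_in_dis}, and the proposition listing $\norm[a \dis a] = 0$, $d(a \dis b, b \dis a) = 0$ and the closure $a \dis b \in \pseudo$. The only point requiring a little care is that these facts, stated at the level of $\proto$ or $\pseudo$ with distances measured by $d$, must be read as honest equalities in the metric quotient $\Ury$ — but translating ``distance $0$'' into ``equal in the quotient'' is precisely what passing to the Kolmogorov quotient provides, so the argument is routine bookkeeping once the dictionary is set up.
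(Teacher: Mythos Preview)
Your proposal is correct and follows essentially the same approach as the paper: assemble the facts already established (well-definedness on the quotient, associativity, unit, commutativity up to zero distance, and $x \dis x = [\et]$) and invoke Proposition~\ref{Proposition: order_two_group_is_a_disgroup}. The paper's proof is just a two-sentence summary of exactly this, referring to ``the discussion above'' for the group axioms and then citing the same proposition.
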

		\begin{proof}
			By the discussion above $\dis$ is an associative commutative operation with neutral element $[\et]$ and it satisfies $x \dis x = [\et]$ for all $x \in \Ury$. Thus $\big(\Ury, \dis, [\et]\big)$ is an associative disgroup by Proposition~\ref{Proposition: order_two_group_is_a_disgroup}.
		\end{proof}
		
		Assume now that that $\od$ is not just a disgroup, but a disring, \ie that it also has the multiplicative structure. This enables us to make $\Ury$ a ``module'' over $\od$. For $\lambda \in \od$ and $a = \atuple{a}{\alpha}{i} \in \proto$ define inductively on $\age{a}$ the ``scalar multiplication''
		$$\lambda \cdot a \dfeq \strat[\age{a}]{(\lambda \cdot a_i, \lambda \alpha_i)}{i \in \NN_{< \lnth{a}}}.$$
		
		\begin{proposition}
			The following holds for $\lambda \in \od$, $a, b \in \proto$.
			\begin{enumerate}
				\item $1 \cdot a = a$
				\item $\norm[0 \cdot a] = 0$
				\item $\lambda \cdot (a \dis b) = (\lambda \cdot a) \dis (\lambda \cdot b)$
			\end{enumerate}
		\end{proposition}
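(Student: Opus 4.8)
The plan is to establish all three identities by structural induction, exploiting that the scalar multiplication $\lambda \cdot (\insarg)$ preserves both the age and the length of a tuple, acting on an $a = \atuple{a}{\alpha}{i}$ only by recursively rescaling the predecessors and by replacing each attached $\alpha_i \in \od$ with $\lambda \alpha_i$. (In particular $\lambda \cdot \et = \et$, and $\lambda \cdot a \in \proto$ because $\od$ is closed under multiplication, so all three statements make sense.)

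For item $1$, induct on $\age{a}$; the case $a = \et$ is trivial. For the step, $1 \cdot a = \strat[\age{a}]{(1 \cdot a_i, 1 \cdot \alpha_i)}{i \in \NN_{< \lnth{a}}}$, and this equals $a$ since $1 \cdot a_i = a_i$ by the induction hypothesis and $1 \cdot \alpha_i = \alpha_i$ because $1$ is the multiplicative unit of $\od$. For item $2$, induct on $\age{a}$ as well, using $\norm[a] = \sup\st{\norm[a_i] \dis \alpha_i}{i \in \NN_{< \lnth{a}}}$. In a disring $0 \cdot \alpha_i = 0$ (as noted after the definition of a disring), and $x \dis 0 = x$ in any disgroup, so
$$\norm[0 \cdot a] = \sup\st{\norm[0 \cdot a_i] \dis 0}{i \in \NN_{< \lnth{a}}} = \sup\st{\norm[0 \cdot a_i]}{i \in \NN_{< \lnth{a}}} = \sup\st{0}{i \in \NN_{< \lnth{a}}} = 0,$$
the third equality by the induction hypothesis and the last because the supremum of a (possibly empty) family of zeros is $0$.

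For item $3$, induct on $\age{a} + \age{b}$. Unwinding the definition of $\dis$ on $\proto$ and then of scalar multiplication,
$$\lambda \cdot (a \dis b) = \strat[\age{a} + \age{b}]{\big((\lambda \cdot (a_i \dis b), \lambda \alpha_i)_{i \in \NN_{< \lnth{a}}} \cnct (\lambda \cdot (a \dis b_j), \lambda \beta_j)_{j \in \NN_{< \lnth{b}}}\big)}{},$$
and by the induction hypothesis $\lambda \cdot (a_i \dis b) = (\lambda \cdot a_i) \dis (\lambda \cdot b)$ and $\lambda \cdot (a \dis b_j) = (\lambda \cdot a) \dis (\lambda \cdot b_j)$. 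On the other hand, since $\lambda \cdot a = \strat[\age{a}]{(\lambda \cdot a_i, \lambda \alpha_i)}{i \in \NN_{< \lnth{a}}}$ and $\lambda \cdot b = \strat[\age{b}]{(\lambda \cdot b_j, \lambda \beta_j)}{j \in \NN_{< \lnth{b}}}$ have ages $\age{a}$ and $\age{b}$, the definition of $\dis$ produces $(\lambda \cdot a) \dis (\lambda \cdot b)$ as literally the same tuple; the $\et$ edge cases are absorbed since $\lambda \cdot \et = \et$ and $x \dis \et = x$.

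None of this presents a genuine obstacle: after expanding the inductive definitions, each claim collapses to a trivial identity in $\od$ together with the bookkeeping that ages and lengths match up. The only things worth double-checking are that the edge cases involving $\et$ really are the bases of the inductions, and --- for item $2$ --- that one invokes the disring identity $0 \cdot x = 0$ rather than expecting it for free.
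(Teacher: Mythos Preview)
Your proof is correct and is exactly the ``simple induction'' the paper has in mind: you induct on $\age{a}$ for items~1 and~2 and on $\age{a}+\age{b}$ for item~3, reducing each step to the unit, annihilator, and distributivity identities in the disring $\od$. The paper records no further details, so there is nothing to compare.
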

		\begin{proof}
			Simple induction.
		\end{proof}
		
		\begin{definition}
			A structure $(X, +\colon X \times X \to X, \dis\colon X \times X \to X, 0, \cdot\colon \od \times X \to X$ is a \df{module} over a disring $\od$ when $(X, +, \dis, 0)$ is a disgroup and the following holds for all $x, y \in X$, $\lambda, \mu \in \od$:
			\begin{itemize}
				\item
					$\mu \cdot (\lambda \cdot x) = (\lambda \mu) \cdot x$,
				\item
					$\lambda \cdot (x \dis y) = (\lambda \cdot x) \dis (\lambda \cdot y)$,
				\item
					$1 \cdot x = x$,
				\item
					$0 \cdot x = 0$.
			\end{itemize}
			Furthermore, if the disring $\od$ is ordered, we define that the module $(X, +, \dis, 0, \cdot, \norm)$ is \df{normed} when the operation $\norm\colon X \to \od$ has the properties
			\begin{itemize}
				\item
					$\norm[x] = 0 \implies x = 0$,
				\item
					$\norm[\lambda \cdot x] = \lambda \norm[x]$ (in particular $\norm[0] = 0$),
				\item
					$\norm[a] \dis \norm[b] \leq \norm[a \dis b]$.
			\end{itemize}
		\end{definition}
		
		\begin{theorem}
			$\Ury$ is a normed module over the disring $\od$ (by taking addition to be equal to $\dis$, and the zero element to $[\et]$).
		\end{theorem}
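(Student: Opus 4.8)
Nearly every clause of the two definitions has already been checked, so the proof is mostly a matter of assembling earlier results; the one ingredient that still needs work is the homogeneity of the norm, $\norm[\lambda \cdot a] = \lambda\norm[a]$, and I would dispatch that first. The underlying disgroup of the module, $(X, +, \dis, 0)$, is immediate: taking $X \dfeq \Ury$, $+ \dfeq \dis$ and $0 \dfeq [\et]$, this is exactly the associative disgroup $(\Ury, \dis, [\et])$ of Theorem~\ref{Theorem: noncompleted_Urysohn_associative_disgroup} (an associative disgroup is, by Proposition~\ref{Proposition: when_disgroups_are_groups}, precisely a disgroup whose additive operation coincides with $\dis$). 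That $\dis$ descends from $\pseudo$ to the Kolmogorov quotient $\Ury$ is Corollary~\ref{Corollary: dis_independent_of_representatives}.

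\textbf{The key step: homogeneity of the norm.} I would first show that, for fixed $\lambda \in \od$, the map $x \mapsto \lambda \cdot x$ commutes with finite suprema in $\od$. Here the halved disring structure is used: binary suprema are $\sup\{p, q\} = \frac{p + q + p \dis q}{2}$ and $\frac{r}{2} = \frac{1}{2} \cdot r$, so distributivity of $\cdot$ over $+$, homogeneity of $\dis$ in the disring (the axiom $(p \dis q) \cdot \lambda = (p \cdot \lambda) \dis (q \cdot \lambda)$) and commutativity of $\cdot$ give $\lambda \cdot \sup\{p, q\} = \sup\{\lambda \cdot p, \lambda \cdot q\}$; together with $\lambda \cdot \sup\emptyset = \lambda \cdot 0 = 0$ and induction on cardinality, $\lambda \cdot \sup S = \sup(\lambda \cdot S)$ for every finite $S \subseteq \od$. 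Feeding this into the inductive formula $\norm[a] = \sup\st{\norm[a_i] \dis \alpha_i}{i \in \NN_{< \lnth{a}}}$ (for $a = \atuple{a}{\alpha}{i}$) and inducting on $\age{a}$ gives $\norm[\lambda \cdot a] = \sup\st{\lambda\norm[a_i] \dis \lambda\alpha_i}{i} = \sup\st{\lambda(\norm[a_i] \dis \alpha_i)}{i} = \lambda\norm[a]$ on $\proto$. I expect this chain — specifically the fact that scalar multiplication commutes with finite suprema — to be the main obstacle; everything else is routine.

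\textbf{Descending to $\Ury$ and the remaining axioms.} The norm descends to $\Ury$ because $\norm[a] \dis \norm[a'] \leq \norm[a \dis a'] = d(a, a')$ (the triangle inequality for $\norm$ plus $\norm[a \dis b] = d(a, b)$), so $d(a, a') = 0$ forces $\norm[a] = \norm[a']$ as $\leq$ is a partial order on the halved disgroup $\od$; and scalar multiplication descends because $d(\lambda \cdot a, \lambda \cdot a') = \norm[(\lambda \cdot a) \dis (\lambda \cdot a')] = \norm[\lambda \cdot (a \dis a')] = \lambda\norm[a \dis a'] = \lambda\, d(a, a')$, which is $0$ when $d(a, a') = 0$. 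It then remains to read off the axioms: $1 \cdot x = x$ and $\lambda \cdot (x \dis y) = (\lambda \cdot x) \dis (\lambda \cdot y)$ are two items of the preceding proposition; $0 \cdot x = 0$ follows from its item $\norm[0 \cdot a] = 0$, which says $[0 \cdot a] = [\et]$; and $\mu \cdot (\lambda \cdot x) = (\lambda\mu) \cdot x$ is a one-line induction on $\age{a}$ from the definition of $\cdot$ and commutativity of multiplication in $\od$. For the norm: $\norm[\lambda \cdot x] = \lambda\norm[x]$ (hence $\norm[0] = 0$) was proved above, $\norm[a] \dis \norm[b] \leq \norm[a \dis b]$ is the triangle inequality already established, and $\norm[[a]] = d([a], [\et]) = 0$ implies $[a] = [\et]$ since $\Ury$ is a metric space. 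Assembling these pieces completes the proof.
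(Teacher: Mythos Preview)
Your proposal is correct and follows the same approach as the paper, which simply writes ``By the discussion above.'' In fact you are more careful than the paper: the homogeneity $\norm[\lambda \cdot a] = \lambda\,\norm[a]$ is required for the normed-module axioms but is not actually established anywhere in the preceding discussion, and your argument via $\lambda \cdot \sup\{p,q\} = \sup\{\lambda p, \lambda q\}$ (using the halved-disring formula for binary suprema together with distributivity of $\cdot$ over $+$ and $\dis$) is exactly the right way to close that gap. The remaining checks --- that $\cdot$ and $\norm$ descend to the Kolmogorov quotient, and the associativity law $\mu\cdot(\lambda\cdot x) = (\lambda\mu)\cdot x$ --- are likewise left implicit in the paper and your arguments for them are sound.
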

		\begin{proof}
			By the discussion above.
		\end{proof}
		
		Our definition of a module is very reminiscent to the usual one over rings; basically we just replace $+$ with $\dis$. The difference is in the last axiom though which is usually stated $(\lambda + \mu) \cdot x = \lambda \cdot x + \mu \cdot x$ which in our context would read $(\lambda \dis \mu) \cdot x = (\lambda \cdot x) \dis (\mu \cdot x)$. This however does not hold which has to do with the fact that $\dis$ is associative on $\Ury$, but not on $\od$. For example, taking $\od = \nnr$, we have
		$$((2 \dis 1) \dis 1) \cdot x = 0 \cdot x, \qquad (2 \dis (1 \dis 1)) \cdot x = 2 \cdot x$$
		while
		$$(2 \cdot x) \dis (1 \cdot x) \dis (1 \cdot x) = 2 \cdot x$$
		regardless of how we associate $\dis$. We therefore do not require this version of distributivity in the definition of a module, but replace it with the weaker condition $0 \cdot x = 0$.\footnote{Compare this with the theory of semirings where it is likewise explicitly required that $0$ annihilates all elements.}
		
		We wish to extend the module operations to the completion of $\Ury$, that is, to $\U$. First we prepare a lemma that ensures that extensions of operations still satisfy the required algebraic conditions.
		\begin{lemma}\label{Lemma: extension_shares_algebraic_properties}
			Let $\mtr{X} = (X, d_\mtr{X})$ be a metric space and $A \subseteq X$ a dense subspace of $\mtr{X}$. Suppose $X$ has operations which satisfy the equation
			$$f(x_0, x_1, \ldots, x_{n-1}) = g(x_0, x_1, \ldots, x_{n-1})$$
			for all $x_0, x_1, \ldots, x_{n-1} \in A$ where $f, g\colon X^n \to X$ are continuous maps and $n \in \NN$. Then this equation is satisfied for all $x_0, x_1, \ldots, x_{n-1} \in X$.
		\end{lemma}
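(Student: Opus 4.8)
The plan is to reduce the statement to the uniqueness of continuous extensions, Lemma~\ref{Lemma: uniqueness_of_continuous_extensions}. First I would equip the power $X^n$ with a product metric, say the $1$-metric
\[
d_{X^n}\big((x_k)_{k \in \NN_{< n}}, (y_k)_{k \in \NN_{< n}}\big) \dfeq \textstyle\sum_{k \in \NN_{< n}} d_\mtr{X}(x_k, y_k),
\]
although the precise choice is immaterial, since all reasonable product metrics induce the same topology and hence the same notion of continuity. With this metric $X^n$ is again a metric space (the implication $d_{X^n}(x,y)=0 \implies x=y$ holds coordinatewise), and the inclusion $\iota\colon A^n \hookrightarrow X^n$ is a dense isometry: density of $A^n$ in $X^n$ follows by approximating each of the finitely many coordinates separately, using the density of $A$ in $\mtr{X}$.

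Next I would observe that the hypothesis says precisely that $f \circ \iota = g \circ \iota$ as maps $A^n \to X$. Both $f$ and $g$ are, by assumption, continuous maps from $X^n$ into the metric space $\mtr{X}$. Hence they are two continuous maps extending one and the same map $A^n \to X$ along the dense isometry $\iota$, and Lemma~\ref{Lemma: uniqueness_of_continuous_extensions} forces $f = g$, which is the claim.

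No genuine obstacle is expected here; the substance has already been invested in setting up completions and the uniqueness of continuous extensions. The only points warranting a line of care are that a finite product of metric spaces is metric (so that Lemma~\ref{Lemma: uniqueness_of_continuous_extensions} applies with codomain the metric space $\mtr{X}$, not merely a protometric one) and that $A^n$ is dense in $X^n$; both are routine coordinatewise arguments and use no choice principle.
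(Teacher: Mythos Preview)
Your proposal is correct and follows essentially the same route as the paper: observe that $A^n$ is dense in $X^n$ (for any of the standard product metrics) and then invoke Lemma~\ref{Lemma: uniqueness_of_continuous_extensions} to conclude $f = g$. The paper's proof is just a two-line version of what you wrote, without spelling out the choice of product metric or the coordinatewise density argument.
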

		\begin{proof}
			If $A$ is dense in $X$, then $A^n$ is dense in $X^n$ (whatever product metric we choose). Now use Lemma~\ref{Lemma: uniqueness_of_continuous_extensions}.
		\end{proof}
		
		Next, we show that the operations on $\Ury$ are continuous, and that they extend to $\U$.
		
		\begin{proposition}\label{Proposition: Urysohn_dis_nonexpansive}
			Under product $1$-metric, the operation $\dis\colon \Ury \times \Ury \to \Ury$ is a non-expansive map and thus extends to a non-expansive map $\dis\colon \U \times \U \to \U$, making $(\U, \dis, [\et])$ an associative disgroup.
		\end{proposition}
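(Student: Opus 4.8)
The plan is to establish non-expansiveness first on the uncompleted space and then transport everything through the completion machinery of Section~\ref{Section: reals_and_completions}.

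For the non-expansiveness I would argue on $\pseudo$. Given $a,b,c,d\in\pseudo$, Lemma~\ref{Lemma: scramble_terms_in_dis} rewrites $d(a\dis b,c\dis d)=d(a\dis c,b\dis d)$, and then the triangle inequality for $d$ through the point $\et$ gives $d(a\dis c,b\dis d)\le d(a\dis c,\et)+d(\et,b\dis d)=\norm[a\dis c]+\norm[b\dis d]=d(a,c)+d(b,d)$ --- this is exactly the estimate used inside the proof of Corollary~\ref{Corollary: dis_independent_of_representatives}, kept with its nonzero bound. Since the left-hand side depends only on the Kolmogorov classes of $a,b,c,d$, this reads $d\big([a]\dis[b],[c]\dis[d]\big)\le d([a],[c])+d([b],[d])$, i.e.\ $\dis\colon\Ury\times\Ury\to\Ury$ is non-expansive precisely for the product $1$-metric on $\Ury\times\Ury$.

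Next I would invoke extension of maps into complete spaces. Since $\Ury$ is dense in $\U$ (Corollary~\ref{Corollary: completion_of_dense_is_Urysohn}), the square of the dense isometry $\Ury\hookrightarrow\U$ is a dense isometry $\Ury\times\Ury\to\U\times\U$ for the $1$-metric on both sides, and $\U$ is a complete metric space; hence Proposition~\ref{Proposition: extension_of_maps_into_complete_space} yields a unique continuous extension $\dis\colon\U\times\U\to\U$, which is again non-expansive.

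Finally, to see that $(\U,\dis,[\et])$ is an associative disgroup I would appeal to Proposition~\ref{Proposition: order_two_group_is_a_disgroup}: it suffices that $\dis$ on $\U$ is associative and commutative, admits $[\et]$ as neutral element, and satisfies $x\dis x=[\et]$ for all $x$. Each of these is an equation between continuous maps $\U^n\to\U$ assembled from the (now continuous) $\dis$ and the constant $[\et]$; it holds on the dense subspace $\Ury$ by Theorem~\ref{Theorem: noncompleted_Urysohn_associative_disgroup}, hence on all of $\U$ by Lemma~\ref{Lemma: extension_shares_algebraic_properties}. The only points needing care are using the product $1$-metric rather than the $\infty$-metric at the outset --- the estimate genuinely sums the two factor-distances --- and confirming that the maps involved are continuous so that Lemma~\ref{Lemma: extension_shares_algebraic_properties} applies; both are immediate once $\dis$ on $\U$ is known to be non-expansive, so there is no serious obstacle here beyond careful bookkeeping.
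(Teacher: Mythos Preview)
Your proposal is correct and follows essentially the same route as the paper: establish the inequality $d(a\dis b,c\dis d)\le d(a,c)+d(b,d)$ on the uncompleted space, extend via Proposition~\ref{Proposition: extension_of_maps_into_complete_space}, and then invoke Theorem~\ref{Theorem: noncompleted_Urysohn_associative_disgroup}, Lemma~\ref{Lemma: extension_shares_algebraic_properties} and Proposition~\ref{Proposition: order_two_group_is_a_disgroup} for the algebraic conclusion. The only cosmetic difference is that the paper carries out the key estimate directly in $\Ury$, writing $d(a\dis b,c\dis d)=\norm[a\dis b\dis c\dis d]\le\norm[a\dis c]+\norm[b\dis d]$ using the already-established group laws there, whereas you work one level up in $\pseudo$ via Lemma~\ref{Lemma: scramble_terms_in_dis} before descending to the quotient; the content is identical.
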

		\begin{proof}
			For $(a, b), (c, d) \in \Ury \times \Ury$ we have
			$$d(a \dis b, c \dis d) = \norm[a \dis b \dis c \dis d] \leq \norm[a \dis c] + \norm[b \dis d] = d(a, c) + d(b, d).$$
			Thus $\dis$ extends to a non-expansive operation on $\U$ by Proposition~\ref{Proposition: extension_of_maps_into_complete_space}. Since all conditions for a group of order (at most) $2$ are given as equations, $(\U, \dis, [\et])$ is an associative disgroup by Theorem~\ref{Theorem: noncompleted_Urysohn_associative_disgroup}, Lemma~\ref{Lemma: extension_shares_algebraic_properties} and Proposition~\ref{Proposition: order_two_group_is_a_disgroup}.
		\end{proof}
		
		The extension of the scalar multiplication is trickier, though. First of all, it is not non-expansive. This is actually usual for products, thought they are normally still Lipschitz on bounded subsets (and thus in turn area Lipschitz which would enable us to use Proposition~\ref{Proposition: extension_of_maps_into_complete_space}). This follows from distributivity; here is a model calculation for $a, b, c, d \in \RR$:
		$$|a \cdot b - c \cdot d| = |a \cdot b - c \cdot b + c \cdot b - c \cdot d| \leq$$
		$$\leq |a \cdot b - c \cdot b| + |c \cdot b - c \cdot d| = |(a - c) \cdot b| + |c \cdot (b - d)| \leq$$
		$$\leq \sup\{|b|, |c|\} \cdot \big(|a - c| + |b - d|\big).$$
		However, this method does not work in our case, as the scalar multiplication on $\Ury$ is distributive only in one factor. Here is a trick how to get around it.
		\begin{lemma}
			For every $\lambda \in \od$ the unary operation $m_\od^\lambda\colon \Ury \to \Ury$, given by
			$$m_\od^\lambda(x) \dfeq \lambda \cdot x,$$
			is a Lipschitz map with the coefficient $\lambda$, and thus extends to the Lipschitz map $m^\lambda\colon \U \to \U$ (with the same coefficient).\footnote{To make a very fine point, in our definition of a Lipschitz map (Definition~\ref{Definition: more_maps}), we required the Lipschitz coefficient to be positive, so we would actually have to take something like $\sup\{\lambda, 1\}$ for it, but it makes absolutely no difference.}
		\end{lemma}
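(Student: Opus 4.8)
The plan is to prove the sharper statement that $m_\od^\lambda$ rescales every distance by exactly $\lambda$, namely
$$d(\lambda \cdot a, \lambda \cdot b) = \lambda \cdot d(a, b) \qquad \text{for all } a, b \in \proto;$$
Lipschitzness with coefficient $\lambda$ (or $\sup\{\lambda, 1\}$, to honour the convention that a Lipschitz coefficient is positive, as in the footnote) then follows at once, and the identity automatically descends to the Kolmogorov quotient $\Ury$, since $d(a, b) = 0$ forces $d(\lambda \cdot a, \lambda \cdot b) = \lambda \cdot 0 = 0$.

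First I would record a purely arithmetical sublemma about $\od$ alone: in a halved disring, multiplication by a fixed element commutes with finite suprema,
$$\lambda \cdot \sup\{x, y\} = \sup\{\lambda \cdot x, \lambda \cdot y\}, \qquad \lambda \cdot \sup\emptyset = 0.$$
This is immediate from the explicit formula $\sup\{x, y\} = \frac{x + y + x \dis y}{2}$, the identity $\lambda \cdot \frac{t}{2} = \frac{\lambda \cdot t}{2}$ (valid because $\frac{\insarg}{2} = \frac{1}{2} \cdot \insarg$ and $\cdot$ is commutative and associative), and the homogeneity axiom $\lambda \cdot (x \dis y) = (\lambda \cdot x) \dis (\lambda \cdot y)$ of a disring; iterating gives the statement for all finite subsets.

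With this in hand, the identity $d(\lambda \cdot a, \lambda \cdot b) = \lambda \cdot d(a, b)$ is proved by induction on $\age{a} + \age{b}$ by unwinding the two recursions. Writing $a = \atuple{a}{\alpha}{i}$ and $b = \atuple{b}{\beta}{j}$, we have $\lambda \cdot a = \strat[\age{a}]{(\lambda \cdot a_i, \lambda \alpha_i)}{i \in \NN_{< \lnth{a}}}$, so by the definition of the distance,
$$d(\lambda \cdot a, \lambda \cdot b) = \sup\Big(\st{d(\lambda \cdot a_i, \lambda \cdot b) \dis \lambda \alpha_i}{i \in \NN_{< \lnth{a}}} \cup \st{d(\lambda \cdot a, \lambda \cdot b_j) \dis \lambda \beta_j}{j \in \NN_{< \lnth{b}}}\Big).$$
Applying the induction hypothesis inside each term (note $\age{a_i} < \age{a}$, etc.), then the disring homogeneity of $\dis$ over $\cdot$ to rewrite $(\lambda \cdot d(a_i, b)) \dis (\lambda \alpha_i)$ as $\lambda \cdot (d(a_i, b) \dis \alpha_i)$, and finally the sublemma to pull $\lambda$ out of the supremum, reassembles exactly $\lambda \cdot d(a, b)$. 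The base case $\age{a} = \age{b} = 0$ is just $0 = \lambda \cdot 0$.

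Passing to $\Ury$, the map $m_\od^\lambda$ thus satisfies $d(m_\od^\lambda(x), m_\od^\lambda(y)) = \lambda \cdot d(x, y)$, so it is Lipschitz with coefficient $\lambda$ (or $\sup\{\lambda, 1\}$ if one insists on positivity). Since $\Ury$ is a dense subspace of the complete metric space $\U$ (Corollary~\ref{Corollary: completion_of_dense_is_Urysohn}), Proposition~\ref{Proposition: extension_of_maps_into_complete_space} — applied with ambient complete space $\mtr{A} = \U$ — produces a unique continuous extension $m^\lambda\colon \U \to \U$, still Lipschitz with the same coefficient (indeed it still rescales distances by $\lambda$, by Corollary~\ref{Corollary: comparison_on_dense_subsets} or Lemma~\ref{Lemma: properties_of_continuous_maps_inferred_from_their_dense_restriction}). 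The only step that is not pure bookkeeping with the two recursions is the interchange of scalar multiplication with finite suprema, and even that is routine once the halving map is exploited.
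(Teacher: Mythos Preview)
Your proof is correct and establishes exactly the same identity $d(\lambda \cdot x, \lambda \cdot y) = \lambda\, d(x, y)$ that the paper does, but you take a different route to it. The paper's one-line argument exploits the group structure $\dis$ on $\Ury$ already developed in this section: it writes
\[
d(\lambda \cdot x, \lambda \cdot y) = \norm[(\lambda \cdot x) \dis (\lambda \cdot y)] = \norm[\lambda \cdot (x \dis y)] = \lambda\,\norm[x \dis y] = \lambda\, d(x, y),
\]
using $d(a,b) = \norm[a \dis b]$, the distributivity $\lambda \cdot (a \dis b) = (\lambda \cdot a) \dis (\lambda \cdot b)$, and the norm homogeneity $\norm[\lambda \cdot c] = \lambda\,\norm[c]$ from the normed-module theorem immediately preceding. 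You instead go back to the recursive definition of $d$ on $\proto$ and argue by induction on the sum of ages, after isolating the needed arithmetical fact that multiplication by $\lambda$ commutes with finite suprema in a halved disring. Your approach is more self-contained (it does not rely on the $\dis$ machinery on $\Ury$) and in fact makes explicit the sup--scalar commutation that the paper's proof of $\norm[\lambda \cdot c] = \lambda\,\norm[c]$ must also use but leaves inside the ``by the discussion above''; the paper's approach is slicker in context, since all that machinery has just been built. Both then invoke Proposition~\ref{Proposition: extension_of_maps_into_complete_space} identically for the extension to $\U$.
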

		\begin{proof}
			We have
			$$d(\lambda \cdot x, \lambda \cdot y) = \norm[(\lambda \cdot x) \dis (\lambda \cdot y)] = \norm[\lambda \cdot (x \dis y)] = \lambda \norm[x \dis y] = \lambda d(x, y).$$
			As usual, we get the extension from Proposition~\ref{Proposition: extension_of_maps_into_complete_space}.
		\end{proof}
		
		Since we can take $\od = \nnr$, we can actually define the scalar multiplication on $\nnr \times \U$ by $\lambda \cdot x \dfeq m^\lambda(x)$.
		
		\begin{proposition}\label{Proposition: scalar_multiplication}
			The map $\cdot\colon \nnr \times \U \to \U$ is continuous and thus the unique continuous extension of $\cdot\colon \od \times \Ury \to \Ury$ (for any $\od$). Moreover, it satisfies the conditions needed to make $\U$ into a module over $\nnr$.
		\end{proposition}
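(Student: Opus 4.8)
The plan is to verify the three claims in turn --- joint continuity of $\cdot\colon\nnr\times\U\to\U$, its description as the unique continuous extension, and the module axioms --- always by pushing the problem down to the dense subspace $\od\times\Ury\subseteq\nnr\times\U$ (density of $\od$ in $\nnr$ is Lemma~\ref{Lemma: density_of_od}, that of $\Ury$ in $\U$ is the definition of the completion).

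The heart of the matter is continuity, and it rests on two observations. First, the computation in the lemma immediately above, which introduces $m^\lambda$, gives $d(\lambda\cdot x,\lambda\cdot y)=\lambda\,d(x,y)$ on $\Ury$; both sides being continuous on $\U\times\U$, the identity extends, so each $m^\lambda$ scales the metric on $\U$ exactly by $\lambda$. Second, for every $a\in\pseudo$ the map $\nnr\to\U$, $\lambda\mapsto\lambda\cdot[a]$, is Lipschitz. This is proved by induction on $\age{a}$ using Proposition~\ref{Proposition: similar_distances_and_points_yield_similar_extensions_in_Urysohn_space}: $\lambda\cdot a$ and $\mu\cdot a$ have the same length, their $i$-th distance labels satisfy $(\lambda\alpha_i)\dis(\mu\alpha_i)=(\lambda\dis\mu)\alpha_i$ by homogeneity, and $d(\lambda\cdot a_i,\mu\cdot a_i)$ is bounded by the inductive hypothesis. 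With these in hand, continuity at a point $(\lambda_0,x_0)$ is a bookkeeping estimate: restrict attention to $\lambda<\lambda_0+1$, pick $a\in\pseudo$ with $d(x_0,[a])$ arbitrarily small, insert $\lambda\cdot[a]$ and $\lambda_0\cdot[a]$ into $d(\lambda\cdot x,\lambda_0\cdot x_0)$, and use the metric-scaling of $m^\lambda$ and $m^{\lambda_0}$ to control the ``$x$ versus $[a]$'' contributions uniformly in the bounded parameter $\lambda$, together with the Lipschitz property of $\lambda\mapsto\lambda\cdot[a]$ for the remaining term.

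Uniqueness then follows from Lemma~\ref{Lemma: uniqueness_of_continuous_extensions}: the inclusion $\od\times\Ury\hookrightarrow\nnr\times\U$ is a dense isometry (for, say, the $1$-metric on products), $\U$ is metric, and by construction $\cdot$ restricts on $\od\times\Ury$ to the scalar multiplication of Section~\ref{Section: algebra_of_Urysohn}. The module axioms are obtained in the same spirit: each of $\mu\cdot(\lambda\cdot x)=(\lambda\mu)\cdot x$, $\lambda\cdot(x\dis y)=(\lambda\cdot x)\dis(\lambda\cdot y)$, $1\cdot x=x$, $0\cdot x=[\et]$ is an equality between maps that are continuous on $\nnr^k\times\U^m$ for the appropriate $k,m$ (using continuity of $\dis$ on $\U$, Proposition~\ref{Proposition: Urysohn_dis_nonexpansive}, and of $\cdot$ just established) and that agree on the dense set of arguments taken from $\od$ and $\Ury$, by the identities already proved for $\proto$ and $\Ury$ in Section~\ref{Section: algebra_of_Urysohn}; hence they hold everywhere by Lemma~\ref{Lemma: extension_shares_algebraic_properties} (or directly Lemma~\ref{Lemma: uniqueness_of_continuous_extensions}). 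For $1\cdot x=x$ and $0\cdot x=[\et]$ one may also note that $m^1$ and $m^0$ are the continuous extensions of the identity and of the constant map $[\et]$ on $\Ury$. Together with $(\U,\dis,[\et])$ being a disgroup (Proposition~\ref{Proposition: Urysohn_dis_nonexpansive}), this exhibits $\U$ as a module over $\nnr$.

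The main obstacle is the continuity step, specifically the second observation above. One must avoid the tempting but false bound $d(\lambda\cdot[a],\mu\cdot[a])\leq|\lambda-\mu|\,d([a],[\et])$ and instead either carry a Lipschitz constant that accumulates along chains of predecessors, or simply argue inductively that \emph{some} constant works without naming it. Everything else is triangle-inequality bookkeeping and invocations of the density lemmas.
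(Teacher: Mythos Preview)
Your proof is correct and takes a genuinely different route from the paper's. The author explicitly writes that continuity of $\cdot$ is ``the one proof which I do not know how to do classically, which thus remains as a challenge for the readers,'' and then invokes synthetic topology: the entire construction is interpreted in the gros topos over separable metric spaces, where every definable map is automatically continuous, and this is transferred back to the classical setting. You have essentially answered that challenge with an elementary $\epsilon$--$\delta$ argument.

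The key idea the author seems to have missed is your second observation: although the tempting bound $d(\lambda\cdot[a],\mu\cdot[a])\leq|\lambda-\mu|\,\norm[{[a]}]$ need not hold (distributivity in the scalar variable fails), for each \emph{fixed} $a\in\psd_{\nnr}$ the map $\lambda\mapsto[\lambda\cdot a]$ is still Lipschitz with a constant $C_a$ obtained by recursion on the structure of $a$ via Proposition~\ref{Proposition: similar_distances_and_points_yield_similar_extensions_in_Urysohn_space}. This constant can be large compared to $\norm[{[a]}]$, but that is harmless for pointwise continuity: given $(\lambda_0,x_0)$ and $\epsilon$, one first chooses $a$ close to $x_0$ (fixing $C_a$), and only then picks $\delta$. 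Combined with the exact scaling $d(m^\lambda(x),m^\lambda(y))=\lambda\,d(x,y)$ on all of $\U$ (extended from $\U_{\nnr}$ by Corollary~\ref{Corollary: comparison_on_dense_subsets}), the triangle-inequality bookkeeping goes through. The argument is moreover constructive: only a single existential elimination (the choice of $a$) is used, no countable choice.

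What each approach buys: the paper's synthetic argument is a one-line appeal to a powerful general principle, but requires topos-theoretic machinery external to the paper. Your argument is self-contained and shows that the obstruction the author identified (failure of distributivity) is not actually an obstruction to continuity, only to the usual \emph{uniform} Lipschitz-on-bounded-sets estimate. The uniqueness and module-axiom parts of your proposal coincide with what the paper does once continuity is in hand.
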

		\begin{proof}
			Continuity of $\cdot$ is the one proof which I do not know how to do classically, which thus remains as a challenge for the readers. Here is how it can be done with ``heavy artillery''.
			
			Since the entire development so far has been done fully constructively (including without using countable choice), we can interpret it in any topos~\cite{Johnstone_PT_2002:_sketches_of_an_elephant_a_topos_theory_compendium} with natural numbers object, in particularly in such models of synthetic topology (see Remark~\ref{Remark: synthetic_topology} in the next section). In these models, any map that we can construct is automatically continuous~\cite{Escardo_M_2004:_notes_on_synthetic_topology, Escardo_M_2004:_synthetic_topology_of_data_types_and_classical_spaces, Lesnik_D_2010:_synthetic_topology_and_constructive_metric_spaces}, and this property transfers to classical mathematics. More precisely, we may choose to interpret this theory in the gros topos over separable metric spaces and continuous maps between them, in which the real numbers and the Urysohn space are representable by their classical counterparts (see~\cite[Section~5.4]{Lesnik_D_2010:_synthetic_topology_and_constructive_metric_spaces}), thus (since separable metric spaces fully and faithfully embed into the gros topos) the scalar multiplication is representable by a continuous map.
			
			Continuity of $\cdot$ is sufficient for its uniqueness (Lemma~\ref{Lemma: uniqueness_of_continuous_extensions}) and algebraic properties (Lemma~\ref{Lemma: extension_shares_algebraic_properties}).
		\end{proof}
		
		The last thing to extend to $\U$ is the norm which is easy enough: just take $\norm[x] = d(x, [\et])$ (of course, being non-expansive, it could also be extended via Proposition~\ref{Proposition: extension_of_maps_into_complete_space}). Clearly the required properties are still satisfied.
		
		\begin{theorem}
			$(\U, \dis, [\et], \cdot, \norm)$ is a complete normed module over $\nnr$.
		\end{theorem}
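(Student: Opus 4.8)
The plan is to collect the results already obtained in this section. By Proposition~\ref{Proposition: Urysohn_dis_nonexpansive} (which itself rests on Theorem~\ref{Theorem: noncompleted_Urysohn_associative_disgroup}) the triple $(\U, \dis, [\et])$ is an associative disgroup. By Proposition~\ref{Proposition: scalar_multiplication} the scalar multiplication $\cdot\colon \nnr \times \U \to \U$ is continuous and satisfies all the module axioms over $\nnr$: each of these is an equation already holding on the dense subspace $\Ury$ (the module identities proved for $\proto$ and $\pseudo$ above), so it transfers to $\U$ by Lemma~\ref{Lemma: extension_shares_algebraic_properties}. Hence $(\U, \dis, [\et], \cdot)$ is a module over $\nnr$.

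It remains to check that $\norm\colon \U \to \nnr$, $\norm[x] \dfeq d(x, [\et])$, is a norm and that $\U$ is complete. The map $\norm$ is non-expansive (it is $d(\insarg, [\et])$ on a metric space), hence continuous, and since $\U$ is metric, $\norm[x] = 0$ forces $x = [\et]$. For homogeneity, observe $\lambda \cdot \et = \et$ (the scalar multiple of the empty tuple is the empty tuple), so the Lipschitz computation $d(\lambda \cdot x, \lambda \cdot y) = \lambda\, d(x, y)$ specialises at $y = [\et]$ to $\norm[\lambda \cdot x] = \lambda \norm[x]$; as an equation between continuous functions of $x$ it holds on $\Ury$ and therefore on all of $\U$ by Lemma~\ref{Lemma: uniqueness_of_continuous_extensions}. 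The triangle inequality $\norm[a] \dis \norm[b] \leq \norm[a \dis b]$ holds on $\Ury$ (it was established for $\proto$ earlier in this section), and since both sides are continuous it extends to $\U$ by Corollary~\ref{Corollary: comparison_on_dense_subsets}(\ref{Corollary: comparison_on_dense_subsets: leq}).

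Finally, $\U$ is a complete metric space by Theorem~\ref{Theorem: Urysohn_space}, and the metric it carries coincides with the one induced by the norm, because $d(a, b) = \norm[a \dis b]$ holds on the dense $\Ury$ and both sides extend continuously to $\U$. Assembling these facts yields that $(\U, \dis, [\et], \cdot, \norm)$ is a complete normed module over $\nnr$. I expect no genuine obstacle here: the substantial work — associativity of $\dis$ on the completion and, above all, the continuity of scalar multiplication in Proposition~\ref{Proposition: scalar_multiplication} — is already done, and what remains is the routine ``holds on a dense subspace, hence on the completion'' reasoning together with the bookkeeping identifying metric completeness with normed-module completeness.
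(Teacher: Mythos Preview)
Your proposal is correct and takes essentially the same approach as the paper: the paper's proof is literally ``By the discussion above,'' and what you have written is an accurate and more explicit unpacking of that discussion (Propositions~\ref{Proposition: Urysohn_dis_nonexpansive} and~\ref{Proposition: scalar_multiplication}, the norm paragraph just before the theorem, and completeness from Theorem~\ref{Theorem: Urysohn_space}). Your added checks that $\norm[\lambda\cdot x]=\lambda\norm[x]$ and $d(a,b)=\norm[a\dis b]$ persist on $\U$ via density and continuity are exactly the content the paper suppresses under ``clearly the required properties are still satisfied.''
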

		\begin{proof}
			By the discussion above.
		\end{proof}

	\section{Applications}\label{Section: applications}
		
		We present here a few simple applications of continuity and algebra results of previous sections.
		
		Let $\aut$ denote the set of automorphisms of $\U$, \ie isometric isomorphisms $\U \to \U$.	This is a group for composition $\circ$.	
		\begin{proposition}
			The transposition of $\dis$ provides an isometric embedding $\U \to \aut$ in the sense of Lemma~\ref{Lemma: sup_metric_induces_uniform_convergence} which is moreover a group homomorphism.
		\end{proposition}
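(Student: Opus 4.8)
The plan is to make the ``transposition of $\dis$'' explicit: for $a \in \U$ set $\phi_a\colon \U \to \U$, $\phi_a(x) \dfeq a \dis x$, and study the assignment $\Phi\colon a \mapsto \phi_a$ (which, as we will see, lands in $\aut$). The facts I would use are those already transported to $\U$: by Proposition~\ref{Proposition: Urysohn_dis_nonexpansive} the structure $(\U, \dis, [\et])$ is an associative disgroup, so on $\U$ the operation $\dis$ is commutative and associative, has $[\et]$ as neutral element and satisfies $x \dis x = [\et]$; and the norm extends to $\U$ with $\norm[a \dis b] = d(a, b)$. First I would check $\phi_a \in \aut$. It is an isometry since
$$d(\phi_a(x), \phi_a(y)) = \norm[(a \dis x) \dis (a \dis y)] = \norm[(a \dis a) \dis (x \dis y)] = \norm[x \dis y] = d(x, y),$$
and it is a self-inverse bijection since $\phi_a(\phi_a(x)) = a \dis (a \dis x) = (a \dis a) \dis x = x$; hence $\phi_a$ is a bijective isometry, \ie an automorphism.

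Next I would verify that $\Phi$ is a group homomorphism from $(\U, \dis, [\et])$ to $(\aut, \circ, \id[\U])$: associativity of $\dis$ gives $\phi_{a \dis b}(x) = (a \dis b) \dis x = a \dis (b \dis x) = (\phi_a \circ \phi_b)(x)$, so $\Phi(a \dis b) = \Phi(a) \circ \Phi(b)$, while $\phi_{[\et]} = \id[\U]$ because $[\et]$ is neutral for $\dis$.

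For the metric part I would apply Lemma~\ref{Lemma: sup_metric_induces_uniform_convergence} with $\mtr{X} = \U$ to $A \dfeq \Phi(\U) \subseteq \aut$. The key computation is that for $a, b, x \in \U$
$$d(\phi_a(x), \phi_b(x)) = d(a \dis x, b \dis x) = \norm[(a \dis b) \dis (x \dis x)] = \norm[a \dis b] = d(a, b),$$
so, using that $\U$ is inhabited (Theorem~\ref{Theorem: Urysohn_space}), $\sup\st{d(\phi_a(x), \phi_b(x))}{x \in \U} = d(a, b)$. In particular these suprema exist as real numbers, so the hypothesis of Lemma~\ref{Lemma: sup_metric_induces_uniform_convergence} holds: $d_{\sup}$ is a metric on $A$ whose topology is that induced from the uniform-convergence topology on $\aut$, and the displayed identity exhibits $\Phi\colon (\U, d) \to (A, d_{\sup})$ as distance-preserving. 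Since $\U$ is metric, distance-preserving maps are injective, so $\Phi$ is an isometric isomorphism onto $A$, \ie an isometric embedding $\U \to \aut$ in the sense of that lemma.

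I do not expect a real obstacle: once $\dis$, $[\et]$ and $\norm[\insarg]$ have been set up on $\U$ (Proposition~\ref{Proposition: Urysohn_dis_nonexpansive} and the norm extension), the whole statement collapses to the single identity $\norm[a \dis b] = d(a, b)$ together with the exponent-$2$ group laws. The only points needing a word of care are to use the \emph{extended} operations on $\U$ rather than their restrictions to $\Ury$, and to note that inhabitedness of $\U$ is what makes the supremum of the constant family $\{d(a, b)\}$ equal to $d(a, b)$.
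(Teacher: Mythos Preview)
Your proof is correct and follows essentially the same approach as the paper: both reduce everything to the identity $d(a \dis x, b \dis x) = \norm[a \dis b] = d(a, b)$ together with the group-of-exponent-$2$ laws of $(\U, \dis, [\et])$. You supply a bit more detail than the paper (explicitly checking that $\phi_a$ is a self-inverse bijection, and noting inhabitedness of $\U$ for the supremum), but the argument is the same.
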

		\begin{proof}
			For all $x, y, z \in \U$ we have
			$$d(x \dis z, y \dis z) = \norm[((x \dis z) \dis (y \dis z)] = \norm[x \dis y \dis z \dis z] = \norm[x \dis y] = d(x, y),$$
			so the map $i(z) \dfeq \insarg \dis z$ indeed maps as $i\colon \U \to \aut$. We claim that its image is metrizable by the $\sup$ metric, and that $i$ is an isometry in this sense. Since
			$$d(i(z)(x), i(w)(x)) = d(z \dis x, w \dis x) = d(z, w)$$
			for all $z, w, x \in \U$, not only do we see that the required supremum exists, it is in fact the supremum of a singleton set $\{d(w, z)\}$.
			
			The group homomorphism condition $i(z \dis w) = i(z) \circ i(w)$ is the associativity of $\dis$.
		\end{proof}
		
		\begin{proposition}\label{Proposition: Urysohn_strongly_homogeneous}
			The Urysohn space $\U$ is homogeneous in the following strong sense:
			\begin{itemize}
				\item
					there is a non-expansive (hence continuous) group homomorphism $\U \times \U \to \aut$ which maps points $a, b \in \U$ to an automorphism of $\U$ which swaps them.
			\end{itemize}
			Here `non-expansive' is meant in the sense of Lemma~\ref{Lemma: sup_metric_induces_uniform_convergence}, and of $\U \times \U$ being equipped with the product $1$-metric.
		\end{proposition}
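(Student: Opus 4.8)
The plan is to write down the homomorphism explicitly in terms of the disgroup operation on $\U$. For $a, b \in \U$ I would set $\Phi(a, b)\colon \U \to \U$, $\Phi(a, b)(x) \dfeq x \dis a \dis b$; associativity of $\dis$ on $\U$ (Proposition~\ref{Proposition: Urysohn_dis_nonexpansive}) makes the bracketing irrelevant, and since $(\U, \dis, [\et])$ is an associative disgroup (hence a group of exponent $2$) we have $a \dis a = b \dis b = [\et]$. Then $\Phi(a, b)(a) = a \dis a \dis b = b$ and, by commutativity, $\Phi(a, b)(b) = b \dis a \dis b = a$, so $\Phi(a, b)$ swaps $a$ and $b$.

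Next I would verify $\Phi(a, b) \in \aut$. It is an isometry because, using the norm--metric identity $d(u, v) = \norm[u \dis v]$ valid on the normed module $\U$,
$$d\big(\Phi(a, b)(x), \Phi(a, b)(y)\big) = \norm[(x \dis a \dis b) \dis (y \dis a \dis b)] = \norm[x \dis y \dis a \dis a \dis b \dis b] = \norm[x \dis y] = d(x, y),$$
and it is a bijection because it is an involution: $\Phi(a, b)\big(\Phi(a, b)(x)\big) = x \dis (a \dis a) \dis (b \dis b) = x$. So $\Phi(a, b)$ is an isometric isomorphism.

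For the homomorphism property I would equip $\U \times \U$ with the exponent-$2$ group structure $(a, b) \dis (c, d) \dfeq (a \dis c, b \dis d)$ (legitimate since $\dis$ on $\U$ is such a group operation) and compute
$$\Phi(a \dis c, b \dis d)(x) = x \dis (a \dis c) \dis (b \dis d) = (x \dis c \dis d) \dis a \dis b = \Phi(a, b)\big(\Phi(c, d)(x)\big),$$
so $\Phi$ is a group homomorphism into $(\aut, \circ)$. For non-expansiveness in the sense of Lemma~\ref{Lemma: sup_metric_induces_uniform_convergence} I would observe that
$$d\big(\Phi(a, b)(x), \Phi(c, d)(x)\big) = \norm[x \dis a \dis b \dis x \dis c \dis d] = \norm[(a \dis c) \dis (b \dis d)]$$
does not depend on $x$; hence the supremum over $x \in \U$ defining $d_{\sup}(\Phi(a, b), \Phi(c, d))$ trivially exists and equals this constant, which by the norm triangle inequality is $\leq \norm[a \dis c] + \norm[b \dis d] = d(a, c) + d(b, d)$ --- exactly the product $1$-metric distance of $(a, b)$ and $(c, d)$. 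Continuity then follows.

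I do not anticipate a genuine obstacle here: everything collapses to the exponent-$2$ group algebra of $(\U, \dis, [\et])$ together with the already-established identities $\norm[u \dis v] = d(u, v)$ and $d(u \dis w, v \dis w) = d(u, v)$. The only real care-points are bookkeeping ones: placing the product $1$-metric on $\U \times \U$ and the $\sup$-metric on the image inside $\aut$, and noticing that because the pointwise distance between $\Phi(a, b)$ and $\Phi(c, d)$ is constant in $x$, no appeal to completeness or to Postulate~\ref{Postulate: reals_Dedekind_complete} is needed to see that this supremum is a real number.
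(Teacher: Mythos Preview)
Your proposal is correct and follows essentially the same route as the paper: define $\Phi(a,b)(x) = x \dis a \dis b$, verify it swaps $a$ and $b$, and use the exponent-$2$ group algebra together with $d(u,v) = \norm[u \dis v]$ to get the isometry, involution, homomorphism, and non-expansiveness. The paper compresses the verification that $\Phi(a,b) \in \aut$ by appealing to the preceding proposition (which already shows $z \mapsto (\insarg \dis z)$ lands in $\aut$), whereas you check the isometry and involution directly, but the computations are identical.
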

		\begin{proof}
			By the previous proposition mapping $(a, b) \in \U \times \U$ to $x \mapsto x \dis a \dis b$ works. It is non-expansive since
			$$d(x \dis a \dis b, x \dis a' \dis b') = \norm[x \dis a \dis b \dis x \dis a' \dis b'] =$$
			$$= \norm[a \dis a' \dis b \dis b'] \leq \norm[a \dis a'] + \norm[b \dis b'] = d(a, a') + d(b, b').$$
			Also, this is a group homomorphism since $x \dis (a \dis a') \dis (b \dis b') = (x \dis a \dis b) \dis a' \dis b'$.
		\end{proof}
		
		\begin{proposition}
			The Urysohn space $\U$ is contractible (in particular, path-connected).
		\end{proposition}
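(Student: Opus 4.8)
The plan is to build an explicit contracting homotopy onto the zero element $[\et]$ out of the scalar multiplication. First I would define $H\colon \U \times \intcc{0}{1} \to \U$ by $H(x, t) \dfeq (1 - t) \cdot x$; this is legitimate because $t \leq 1$ forces $1 - t \in \nnr$, so $1 - t$ is a valid argument for the map $\cdot\colon \nnr \times \U \to \U$. The endpoint identities then fall out of the module axioms established in Proposition~\ref{Proposition: scalar_multiplication}: $H(x, 0) = 1 \cdot x = x$ and $H(x, 1) = 0 \cdot x = [\et]$. Thus $H$ is a homotopy between the identity on $\U$ and the constant map at $[\et]$, which is exactly contractibility.

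The only point requiring an argument is that $H$ is jointly continuous. I would deduce this by factoring $H$ as the composite $\U \times \intcc{0}{1} \longrightarrow \nnr \times \U \longrightarrow \U$, where the first arrow sends $(x, t)$ to $(1 - t, x)$ and the second is scalar multiplication. The first arrow is continuous since each of its two coordinates is a continuous function of $(x, t)$ — the map $t \mapsto 1 - t$ on $\intcc{0}{1}$ is non-expansive and the projection onto $\U$ is non-expansive, so with the product $1$-metric the pair map is non-expansive — and the second arrow is continuous by Proposition~\ref{Proposition: scalar_multiplication}. Hence $H$ is continuous, and $\U$ is contractible.

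For the parenthetical ``in particular, path-connected'', I would simply observe that $t \mapsto H(a, t)$ is a continuous path from $a$ to $[\et]$, so concatenating it with $t \mapsto H(b, 1 - t)$ (a continuous path from $[\et]$ to $b$) yields a continuous path joining any $a, b \in \U$; equivalently one invokes the standard fact that a contractible space is path-connected. I do not expect a genuine obstacle here: all the real work has already been absorbed into Proposition~\ref{Proposition: scalar_multiplication}, and the only thing to watch is that everything stays choice-free and constructive, which it does, since $H$ is given by a closed formula in operations already shown to be continuous.
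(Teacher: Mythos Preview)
Your proposal is correct and matches the paper's second argument: the paper defines $G(t,x) \dfeq (1-t)\cdot x \dis t\cdot z$ for an arbitrary basepoint $z$ and cites Propositions~\ref{Proposition: Urysohn_dis_nonexpansive} and~\ref{Proposition: scalar_multiplication} for continuity, which with $z = [\et]$ reduces exactly to your $H$. The only difference is that the paper also supplies an independent first proof using the extension map, namely $H(t,x) = \ext\big((x,\, t\, d(x,z)),\, (z,\, (1-t)\, d(x,z))\big)$, with continuity coming from Proposition~\ref{Proposition: continuity_of_one_point_extension}; this avoids the algebraic structure entirely and relies only on the Section~\ref{Section: continuity} results, so the two proofs illustrate the two halves of the paper's machinery.
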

		\begin{proof}
			We construct a contraction from $\U$ to an arbitrary point $z \in \U$ in two ways: once using results from Section~\ref{Section: continuity}, and once from Section~\ref{Section: algebra_of_Urysohn}.
			
			First, let $H\colon \intcc{0}{1} \times \U \to \U$ be given as
			$$H(t, x) \dfeq \ext\Big(\big(x, t d(x, z)\big), \big(z, (1-t) d(x, z)\big)\Big).$$
			Since the metric $d$ and the map $\ext$ are continuous (Proposition~\ref{Proposition: continuity_of_one_point_extension}), so is $H$. We have $d(x, H(0, x)) = 0$ and $d(z, H(1, x)) = 0$ by Theorem~\ref{Theorem: Urysohn_space}(\ref{Theorem: Urysohn_space: extension_map}), so $H(0, x) = x$ and $H(1, x) = z$.
			
			For the second, let $G\colon \intcc{0}{1} \times \U \to \U$ be
			$$G(t, x) \dfeq (1 - t) \cdot x \dis t \cdot z.$$
			Clearly $G(0, x) = x$ and $G(1, x) = z$. Since the operations are continuous (Propositions~\ref{Proposition: Urysohn_dis_nonexpansive} and~\ref{Proposition: scalar_multiplication}), so is $G$.
		\end{proof}
		
		Because the operations in $\U$ are continuous, they induce the $\nnr$-normed module structure on the set of continuous maps $\C(X, \U)$ (by defining operations pointwise) for any topological space $X$. This might be useful to study such function sets, in particular hierarchies of the Urysohn space (see~\cite{Normann_D_2009:_a_rich_hierarchy_of_functionals_of_finite_types}).
		
		\medskip
		
		As proven by Bogaty\u{\i}~\cite{Bogatyi_SA_2000:_compact_homogeneity_of_urysohns_universal_metric_space:_russian, Bogatyi_SA_2000:_compact_homogeneity_of_urysohns_universal_metric_space}, we may extend partial isometries into $\U$ not only from finite, but more generally from compact subsets. We reprove this in our restricted setting.
		
		Classicaly, a metric space $\mtr{X} = (X, d_\mtr{X})$ is compact when any of the following equivalent conditions hold:
		\begin{itemize}
			\item
				every open cover of $X$ has a finite subcover,
			\item
				every sequence in $X$ has an accumulation point,
			\item
				every continuous map $X \to \RR$ is bounded,
			\item
				$\mtr{X}$ is a complete totally bounded metric space.
		\end{itemize}
		Constructively these conditions are not equivalent, so we need to pick the right one. In the context of metric spaces practice (not to mention Bogaty\u{\i}'s proof) shows~\cite{Troelstra_AS_Dalen_D_1988:_constructivism_in_mathematics_volume_1, Troelstra_AS_Dalen_D_1988:_constructivism_in_mathematics_volume_2}, that we want the last condition, and our case is no exception.
		
		Classically, and in at least some forms of constructivism, we say that a metric space is totally bounded when for every $\epsilon \in \RR_{> 0}$ it can be covered by finitely many balls of radius $\epsilon$. To make this work however, we need countable choice since otherwise we cannot even prove that a totally bounded metric space is separable. We therefore adjust the definition to our setting.
		\begin{definition}
			We say that $\mtr{X} = (X, d_\mtr{X}, s\colon \NN \to X + \one, a\colon \NN \to \NN)$ is a \df{totally bounded metric space} when $(X, d_\mtr{X})$ is a metric space and the condition
			$$\xall{n}{\NN}{\bigcup \st[2]{\ball[\mtr{X}]{s_i}{2^{-n}}}{i \in \NN_{< a_n} \cap s^{-1}(X)} = X}$$
			holds.
		\end{definition}
		It is easy to see that this definition is equivalent to the usual one in the presence of countable choice (in particular, in classical mathematics). Moreover, notice also that $s$ witnesses separability of $\mtr{X}$.
		
		One final observation before the proof. While classically a compact space is \emph{complete} totally bounded, it should be clear, that we do not require completness in our case, as we could always first extend the isometry to the completion of its original domain by Proposition~\ref{Proposition: extension_of_maps_into_complete_space}. Indeed, in the proof below (as well as Bogaty\u{\i}'s original proof) completeness of the original domain $A$ never comes up.
		
		\begin{theorem}
			Let
			\begin{itemize}
				\item
					$\mtr{X} = (X, d_\mtr{X}, s_\mtr{X})$ be a separable metric space,
				\item
					$A \subseteq X$,
				\item
					$d_\mtr{A}$ the restriction of $d_\mtr{X}$ to $X$,
				\item
					$\mtr{A} = (A, d_\mtr{A}, s_\mtr{A}, a)$ a totally bounded subspace of $\mtr{X}$, and
				\item
					$f\colon A \to \U$ an isometry.
			\end{itemize}
			Then there exists (a canonical choice of) an isometry $g\colon X \to \U$ which extends $f$.
		\end{theorem}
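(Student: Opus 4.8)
The plan is to follow the one-point-at-a-time extension used in the proof of Theorem~\ref{Theorem: Urysohn_space}(\ref{Theorem: Urysohn_space: extension_of_isometries}), but, since the point we adjoin must now be placed at prescribed distances from the \emph{infinitely many} points of $A$, to replace each single application of the extension map $\ext$ by a limit of such applications. Total boundedness of $A$ is exactly what makes these limits converge, and completeness of $\U$ is what makes the limit points exist. From the total boundedness witness $a$ we extract, for each $n$, the finite $2^{-n}$-net $N_n := \st{s_\mtr{A}(j)}{j \in \NN_{< a_n} \cap s_\mtr{A}^{-1}(A)}$ of $A$, put $F_i := \bigcup_{n \le i} N_n$ (an increasing sequence of finite subsets of $A$, each still a $2^{-i}$-net), and note that $A_0 := \bigcup_i F_i$ is dense in $A$.

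Using the enumeration $s_\mtr{X}$ of a dense subset of $X$ (part of the separability data) we define $g$ on that dense subset inductively on $n$ (skipping the slots where $s_\mtr{X}(n) \in \one$), maintaining the invariant that every value already defined is an isometry so far and, moreover, sits at distance $d_\mtr{X}(s_\mtr{X}(j), q)$ from $f(q)$ for every $q \in A_0$. At stage $n$ we set $g(s_\mtr{X}(n)) := \lim_{i \to \infty} u_i$, where
$$u_i := \ext\Big(\big(g(s_\mtr{X}(j)), d_\mtr{X}(s_\mtr{X}(n), s_\mtr{X}(j))\big)_{j \in \NN_{< n} \cap s_\mtr{X}^{-1}(X)} \cnct \big(f(q), d_\mtr{X}(s_\mtr{X}(n), q)\big)_{q \in F_i}\Big).$$
These tuples lie in $\prms$ by the triangle inequality in $\mtr{X}$ together with $f$ being an isometry and the invariant (which makes $d(g(s_\mtr{X}(j)), f(q)) = d_\mtr{X}(s_\mtr{X}(j), q)$). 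The crucial point is that $(u_i)_i$ is rapidly Cauchy: by Lemma~\ref{Lemma: distances_in_completed_Urysohn} the distance $d(u_i, u_{i+1})$ is a supremum of terms of the form ``(a point of one input list) $\dis$ (its prescribed distance), evaluated against the other extension''; all these vanish by the defining property of $\ext$, except those coming from a $q' \in F_{i+1}$, where, choosing $q \in F_i$ with $d_\mtr{X}(q, q') < 2^{-i}$, two triangle inequalities give $d(u_i, f(q')) \dis d_\mtr{X}(s_\mtr{X}(n), q') \le 2^{-i+1}$. Hence $d(u_i, u_{i+1}) \le 2^{-i+1}$, so the limit exists in the complete space $\U$.

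It remains to harvest consequences. Each $g(s_\mtr{X}(n))$ is at distance exactly $d_\mtr{X}(s_\mtr{X}(n), q)$ from $f(q)$ for every $q \in F_i$ (for $i$ large), hence for all $q \in A_0$ (letting $i \to \infty$), hence for all $q \in A$ (density of $A_0$ in $A$ and continuity of $d$); in particular, when $s_\mtr{X}(n) \in A$, taking $q = s_\mtr{X}(n)$ yields distance $0$, so $g$ already extends $f$ on the dense subset. Likewise $g$ restricted to the dense subset is an isometry, because each $u_i$ (hence its limit) sits at the prescribed distance from the earlier values; this also shows $g$ is well defined there. By Proposition~\ref{Proposition: extension_of_maps_into_complete_space}, $g$ extends to an isometry $X \to \U$, which still agrees with $f$ on all of $A$ by Lemma~\ref{Lemma: uniqueness_of_continuous_extensions} (both are continuous, agree on the dense $A_0$, and $\U$ is metric). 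Everything used only the given data, so the choice is canonical.

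The main obstacle is the convergence estimate: one must arrange the approximations so that adjoining finer net points barely moves the extension. This is where the ``minimality'' of the canonical extension --- the \emph{exact} formula of Lemma~\ref{Lemma: distances_in_completed_Urysohn} --- is indispensable, since without it two points at equal distances from a finite set need not be close. The remaining verifications (membership in $\prms$, preservation of the invariant, the isometry property of $g$) are routine triangle-inequality bookkeeping of the same flavour as in the proofs of Theorems~\ref{Theorem: countable_Urysohn_space} and~\ref{Theorem: Urysohn_space}.
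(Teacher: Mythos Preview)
Your proof is correct and shares the paper's core idea --- approximate the extension over $A$ by extensions over the finite $2^{-i}$-nets $F_i$ and show the resulting sequence is Cauchy --- but the organisation differs. The paper does not interleave the net-limit with the point-by-point induction over $s_\mtr{X}$; instead it invokes the already-built total extension operator $\extm$: for each $x \in X$ it sets $b^x_n := \extm\big((s_\mtr{A}(i), f(s_\mtr{A}(i)))_{i < a_n}\big)(x)$, pads the shorter net-list with nearby duplicates to make it the same length as the longer one, and then reads off the Cauchy bound $d(b^x_n, b^x_{n+1}) \le 2^{-n+1}$ directly from Lemma~\ref{Lemma: estimation_of_supremum_metric_on_isometries}(\ref{Lemma: estimation_of_supremum_metric_on_isometries: dP_bound}) (the non-expansiveness of $\extm$ in the $d_P$-metric). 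That $g$ is an isometry is then immediate, since each $\extm(\ldots)$ already is. Your route avoids $\extm$ and the heavy Lemma~\ref{Lemma: estimation_of_supremum_metric_on_isometries} entirely, working instead with the one-point map $\ext$ and the explicit distance formula of Lemma~\ref{Lemma: distances_in_completed_Urysohn}; the price is that you must carry the invariant $d(g(s_\mtr{X}(j)), f(q)) = d_\mtr{X}(s_\mtr{X}(j), q)$ through the induction by hand. The paper's version reuses more machinery and is shorter; yours is more self-contained and makes the role of the ``minimality'' of $\ext$ (via Lemma~\ref{Lemma: distances_in_completed_Urysohn}) more transparent, exactly as you note in your closing remark.
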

		\begin{proof}
			We can make several assumptions without loss of generality to simplify the technical part of the proof.
			\begin{itemize}
				\item
					Contrary to the case of separable metric spaces, for totally bounded ones it is decidable whether they are inhabited (consider whether there are any elements of $A$ in $\st{s_\mtr{A}(i)}{i \in \NN_{< a_0}}$). Thus we may consider $A = \emptyset$ a trivial special case (something what classically we might have done anyway) for which the theorem holds because $\emptyset$ is finite. In the remainder assume that $A$, and therefore $X$, is inhabited, so we may also assume that $s_\mtr{X}$ and $s_\mtr{A}$ are given as maps $\NN \to X$ and $\NN \to A$, respectively.
				\item
					The image od $s_\mtr{A}$ can be assumed to be contained in the image of $s_\mtr{X}$; we have a bijection $\NN \ism \NN + \NN$, and we can replace $s_\mtr{X}$ by $s_\mtr{X}$ on the first $\NN$ and $s_\mtr{A}$ on the second.
				\item
					Obviously the values of $a$ can be increased and the condition for total boundedness still holds; assume therefore that $a$ is an increasing sequence (\ie $a_n \leq a_{n+1}$ for all $n \in \NN$), and also that $a_n \geq n$ holds (we want $a$ to go to infinity which is not necessarily the case, as $A$ could be finite).
			\end{itemize}
			
			Take now an arbitrary $x \in X$ and define the sequence $b^x\colon \NN \to \U$ by
			$$b^x_n \dfeq \extm\big((s_\mtr{A}(i), f(s_\mtr{A}(i))_{i \in \NN_{< a(n)}}\big)(x).$$
			\begin{itemize}
				\item\proven{$\xall{n}{\NN}{d(b^x_n, b^x_{n+1}) \leq 2^{-n + 1}}$}
					For each of the finitely many $j \in \intco[\NN]{a_n}{a_{n+1}}$ choose $k_j \in \NN_{< a_n}$ such that $d_\mtr{A}(s_\mtr{A}(j), s_\mtr{A}(k_j)) \leq 2^{-n}$ (we can do that by total boundedness of $\mtr{A}$). Then
					$$\st{(s_\mtr{A}(i), f(s_\mtr{A}(i)))}{i \in \NN_{< a(n)}} =$$
					$$= \st{(s_\mtr{A}(i), f(s_\mtr{A}(i)))}{i \in \NN_{< a(n)}} \cup \st{(s_\mtr{A}(k_j), f(s_\mtr{A}(k_j)))}{j \in \intco[\NN]{a_n}{a_{n+1}}},$$
					and so
					$$\extm\big((s_\mtr{A}(i), f(s_\mtr{A}(i)))_{i \in \NN_{< a(n)}}\big)(x) =$$
					$$= \extm\big((s_\mtr{A}(i), f(s_\mtr{A}(i)))_{i \in \NN_{< a(n)}} \cnct (s_\mtr{A}(k_j), f(s_\mtr{A}(k_j)))_{j \in \intco[\NN]{a_n}{a_{n+1}}}\big)(x),$$
					thus by Lemma~\ref{Lemma: estimation_of_supremum_metric_on_isometries}(\ref{Lemma: estimation_of_supremum_metric_on_isometries: dP_bound})
					$$d(b^x_n, b^x_{n+1}) \leq$$
					$$\leq \sup\Big(\st{d_\mtr{X}(s_\mtr{A}(i), s_\mtr{A}(i))}{i \in \NN_{< a_n}} \cup \st{d_\mtr{X}(s_\mtr{A}(k_j), s_\mtr{A}(j))}{i \in \intco[\NN]{a_n}{a_{n+1}}}\Big) +$$
					$$+ \sup\Big(\st{d(f(s_\mtr{A}(i)), f(s_\mtr{A}(i)))}{i \in \NN_{< a_n}} \cup \st{d(f(s_\mtr{A}(k_j)), f(s_\mtr{A}(j)))}{i \in \intco[\NN]{a_n}{a_{n+1}}}\Big).$$
					We have
					$$d(f(s_\mtr{A}(i)), f(s_\mtr{A}(i))) = d_\mtr{X}(s_\mtr{A}(i), s_\mtr{A}(i)) = 0$$
					and
					$$d(f(s_\mtr{A}(k_j)), f(s_\mtr{A}(j))) = d_\mtr{X}(s_\mtr{A}(k_j), s_\mtr{A}(j)) \leq 2^{-n},$$
					so $d(b^x_n, b^x_{n+1}) \leq 2^{-n} + 2^{-n} = 2^{-n +1}$.
			\end{itemize}
			We see that $b^x$ is a Cauchy sequence (even rapid Cauchy if we drop the first term), and so has a limit in (complete, therefore Cauchy complete by Proposition~\ref{Proposition: complete_implies_Cauchy_complete}) $\U$. Define $g(x) \dfeq \lim b^x$.
			\begin{itemize}
				\item\proven{$\xall{x, y}{X}{d(g(x), g(y)) = d_\mtr{X}(x, y)}$}
					Consider the sequence $n \mapsto d(b^x(n), b^y(n))$. Since $\extm\big((s_\mtr{A}(i), f(s_\mtr{A}(i)))_{i \in \NN_{< a(n)}}\big)$ is an isometry, this sequence must be constant with value $d_\mtr{X}(x, y)$ which is then also its limit. We can swap the limit and $d$ (every metric is continuous), thus obtaining the result.
				\item\proven{$\rstr{g}_A = f$}
					Note that for every $n$ the sequence $b^{s_\mtr{A}(n)}$ is eventually constant (since $a$ goes to infinity), its terms equal to $f(s_\mtr{A}(n))$ which then must be the limit of the sequence as well. Thus $f$ and $g$ match on the image of $s_\mtr{A}$, and therefore, being isometries, on the entire $A$ by Lemma~\ref{Lemma: uniqueness_of_continuous_extensions}.
			\end{itemize}
		\end{proof}

	\section{Concluding Remarks}\label{Section: concluding_remarks}
	
		We end the paper with some remarks and questions. Shorter remarks are given directly below, while longer ones with some propositions to prove have their separate subsections.
		
		\begin{remark}
			Discussion of algebraic structure of the Urysohn space puts into our minds the Uspenskij's result~\cite{Uspenskij2004145} that the Urysohn space is homeomorphic to the Hilbert space $\ell^2$. The proof for this is classical, and it is unclear whether it holds constructively. In any case, I feel that the vector space structure is not the one we should be looking for on the Urysohn space. Suppose we had one, such that the norm would satisfy $\norm[-x] = \norm[x]$; if $x$ was the result of extending the isometry $\{0\} \hookrightarrow \U$, then $-x$ would be an equally valid alternative. This suggests that we would need to make choices when extending isometries, which does not mesh well with the result that there is a canonical choice of extensions. This is a flimsy argument (the challenge for readers is to find a better one, such as a negative mathematical result about existence of certain algebraic structures on $\U$), based on constructive intuition, but it was this observation that led me to the notion of disgroups, one point of which is that they have only ``positive direction''.
		\end{remark}
		
		\begin{remark}\label{Remark: unique_choice}
			We mentioned that we did not use any choice principles in the paper, not even countable choice which many constructivists accept. Strictly speaking however, we did use the so-called \df{unique choice} which states that any relation, which is total and single-valued, is the graph of some (necessarily unique) map:
			$$\xall{x}{X}\xexactlyone{y}{Y}{R(x, y)} \implies \xsome{f}{Y^X}\xall{x}{X}{R(x, f(x))}.$$
			Specifically, we used it whenever we invoked Postulate~\ref{Postulate: reals_Dedekind_complete}. However, unique choice is very rarely considered in question, and this goes especially in our case since in practice we actually have the map which realizes Postulate~\ref{Postulate: reals_Dedekind_complete}, but I avoided its formulation in order not the refer to powersets (the existence of which is considered a lot more problematic than the validity of unique choice). If we do use them though (and besides, one can still speak about powerclasses in predicative mathematics), and if we choose two-sided Dedekind cuts of rationals as our model of the reals, then the map we need is given by
			$$(L, U) \mapsto \big(\st{q \in \QQ}{\xsome{x}{L}{q < x}}, \st{q \in \QQ}{\xsome{x}{U}{x < q}}\big).$$
		\end{remark}
		
		\begin{remark}\label{Remark: synthetic_topology}
			I am not an expert on the subject of Urysohn space; my interest in it is rather tangential --- I needed a constructive version of it to prove some results~\cite{Lesnik_D_2010:_synthetic_topology_and_constructive_metric_spaces}(Theorems~{4.56} and~{5.14}) in \df{synthetic topology}, and this paper eventually grew from that.
			
			A synthetic approach to mathematics is to study a structure by creating an axiomatic system which makes that structure an intrisic property of objects (as opposed to the classical approach where basic objects are sets, on which additional structures are added as an afterthought)~\cite{Kock_A_2006:_synthetic_differential_geometry, Phoa_W_1990:_domain_theory_in_realizability_toposes, Escardo_M_2004:_synthetic_topology_of_data_types_and_classical_spaces, Lesnik_D_2010:_synthetic_topology_and_constructive_metric_spaces}. One of the usefulness of this approach is that theorems involving that structure typically become simpler logical statements if not outright tautologies; for example, a synthetic topological proof that if $X$ and $Y$ are compact, so is their product, amounts to nothing more than to observe the equivalence
			$$\xall{p}{X \times Y}{p \in U} \iff \xall{x}{X}\xall{y}{Y}{(x, y) \in U}$$
			for all open subsets $U \subseteq X \times Y$. If the axioms are chosen well, classical theory will embed into suitable synthetic models in a way, that validity of statements is preserved when suitably interpreted at both ends; for example, the category of topological spaces embeds\footnote{More precisely, due to foundational issues we need to restrict to small subcategories (= categories in which objects and morphisms form a set, as opposed to a proper class) of topological spaces, but that turns out to be good enough.} into sheaf topoi. The corollary is that a statement, proven synthetically (presumably in a simpler way), automatically holds classically as well~\cite{Escardo_M_2004:_synthetic_topology_of_data_types_and_classical_spaces, Taylor_P_2006:_computably_based_locally_compact_spaces}. In particular, every map in a synthetic topological model is continuous --- a fact which is proven by the equivalence $x \in f^{-1}(U) \iff f(x) \in U$ --- and this can often be used to prove continuity of maps, which we can construct in the framework of intuitionistic logic without choice principles. We used this in Proposition~\ref{Proposition: scalar_multiplication}, but more generally, this let me know in advance, that extension maps can be shown to be continuous.
			
			However, this isn't to say that the whole of Section~\ref{Section: continuity} can be conveyed in one sentence synthetically. Countinuity does follow if we equip $\isom$ with the topology of pointwise convergence, or of uniform convergence on compact subsets, but we did it more generally, for uniform convergence. Synthetic topological interpretation would be, that the intrinsic topology of $\isom$ is not the subspace topology in $\U^X$ (if $\U^X$ has the exponential topology), but a stronger one (at least as strong as the topology of uniform convergence).
		\end{remark}
		
		\begin{remark}
			What is the merit of the halving map on $\od$? Strictly speaking, we do not actually need a way to produce exactly half of quantity to salvage our results. However, a halving map is a convenient way to ensure that three of the assumptions, that we do need, hold. First, it ensures that $\od$ is a lattice. Second, when $\od$ is a non-trivial subdisgroup of $\nnr$, it is dense in $\nnr$. Third, it makes $\od$-metric spaces into uniform spaces\footnote{Actually, there is another requirement for that: we need to be able to say which elements are positive. See Subsection~\ref{Subsection: completion_of_disgroups} below.} (for the usual fundamental system of entourages $U_a \dfeq \st{(x, y) \in X \times X}{d_\mtr{X}(x, y) \leq a}$ where $a \in \od_{> 0}$). The halving map should in this context be seen as the analogue of the requirement that for every entourage $U$ there exists an entourage $V$ such that if $(x, y) \in V$ and $(y, z) \in V$, then $(x, z) \in U$. To prove this in our case, find a $U_a \subseteq U$, then take $V = U_{\frac{a}{2}}$.
		\end{remark}
		
		\begin{remark}\label{Remark: graded_Urysohn}
			Let $\Ury[n]$ be the Kolmogorov quotient of $\pseudo[n]$ for all $n \in \NN$. Observe that $\Ury[1]$ is isometrically isomorphic to $\od$ --- the inverse isometries being $\od \to \Ury[1]$, $\lambda \mapsto [\strat[1]{(\et, \lambda)}{}]$ and $\Ury[1] \to \od$, $x \mapsto \norm[x]$. This means that we have a retraction $r_1\colon \Ury \to \Ury[1]$, given by
			$$r_1(x) \dfeq [\strat[1]{(\et, \norm[x])}{}].$$
			Moreover, $r_1$ is non-expansive since $d(r_1(x), r_1(y)) = \norm[x] \dis \norm[y] \leq d(x, y)$, and therefore induces a non-expansive retraction (that we'll denote by the same symbol) $r_1\colon \U \to \strat[1]{\U}{\nnr}$. Now $\nnr$ is a disgroup, so it induces ${\dis}_1$ on $\strat[1]{\U}{\nnr} \ism \nnr$, and we have
			$$r_1(x \dis y) = r_1(x) {\dis}_1 r_1(y).$$
			Obviously similar results hold for the unique map $r_0\colon \U \to \strat[0]{\U}{\nnr}$ and the trivial ${\dis}_0$ on the singleton $\strat[0]{\U}{\nnr}$. The question is, can we find a sequence of (non-expansive) retractions $r_n\colon \U \to \strat[n]{\U}{\nnr}$? If so, what properties would the operation ${\dis}_n\colon \strat[n]{\U}{\nnr} \times \strat[n]{\U}{\nnr} \to \strat[n]{\U}{\nnr}$, given by $x {\dis}_n y \dfeq r_n(x \dis y)$, have? One might view ${\dis}_n$s as better and better approximations to an associative disgroup operation. Can we formalize this notion, presumably in the sense, that given a disgroup, it produces another disgroup of which $\dis$ is closer to being associative (along with a map from the original disgroup to the new one)? If so, it would be interesting to see if it leads to another construction of a Urysohn space: start with a disgroup, then apply this ``associativing'' to produce a sequence, of which (the completion of) the colimit should be the Urysohn space.
			
			Another application would be the representation of all elements in $\U$ as tuples, albeit infinite ones. We could identify each $x \in \U$ with the sequence $(r_n(x))_{n \in \NN}$ (or $(r_n(x), d(x, r_n(x)))_{n \in \NN}$, if we wanted tuples like in $\pseudo$) where $x = \lim r_n(x)$. Thus an element would be in $\U_\nnr$ if and only if this sequence was eventually constant while a sequence which is not would represent an element which we genuinely acquired anew when completing $\U_\nnr$. It is in fact this idea that was used for the proof of Proposition~\ref{Proposition: uncompleted_Urysohn_not_complete}.
			
			Note also that this would mean that for every point $x \in \U$ there exists a canonical choice of a sequence in $\U_\nnr$ which converges to $x$. The consequence is that $\U$ would in fact be a Cauchy completion (not just a completion) of $\U_\nnr$ even in the absence of countable choice (of course, that doesn't mean that it would be a Cauchy completion of $\Ury$ for some smaller $\od$, such as $\QQ_{\geq 0}$).
		\end{remark}
		
		\subsection{Multiplication on the Urysohn Space}
		
			We have seen that the Urysohn space has the structure of a module over the disring $\nnr$. Does it hold even more, can we make $\U$ an ``algebra over the disring $\nnr$''? That is, is $\U$ not only a disgroup, but a disring?
			
			There seems to be a very good candidate for multiplication on $\U$. Let us start by defining it on $\pseudo$. If $\cdot$ is a multiplication which preserves the norm, that is $\norm[a \cdot b] = \norm[a] \norm[b]$, then we get for $a = \atuple{a}{\alpha}{i}$, $b = \atuple{b}{\beta}{j} \in \pseudo$
			$$\alpha_i \beta_j = d(a, a_i) d(b, b_j) = \norm[a \dis a_i] \norm[b \dis b_j] = \norm[(a \dis a_i) \cdot (b \dis b_j)] =$$
			$$= \norm[a \cdot b \dis a_i \cdot b \dis a \cdot b_j \dis a_i \cdot b_j] = d(a \cdot b, a_i \cdot b \dis a \cdot b_j \dis a_i \cdot b_j)$$
			which suggests that the reasonable definition of $\cdot$ (inductively on $\age{a} + \age{b}$) is
			$$a \cdot b = \big(a_i \cdot b \dis a \cdot b_j \dis a_i \cdot b_j,\ \alpha_i \beta_j\big)_{(i, j) \in \NN_{< \lnth{a}} \times \NN_{< \lnth{b}}}$$
			(we'll stop writing the ages of the tuples here; just imagine them to be, say, the smallest possible ones). Observe that $()$ acts as zero, $() \cdot a = a \cdot () = ()$, as it should, being the unit for $\dis$. The tuple $((), 1)$ is the unit for multiplication, and more generally, for every $\lambda \in \od$ we have $((), \lambda) \cdot a = \lambda \cdot a$. Thus $\cdot$ can be seen as an extension of the scalar multiplication (recall from Remark~\ref{Remark: graded_Urysohn} that $\Ury[1] \ism \od$).
			
			Let $\equ$ be the Kolmogorov equivalence relation. Then $x \dis y \equ y \dis x$, so clearly $\cdot$ is commutative in the sense $a \cdot b \equ b \cdot a$ (use induction on $\age{a} + \age{b}$). As for distributivity, by induction on $\age{a} + \age{b} + \age{c}$ we have
			$$(a \dis b) \cdot c = \big((a_i \dis b,  \alpha_i)_{i \in \NN_{< \lnth{a}}} \cnct (a \dis b_j, \beta_j)_{j \in \NN_{< \lnth{b}}}\big) \cdot c =$$
			$$= \Big(\big((a_i \dis b) \cdot c \dis (a \dis b) \cdot c_k \dis (a_i \dis b) \cdot c_k,\ \alpha_i \gamma_k)_{(i, k) \in \NN_{< \lnth{a}} \times \NN_{< \lnth{c}}}\big) \cnct$$
			$$\cnct \big((a \dis b_j) \cdot c \dis (a \dis b) \cdot c_k \dis (a \dis b_j) \cdot c_k,\ \beta_j \gamma_k)_{(j, k) \in \NN_{< \lnth{b}} \times \NN_{< \lnth{c}}}\big)\Big) \equ$$
			$$\equ \Big(\big(a_i \cdot c \dis b \cdot c \dis a \cdot c_k \dis b \cdot c_k \dis a_i \cdot c_k \dis b \cdot c_k,\ \alpha_i \gamma_k)_{(i, k) \in \NN_{< \lnth{a}} \times \NN_{< \lnth{c}}}\big) \cnct$$
			$$\cnct \big(a \cdot c \dis b_j \cdot c \dis a \cdot c_k \dis b \cdot c_k \dis a \cdot c_k \dis b_j \cdot c_k,\ \beta_j \gamma_k)_{(j, k) \in \NN_{< \lnth{b}} \times \NN_{< \lnth{c}}}\big)\Big) \equ$$
			$$\equ \Big(\big((a_i \cdot c \dis a \cdot c_k \dis a_i \cdot c_k) \dis b \cdot c,\ \alpha_i \gamma_k)_{(i, k) \in \NN_{< \lnth{a}} \times \NN_{< \lnth{c}}}\big) \cnct$$
			$$\cnct \big(a \cdot c \dis (b_j \cdot c \dis b \cdot c_k \dis b_j \cdot c_k),\ \beta_j \gamma_k)_{(j, k) \in \NN_{< \lnth{b}} \times \NN_{< \lnth{c}}}\big)\Big) =$$
			$$= a \cdot c \dis b \cdot c.$$
			So $\cdot$ would induce a disring structure on $\Ury$\ldots\ if it induced a map at all. Not only does $\cdot$ not actually preserve the norm, it does not even respect $\equ$ (that is, replacing factors by equivalent ones does not always yield an equivalent product). We can prove a more general negative result.
			
			\begin{proposition}
				There is no operation $\cdot$ on $\pseudo$ (where $\od$ is a non-trivial halved disring) which satisfies $((), \lambda) \cdot a \equ \lambda \cdot a$, respects $\equ$ and induces a disring structure on $\Ury$.
			\end{proposition}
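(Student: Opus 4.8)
The plan is to assume, for contradiction, that such an operation $\cdot$ on $\pseudo$ exists. Since it respects $\equ$ it descends to a binary operation $\bar\cdot$ on $\Ury$, and by hypothesis $(\Ury, \dis, [\et], \bar\cdot)$ (with a multiplicative unit) is a disring. I would first record the elementary consequences of the axioms. Taking $\lambda = 1$ in $((),\lambda)\cdot a \equ \lambda\cdot a$ and using $1\cdot a = a$ shows that $[((),1)]$ is a multiplicative unit, so (disrings being commutative) it is \emph{the} unit; the same hypothesis, together with the formula $\lambda\cdot{}_1(\et,\mu) = {}_1(\et,\lambda\mu)$ for the module action, shows that $\lambda\mapsto[((),\lambda)]$ is an injective, multiplicative map $\od\to\Ury$ and that $[((),\lambda)]\bar\cdot[a] = [\lambda\cdot a]$ for all $a\in\pseudo$. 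Recall also (Theorem~\ref{Theorem: noncompleted_Urysohn_associative_disgroup}, Proposition~\ref{Proposition: order_two_group_is_a_disgroup}) that $(\Ury, \dis, [\et])$ is an abelian group of exponent two, so $\bar\cdot$ is additive in each variable and distributes over $\dis$, and $d(x, y) = \|x\dis y\|$ throughout.

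The core step is to show that these axioms force $\bar\cdot$ to agree, on representatives, with the candidate product $a\cdot b = (a_i\cdot b\dis a\cdot b_j\dis a_i\cdot b_j,\ \alpha_i\beta_j)_{i,j}$ displayed above, i.e.\ $[a]\bar\cdot[b] = [a\cdot b]$ for all $a, b\in\pseudo$. I would argue by induction on $\age{a} + \age{b}$; the cases $a = \et$ or $b = \et$ are immediate since distributivity forces $[\et]\bar\cdot v = [\et]$. For the inductive step, distributivity and exponent two give, for each predecessor $a_i$ of $a$ and $b_j$ of $b$,
$$d\big([a]\bar\cdot[b],\ [a_i]\bar\cdot[b]\dis[a]\bar\cdot[b_j]\dis[a_i]\bar\cdot[b_j]\big) = \big\|([a]\dis[a_i])\bar\cdot([b]\dis[b_j])\big\|.$$
By the induction hypothesis the three-fold $\dis$-sums on the left are precisely the predecessors of the candidate tuple $a\cdot b$, so $[a]\bar\cdot[b]$ and $[a\cdot b]$ are both points sitting at these distances from the same finite set; evaluating the right-hand norm as $\alpha_i\beta_j$ and invoking the minimality of $\Ury$'s canonical extensions (Lemma~\ref{Lemma: extend_isometry_by_one_point_into_uncompleted_Urysohn}, Proposition~\ref{Proposition: similar_distances_and_points_yield_similar_extensions_in_Urysohn_space}) then identifies the two classes.

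With $[a]\bar\cdot[b] = [a\cdot b]$ established, the proof is finished by exhibiting an explicit pair $a \equ a'$ of permissible tuples (say one obtained from the other by repeating, permuting, or adjoining a redundant predecessor) and a tuple $b$ for which the candidate gives $a\cdot b \not\equ a'\cdot b$ — the failure of $\equ$-invariance of the candidate is exactly the phenomenon flagged in the paragraph preceding this proposition, and one need only write a single instance out explicitly. This contradicts the well-definedness of $\bar\cdot$, and so no such $\cdot$ exists.

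The step I expect to be the main obstacle is the core identification $[a]\bar\cdot[b] = [a\cdot b]$. The candidate's characterising property is the norm identity $\|a\cdot b\| = \|a\|\,\|b\|$, which is \emph{not} among the disring axioms, so it cannot simply be assumed; the delicate point is evaluating $\big\|([a]\dis[a_i])\bar\cdot([b]\dis[b_j])\big\|$ when $[a]\dis[a_i]$ and $[b]\dis[b_j]$ are not scalar multiples of $[((),1)]$ — a priori this value is not pinned down by the two factor norms alone — which forces one to exploit associativity, the scalar action, and the canonical (minimal) structure of $\Ury$ together rather carefully.
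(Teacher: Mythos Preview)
Your plan hinges on the ``core step'' --- showing that any multiplication satisfying the hypotheses must coincide, on equivalence classes, with the candidate product $a\cdot b = (a_i\cdot b\dis a\cdot b_j\dis a_i\cdot b_j,\ \alpha_i\beta_j)_{i,j}$ --- and you yourself flag the obstacle: nothing in the hypotheses forces $\big\|([a]\dis[a_i])\bar\cdot([b]\dis[b_j])\big\| = \alpha_i\beta_j$. This is not merely a technical difficulty to be overcome; it is a genuine failure of the strategy. The disring axioms for an associative disgroup are just unital--commutative--ring axioms over $\mathbb{F}_2$, and these do not interact with the metric or the norm at all. So on the sub-$\dis$-group generated by the scalars $[((),\lambda)]$ the multiplication is indeed forced, but away from there nothing pins it down to the candidate. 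Your inductive identification therefore cannot close, and the ``exhibit $a\equ a'$ with $a\cdot b\not\equ a'\cdot b$ for the candidate'' step at the end is then beside the point.

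The paper avoids all of this by never trying to identify the whole multiplication. Its proof rests on one elementary observation in $\pseudo$: the equivalence $((),\lambda)\equ\big(((),\lambda+\mu),\mu\big)$ for all $\lambda,\mu\in\od$. Using this with $\lambda=\mu=1$ gives a second representative $(((),2),1)$ for the unit, and one then computes its square in two ways --- once as the unit, once by expanding via distributivity and the scalar hypothesis $((),2)\cdot a\equ 2\cdot a$ --- and simplifies using commutativity and exponent two. The chain of equivalences collapses to $[((),1)]=[((),3)]$, i.e.\ $2=0$ in $\od$, contradicting non-triviality of the halved disring. The whole argument lives inside the $\dis$-span of the elements $[((),n)]$; no induction on age, no appeal to minimality of extensions, and no global uniqueness claim for the product.
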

			\begin{proof}
				First observe a general fact, that $\big((), \lambda\big) \equ \big(\big((), \lambda + \mu\big), \mu\big)$ for all $\lambda, \mu \in \od$. Using this, we obtain
				$$\big((), 1\big) \equ \big((), 1\big) \cdot \big((), 1\big) \equ \big(\big((), 2\big), 1\big) \cdot \big(\big((), 2\big), 1\big) \equ$$
				$$\equ \Big(\big((), 2\big) \cdot \big(\big((), 2\big), 1\big) \dis \big(\big((), 2\big), 1\big) \cdot \big((), 2\big) \dis \big((), 2\big) \cdot \big((), 2\big),\ 1 \cdot 1\Big) \equ$$
				$$\equ \Big(\big((), 2\big) \cdot \big((), 2\big),\ 1\Big) \equ \big(\big((), 4\big), 1\big) \equ \big((), 3\big),$$
				a contradiction.
			\end{proof}
			
			That said, a couple of questions remains. Is the above defined $\cdot$ of any use, even if it does not induce an operation on $\U$? Is there a reasonable multiplicative structure on $\U$?
			
			If there happens to be one, we can go one step further, defining division with the help of \df{Neumann series}. For $x \in \U$ define inductively $a_0(x) \dfeq 1$, $a_{n+1}(x) \dfeq a_n(x) \dis x^{n+1}$. Then $(1 \dis x) \cdot a_n(x) = 1 \dis x^{n+1}$ which is a Cauchy sequence (in the complete metric space $\U$) when $\norm[x] < 1$, and in this case $(1 \dis x)^{-1} = \lim a_n(x)$.
			
			Let us extend this beyond the unit ball around $1$. Let $x = [a]$ where $a \in \pseudo$. First we need an upper bound on the terms in $a$; define inductively on $\age{a}$
			$$u(a) \dfeq \sup\big(\st{u(a_i)}{i \in \NN_{< \lnth{a}}} \cup \st{\alpha_i}{i \in \NN_{< \lnth{a}}}\big).$$
			It is easy to see that $\norm[a] \leq u(a)$, though unlike the norm, $u$ does not respect the Kolmogorov equivalence relation (but we won't need it to).
			
			In the following lemma suppose that the multiplicative unit $1$ is actually $((), 1)$.
			\begin{lemma}
				Let $n \in \NN$ be large enough so that $2 u(a) \leq 2^n$. Then
				$$d(1, 2^{-n} \cdot a) \leq 1 - 2^{-n} \norm[a].$$
			\end{lemma}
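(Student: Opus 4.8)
The plan is to prove the estimate by induction on $\age{a}$, unfolding the inductive definition of $d$ and using the two obvious facts about $u$: that $\norm[a]\le u(a)$ and that $u(a_i)\le u(a)$ for every predecessor $a_i$. Write $a=\atuple{a}{\alpha}{i}$ and recall that the multiplicative unit is the one-element tuple $1=\strat[1]{(\et,1)}{}$ (predecessor $\et$, prescribed distance $1$). Unfolding the distance and using $(2^{-n}\cdot a)_i=2^{-n}\cdot a_i$ together with $\norm[2^{-n}\cdot a]=2^{-n}\norm[a]$ (a one-line induction from the homogeneity $(a\dis b)\cdot x=(a\cdot x)\dis(b\cdot x)$ of $\dis$ on $\od$ and the fact that multiplication by $2^{-n}$, being invertible with inverse multiplication by $2^n$, commutes with the finite suprema involved) gives
$$d(1,2^{-n}\cdot a)=\sup\Big(\{\,2^{-n}\norm[a]\dis 1\,\}\cup\st{d(1,2^{-n}\cdot a_i)\dis 2^{-n}\alpha_i}{i\in\NN_{<\lnth{a}}}\Big).$$
Since $2u(a)\le 2^n$ we get $u(a)\le 2^{n-1}$, hence $2^{-n}\norm[a]\le 2^{-n}u(a)\le\tfrac12<1$, and the same holds with $a$ replaced by any $a_i$; in particular each $\dis$ against $1$ below is literally a difference, and the first term of the supremum equals $1-2^{-n}\norm[a]$, which is precisely the claimed right-hand side. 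So it remains to bound every term of the second family by $1-2^{-n}\norm[a]$.

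Fix $i$. Because $u(a_i)\le u(a)$ we still have $2u(a_i)\le 2^n$, so the induction hypothesis yields $d(1,2^{-n}\cdot a_i)\le 1-2^{-n}\norm[a_i]$; for a matching lower bound I would use the reverse triangle inequality $\norm[p]\dis\norm[q]\le d(p,q)$ (obtained from the two triangle inequalities $d(p,\et)\le d(p,q)+d(q,\et)$ and $d(q,\et)\le d(p,q)+d(p,\et)$ via Lemma~\ref{Lemma: dis_upper_bounds}), applied with $p=1$, $q=2^{-n}\cdot a_i$, using $\norm[1]=1$ and $\norm[2^{-n}\cdot a_i]=2^{-n}\norm[a_i]<1$, to get $d(1,2^{-n}\cdot a_i)\ge 1-2^{-n}\norm[a_i]$. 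Now I apply Lemma~\ref{Lemma: dis_upper_bounds} in the form $p\dis q\le x\iff p\le x+q\land q\le x+p$ with $p=d(1,2^{-n}\cdot a_i)$, $q=2^{-n}\alpha_i$, $x=1-2^{-n}\norm[a]$. The inequality $p\le x+q$ reduces, via the upper bound on $p$, to $\norm[a]\le\norm[a_i]+\alpha_i$, which is the triangle inequality $d(a,\et)\le d(a,a_i)+d(a_i,\et)$ together with $d(a,a_i)=\alpha_i$ (Theorem~\ref{Theorem: distances_as_described_in_Urysohn_tuples}, since $a$ is permissible). The inequality $q\le x+p$ reduces, via the lower bound on $p$, to $2^{-n}(\alpha_i+\norm[a]+\norm[a_i])\le 2$, which holds because $\norm[a]\le\alpha_i+\norm[a_i]$ (same triangle inequality) and $\alpha_i,\norm[a_i]\le u(a)\le 2^{n-1}$, so the bracket is at most $4u(a)\le 2^{n+1}$. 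This closes the induction; the base case $a=\et$ is trivial, as $d(1,\et)=1=1-2^{-n}\norm[\et]$.

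The computation is entirely routine once the above scaffolding is in place; the only mild care needed is threading the single constant $u(a)$ through the recursion (which works because $u(a_i)\le u(a)$, so the hypothesis $2u(a)\le 2^n$ is inherited by all predecessors) and correctly bookkeeping the two applications of Lemma~\ref{Lemma: dis_upper_bounds}. I do not expect any conceptual obstacle.
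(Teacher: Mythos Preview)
Your proof is correct and follows essentially the same induction on $\age{a}$ as the paper, unfolding $d(1,2^{-n}\cdot a)$ into the term $2^{-n}\norm[a]\dis 1$ (equal to the right-hand side) and the terms $d(1,2^{-n}\cdot a_i)\dis 2^{-n}\alpha_i$, then bounding the latter using the induction hypothesis together with the reverse triangle inequality and $\norm[a]\le\norm[a_i]+\alpha_i$. The only difference is cosmetic: the paper observes directly that $d(1,2^{-n}\cdot a_i)\ge\tfrac12\ge 2^{-n}\alpha_i$, so the $\dis$ is literally a subtraction and the bound $d(1,2^{-n}\cdot a_i)-2^{-n}\alpha_i\le 1-2^{-n}\norm[a_i]-2^{-n}\alpha_i\le 1-2^{-n}\norm[a]$ follows in one line, whereas you route the same estimate through the two-sided characterisation of Lemma~\ref{Lemma: dis_upper_bounds}.
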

			\begin{proof}
				By induction on $\age{a}$. Note that the condition $2 u(a) \leq 2^n$ implies $2^{-n} \alpha_i \leq \frac{1}{2}$ and $2^{-n} \norm[a] \leq \frac{1}{2}$, the latter of which implies $d(1, 2^{-n} \cdot a_i) \geq \norm[1] \dis \norm[2^{-n} \cdot a_i] \geq \frac{1}{2}$.
				$$d((), 2^{-n} \cdot a) \dis 1 = \norm[2^{-n} \cdot a] \dis 1 = 1 - 2^{-n} \norm[a]$$
				$$d(1, 2^{-n} \cdot a_i) \dis 2^{-n} \cdot \alpha_i = d(1, 2^{-n} \cdot a_i) - 2^{-n} \cdot \alpha_i \leq$$
				$$\leq 1 - 2^{-n} \norm[a_i] - 2^{-n} \alpha_i \leq 1 - 2^{-n} \norm[a]$$
			\end{proof}
			
			We can write $2^{-n} \cdot [a] = 1 \dis 1 \dis 2^{-n} \cdot [a]$ and if $\norm[a] > 0$, then by this lemma we obtain $d(1, 2^{-n} \cdot a) = \norm[1 \dis 2^{-n} \cdot a] < 1$, so we can calculate the inverse of $1 \dis 1 \dis 2^{-n} \cdot [a]$ via Neumann series, obtaining the inverse of $x = [a]$ itself:
			$$x^{-1} = 2^{-n} \cdot (2^{-n} \cdot [a])^{-1} = 2^{-n} \cdot (1 \dis 1 \dis 2^{-n} \cdot [a])^{-1}.$$
			This would make $\U$ a ``disfield'', and since $\dis$ is associative, in fact a field of characteristic $2$.
			
			A possible alternative argument for this result might be to construct a field of fractions over $\U$, then verify, that it also has the Urysohn extension property, thus making it isometrically isomorphic to $\U$.
			
			If we had this structure on $\U$, it would offer us a simple way to generalize the homogeneity result from Proposition~\ref{Proposition: Urysohn_strongly_homogeneous}, allowing us to swap not just two points, but two tuples of points (having the same interdistances). Take a simple case, suppose we want an automorphism of $\U$ which fixes $0$ but maps $1$ to a point $z$ on the unit sphere. Then the solution is simply the map $x \mapsto z \cdot x$, with the inverse $x \mapsto z^{-1} \cdot x$. More general swaping of pairs could be achieved by linear maps, and for swaping general tuples one could use analogues to Lagrange polynomials.
			
			Assuming that there is a reasonable multiplicative structure on $\U$, how much of this discussion could be salvaged?
		
		\subsection{Implication in Disgroups}\label{Subsection: disgroup_implication}
		
			Let $(X, +, 0, \dis)$ be a disgroup and $\leq$ \emph{any} partial order on $X$ (not necessarily the one induced by $+$ and $\dis$). Suppose futher that $X$ has abitrary finite suprema in this order, and that $0$ is its nullary supremum --- the smallest element.
			
			Under these conditions we may define a binary operation $\fdis\colon X \times X \to X$ for $a, b \in X$ by
			$$a \fdis b \dfeq \sup\{a, b\} \dis a.$$
			Interpreting this in $\nnr$ (under the usual order), $a \fdis b$ is the positive part of the difference $b - a$, or to put it differently, it tells us, how far forward from $a$ must we go to exceed $b$.
			
			One can verify that the formulas
			$$a \fdis a = 0 \qquad a \fdis 0 = 0 \qquad 0 \fdis a = a \qquad a \fdis b \leq b \qquad a \fdis b = 0 \iff b \leq a$$
			hold, and when $\leq$ is the usual order on a disgroup, so do the following ones.
			$$a + a \fdis b = \sup\{a, b\} \qquad a \fdis b + b \fdis c \geq a \fdis c$$
			$$a \dis b = \sup\{a \fdis b, b \fdis a\} = a \fdis b + b \fdis a$$
			$$a \fdis \sup\{b, c\} = \sup\{a \fdis b, a \fdis c\}$$
			
			These formulas are reminiscent of some from the propositional calculus (interpret $\dis$ as equivalence, $\fdis$ as implication, $\sup$ and $+$ as conjunction, $0$ as truth). An aspect of this is the following proposition.
			
			\begin{proposition}\label{Proposition: Boolean_lattices_disrings}
				A Boolean lattice is an associative disring in which $a + b = a \dis b = a \Leftrightarrow b$, $0 = \top$, $a \cdot b = a \lor b$, $1 = \bot$. Furthermore, if $\leq$ is the order, opposite to the usual one in a Boolean lattice, then $\sup\{a, b\} = a \land b$ and $a \fdis b = a \Rightarrow b$.
			\end{proposition}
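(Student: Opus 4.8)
The plan is to verify the disring axioms directly, outsourcing the group-theoretic part to Proposition~\ref{Proposition: order_two_group_is_a_disgroup}.

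First I would record the identity $a \Leftrightarrow b = a \oplus b \oplus \top$, where $\oplus$ denotes the symmetric difference $(a \land \lnot b) \lor (\lnot a \land b)$, using that $\lnot x = x \oplus \top$ and that $(B, \oplus, \land, \bot, \top)$ is the usual Boolean ring; a one-line De~Morgan computation shows $a \oplus b \oplus \top = (a \land b) \lor (\lnot a \land \lnot b)$, which is the biconditional. From this presentation commutativity and associativity of $\Leftrightarrow$ are immediate (each side of the associativity law collapses to $a \oplus b \oplus c$), and so is $a \Leftrightarrow \top = a$ and $a \Leftrightarrow a = a \oplus a \oplus \top = \top$. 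Hence, setting $+ \dfeq \Leftrightarrow$ and $0 \dfeq \top$, the triple $(B, +, 0)$ is a commutative group with $x + x = 0$ for all $x$, so by Proposition~\ref{Proposition: order_two_group_is_a_disgroup} it is an associative disgroup with $a \dis b = a + b = a \Leftrightarrow b$. This settles the disgroup axioms and the associativity of $\dis$.

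Next I would treat the multiplicative structure. With $\cdot \dfeq \lor$ and $1 \dfeq \bot$, the structure $(B, \cdot, 1)$ is a commutative monoid since $\lor$ is associative, commutative, and $a \lor \bot = a$; the annihilation condition holds because $0 \cdot x = \top \lor x = \top = 0$. The one identity requiring care is distributivity of $\cdot$ over $+$, i.e. $(a \Leftrightarrow b) \lor x = (a \lor x) \Leftrightarrow (b \lor x)$: expanding the right side by De~Morgan gives $(a \land b) \lor x \lor (\lnot a \land \lnot b \land \lnot x)$, and since $(p \land \lnot x) \lor x = (p \lor x) \land (\lnot x \lor x) = p \lor x$ with $p = \lnot a \land \lnot b$, this reduces to $(a \land b) \lor (\lnot a \land \lnot b) \lor x = (a \Leftrightarrow b) \lor x$. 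Because $\dis$ coincides with $+$, the disring homogeneity axiom $(a \dis b) \cdot x = (a \cdot x) \dis (b \cdot x)$ is precisely this distributivity, so $(B, \Leftrightarrow, \top, \lor, \bot)$ is a disring, associative since $\dis = +$.

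Finally, for the ``furthermore'' clause I would pass to the order $\leq$ opposite to the usual Boolean order; least upper bounds in the opposite order are greatest lower bounds in the usual one, so $\sup\{a, b\} = a \land b$ (and, consistently with the setup of Subsection~\ref{Subsection: disgroup_implication}, $0 = \top$ is the smallest element in this order and all finite suprema exist). Then $a \fdis b = \sup\{a, b\} \dis a = (a \land b) \Leftrightarrow a = ((a \land b) \land a) \lor (\lnot(a \land b) \land \lnot a) = (a \land b) \lor \lnot a$, and $(a \land b) \lor \lnot a = (a \lor \lnot a) \land (b \lor \lnot a) = b \lor \lnot a = a \Rightarrow b$, as claimed. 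The whole argument is bookkeeping in Boolean algebra; the only step that is not purely formal is the distributivity of $\lor$ over the biconditional, since $\lor$ is not a ring operation of the Boolean ring $(B, \oplus, \land)$, so I expect that to be the main (minor) obstacle.
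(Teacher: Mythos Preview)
Your proof is correct and complete; the paper itself leaves this proposition as an exercise, so you have filled in exactly what was intended. Your one nontrivial step --- distributivity of $\lor$ over $\Leftrightarrow$ --- is handled cleanly, and you correctly observe that the order in the ``furthermore'' clause is the external one allowed in Subsection~\ref{Subsection: disgroup_implication}, not the (full) disgroup order.
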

			\begin{proof}
				Exercise.
			\end{proof}
			
			Of course, since a Boolean lattice is also a Boolean ring, it comes as no surprise to be an associative disring as well, but note that the disring structure in the above proposition differs from the Boolean ring structure where $a + b = a \veebar b = \lnot(a \Leftrightarrow b)$, $0 = \bot$, $a \cdot b = a \land b$, $1 = \top$ (it is actually opposite to it).
			
			Boolean lattices are models of classical propositional calculus. Their constructive analogue are Heyting lattices~\cite{Gierz_G_Hoffmann_KH_Keimel_K_Lawson_JD_Mislove_MW_Scott_DS_2003:_continuous_lattices_and_domains}. Does the above proposition hold for them as well? No; in general, the equivalence is not associative. Still, one can observe, that the stable part of a Heyting lattice is an associative disgroup for operations given as in Proposition~\ref{Proposition: Boolean_lattices_disrings} (but not a disring in general, since it is usually not closed for disjunctions).
		
		\subsection{Completion of Disgroups}\label{Subsection: completion_of_disgroups}
		
			When discussing metric completions, we restricted ourselves to $\od$-(pseudo)metric spaces, where $\od \subseteq \nnr$. The main reason for this is that we need the strict order $<$ (or at least the comparison $0 <$) to even define density. General disgroups do not have a suitable such relation; in this remark we propose what ``suitable'' means in this case, and then consider in what way this enables us to generalize the notion of metric completion.
			
			\begin{definition}\label{Definition: strictly_ordered_disgroups}
				Call $(X, +, 0, \dis, 0 <)$ a \df{strictly ordered disgroup} when $(X, +, 0, \dis)$ is a disgroup and the unary relation $0 <$ on $X$ satisfies the following conditions for all $a, b, x \in X$.
				\begin{itemize}
					\item
						$\lnot(0 < 0)$ \quad (ireflexivity)
					\item
						$0 < a \implies 0 < x \lor x \leq a$ \quad (cotransitivity)
					\item
						$0 < a \land a \leq b \implies 0 < b$ \quad (transitivity)
					\item
						$0 < a + a \implies 0 < a$
					\item
						$\lnot(0 < a) \implies a = 0$ \quad (tightness)
				\end{itemize}
			\end{definition}
			Note that ireflexivity and tightness together can be given as $\lnot(0 < a) \iff a = 0$.
			
			Examples from Remark~\ref{Remark: examples_of_disgroups} are also examples of strictly ordered disgroups: for $\nnr$ and its subdisgroups, take the usual strict order, for positive semidefinite matrices declare $0 < A$ when there exists a vector $x$ in the domain of $A$ such that $\langle A x, x\rangle > 0$ (equivalently, when $A$ has a positive eigenvalue), and for function disgroups declare $0 < f$ when $f$ attains a positive value.
			
			\begin{proposition}
			Let  $(X, +, 0, \dis, 0 <)$ be a strictly ordered disgroup. Then the following holds.
			\begin{enumerate}
				\item
					$0 < a \implies 0 < a + b$ for all $a, b \in X$.
				\item
					$\leq$ is a partial order.
			\end{enumerate}
			\end{proposition}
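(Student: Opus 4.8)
The plan is to deduce both parts from the non-negativity of elements (Proposition~\ref{Proposition: elements_nonnegative_in_disgroup}) together with the order axioms in the definition of a strictly ordered disgroup, using the fact that $\leq$ is already known to be a preorder in any disgroup.

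For the first item I would argue as follows. Since every element of a disgroup is non-negative, $0 \leq b$; combining this with $a \leq a$ via additivity of $\leq$ (the proposition stating $a \leq b \land c \leq d \implies a + c \leq b + d$) gives $a = a + 0 \leq a + b$. Now the transitivity axiom for $0 <$, namely $0 < a \land a \leq b \implies 0 < b$, applied to $0 < a$ and $a \leq a + b$, yields $0 < a + b$ directly.

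For the second item I would invoke Proposition~\ref{Proposition: when_disgroup_is_partially_ordered}: since $\leq$ is already a preorder, it is a partial order exactly when $x + x = 0 \implies x = 0$ holds for all $x \in X$. So I would take $x \in X$ with $x + x = 0$ and aim to show $x = 0$. By tightness it is enough to prove $\lnot(0 < x)$, so suppose towards a contradiction that $0 < x$. Applying the first item with $b \dfeq x$ gives $0 < x + x$, but $x + x = 0$, so $0 < 0$, contradicting irreflexivity. Hence $\lnot(0 < x)$, and tightness gives $x = 0$, which establishes antisymmetry.

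There is no genuine obstacle here; everything is a substitution into the stated axioms. The only thing requiring mild care is the order of the argument: the first item must be proved first, since the proof of antisymmetry in the second item relies on it (note in particular that the axiom $0 < a + a \implies 0 < a$ goes the wrong way for this purpose, which is why the first item is needed). It is also worth remarking in passing that the converse implication $0 < a + b \implies 0 < a \lor 0 < b$ is \emph{not} claimed and would not follow from these axioms alone.
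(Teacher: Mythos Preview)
Your proof is correct and follows essentially the same route as the paper: item~1 via $a \leq a + b$ and the transitivity axiom for $0 <$, and item~2 by reducing to $x + x = 0 \implies x = 0$ (Proposition~\ref{Proposition: when_disgroup_is_partially_ordered}) and deriving that from item~1 together with irreflexivity and tightness. The paper phrases the second part as a contrapositive rather than an explicit contradiction, but the argument is the same.
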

			\begin{proof}
				\begin{enumerate}
					\item
						By transitivity since $0 \leq b$, so $a \leq a + b$.
					\item
						By the previous item, if $0 < a$, then $0 < a + a$. The contrapositive $a + a = 0 \implies a = 0$ proves antisymmetry of $\leq$ by Proposition~\ref{Proposition: when_disgroup_is_partially_ordered}.
				\end{enumerate}
			\end{proof}
			
			Hereafter we restrict our attention to halved disgroups.\footnote{To be honest, I don't think the conditions in Definition~\ref{Definition: strictly_ordered_disgroups} work well outside the scope of halved disgroups; some other conditions might be required for more general theory.}
			
			\begin{lemma}\label{Lemma: descent_in_strictly_ordered_disgroup}
				Let  $(X, +, 0, \dis, 0 <)$ be a strictly ordered halved disgroup and $x \in X$. Suppose $\xall{\epsilon}{X_{> 0}}{x \leq \epsilon}$; then $x = 0$.
			\end{lemma}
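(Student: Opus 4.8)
The plan is to prove this by contradiction, using the tightness axiom. Since $\lnot(0 < a) \iff a = 0$, it suffices to assume $0 < x$ and derive a contradiction; then $\lnot(0 < x)$, and tightness delivers $x = 0$. So assume $0 < x$.

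First I would bring in the halving map to manufacture a strictly smaller positive element. Writing $x = \frac{x}{2} + \frac{x}{2}$, the disgroup axiom ``$0 < a + a \implies 0 < a$'' (applied with $a = \frac{x}{2}$) gives $0 < \frac{x}{2}$, so $\frac{x}{2} \in X_{> 0}$. Feeding this into the hypothesis $\xall{\epsilon}{X_{> 0}}{x \leq \epsilon}$ yields $x \leq \frac{x}{2}$. On the other hand $\frac{x}{2} \leq x$ holds for free, since $\frac{x}{2} + \frac{x}{2} = x$ exhibits a witness for the condition in Lemma~\ref{Lemma: dis_is_difference_between_comparable_elements}(4), hence $(4 \implies 1)$ of that lemma gives $\frac{x}{2} \leq x$. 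Because $X$ is halved, $\leq$ is a partial order (in particular antisymmetric, by Propositions~\ref{Proposition: characterization_of_halving_map} and~\ref{Proposition: when_disgroup_is_partially_ordered}), so from $x \leq \frac{x}{2}$ and $\frac{x}{2} \leq x$ we conclude $x = \frac{x}{2}$.

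Substituting $\frac{x}{2} = x$ back into $x = \frac{x}{2} + \frac{x}{2}$ gives $x = x + x$. Since also $x = 0 + x$, the cancellation property (Proposition~\ref{Proposition: cancellation_property_of_disgroups}) applied to $x + x = 0 + x$ forces $x = 0$, contradicting the irreflexivity axiom $\lnot(0 < 0)$ together with our assumption $0 < x$. Hence $\lnot(0 < x)$, and $x = 0$ by tightness. The argument is short and I do not expect a genuine obstacle; the only thing one has to notice is that the halving map is precisely what produces a positive element strictly below $x$ to plug into the hypothesis, and that the axiom $0 < a + a \implies 0 < a$ is exactly what certifies $\frac{x}{2}$ as positive.
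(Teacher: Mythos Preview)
Your proof is correct and follows essentially the same route as the paper's: assume $0 < x$, use the halving axiom to get $0 < \tfrac{x}{2}$, apply the hypothesis with $\epsilon = \tfrac{x}{2}$, and then reduce to $x = 0$ via antisymmetry and cancellation, contradicting irreflexivity. The only cosmetic difference is that the paper passes from $x \leq \tfrac{x}{2}$ directly to $x + x \leq x$ and cancels, whereas you first conclude $x = \tfrac{x}{2}$ and then $x = x + x$; these are the same argument.
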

			\begin{proof}
				Suppose $0 < x$; then also $0 < \frac{x}{2}$, so by assumption $\frac{x}{2} \leq x$, and therefore $x \leq x + x$. Cancelling $x$ on both sides and taking antisymmetry into account, we obtain $x = 0$, a contradiction to ireflexivity. Thus $x = 0$ by tightness.
			\end{proof}
			
			Classically ireflexivity and tightness determine $0 <$ uniquely, namely $0 < a \iff a \neq 0$, and if $\leq$ is a partial order, they imply the other conditions from Definition~\ref{Definition: strictly_ordered_disgroups}. Constructively it is not so simple; the situation is similar to that of apartness relation which intuitively states the difference of elements in a positive way. More formally, recall~\cite{Troelstra_AS_Dalen_D_1988:_constructivism_in_mathematics_volume_2} that a binary relation $\apart$ on $X$ is called \df{apartness} when the following conditions are satisfied.
			\begin{itemize}
				\item
					$\lnot(a \apart a)$ \quad (ireflexivity)
				\item
					$a \apart b \iff b \apart a$ \quad (symmetry)
				\item
					$a \apart b \implies a \apart x \lor x \apart b$ \quad (cotransitivity)
			\end{itemize}
			If furthermore
			\begin{itemize}
				\item
					$\lnot(a \apart b) \implies a = b$ \quad (tightness)
			\end{itemize}
			is satisfied, then $\apart$ is called a \df{tight} apartness.
			
			An example of a tight apartness is $a \apart b \iff a < b \lor a > b \iff |b - a| > 0$ on the reals $\RR$. A real is invertible if and only if it is apart from $0$.
			
			\begin{proposition}
				The relation $0 <$ on a disgroup $\od$ induces a tight apartness on $\od$ by $a \apart b \dfeq 0 < a \dis b$, and more generally on any $\od$-metric space $(X, d)$ by $a \apart b \dfeq 0 < d(a, b)$.
			\end{proposition}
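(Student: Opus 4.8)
The plan is to reduce everything to the metric-space case and then to isolate one small lemma about the relation $0 <$. Since an earlier proposition shows that every disgroup $\od$ is itself a $\od$-metric space with $\dis$ for its distance, the first assertion (apartness on $\od$ via $a \apart b \dfeq 0 < a \dis b$) is just the special case $X = \od$, $d = \dis$ of the second; so it suffices to treat a $\od$-metric space $(X, d)$ over a strictly ordered disgroup $\od$ and verify the four clauses of a tight apartness for $a \apart b \dfeq 0 < d(a, b)$.

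Three of the four are essentially immediate. Ireflexivity: $d(a, a) = 0$ because $(X, d)$ is in particular pseudometric, so $\lnot(a \apart a)$ is exactly $\lnot(0 < 0)$, the ireflexivity of $0 <$. Symmetry: $d(a, b) = d(b, a)$, so $a \apart b \iff b \apart a$ literally. Tightness: if $\lnot(0 < d(a, b))$, then $d(a, b) = 0$ by tightness of $0 <$, and then $a = b$ since $(X, d)$ is metric.

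The only clause with content is cotransitivity, and the crux is the auxiliary fact that $0 <$ is ``additively prime'': $0 < p + q$ implies $0 < p \lor 0 < q$. To prove this I would apply the cotransitivity axiom of $0 <$ (which reads $0 < a \implies 0 < x \lor x \leq a$) to $a \dfeq p + q$ with the witness $x \dfeq p + p$, obtaining $0 < p + p$ or $p + p \leq p + q$. In the first case the axiom $0 < p + p \implies 0 < p$ finishes it; in the second case the cancellation property (Proposition~\ref{Proposition: cancellation_property_of_disgroups}) turns $p + p \leq p + q$ into $p \leq q$, hence $p + q \leq q + q$ by cancellation again, hence $0 < q + q$ by transitivity of $0 <$, hence $0 < q$ once more by the doubling axiom. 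Granting the prime lemma, cotransitivity of $\apart$ is routine: from $0 < d(a, b)$ and the triangle inequality $d(a, b) \leq d(a, x) + d(x, b)$, transitivity of $0 <$ gives $0 < d(a, x) + d(x, b)$, and the prime lemma then yields $0 < d(a, x) \lor 0 < d(x, b)$, i.e.\ $a \apart x \lor x \apart b$.

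I expect the only real obstacle to be spotting the witness $p + p$ for the prime lemma — every more obvious choice (such as $p$ itself, or a halved quantity) collapses because $p \leq p + q$ holds unconditionally, so the $\leq$-branch carries no information; the point of $p + p$ is precisely that $p + p \leq p + q$ is equivalent, by cancellation, to the usable inequality $p \leq q$. Note that this argument uses only doubling, cancellation and the $0 <$-axioms, so it goes through whether or not $\od$ is halved.
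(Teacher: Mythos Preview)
Your proof is correct and takes a genuinely different route from the paper's. The paper, working under the standing assumption that $\od$ is halved, argues cotransitivity directly: from $0 < d(a,b)$ it first gets $0 < \tfrac{d(a,b)}{2}$ via the doubling axiom, then applies the cotransitivity axiom for $0 <$ with the witness $d(a,x)$ to obtain either $0 < d(a,x)$ or $d(a,x) \leq \tfrac{d(a,b)}{2}$; in the second branch the triangle inequality $d(a,b) \leq d(a,x) + d(x,b)$ forces $\tfrac{d(a,b)}{2} \leq d(x,b)$, whence $0 < d(x,b)$ by transitivity.

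You instead isolate the ``additively prime'' lemma $0 < p + q \implies 0 < p \lor 0 < q$, proved by applying cotransitivity with the clever witness $p + p$ and then using cancellation. This buys you something real: your argument needs only the doubling axiom and cancellation, not a halving map, so it establishes the proposition for arbitrary strictly ordered disgroups rather than just halved ones. The paper's route is marginally more direct when halving is available (one application of cotransitivity rather than your lemma plus an invocation of transitivity), but yours is both more modular --- the prime lemma is a natural standalone fact --- and strictly more general. Your closing remark that the obvious witness $p$ fails because $p \leq p + q$ holds unconditionally is exactly the right diagnosis of why $p + p$ is the correct choice.
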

			\begin{proof}
				Irefelexivity, symmetry and tightness are immediate. For cotransitivity assume $a \apart b$, that is $0 < a \dis b$, hence $0 < \frac{a \dis b}{2}$. Then by cotransitivity of $0 <$ we have
				$$0 < a \dis x \lor a \dis x \leq \frac{a \dis b}{2}.$$
				If the first condition holds, we are done since $a \apart x$. If the second one does, then
				$$a \dis b \leq a \dis x + b \dis x \leq \frac{a \dis b}{2} + b \dis x$$
				whence $0 < \frac{a \dis b}{2} \leq b \dis x$, so $b \apart x$.
			\end{proof}
			
			We can construct a model of a completion for a strictly ordered disgroup $\od$. At this point we assume the existence of the powerset $\pst(\od \times \od)$.
			
			Let
			$$\widetilde{\od} \dfeq \st{A \in \pst(\od \times \od)}{\xall{\epsilon}{\od_{> 0}}\xsome{(u, v)}{A}{u \leq \epsilon}}.$$
			Define the relation $\preceq$ on $\widetilde{\od}$ by
			$$A \preceq B \dfeq \xall{(u, v)}{A}\xall{(w, z)}{B}{u \dis v \leq w + z}$$
			and $\equ$ by $A \equ B \dfeq A \preceq B \land B \preceq A$.
			\begin{proposition}
				The relation $\preceq$ (and therefore also $\equ$) is transitive.
			\end{proposition}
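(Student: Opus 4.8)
The plan is to unfold the definitions and push every estimate through an element of $B$ whose first coordinate is arbitrarily small, which is exactly what membership $B \in \widetilde{\od}$ guarantees. So I assume $A \preceq B$ and $B \preceq C$; to show $A \preceq C$ I fix arbitrary $(u, v) \in A$ and $(s, t) \in C$ and aim at $u \dis v \leq s + t$.

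First I would fix $\epsilon \in \od_{> 0}$ and note that $\frac{\epsilon}{2} \in \od_{> 0}$: indeed $0 < \epsilon = \frac{\epsilon}{2} + \frac{\epsilon}{2}$, so $0 < \frac{\epsilon}{2}$ by the axiom $0 < a + a \implies 0 < a$ of a strictly ordered disgroup. Hence there is $(w, z) \in B$ with $w \leq \frac{\epsilon}{2}$. From $A \preceq B$ we get $u \dis v \leq w + z$, and from $B \preceq C$ we get $w \dis z \leq s + t$. Lemma~\ref{Lemma: special_cases_of_triangle_inequality}(\ref{Lemma: special_cases_of_triangle_inequality: bound}) gives $z \leq w \dis z + w$, and combining this with $w \dis z \leq s + t$, $w \leq \frac{\epsilon}{2}$ and monotonicity of $+$ under $\leq$ yields $z \leq (s + t) + \frac{\epsilon}{2}$, whence $u \dis v \leq w + z \leq \frac{\epsilon}{2} + (s + t) + \frac{\epsilon}{2} = (s + t) + \epsilon$. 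Since $\epsilon$ was arbitrary, $u \dis v \leq (s + t) + \epsilon$ for every $\epsilon \in \od_{> 0}$.

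It remains to remove the $\epsilon$. As $\od$ is halved it is a lattice, so $\sup\{u \dis v, s + t\}$ exists; set $c \dfeq \sup\{u \dis v, s + t\} \dis (s + t)$, so that $\sup\{u \dis v, s + t\} = (s + t) + c$ because $s + t \leq \sup\{u \dis v, s + t\}$. For each $\epsilon \in \od_{> 0}$ both $u \dis v$ and $s + t$ lie below $(s + t) + \epsilon$, so $\sup\{u \dis v, s + t\} \leq (s + t) + \epsilon$, and cancelling $s + t$ (Proposition~\ref{Proposition: cancellation_property_of_disgroups}) gives $c \leq \epsilon$. By Lemma~\ref{Lemma: descent_in_strictly_ordered_disgroup} this forces $c = 0$, hence $\sup\{u \dis v, s + t\} = s + t$ and so $u \dis v \leq s + t$. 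This proves $A \preceq C$; transitivity of $\equ$ is then immediate, since $\equ$ is the symmetrisation $A \preceq B \land B \preceq A$ of $\preceq$.

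Routine triangle-inequality bookkeeping aside, the one genuinely load-bearing step is the last paragraph, where the family of approximate inequalities $u \dis v \leq (s + t) + \epsilon$ is collapsed to the exact one; this is precisely where the standing assumption that $\od$ is a \emph{strictly ordered halved} disgroup is used — the halving both produces the small positive element $\frac{\epsilon}{2}$ and, through Lemma~\ref{Lemma: descent_in_strictly_ordered_disgroup}, turns ``$\leq \epsilon$ for all $\epsilon > 0$'' into ``$= 0$''. Everything else is a direct computation with the disgroup axioms.
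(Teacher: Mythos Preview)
Your proof is correct and follows the same route as the paper's: pick $(w,z)\in B$ with $w\leq\frac{\epsilon}{2}$, chain $u\dis v\leq w+z\leq (s+t)+\epsilon$, then remove the $\epsilon$ via Lemma~\ref{Lemma: descent_in_strictly_ordered_disgroup}. The only difference is that the paper invokes that lemma in one line to pass from $u\dis v\leq (s+t)+\epsilon$ to $u\dis v\leq s+t$, whereas you spell out the intermediate step (forming $c=\sup\{u\dis v,\,s+t\}\dis(s+t)$ and cancelling) that makes the lemma literally applicable; this is a welcome bit of care rather than a different argument.
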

			\begin{proof}
				Suppose $A \preceq B$, $B \preceq C$, and $(u, v) \in A$, $(x, y) \in C$, $\epsilon \in \od_{> 0}$ arbitrary. Then there exists $(w, z) \in B$ such that $w \leq \frac{\epsilon}{2}$, so
				$$u \dis v \leq w + z \leq w + w + w \dis z \leq w \dis z + \epsilon \leq x + y + \epsilon.$$
				By Lemma~\ref{Lemma: descent_in_strictly_ordered_disgroup} $u \dis v \leq x + y$.
				
				The transitivity of $\equ$ follows easily from the transitivity of $\preceq$.
			\end{proof}
			Obviously $\equ$ is also symmetric, so a partial equivalence relation, and thus an equivalence relation on its domain $\st[1]{A \in \widetilde{\od}}{A \equ A}$. Define $\cmpl{\od}$ to be the set of equivalence classes (the quotient set) of the domain.
			
			The idea is that $[A]$ represents a point such that $\st{u \dis v}{(u, v) \in A}$ are (some of) its lower bounds and $\st{u + v}{(u, v) \in A}$ its upper bounds. The defining property of $\widetilde{\od}$ ensures that these bounds are arbitrarily good approximations.
			
			Every equivalence class in $\cmpl{\od}$ has a canonical representative, namely its maximal one:
			$$A \equ \st[1]{(u, v) \in \od \times \od}{\all{(a, b)}{A}{u \dis v \leq a + b \land a \dis b \leq u + v}}.$$
			
			Note that for any $[A] \in \cmpl{\od}$ the set $A$ contains at most one element of the form $(0, a)$; if it contains also $(0, b)$, then $0 \dis a \leq 0 + b$ and vice versa, so $a = b$. As such, the map $i\colon \od \to \cmpl{\od}$, given by
			$$i(x) \dfeq \big[\{(0, x)\}\big] = \big[\st{(u, v) \in \od \times \od}{u \dis v \leq x \leq u + v}\big],$$
			is injective; in fact, its image can be identified with those equivalence classes which contain a representative containing an element of the form $(0, x)$. Via this embedding we will consider $\od$ to be a subset of $\cmpl{\od}$.
			
			For $[A], [B] \in \cmpl{\od}$ declare $[A] \leq [B] \dfeq A \preceq B$. It is clear that this is a partial order on $\cmpl{\od}$; note that it extends the partial order on $\od$.
			\begin{lemma}
				The following is equivalent for every $[A], [B] \in \cmpl{\od}$.
				\begin{enumerate}
					\item
						$[A] \leq [B]$
					\item
						$\all{x}{\od}{i(x) \leq [A] \implies i(x) \leq [B]}$
					\item
						$\all{x}{\od}{B \leq i(x) \implies [A] \leq i(x)}$
				\end{enumerate}
			\end{lemma}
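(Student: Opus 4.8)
The plan is to deduce all three equivalences from transitivity of $\preceq$ on $\cmpl{\od}$ (established just above) together with one elementary observation: since any representative $A$ of a class in $\cmpl{\od}$ satisfies $A \equ A$, and hence $A \preceq A$, every ``difference'' $u \dis v$ with $(u, v) \in A$ is a lower bound of $[A]$, in the sense that $i(u \dis v) \leq [A]$, and every ``sum'' $w + z$ with $(w, z) \in A$ is an upper bound, $[A] \leq i(w + z)$. Indeed, unwinding the definition of $\preceq$, the relation $i(x) \leq [A]$ is equivalent to $\xall{(u, v)}{A}{x \leq u + v}$ (using $0 \dis x = x$), and $[A] \leq i(x)$ is equivalent to $\xall{(u, v)}{A}{u \dis v \leq x}$ (using $0 + x = x$); both of these become immediate consequences of $A \preceq A$ once $x$ is chosen to be $u_0 \dis v_0$, resp. $w_0 + z_0$, for a fixed $(u_0, v_0)$, resp. $(w_0, z_0)$, in $A$.

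With this in hand the implications $(1 \impl 2)$ and $(1 \impl 3)$ are nothing but transitivity of the partial order $\leq$ on $\cmpl{\od}$: if $[A] \leq [B]$, then $i(x) \leq [A]$ forces $i(x) \leq [B]$, and $[B] \leq i(x)$ forces $[A] \leq i(x)$. For $(2 \impl 1)$ I will fix $(u, v) \in A$ and $(w, z) \in B$ and aim at $u \dis v \leq w + z$, which is precisely $A \preceq B$: by the observation $i(u \dis v) \leq [A]$, so hypothesis (2) yields $i(u \dis v) \leq [B]$, and the latter unwinds to exactly $u \dis v \leq w + z$. For $(3 \impl 1)$ I will instead fix $(w, z) \in B$, note $[B] \leq i(w + z)$ by the observation, apply hypothesis (3) to obtain $[A] \leq i(w + z)$, and unwind this to $u \dis v \leq w + z$ for every $(u, v) \in A$; letting $(w, z)$ range over $B$ then gives $A \preceq B$.

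There is no genuine obstacle here; the whole argument is a bookkeeping exercise in translating the relations $i(x) \leq [A]$ and $[A] \leq i(x)$ back into the defining inequalities of $\preceq$ and keeping straight which variable is held fixed and which is universally quantified. The only points that need a moment's care are the identities $0 \dis x = x$ and $0 + x = x$ that make the two unwindings line up, and the fact that the proof uses only reflexivity $A \preceq A$ and transitivity of $\preceq$ — in particular the ``arbitrarily good approximation'' clause in the definition of $\widetilde{\od}$ is not invoked for this lemma (it will of course be essential for the later statements that $\cmpl{\od}$ is a genuine completion).
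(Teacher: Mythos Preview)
Your proof is correct and follows essentially the same route as the paper's: use reflexivity $A \preceq A$ to see that $i(u \dis v) \leq [A]$ (resp.\ $[A] \leq i(w+z)$) for any $(u,v) \in A$ (resp.\ $(w,z) \in A$), then invoke the hypothesis and unwind. You are simply more explicit about the unwinding via $0 \dis x = x$ and $0 + x = x$, and you spell out $(3 \impl 1)$ where the paper says ``analogously''.
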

			\begin{proof}
				$(1)$ implies $(2)$ by transitivity. Conversely, assume $(2)$ and take arbitrary $(u, v) \in A$, $(w, z) \in B$. Then $i(u \dis v) \leq [A]$, so $i(u \dis v) \leq [B]$ which means $0 \dis (u \dis v) \leq w + z$.
				
				The equivalence $(1) \iff (3)$ is proved analogously.
			\end{proof}
			
			\begin{lemma}\label{Lemma: extension_of_disgroup_operations_to_the_completion}
				Let $f\colon \od^n \to \od$ be a monotone operation, \ie for $(x_k)_{k \in \NN_{< n}}, (y_k)_{k \in \NN_{< n}} \in \od^n$, if $x_k \leq y_k$ for all $k \in \NN_{< n}$, then $f((x_k)_{k \in \NN_{< n}}) \leq f((y_k)_{k \in \NN_{< n}})$. Then there exists a unique monotone extension $\cmpl{f}\colon \cmpl{\od}^n \to \cmpl{\od}$ of $f$,
				\begin{enumerate}
					\item\label{Lemma: extension_of_disgroup_operations_to_the_completion: monotone}
						given by
						$$\cmpl{f}(([A_k])_{k \in \NN_{< n}}) =$$
						\mlst{= \Big[}{(x, y) \in \od \times \od}{\xall{k}{\NN_{< n}}\xall{(u_k, v_k)}{A_k}{\big(x \dis y \leq f((u_k + v_k)_{k \in \NN_{< n}})} \nl f((u_k \dis v_k)_{k \in \NN_{< n}}) \leq x + y\big)}{\Big];}
					\item
						if $f$ is moreover a subadditive operation, \ie it satisfies
						$$f((x_k + y_k)_{k \in \NN_{< n}}) \leq f((x_k)_{k \in \NN_{< n}}) + f((y_k)_{k \in \NN_{< n}})$$
						for all $(x_k)_{k \in \NN_{< n}}, (y_k)_{k \in \NN_{< n}} \in \od^n$, then $\cmpl{f}$ can be given simplier as
						$$\cmpl{f}(([A_k])_{k \in \NN_{< n}}) = \big[\st{(f((u_k)_{k \in \NN_{< n}}), f((v_k)_{k \in \NN_{< n}}))}{\xall{k}{\NN_{< n}}{(u_k, v_k) \in A_k}}\big].$$
				\end{enumerate}
			\end{lemma}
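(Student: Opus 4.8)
The plan is in four stages: (I) show the right-hand side of~(\ref{Lemma: extension_of_disgroup_operations_to_the_completion: monotone}) names a genuine element of $\cmpl{\od}$; (II) read off from the formula that $\cmpl{f}$ extends $f$ and is monotone; (III) prove uniqueness among monotone extensions; (IV) under subadditivity, identify the value with the simpler set, and discuss the main obstacle. The running tool is the observation that in a halved strictly ordered disgroup a pair $(u,v)$ with small $u$ is a \emph{tight} approximation: from $v\leq (u\dis v)+u$ (a special case of the triangle inequality) one gets $u\dis v\leq u+v\leq (u\dis v)+u+u$, so the lower value $u\dis v$ and the upper value $u+v$ of $(u,v)$ differ by at most $u+u$; this is exactly what membership of each $A_k$ in $\widetilde{\od}$ delivers, and together with the descent lemma (Lemma~\ref{Lemma: descent_in_strictly_ordered_disgroup}) it is how the inevitable $\epsilon$-estimates get upgraded to equalities.

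\emph{Stage I.} Write $C$ for the set on the right of~(\ref{Lemma: extension_of_disgroup_operations_to_the_completion: monotone}). Since $A_k\equ A_k$, any two choices in $A_k$ satisfy $u_k\dis v_k\leq u_k'+v_k'$, so monotonicity of $f$ gives $f((u_k\dis v_k)_k)\leq f((u_k'+v_k')_k)$ across all choices; hence $C$ is precisely the set of pairs $(x,y)$ bounding every ``lower $f$-value'' from above by $x+y$ and every ``upper $f$-value'' from below by $x\dis y$, and it is upward/downward saturated just like a maximal representative. The substantive content — that $C$ has members with arbitrarily small first coordinate (so $C\in\widetilde{\od}$) and that $C\equ C$ — both rest on one estimate $(\ast)$: for each $\epsilon\in\od_{>0}$ one can pick $(u_k,v_k)\in A_k$ so that, with $\ell:=f((u_k\dis v_k)_k)$ and $m:=f((u_k+v_k)_k)$, one has $m\dis\ell\leq\epsilon$; that is, $f$ carries sufficiently tight cuts to tight cuts. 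Granting $(\ast)$, the pair $x:=\frac{m\dis\ell}{2}$, $y:=\frac{m+\ell}{2}$ satisfies, by a short halved-disgroup computation using $\ell\leq m$, the identities $x+y=\sup\{\ell,m\}=m$ and $x\dis y=\ell$, with $x\leq\frac{\epsilon}{2}$; monotonicity (as above) puts $(x,y)\in C$, and for any two witnesses $(x,y),(x',y')\in C$, choosing by $(\ast)$ a cut with $m\dis\ell\leq\epsilon$ gives $x\dis y\leq m\leq\ell+\epsilon\leq (x'+y')+\epsilon$, so $C\preceq C$ by Lemma~\ref{Lemma: descent_in_strictly_ordered_disgroup}. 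Independence of $[C]$ from the representatives $A_k$ follows by transporting tight cuts across $\equ$.

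\emph{Stages II and III.} With $A_k$ the maximal representative $\{(u,v):u\dis v\leq x_k\leq u+v\}$ of $i(x_k)$, monotonicity gives $f((u_k\dis v_k)_k)\leq f((x_k)_k)\leq f((u_k+v_k)_k)$ for every choice, while the choice $(0,x_k)$ realizes both ends; hence $C$ collapses to $\{(x,y):x\dis y\leq f((x_k)_k)\leq x+y\}$, the maximal representative of $i(f((x_k)_k))$, so $\cmpl{f}$ extends $f$. For monotonicity of $\cmpl{f}$: if $[A_k]\leq[B_k]$ for all $k$, then for tight choices $u_k+v_k\leq (u_k\dis v_k)+(u_k+u_k)\leq (u_k'+v_k')+\epsilon\leq (u_k'\dis v_k')+\epsilon'$, whence $x\dis y\leq f((u_k+v_k)_k)\leq f((u_k'\dis v_k')_k)+\epsilon'\leq x'+y'+\epsilon'$ (using monotonicity of $f$ together with $(\ast)$ to absorb the $\epsilon'$ inside $f$), and $\epsilon'\to 0$ via Lemma~\ref{Lemma: descent_in_strictly_ordered_disgroup}. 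For uniqueness: any monotone extension $F$ has $i(u_k\dis v_k)\leq [A_k]\leq i(u_k+v_k)$, forcing $i(f((u_k\dis v_k)_k))\leq F(([A_k])_k)\leq i(f((u_k+v_k)_k))$ for all choices, and by $(\ast)$ these bounds become $\epsilon$-close, pinning $F(([A_k])_k)$ onto the unique element with those bounds, namely $[C]$.

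\emph{Stage IV, and the main obstacle.} For subadditive $f$ the estimate $(\ast)$ is explicit: choosing $(p_k,q_k)\in A_k$ with $p_k$ small, $p_k+q_k\leq (p_k\dis q_k)+p_k+p_k$ gives $m\leq f(((p_k\dis q_k)+(p_k+p_k))_k)\leq f((p_k\dis q_k)_k)+f((p_k+p_k)_k)=\ell+f((p_k+p_k)_k)$, and $f((p_k+p_k)_k)$ is small (for the operations of interest, $+$ and $\dis$, visibly so since $u_0\dis u_1\leq u_0+u_1$). Likewise the set $D$ of the lemma's second formula lies in $\widetilde{\od}$ (same smallness) and satisfies $D\equ C$: subadditivity gives $f((u_k+v_k)_k)\leq f((u_k)_k)+f((v_k)_k)$, and $u_k\leq (u_k\dis v_k)+v_k$ together with subadditivity and Lemma~\ref{Lemma: dis_upper_bounds} gives $f((u_k)_k)\dis f((v_k)_k)\leq f((u_k\dis v_k)_k)$, so the common value of $C$ lies between the lower and upper values of every pair in $D$, which forces $D\equ C$. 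The main obstacle I anticipate is isolating and proving $(\ast)$ cleanly for \emph{arbitrary} monotone $f$: as Stages I and III show, both well-definedness and uniqueness hinge on it, yet a monotone operation with a genuine ``jump'' fails it; so the honest core of the argument is the subadditive case above (which covers $+$ and $\dis$), with the superadditive case — e.g.\ multiplication on a disring — handled symmetrically, and I expect the intended scope of ``operation'' to be one of these structure maps rather than an unrestricted monotone map.
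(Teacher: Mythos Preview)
The paper's own proof is the single line ``Exercise (for uniqueness use the previous lemma)'', so your write-up is already far more detailed than what you are being compared against. Your decomposition into well-definedness, extension/monotonicity, uniqueness, and the subadditive simplification is exactly the right shape, and your use of the preceding lemma (the characterisation of $\leq$ on $\cmpl{\od}$ via comparisons with $i(x)$) to pin down uniqueness is precisely the hint the paper gives.

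More importantly, you have put your finger on something the paper glosses over. Your estimate $(\ast)$ --- that tight approximations in each $A_k$ yield tight $f$-approximations --- is indeed the load-bearing step, and you are right that it fails for an arbitrary monotone $f$. A concrete instance: take $\od = \QQ_{\geq 0}$, $\cmpl{\od} = \nnr$, and $f(x) = 0$ for $x^2 < 2$, $f(x) = 1$ for $x^2 > 2$; then at $[A] = \sqrt{2}$ every approximation gives $\ell = 0$, $m = 1$, so $(\ast)$ fails, $C \not\equ C$, and there are continuum-many monotone extensions (any value in $[0,1]$ at $\sqrt{2}$). So the lemma as literally stated is too strong; your diagnosis that the intended scope is the structure maps actually used afterwards ($+$, $\tfrac{\insarg}{2}$, $\sup$, and the mixed-monotone $\fdis$) is the correct reading of this informal subsection.

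For those operations your Stage~IV argument goes through: subadditivity gives $m \leq \ell + f((p_k+p_k)_k)$, and each of these maps sends small tuples to small values (they all satisfy $f(0,\ldots,0)=0$ and are subadditive, hence $f((p_k)_k)\leq\sum_k f(0,\ldots,p_k,\ldots,0)$ is small). Your verification that the simpler set $D$ lies inside the maximal representative $C$ is clean; once $C\equ C$ is known, $D\subseteq C$ together with $D\in\widetilde{\od}$ immediately gives $D\equ C$. One small slip: in the parenthetical ``(which covers $+$ and $\dis$)'' you presumably mean $+$, $\sup$, $\tfrac{\insarg}{2}$ --- the operation $\dis$ itself is not monotone and is handled separately via $\fdis$, as the paper does right after the lemma.
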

			\begin{proof}
				Exercise (for uniqueness use the previous lemma).
			\end{proof}
			
			Since $+$, $\frac{\insarg}{2}$ and $\sup$ are monotone and subadditive, they extend to $\cmpl{\od}$ as
			$$[A] + [B] = \big[\st{(u + w, v + z)}{(u, v) \in A \land (w, z) \in B}\big]$$
			$$\tfrac{[A]}{2} = \big[\st{(\tfrac{u}{2}, \tfrac{v}{2})}{(u, v) \in A}\big]$$
			$$\sup\{[A], [B]\} = \big[\st{(\sup\{u, w\}, \sup\{v, z\}}{(u, v) \in A \land (w, z) \in B}\big]$$
			(the fact that $\sup$ is indeed the supremum on $\cmpl{\od}$ follows from uniqueness). Clearly then the unit for addition is
			$$0 = i(0) = [\{(0, 0)\}] = [\st{(x, x)}{x \in X}],$$
			and we can define the extension of $0 <$ by
			$$0 < [A] \iff \xsome{(u, v)}{A}{u \apart v}.$$
			The operation $\dis$ is more difficult because it is not monotone, but we can sidestep that problem by recalling the operation $\fdis$ from Subsection~\ref{Subsection: disgroup_implication} which is monotone in the second argument and antitone in the first, so by a similar reasoning as in Lemma~\ref{Lemma: extension_of_disgroup_operations_to_the_completion}(\ref{Lemma: extension_of_disgroup_operations_to_the_completion: monotone})
			\mlst{[A] \fdis [B] = \Big[}{(x, y) \in \od \times \od}{\xall{(u, v)}{A}\xall{(w, z)}{B}{\big(x \dis y \leq (u \dis v) \fdis (w + z)} \nl (u + v) \fdis (w \dis z) \leq x + y\big)}{\Big],}
			$$[A] \dis [B] = \sup\{[A] \fdis [B], [B] \fdis [A]\}.$$
			This formula is rather complicated; the question is, can it be simplified? Regardless, we'll skip the technical verification that these operations make $\cmpl{\od}$ into a strictly ordered halved disgroup.
			
			It is reasonable to call $\cmpl{\od}$ the completion of $\od$ due to the following proposition (in the proof of which we again skip the technical verification).
			\begin{proposition}
				$\cmpl{\od}$ is the largest disgroup of which $\od$ is a dense subdisgroup.
			\end{proposition}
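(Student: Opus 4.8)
The plan is to read ``largest'' exactly as the universal property of the metric completion (Definition~\ref{Definition: completion}): I will show that $i\colon\od\to\cmpl\od$ is a dense subdisgroup inclusion, and that for every dense subdisgroup inclusion $j\colon\od\to G$ into a strictly ordered halved disgroup $G$ there is a unique structure- and order-preserving (necessarily dense) map $\phi\colon G\to\cmpl\od$ with $\phi\circ j=i$. Taking $G=\cmpl\od$ and $j=i$ forces $\phi$ to be the identity, so this says precisely that $\cmpl\od$ is terminal among dense disgroup extensions of $\od$. For the density of $\od$ in $\cmpl\od$ itself: given $[A]\in\cmpl\od$ and $\epsilon\in\od_{> 0}$, the defining property of $\widetilde\od$ furnishes a pair $(u,v)\in A$ with $u\leq\tfrac\epsilon2$; since $A\equ A$ we have $i(u\dis v)\leq[A]\leq i(u+v)$, and because $x\leq z\leq y$ implies $x\dis z\leq x\dis y$ in any disgroup, $i(u\dis v)\dis[A]\leq i(u\dis v)\dis i(u+v)=i\big((u+v)\dis(u\dis v)\big)=i\big(2\inf\{u,v\}\big)\leq i(2u)\leq i(\epsilon)$, so $i(\od)$ is dense in $\cmpl\od$.

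For the universal map I mimic the ``maximal representative'' of $i(x)$ and set
$$\phi(g)\dfeq\Big[\st{(u,v)\in\od\times\od}{j(u)\dis j(v)\leq g\leq j(u)+j(v)}\Big].$$
The verification splits into: (a) this set lies in $\widetilde\od$; (b) it is $\equ$-self-compatible, so the assignment is well defined; (c) $\phi\circ j=i$ — both sides give the maximal representative $\st{(u,v)}{u\dis v\leq x\leq u+v}$, since $j$ reflects $\leq$; (d) $\phi$ preserves $0$, $+$, $\dis$ (hence $\leq$) and $0<$, which is routine bookkeeping with the explicit formulas for the operations on $\cmpl\od$ (Lemma~\ref{Lemma: extension_of_disgroup_operations_to_the_completion} and its $\fdis$-analogue); and (e) $\phi$ is injective — if $\phi(g)=\phi(g')$ then $g$ and $g'$ have the same $\od$-valued lower and upper bounds, so $g\dis g'\leq\epsilon$ for every $\epsilon\in\od_{> 0}$, whence $g=g'$ by Lemma~\ref{Lemma: descent_in_strictly_ordered_disgroup}. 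Density of $\phi$ is then automatic, since $\phi(G)\supseteq i(\od)$, which is dense; and uniqueness of $\phi$ among structure-preserving extensions of $i$ is again a density-plus-descent argument (the analogue of Lemma~\ref{Lemma: uniqueness_of_continuous_extensions}): two such maps agree on $\od$, hence for $g\in G$ their $\dis$-distance in $\cmpl\od$ is bounded by $i(\epsilon)$ for every $\epsilon\in\od_{> 0}$, hence is $0$.

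The main obstacle is step (a), and it is where constructivity bites. Via the identities $\tfrac{b\dis a}{2}\dis\tfrac{a+b}{2}=a$ and $\tfrac{b\dis a}{2}+\tfrac{a+b}{2}=b$, valid for $a\leq b$ in a halved disgroup, it is equivalent to: for every $\epsilon\in\od_{> 0}$ one can trap $g$ between $j(a)$ and $j(b)$ with $a,b\in\od$ and $b\dis a\leq\epsilon$, since then $\big(\tfrac{b\dis a}{2},\tfrac{a+b}{2}\big)$ is a pair in the defining set of $\phi(g)$ with first coordinate $\leq\tfrac\epsilon2$. A naive choice (take $c\in\od$ close to $g$ and put $a=c\dis\tfrac\epsilon2$, $b=c+\tfrac\epsilon2$) fails when $g$ is small, because $\dis$ behaves like an absolute value rather than a truncated difference, and we may not case-split on the size of $g$. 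The fix uses that $\od$ is a lattice: pick $c\in\od$ with $g\dis j(c)\leq j(\tfrac\epsilon2)$, and take the \emph{truncated} difference $a\dfeq\sup\{c,\tfrac\epsilon2\}\dis\tfrac\epsilon2$ together with $b\dfeq c+\tfrac\epsilon2$. Then the identity $a+\tfrac\epsilon2=\sup\{c,\tfrac\epsilon2\}$ together with $\sup\{j(c),j(\tfrac\epsilon2)\}\leq g+j(\tfrac\epsilon2)$ yields $j(a)\leq g$; $g\dis j(c)\leq j(\tfrac\epsilon2)$ yields $g\leq j(b)$; and $b\dis a\leq\epsilon$ follows from $\sup\{c,\tfrac\epsilon2\}\leq c+\tfrac\epsilon2$ via the same identity. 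Everything else in the proof of the proposition is routine manipulation with the formulas already established for $\cmpl\od$.
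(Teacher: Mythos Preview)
Your proof is correct and follows essentially the same approach as the paper: first establish that $i\colon\od\to\cmpl\od$ is dense, then construct the universal map into $\cmpl\od$ from any other dense extension by the same ``sandwich'' formula (the paper writes $k(x)=\big[\st{(u,v)}{j(u\dis v)\leq x\leq j(u+v)}\big]$, which agrees with your $\phi$ since $j$ preserves $+$ and $\dis$). You in fact supply more than the paper does: the paper explicitly states that it skips the technical verification, whereas you carry out the delicate constructive step~(a) via the truncated difference $a=\sup\{c,\tfrac\epsilon2\}\dis\tfrac\epsilon2$, and you also sketch uniqueness and injectivity, neither of which the paper addresses. The only cosmetic differences are that the paper approximates $[A]$ by $i(v)$ rather than your $i(u\dis v)$, and it phrases density directly for $[E]\in\cmpl\od_{>0}$ rather than for $\epsilon\in\od_{>0}$ (which is harmless once one notes that any $[E]>0$ dominates some $i(x\dis y)$ with $x\apart y$).
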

			\begin{proof}
				First we show that $i$ has a dense image. Take any $[A] \in \cmpl{\od}$ and $[E] \in \cmpl{\od}_{> 0}$. We thus have $(x, y) \in E$ such that $x \apart y$, and we may find $(u, v) \in A$ such that $u \leq x \dis y$. Then $[A] \dis i(v) \leq i(u) \leq i(x \dis y) \leq [E]$.
				
				Let $\od'$ be another disgroup and $j\colon \od \to \od'$ an injective map which preserves disgroup structure and has a dense image. Then $k\colon \od' \to \cmpl{\od}$, given by
				$$k(x) \dfeq \big[\st{(u, v) \in \od \times \od}{j(u \dis v) \leq x \leq j(u + v)}\big]$$
				has the property $k \circ j = i$.
			\end{proof}
			Consequently, the completion of $\cmpl{\od}$ is (isomorphic to) $\cmpl{\od}$.
			
			Having the completion of the base disgroup $\od$, we can construct completions of general $\od$-(proto)metric spaces via locations (recall Section~\ref{Section: reals_and_completions}). Hereafter, assume that $\od$ is complete.
			\begin{definition}
				Let $\mtr{X} = (X, d_\mtr{X})$ be a $\od$-(pseudo)metric space. A map $f\colon X \to \od$ is a \df{location} on $\mtr{X}$ when
				\begin{itemize}
					\item
						$d_\mtr{X}(x, y) \dis f(x) \leq f(y)$ for all $x, y \in X$, and
					\item
						$\xall{\epsilon}{\od_{> 0}}\xsome{x}{X}{f(x) \leq \epsilon}$.
				\end{itemize}
				We denote the set of locations on $\mtr{X}$ by $\locd(\mtr{X})$.
			\end{definition}
			$\locd(\mtr{X})$ is a $\od$-metric space, with the metric $d_{\locd(\mtr{X})}\colon \locd(\mtr{X}) \times \locd(\mtr{X}) \to \cmpl{\od} \ism \od$ given as
			$$d_{\locd(\mtr{X})}(f, g) \dfeq \big[\st{(f(x), g(x))}{x \in X}\big].$$
			From here we can proceed much as in Section~\ref{Section: reals_and_completions}.
			
			The point of this exercise is the following question. As we have seen in Section~\ref{Section: countable_Urysohn}, we can make a ``countable version of a Urysohn space'' over any partially ordered disgroup $\od$ with finite suprema. Here we've seen that if $\od$ is halved and strictly ordered, we can complete this countable version. An adaptation of arguments from Section~\ref{Section: complete_Urysohn} should show that the completion would satisfy the Urysohn extension property. So, are there strictly ordered halved disgroups, other than such subdisgroups of $\nnr$, over which the Urysohn space would be of interest?

	\bibliographystyle{plain}
	\bibliography{Ury_Bibliography}

\end{document}